\documentclass[a4paper,11pt,onecolumn,twoside]{article}
\usepackage{tikz}
\usepackage{tikz-cd}
\usetikzlibrary{arrows.meta}
\usepackage{fancyhdr}
\usepackage{amsmath}
\usepackage{hyperref} 
\sffamily 
\usepackage{bbm}
\usepackage[utf8]{inputenc}
\usepackage{amsmath,amsfonts,amssymb}
\usepackage{graphicx}
\usepackage{mathptmx}
\DeclareUnicodeCharacter{FB01}{fi}
\usepackage{amsthm}
\usepackage{booktabs}
\usepackage{amssymb}
\usepackage[mathscr]{eucal}
\usepackage[labelfont=bf]{caption}
\usepackage{indentfirst}
\usepackage{caption}
\usepackage{enumitem}
\usepackage{subfigure}
\usepackage{authblk}
\usepackage{natbib}
\usepackage[all]{xy}
\usepackage{geometry}
\usepackage{mathrsfs}
\usepackage{newtxmath}
\usepackage{hyperref}
\numberwithin{equation}{section}

\newtheorem{thm}{Theorem}[section]

\newtheorem{definition}[thm]{Definition}
\newtheorem{exm}[thm]{Example}

\newtheorem{lemma}[thm]{Lemma}
\newtheorem{cor}[thm]{Corollary}
\newtheorem{prop}[thm]{Proposition}

\newtheorem{rmk}[thm]{Remark}

\newcommand{\GG}{{^{G}_{G}\mathcal{YD}^{\Phi}}}

\newcommand{\AAC}{{^{A}_{A}\mathcal{YD}(\mathcal{C})}}
\newcommand{\RRC}{{^{R}_{R}\mathcal{YD}(\mathcal{C})}}
\newcommand{\RRH}{{^{R\#H}_{R\#H}\mathcal{YD}}}
\newcommand{\AACR}{{^{A}_{A}\mathcal{YD}(\mathcal{C})}_{\operatorname{rat}}}
\newcommand{\AACOPCR}{{^{A^{\operatorname{cop}}}_{A^{\operatorname{cop}}}\mathcal{YD}(\overline{\mathcal{C}})}_{\operatorname{rat}}}

\newcommand{\BBCR}{{^{B}_{B}\mathcal{YD}(\mathcal{C})}_{\operatorname{rat}}}

\newcommand{\AARC}{{\mathcal{YD}(\mathcal{C})^A_A}}

\newcommand{\BBC}{{^{B}_{B}\mathcal{YD}(\mathcal{C})}}

\newcommand{\HH}{{^{H}_{H}\mathcal{YD}}}
\newcommand{\BNG}{{^{\AN}_{\AN}\mathcal{YD}}}
\newcommand{\AMN}{\mathcal{A}(M\oplus N)}
\newcommand{\AN}{\mathcal{A}(N)}
\newcommand{\AMI}{{^{A(M_i)}_{A(M_i)}\mathcal{YD}}}
\newcommand{\ANI}{{^{A(N_i)}_{A(N_i)}\mathcal{YD}}}
\newcommand{\AMID}{{^{A(M_i^*)}_{A(M_i^*)}\mathcal{YD}}}
\newcommand{\MMY}{{^{M}_{M}\mathcal{YD}}}

\newcommand{\NN}{\mathbb{N}_{0}}
\newcommand{\ZZ}{\mathbb{Z}}
\newcommand{\Gr}{\text{-Gr}}
\newcommand{\Mod}{\mathcal{M}}

\newcommand{\DG}{D^{\Phi}(G)}

\newcommand{\Zgr}{\mathbb{Z}\operatorname{-Gr}\mathcal{M}_{\mathbbm{k}}}
\newcommand{\BMN}{\mathcal{B}(M\oplus N)}

\newcommand{\bVG}{\textbf{Vec}_G^{\Phi}}

\newcommand{\bfo}{\textbf{1}}
\newcommand{\BV}{\mathcal{B}(V)}
\newcommand{\BVCC}{{^{\BV}_{\BV}\mathcal{YD}(\mathcal{C})}}
\newcommand{\BMICC}{{^{\bB(M_i)}_{\bB(M_i)}\mathcal{YD}(\mathcal{C})}}
\newcommand{\BVdCC}{{^{\bB(V^*)}_{\bB(V^*)}\mathcal{YD}(\mathcal{C})}}
\newcommand{\BMIdCC}{{^{\bB(M_i^*)}_{\bB(M_i^*)}\mathcal{YD}(\mathcal{C})}}
\newcommand{\BN}{\mathcal{B}(N)}

\usetikzlibrary{knots}
\newcommand{\kG}{\mathbbm{k}G}
\newcommand{\bB}{\mathcal{B}}

\newcommand{\ot}{\otimes}
\newcommand{\BVd}{\mathcal{B}(V^*)}
\newcommand{\wPhi}{\widetilde{\Phi}}

\hypersetup{
  colorlinks=true,
  linkcolor=magenta,
  citecolor=blue,
  filecolor=blue,
  urlcolor=blue
}

\title{\textbf{  Reflection Theory of Nichols Algebras over Coquasi-Hopf Algebras with Bijective Antipode}}
\author{Bowen Li and Gongxiang Liu
	}
\date{}
\setlist{nolistsep}
\begin{document}\large 
	\maketitle
	
	\setlength{\oddsidemargin}{ -1cm}
	\setlength{\evensidemargin}{\oddsidemargin}
	\setlength{\textwidth}{15.50cm}
	\vspace{-.8cm}
	
	\setcounter{page}{1}
	
	\setlength{\oddsidemargin}{-.6cm}  
	\setlength{\evensidemargin}{\oddsidemargin}
	\setlength{\textwidth}{17.00cm}
		\thispagestyle{fancy}
	\fancyhf{}
	\fancyfoot[R]{\thepage}
	\fancyfoot[L]{Bowen Li, Gongxiang Liu:  School of Mathematics, Nanjing University, Nanjing, P. R. China. \\ B.Li, e-mail: DZ21210002@smail.nju.edu.cn \\ 
 G.Liu, e-mail: gxliu@nju.edu.cn.\\}
	\fancyhead{} 
	\renewcommand{\footrulewidth}{1pt}
	\renewcommand{\headrulewidth}{0pt}
	\par 
 \begin{abstract}
We investigate the reflection theory of Nichols algebras over arbitrary coquasi-Hopf algebras with  bijective antipode, generalizing previous results restricted to the pointed cosemisimple setting \cite{reflection1}. By establishing a braided monoidal equivalence between categories of rational Yetter-Drinfeld modules via a dual pair, we demonstrate that a tuple of finite-dimensional irreducible Yetter-Drinfeld modules admitting all reflections gives rise to a semi-Cartan graph. As an application, we consider an explicit example of a rank three Nichols algebra from \cite{huang2024classification}. We show that it yields a standard Cartan graph and prove that it is, in fact, an affine Nichols algebra.
 \end{abstract}

 \section{Introduction}
\subsection{The classical setting: Nichols algebras over Hopf algebras}
Nichols algebras play an important role in the theory of Hopf algebras and quantum groups, serving as key building blocks for the classification of pointed Hopf algebras and the structural analysis of braided tensor categories. Their combinatorial and representation-theoretic properties are intrinsically linked to the underlying Yetter–Drinfeld modules, and understanding their behavior under natural maps, such as reflections, has become a central theme in the research on Nichols algebra.\par
Heckenberger introduced the concepts of semi-Cartan graphs and Weyl groupoids [\citealp{Hec06},\citealp{Hec09},\citealp{Hec10}], which provide powerful combinatorial tools for studying the structure and classification of Nichols algebras. A semi-Cartan graph is essentially a quadruple $\mathcal{G} = \mathcal{G}(\mathbb{I}, \mathcal{X}, r, A)$ consisting of a finite index set $\mathbb{I}$, a set of points $\mathcal{X}$, a set of reflection maps $r$, and a generalized Cartan matrix $A^X$ associated with each point $X \in \mathcal{X}$, satisfying specific compatibility axioms (see Definition \ref{def5.14} for details).
The most celebrated theorem states that a tuple of simple Yetter-Drinfeld modules over a Hopf algebra with bijective antipode  gives rise to a semi-Cartan graph, provided that it admits all reflections [\citealp{AHS10},\citealp{HS10},\citealp{HSdualpair}].\par   
For  Hopf algebras, the reflection theory of Nichols algebras has been extensively developed and successfully applied to classify finite-dimensional Nichols algebras of diagonal type [\citealp{Hec06},\citealp{Hec10},\citealp{A13},\citealp{A15}]. Moreover, the classification of finite-dimensional pointed Hopf algebras over abelian groups has been completed with the aid of the classification of Nichols algebras of diagonal type [\citealp{AS98},\citealp{AS00},\citealp{AS10},\citealp{AA+14},\citealp{AG19},\citealp{AA17}]. Significant progress has also been made in classifying finite-dimensional Nichols algebras over non-abelian groups [\citealp{AGn99},\citealp{AGn03},\citealp{AHS10},\citealp{MS00},\citealp{FK99},\citealp{Gn00},\citealp{Gn+11},\citealp{HLV12},\citealp{HS10},\citealp{HS10b},\citealp{HS10c},\citealp{HS14},\citealp{Hs15},\citealp{HV17}].\par 
\subsection{From Hopf algebras to coquasi‑Hopf algebras }
In this work, we focus on the more general setting of coquasi-Hopf algebras. The classification of coradically graded pointed finite-dimensional coquasi-Hopf algebras over abelian groups has been completed  in [\citealp{rank1},\citealp{coquasitriangular},\citealp{tame-rep},\citealp{FQQR2},\citealp{QQG},\citealp{huang2024classification},\citealp{LiLiu1}]. While the theory of Nichols algebras over Hopf algebras is mature, the parallel theory for coquasi-Hopf algebras remains less developed due to the lack of associativity. Recent progress has been made for pointed cosemisimple coquasi-Hopf algebras  under mild conditions \cite{reflection1}, which  requires the finite‑dimensionality of certain Nichols algebras appearing in reflection sequences.  This ensures the following braided monoidal equivalence, which is crucial to the main theorem:
$$ \BVCC \longrightarrow \BVdCC.$$ 
Here $\mathcal{C}=\GG$ and $\mathcal{B}(V)$ is a finite-dimensional Nichols algebra in $\mathcal{C}$.

The primary obstacle in extending the theory to the arbitrary setting lies in the duality.
We now consider   two arbitrary Hopf algebras $A$, $B$ in $\mathcal{C}$ with bijective antipodes related by a non-degenerate Hopf pairing. Unfortunately, the monoidal categories $\AAC$ and $\BBC$ are not monoidal equivalent in general. More precisely, when $A$, $B$ are infinite-dimensional, a module of $A$ in $\mathcal{C}$ cannot give rise to a comodule of $B$ in $\mathcal{C}$.
\subsection{Braided monoidal equivalence induced by a dual pair and its application to reflection theory}
In the present work, we overcome several new difficulties that do not appear in  pointed cosemisimple settings and remove the assumption of finite-dimensionality. As a result, we extend the reflection theory to arbitrary coquasi‑Hopf algebras $H$ with bijective antipode. \par 
  Inspired by  methods in [\citealp{HSdualpair},\citealp{rootsys}], we can define rational modules of $A$ in $\mathcal{C}=\HH$.
 This allows us to define the category $\RRC_{\operatorname{rat}}$, which is proven to be a monoidal subcategory of $\RRC$.  
For  two locally finite Hopf algebras $A$, $B$ in $\mathcal{C}$ with bijective antipodes related by a non-degenerate Hopf pairing compatible with the grading, called a dual pair, we  prove the following result:
\begin{thm}[Theorem \ref{thm5.15}]
    There is a  braided monoidal equivalence:
\begin{align*}
    &\Omega: \BBCR \longrightarrow \AACR.
    \end{align*}
    \end{thm}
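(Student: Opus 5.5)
The plan is to build $\Omega$ explicitly from the dual pair. I will then check, in this order, that it lands in rational objects, that it is a functor, that it is monoidal, that it preserves braidings, and that it is an equivalence. Throughout, $\mathcal{C}=\HH$ and $\langle\cdot,\cdot\rangle\colon A\ot B\to \mathbbm{k}$ is the non-degenerate graded Hopf pairing in $\mathcal{C}$. All structure maps will be twisted by the associator $\Phi$ of $H$ in the evident way, and bijectivity of the antipode of $H$ makes $\mathcal{C}$ rigid on finite-dimensional objects.

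\textbf{Construction of $\Omega$.} Let $(M,\cdot,\delta)\in\BBCR$ be a rational Yetter--Drinfeld module over $B$. Write its left $B$-coaction as $\delta(m)=m_{(-1)}\ot m_{(0)}$, with the $\Phi$-corrections suppressed in notation only. Its $A$-coaction is already available: by the definition of rationality, the $A$-action $A\ot M\to M$ induced through the pairing is rational, which is equivalent to the existence of a unique $A$-coaction $\rho$ with $a\cdot m=\langle a, m_{(-1)}\rangle m_{(0)}$ modulo the $\Phi$/antipode twist. The $A$-action is obtained by dualising the $B$-coaction through the pairing, namely $a\cdot m=\langle a, m_{(-1)}\rangle m_{(0)}$ twisted by $\Phi$ and the antipode as required. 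So $\Omega$ swaps the roles: the old $B$-coaction becomes the new $A$-action, and the old $B$-action becomes the new $A$-coaction, using the rationality of $B$-modules. Because $A$ and $B$ are locally finite and the pairing respects the grading, every comodule is a locally finite module. Non-degeneracy then makes the module-to-comodule passage a bijection on rational objects. On morphisms $\Omega$ is the identity, so it is fully faithful once it is well defined, and essential surjectivity follows by running the symmetric construction from $\AACR$ back to $\BBCR$. I will use the earlier result that $\RRC_{\operatorname{rat}}$ is a monoidal subcategory of $\RRC$ to ensure that the target really consists of rational objects closed under $\ot$.

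\textbf{Verification.} Three checks remain.
\begin{enumerate}[label=(\roman*)]
\item The Yetter--Drinfeld compatibility for $\Omega(M)$ over $A$ must follow from that of $M$ over $B$. I will pair the $B$-Yetter--Drinfeld condition against elements of $A$ in both tensor slots. Non-degeneracy, together with the fact that the pairing is a Hopf pairing (multiplicative, comultiplicative, and intertwining the antipodes), then transports the condition.
\item The identity map $\Omega(M)\ot\Omega(N)\to\Omega(M\ot N)$ must serve as the monoidal structure. This amounts to checking that the diagonal action and coaction correspond under the pairing. I will verify it with the same pairing identities, possibly composed with a braiding of $\mathcal{C}$, since the tensor product in $\AAC$ uses the braiding of $\mathcal{C}$.
\item $\Omega$ must preserve the braidings. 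The braiding in $\BBC$ is roughly $c_{M,N}(m\ot n)= m_{(-1)}\cdot n\ot m_{(0)}$ composed with $c^{\mathcal{C}}$, while the braiding in $\AAC$ uses the $A$-coaction of $M$ and the $A$-action on $N$. Under the dictionary above these agree, possibly only up to passing to the inverse braiding or the opposite pair. In that case I will define $\Omega$ with an antipode twist of the pairing, using bijectivity, to fix the orientation.
\end{enumerate}

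\textbf{Main obstacle.} I expect the hardest step to be proving that the dualised coaction of a rational object is again rational in the target sense, and that the $A$-coaction reconstructed from a rational $A$-action is coassociative in the quasi-setting. Because $\mathcal{C}$ is not strict, coassociativity only holds up to $\Phi$, and the pairing does not see $\Phi$ directly. The approach I will try first is to reduce to finite-dimensional homogeneous pieces, which local finiteness and the grading compatibility make possible. On each such piece the pairing is perfect degreewise, so everything can be checked in the rigid finite-dimensional part of $\mathcal{C}$, where the standard dual-basis argument works with $\Phi$-insertions.
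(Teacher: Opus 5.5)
Your dictionary is the paper's: the old $B$-coaction becomes the new $A$-action through the pairing, and the old rational $B$-action becomes the new $A$-coaction. Your degreewise route to the module/comodule correspondence also works in the graded setting: use $A(n)\cong B(n)^*$ in the rigid part of $\mathcal{C}$, then sum over $n$ using rationality. It is a somewhat simpler substitute for Proposition \ref{prop5.5}.

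Your first sentence has the roles crossed. A rational $A$-module corresponds to a $B$-comodule, not an $A$-comodule; the new $A$-coaction must come from the rational $B$-action. The obstacle you flag, coassociativity up to $\Phi$, is routine: naturality and coherence handle it as in Proposition \ref{prop5.9}.

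\textbf{The gap is in (ii)--(iii).} With the identity as monoidal structure, the swap is braided only for the \emph{reversed} braiding. What it actually produces is the strict braided equivalence $\Gamma\colon\overline{\BBCR}\to\AACOPCR$ of Theorem \ref{thm5.10}:
\begin{enumerate}
\item With the nested convention for Hopf pairings in $\mathcal{C}$, the reconstructed actions are compatible with tensor products of $A^{\operatorname{cop}}$-modules in $\overline{\mathcal{C}}$ (Lemma \ref{lem5.8}, Proposition \ref{prop5.9}).
\item The reconstructed coaction is an $A^{\operatorname{cop}}$-coaction, not an $A$-coaction.
\item Lemma \ref{lem5.11} matches $\overline{c}^{\mathcal{YD}(\mathcal{C})}$ with $c^{\mathcal{YD}(\overline{\mathcal{C}})}$.
\item Transporting the Yetter--Drinfeld condition as in your (i) likewise gives the condition over $A^{\operatorname{cop}}$ in $\overline{\mathcal{C}}$ (Proposition \ref{prop5.14}), not over $A$ in $\mathcal{C}$.
\end{enumerate}

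No antipode twist repairs this while keeping $J=\operatorname{id}$. Take $H=\mathbbm{k}$, $B=\mathbbm{k}\mathbb{Z}_2=\mathbbm{k}\langle t\rangle$ concentrated in degree $0$, and $A=B^*$. Let $V$ be one-dimensional of $B$-degree $t$ with trivial action, and $W$ one-dimensional of $B$-degree $1$ with $t$ acting by $-1$. Then $c_{V,W}=-\tau$. For every variant of your $\Omega$, the $A$-degree of $\Omega(V)$ is the trivial character and $A$ acts on $\Omega(W)$ through $\varepsilon$, so $c_{\Omega(V),\Omega(W)}=\tau$. Antipodes, $\omega^{+}$ and $\omega^{+\operatorname{cop}}$ only invert group elements and characters, which changes nothing here. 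Since $\Omega$ is the identity on morphisms, $\Omega(c_{V,W})\circ J_{V,W}=J_{W,V}\circ c_{\Omega(V),\Omega(W)}$ fails for $J=\operatorname{id}$.

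\textbf{The missing step is the passage from $\AACOPCR$ to $\AACR$.} The paper composes $\Gamma$ with two left/right switching equivalences from Heckenberger--Schneider's book: $F_{lr}\colon\AACOPCR\to\overline{{\AARC}_{,\operatorname{rat}}}$ and $F_{rl}\colon{\AARC}_{,\operatorname{rat}}\to\AACR$.
\begin{enumerate}
\item This replaces the coaction by $\delta_A=c^2_{A,V}\circ(\mathcal{S}_A^2\otimes\operatorname{id}_V)\circ\delta'$.
\item Because $F_{rl}$ is not strict, it forces the non-identity monoidal structure $\beta_{X,Y}=c^{\BBC}_{Y,X}\circ\overline{c}_{X,Y}$.
\item Reversing braidings then gives the braided equivalence $\BBCR\to\AACR$.
\end{enumerate}
In the example, $\beta_{X,Y}(x\otimes y)=y_{(-1)}\cdot x\otimes y_{(0)}$. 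So $\beta_{V,W}=\operatorname{id}$ and $\beta_{W,V}=-\operatorname{id}$, and both sides of the braiding condition send $v\otimes w$ to $-w\otimes v$.

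Your plan needs to replace (ii)--(iii) with this step. Alternatively, you could supply an explicit non-trivial $\beta$ and verify its coherence and braiding compatibility yourself.
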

Furthermore, we show that if $V$ is a finite-dimensional Yetter-Drinfeld module in $\HH$, there is a dual pair between $\mathcal{B}(V)$ and $\mathcal{B}(V^*)$.  Thus the above theorem can be applied to Nichols algebras to get the following braided monoidal equivalence:
$$ \Omega_V: \BVCC_{\operatorname{rat}} \longrightarrow \BVdCC_{\operatorname{rat}}.$$ 
 \par Now we are going to state our main result. 
 Let $\theta \geq 1$ be a positive integer, $\mathbb{I}=\{1,2,..,\theta\}$ and $M = (M_1, \ldots, M_{\theta})$, where $M_1, \ldots, M_{\theta} \in \HH$ are finite-dimensional   irreducible Yetter-Drinfeld modules.
We say $M$ admits $i$-th reflection for
some $1\leq i \leq \theta$ 
if for all $j \neq i$ there is a natural number $m_{ij}^M \geq 0$ such that $(\mathrm{ad}\, M_i)^{m_{ij}^M} (M_j)$ is a non-zero finite-dimensional subspace of $\mathcal{B}(M)$, and $(\mathrm{ad}\, M_i)^{m_{ij}^M + 1} (M_j) = 0$. Assume  $M$ admits the $i$-th reflection. Then we set $R_i(M) = (V_1, \ldots, V_{\theta})$, where
\[
V_j = 
\begin{cases} 
M_i^*, & \text{if } j = i, \\
\mathrm{ad}(M_i)^{m_{ij}^M} (M_j), & \text{if } j \neq i.
\end{cases}
\]
 It is still a tuple  of finite-dimensional irreducible Yetter-Drinfeld modules. By the above theorem, we have a braided monoidal equivalence:
$$ \Omega_i: {^{\bB(M_i)}_{\bB(M_i)}\mathcal{YD}(\mathcal{\mathcal{C}})}_{\operatorname{rat}}\longrightarrow{^{\bB(M_i^*)}_{\bB(M_i^*)}\mathcal{YD}(\mathcal{\mathcal{C}})}_{\operatorname{rat}}.$$ Then we prove the following theorem.
\begin{thm} \textup{(Theorem \ref{thm4.12})}
    Under the above assumptions on $M$. Suppose $M$ admits the $i$-th reflection for $1\leq i \leq \theta$. Then there is an isomorphism of Hopf algebras in $\HH$:
     \begin{equation}
         \Theta_i:\bB(R_i(M))\cong \Omega_i\left(\bB(M)^{\operatorname{co}\bB(M_i)}\right)\# \bB(M_i^*).
     \end{equation}
\end{thm}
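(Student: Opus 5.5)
We follow the strategy of \cite{AHS10, HS10} and its pointed cosemisimple version in \cite{reflection1}. The task is to show that the right-hand side is a Nichols algebra of $R_i(M)$. Set $K=\bB(M)^{\operatorname{co}\bB(M_i)}$.

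\textbf{Step 1: the decomposition.} Since $\bB(M)\cong\bB(M_i\oplus M')$ with $M'=\bigoplus_{j\neq i}M_j$, the projection $\bB(M)\to\bB(M_i)$ gives a bosonization decomposition
\[
\bB(M)\cong K\#\bB(M_i).
\]
Here $K$ is a Hopf algebra in ${^{\bB(M_i)}_{\bB(M_i)}\mathcal{YD}(\mathcal{C})}$, with action the adjoint action and coaction induced by the coproduct. One also knows that $K$ is generated by $\bigoplus_{j\neq i}\mathrm{ad}(M_i)^{m}(M_j)$, $m\ge 0$, and that $K$ is the Nichols algebra of this object in the braided category. The hypothesis that $M$ admits the $i$-th reflection makes this generating object
\[
L=\bigoplus_{j\neq i}\bigoplus_{m=0}^{m_{ij}^M}\mathrm{ad}(M_i)^m(M_j)
\]
finite-dimensional in each relevant degree.

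\textbf{Step 2: rationality.} We check that $K$ lies in the rational subcategory. $K$ is $\NN^{\theta}$-graded with finite-dimensional homogeneous components, and the $\bB(M_i)$-action raises the $i$-degree, so each element generates a finite-dimensional $\bB(M_i)$-submodule. This should match the paper's definition of rational module. Then $\Omega_i$ applies, and since $\Omega_i$ is braided monoidal, $\Omega_i(K)$ is again a Hopf algebra in ${^{\bB(M_i^*)}_{\bB(M_i^*)}\mathcal{YD}(\mathcal{C})}_{\operatorname{rat}}$. Moreover, $\Omega_i$ preserves the property of being a Nichols algebra: it preserves primitives, generation in degree one and gradings, because $\Omega_i$ is the identity on underlying objects of $\mathcal{C}$ and only changes the $\bB(M_i)$-structure. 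Hence
\[
\Omega_i(K)\cong\bB(\Omega_i(L)).
\]

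\textbf{Step 3: bosonize and identify.} Bosonizing gives a Hopf algebra $\Omega_i(K)\#\bB(M_i^*)$ in $\HH$, graded by a suitable $\ZZ^\theta$-grading obtained by reflecting degrees with $s_i$. Its degree-one part is
\[
M_i^*\oplus\bigoplus_{j\neq i}\mathrm{ad}(M_i)^{m_{ij}^M}(M_j)=R_i(M).
\]
The key identification is that, under $\Omega_i$, the component $\mathrm{ad}(M_i)^{m_{ij}^M}(M_j)$ is annihilated by the new coaction-to-action structure. In other words, it is the lowest-weight part for the $\bB(M_i^*)$-action, the analogue of the highest/lowest weight swap. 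Consequently the whole $\Omega_i(L)$ is generated by $V_j$ under the adjoint action of $M_i^*$. To conclude that the algebra is $\bB(R_i(M))$, we use the standard criterion: a graded Hopf algebra in $\HH$ generated in degree one whose primitives are concentrated in degree one is the Nichols algebra. The primitive elements of a bosonization $X\#\bB(N)$ with $X$ a Nichols algebra in ${^{\bB(N)}_{\bB(N)}\mathcal{YD}}$ are $N\oplus P(X)^{\text{lowest}}$, which gives the claim. This yields $\Theta_i$.

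\textbf{Main obstacle.} The delicate point is Step 3's identification of $\Omega_i(\mathrm{ad}(M_i)^{m}(M_j))$ with $\mathrm{ad}(M_i^*)^{m_{ij}^M-m}(V_j)$. This requires an explicit description of $\Omega_i$ in terms of the dual pairing $\bB(M_i)\otimes\bB(M_i^*)\to\mathbbm{k}$ in the non-associative category $\mathcal{C}$, with $\Phi$ tracking the reassociation. I would first prove that the skew-derivations $\partial_f$, $f\in M_i^*$, act on $K$ as the transported $\bB(M_i^*)$-action. Then I would use irreducibility of $M_j$ and the vanishing of $\mathrm{ad}(M_i)^{m_{ij}^M+1}(M_j)$ to get the weight-reversal.
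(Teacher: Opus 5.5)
Your proposal is correct in outline. Steps~1--2 are the paper's first half: $K\cong\bB(Q)$ by Theorem~\ref{thm3.7}, then $\Omega_i(K)\cong\bB(\Omega_i(Q))$ by Corollary~\ref{cor4.5}. Your Step~3, however, takes a different route from the paper.

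\textbf{How the paper finishes.} The paper never checks the Nichols axioms on $\Omega_i(K)\#\bB(M_i^*)$ directly. Instead it applies the converse Theorem~\ref{prop3.11} to the semisimple graded object $\Omega_i(Q)$. That proof
\begin{enumerate}
\item regrades each irreducible summand so that its coaction-trivial part sits in degree one;
\item shows $\bB(W)\#\bB(N)$ is a pre-Nichols algebra of $\mathcal{F}^0(W)\oplus N$ and maps it onto $\bB(\mathcal{F}^0(W)\oplus N)$;
\item obtains injectivity by comparing coinvariants via Theorem~\ref{thm3.7} and irreducibility.
\end{enumerate}
It then reads off $\mathcal{F}^0(\Omega_i(Q))=\mathcal{F}_0(Q)=\bigoplus_{j\neq i}V_j$ from Lemma~\ref{lemOmega}(2) and Lemma~\ref{lem5.12}.

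\textbf{How you finish.} You verify Definition~\ref{def3.18} by hand.
\begin{itemize}
\item Your $s_i$-twisted grading does place exactly $R_i(M)$ in total degree one, since $s_i(\alpha_j+m\alpha_i)=\alpha_j+(m_{ij}^M-m)\alpha_i$ and $s_i(-\alpha_i)=\alpha_i$.
\item Generation in degree one is the same induction as in the proof of Theorem~\ref{prop3.11}.
\item The formula $P(X\#\bB(N))=N\oplus\mathcal{F}^0(P(X))$ follows formally from the splitting $\pi\iota=\operatorname{id}$ in any abelian braided category.
\end{itemize}
This replaces the universal-surjection-plus-injectivity argument with a direct computation of primitives, and is somewhat shorter. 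The paper's route buys a reusable converse theorem and an explicit isomorphism that is the identity on the degree-one part.

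\textbf{Points to tighten.}
\begin{itemize}
\item Your ``$P(X)^{\text{lowest}}$'' must mean the coaction-trivial part $\mathcal{F}^0(P(X))$.
\item You need the equality $\mathcal{F}^0(\Omega_i(Q))=\bigoplus_{j\neq i}V_j$, not just $\supseteq$; otherwise there are extra primitives. Once you have $\mathcal{F}^0\Omega_i(-)=\mathcal{F}_0(-)$ (Lemma~\ref{lemOmega}(2), which is what your skew-derivation description amounts to), irreducibility of $Q_j$ (Lemma~\ref{lem3.5}(2)) plus the grading gives two things. First, $\Omega_i(Q_j)=\bB(M_i^*)\rhd V_j$. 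Second, $\mathcal{F}_0(Q_j)=V_j$: a nonzero piece killed by $\bB(M_i)^+$ in degree $<m_{ij}^M$ would generate, under the degree-lowering coaction, a proper nonzero Yetter--Drinfeld submodule.
\item Your rationality argument in Step~2 is a non sequitur. Finite-dimensional components and a degree-raising action do not force $\lambda(\bB(M_i)(n)\otimes x)=0$ for large $n$. The actual reason is $\operatorname{ad}(M_i)^{m_{ij}^M+1}(M_j)=0$, which makes $Q$ rational; Lemma~\ref{lem5.2} then passes rationality to $\bB(Q)$.
\item $\Omega_i$ is not strict: it carries the twist $\beta$ of Theorem~\ref{thm5.15}, which changes the transported product and coproduct. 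So preservation of Nichols algebras is Corollary~\ref{cor4.5}, not a consequence of $\Omega_i$ being the identity on underlying objects.
\end{itemize}
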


Although our main conclusion appears consistent with the Hopf case, the specific details differ significantly.  Some details require adjustment or reformulation in this more general setting. Some identities that are obviously valid in Hopf algebras require extensive computations to verify in our context.

As a corollary, suppose $M$ admits all reflections and 
    let $\mathcal{X}=\{ [X]\mid X \in \mathcal{F}_{\theta}(M) \}$, see Definition \ref{def5.2} for related notations. We denote the generalized Cartan matrix $A^{[X]}=(a_{ij}^X)_{i,j \in \mathbb{I}}$ for each $[X] \in \mathcal{X}$, where $(a_{ij}^X)_{i,j \in \mathbb{I}}$ is defined in Lemma \ref{lem5.4}. Let $r$ be the following map, $$r: \mathbb{I} \times \mathcal{X} \rightarrow \mathcal{X},\ i \times [X] \mapsto [R_i(X)].$$  Then the quadruple
    $$ \mathcal{G}(M)=(\mathbb{I},\mathcal{X},r,(A^X)_{X\in \mathcal{X}})$$
   is a semi-Cartan graph.\par
Finally, we apply this machinery to a crucial test case: a rank-three Nichols algebra $\mathcal{B}(M)=\mathcal{B}(M_1 \oplus M_2 \oplus M_3)$ of non-diagonal type, originally identified in [\citealp{huang2024classification}, Proposition 4.1], which is proved to be infinite-dimensional via three different approaches [\citealp{huang2024classification},\citealp{LiLiu1},\citealp{reflection1}]. We compute the Weyl groupoid of $\mathcal{W}(\mathcal{G}(M))$, and show that the set of real roots of $\mathcal{G}(M)$ at each $X\in \mathcal{X}$ is just the set of real  roots of the Kac-Moody Lie algebra of type${A}_2^{(1)}$. 
Furthermore, we prove that $\mathcal{G}(M)$ is a Cartan graph. Finally, utilizing the correspondence between Cartan graphs and Tits cones [\citealp{affinenichols1},\citealp{affinenichols2}],  we demonstrate that this algebra is affine, meaning its Tits cone is a half-plane. Our observations show that affine Nichols algebras can be realized over coquasi-Hopf algebras.
\subsection{Organization of this paper}
The paper is organized as follows. After reviewing preliminaries on coquasi-Hopf algebras, Yetter–Drinfeld modules, and bosonization in Section 2, we introduce rational modules and establish equivalences of braided monoidal categories  via dual pair in Section 3. Section 4 specializes to Nichols algebras and their pairings. Section 5 is devoted to the general reflection theory: we study the projection of  a Nichols algebra  and  reflections of Nichols algebras. Finally, in Section 6 we work out a detailed example of an affine Nichols algebra and show that it  gives rise to a Cartan graph.

 \section{Preliminaries}
In this section, we briefly recall the fundamental definitions and fix the notation for coquasi-Hopf algebras and Yetter-Drinfeld modules. For a more detailed exposition and proofs of standard properties, we refer the reader to \cite{reflection1}. Throughout the paper, $\mathbbm{k}$ denotes an algebraically closed field of characteristic zero.

\subsection{Coquasi-Hopf algebras}
We follow the standard definition of a coquasi-Hopf algebra as introduced in \cite{Drinfeld}.  A coquasi-Hopf algebra is a coalgebra $(H, \Delta, \epsilon)$ equipped with a compatible quasi-algebra structure $(m, \mu, \Phi)$ and a quasi-antipode $(\mathcal{S}, \alpha, \beta)$. The associator $\Phi$ is a convolution-invertible map satisfying the 3-cocycle condition, while $\alpha, \beta$ enforce the quasi-antipode axioms. We omit the full list of axioms here; see [\citealp{reflection1}, Definition 2.1] for the complete list of identities.

Throughout this paper, we use the Sweedler  notation $\Delta(a) = a_1 \otimes a_2$ for the coproduct and $a_1 \otimes a_2 \otimes \cdots \otimes a_{n+1}$ for the result of the $n$-iterated application of $\Delta$ on $a$. We say $H$ has a bijective antipode if $\mathcal{S}$ is bijective.

Following the definition in [\citealp{preantipode}], we recall the notion of a preantipode, which plays an important role in further calculations.
\begin{definition}\textup{[\citealp{preantipode}, Definition 3.6]}
    Let $(H,\Phi)$ be a coquasi-bialgebra, a preantipode for $H$ is a $\mathbbm{k}$-linear map $\mathbb{S}:H \rightarrow H$ such that, for all $h \in H$,
    \begin{align*}
        \mathbb{S}(h_1)_1h_2\otimes \mathbb{S}(h_1)_2&=1_H \otimes \mathbb{S}(h),\\
        \mathbb{S}(h_2)_1\otimes h_1\mathbb{S}(h_2)_2&=\mathbb{S}(h)\otimes 1_H,\\
\Phi(h_1,\mathbb{S}(h_2),h_3)&=\varepsilon(h).
    \end{align*}
\end{definition}
When $H$ is equipped with the structure of a coquasi-Hopf algebra with antipode $(\mathcal{S}, \alpha, \beta)$, a preantipode of $H$ always exists.
\begin{lemma}\textup{[\citealp{preantipode}, Theorem 3.10]}\label{lem1.3}
Let $H$ be a coquasi-Hopf algebra with antipode $(\mathcal{S}, \alpha, \beta)$, then 
\begin{equation}
    \mathbb{S}:= \beta * \mathcal{S} *\alpha 
\end{equation}
is a preantipode for $H$, where $*$ denotes the convolution product. 
\end{lemma}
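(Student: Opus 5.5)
The plan is to verify the three defining identities of a preantipode directly for $\mathbb{S}(h)=\beta(h_1)\mathcal{S}(h_2)\alpha(h_3)$, using the quasi-antipode axioms
\[
\mathcal{S}(h_1)\alpha(h_2)h_3=\alpha(h)1_H,\qquad h_1\beta(h_2)\mathcal{S}(h_3)=\beta(h)1_H,
\]
\[
\Phi(h_1,\mathcal{S}(h_3),h_5)\beta(h_2)\alpha(h_4)=\varepsilon(h),\qquad \Phi^{-1}(\mathcal{S}(h_1),h_3,\mathcal{S}(h_5))\alpha(h_2)\beta(h_4)=\varepsilon(h).
\]
We also use that $\mathcal{S}$ is an anti-coalgebra map and that $\Delta$ is multiplicative, the latter holding since $H$ is a coquasi-bialgebra, so that only the product is nonassociative.

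For the first identity we expand
\[
\mathbb{S}(h_1)_1h_2\otimes\mathbb{S}(h_1)_2=\beta(h_1)\alpha(h_4)\,\mathcal{S}(h_3)h_5\otimes\mathcal{S}(h_2),
\]
using $\Delta\mathcal{S}(x)=\mathcal{S}(x_2)\otimes\mathcal{S}(x_1)$. No reassociation is needed, because only a product of two elements occurs in the first tensor factor. Applying the first axiom to the block $\mathcal{S}(h_3)\alpha(h_4)h_5$ turns it into $\alpha(h_3)1_H$. What remains is $1_H\otimes\beta(h_1)\mathcal{S}(h_2)\alpha(h_3)=1_H\otimes\mathbb{S}(h)$. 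The second identity is the mirror computation: expand $\mathbb{S}(h_2)_1\otimes h_1\mathbb{S}(h_2)_2$ and apply the $\beta$-axiom to $h_1\beta(h_2)\mathcal{S}(h_3)$. This leaves $\mathbb{S}(h)\otimes 1_H$. Some care is needed with the ordering of Sweedler indices after applying the anti-coalgebra property, but both computations are routine.

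The third identity is $\Phi(h_1,\mathbb{S}(h_2),h_3)=\varepsilon(h)$. Since $\Phi$ is linear in each slot, we have
\[
\Phi(h_1,\mathbb{S}(h_2),h_3)=\beta(h_2)\alpha(h_4)\,\Phi(h_1,\mathcal{S}(h_3),h_5).
\]
This is precisely the left-hand side of the third quasi-antipode axiom, so it equals $\varepsilon(h)$.

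The main obstacle is bookkeeping rather than ideas. One must fix the convention for the quasi-antipode axioms in [\citealp{reflection1}, Definition 2.1], since some sources state them with $\Phi^{-1}$ or with the roles of $\alpha$ and $\beta$ exchanged. The verification of the third condition then has to use exactly the $\Phi$-axiom in the matching form. If the convention differs, one replaces $\Phi$ by $\Phi^{-1}$ in the corresponding step, and the same one-line argument applies.
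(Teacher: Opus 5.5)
Your verification is correct: expanding $\Delta(\mathbb{S}(x))=\beta(x_1)\alpha(x_4)\,\mathcal{S}(x_3)\otimes\mathcal{S}(x_2)$, the first two identities reduce by the $\alpha$- and $\beta$-axioms exactly as you describe, and the third is literally the axiom $\Phi(h_1,\beta(h_2)\mathcal{S}(h_3)\alpha(h_4),h_5)=\varepsilon(h)$. The paper gives no proof of its own and only cites [\citealp{preantipode}, Theorem 3.10], and your argument is the standard direct check behind that result. Your closing hedge is unnecessary, because the convention you used is the one in [\citealp{preantipode}] and matches the paper's definition of a preantipode; replacing $\Phi$ by $\Phi^{-1}$ in a single step would not be a valid repair in any case.
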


 \subsection{ Yetter-Drinfeld module categories over coquasi-Hopf algebras}
 We now turn our attention to the Yetter-Drinfeld module category structure.
 The definition of the Yetter-Drinfeld module category $\HH$ over an arbitrary coquasi-Hopf algebra $H$ was already given in [\citealp{YD-module}]. Since it plays a crucial role in our paper, we recall this definition.
 \begin{definition}\textup{[\citealp{YD-module}, Definition 3.1]}\label{def2.5}
Let $H$ be a coquasi-Hopf algebra with associator $\Phi$. A left-left Yetter--Drinfeld module over $H$ is a triple $(V, \delta_V, \rhd)$ such that:

\begin{itemize}
    \item $(V, \delta_V)$ is a left comodule of $H$ and we denote $\delta_V(v)$ by $v_{-1} \otimes v_0$ as usual;
    
    \item $\rhd: H \otimes V \to V$ is a $\mathbbm{k}$-linear map satisfying for all $h, l \in H$ and $v \in V$:
    \begin{align}
   & (hl) \rhd v = \frac{\Phi(h_2, (l_2 \rhd v_0)_{-1}, l_3)}{\Phi(h_1, l_1, v_{-1})\Phi((h_3 \rhd (l_2 \rhd v_0)_0)_{-1}, h_4, l_4)} 
    (h_3 \rhd (l_2 \rhd v_0)_0)_0, \label{1.9} \\
  &  1_H \rhd v = v, \label{1.10} \\ 
  & (h_1 \rhd v)_{-1} h_2 \otimes (h_1 \rhd v)_0 = h_1 v_{-1} \otimes (h_2 \rhd v_0).   \label{1.11}
    \end{align}
\end{itemize}
A morphism $f:(V,\delta_V,\rhd)\rightarrow (V',\delta_{V'},\rhd')$ is a colinear map $f:(V,\delta_V)\rightarrow (V',\delta_{V'})$ such that $f(h \rhd v)=h \rhd' f(v)$ for all $h\in H$.
\end{definition}
 The category $\HH$ is a $\mathbbm{k}$-linear  braided monoidal  abelian category. The unit object of $\HH$ is $\mathbbm{k}$, which is regarded as an object in $\HH$ via trivial structures. For $V,W \in \HH$, the tensor product of Yetter-Drinfeld modules is defined by:
\[
(V, \delta_V, \rhd) \otimes (W, \delta_W, \rhd) = (V \otimes W, \delta_{V \otimes W}, \rhd),
\]
where  $\delta_{V \otimes W}$ is given by 
\[
\delta_{V \otimes W} (v \otimes w) = v_{-1} w_{-1} \otimes v_0 \otimes w_0,
\]
and
\begin{equation}\label{1.12}
h \rhd (v \otimes w) =
\frac{\Phi (h_1, v_{-1} , w_{-2})\Phi((h_2\rhd v_0)_{-1}, (h_4 \rhd w_0 )_{-1}, h_5)}{\Phi ((h_2 \rhd v_0)_{-2}, h_3, w_{-1}) }
  (h_2 \rhd v_0)_0 \otimes (h_4 \rhd w_0)_0.
\end{equation}
The braiding $
c_{V,W} : V \otimes W \rightarrow W \otimes V
$
is given by:
\begin{equation}\label{1.13}
c_{V,W} (v \otimes w) = (v_{-1} \rhd w) \otimes v_0.
\end{equation}
\par 
In \cite{rightyd}, the authors proved that the category $\mathcal{YD}^H_H$ of right-right  Yetter-Drinfeld module is braided equivalent to $\mathcal{Z}_{l}(\mathcal{M}^H)$, which is the left center of the category of right comodules of $H$. The proof for $\HH$ is completely dual. We need to recall the definition of right center here for completeness.\par 
\begin{definition}
For a coquasi-Hopf algebra \( H \) with bijective antipode, the right center of the category of left \( H \)-comodules \( \mathcal{Z}_r(^H\mathcal{M}) \) has objects \((V, c_{-,V})\), where \( V \in {^H\mathcal{M}} \) and \( c_{W,V} : W \otimes V \longrightarrow V\otimes W \) is a natural transformation in \(^H\mathcal{M} \) that satisfies the commutative diagram
  \begin{center}
	\begin{tikzpicture}
		\node (A) at (0,0) {$ (X\ot W) \ot V$};
		\node (B) at (4,0) {$V \ot (X\ot W) $};
		\node (C) at (-2,-2) {$X \ot (W \ot V)$};
  	\node (D) at (0,-4) {$X \ot (V \ot W)$};
   \node (E) at (4,-4) {$(X\ot V) \ot W$};
   \node (F) at (6,-2) {$(V\ot X) \ot W$};
		\draw[->] (A) --node [above] {$c_{X\ot W,V}$} (B);
		\draw[->] (A) --node [ above left] {$a_{X,W,V}$} (C);	
		\draw[->] (C) --node [below left] {$\operatorname{id}_X\ot c_{W,V}$} (D)	;
		\draw[->] (D) --node [above] {$a^{-1}_{X,V,W}$} (E)	;
  	\draw[->] (E) --node [ below right] {$c_{X,V}\ot \operatorname{id}_W$} (F);	
		\draw[->] (F) --node [above right] {$a_{V,X,W}$} (B)	;
		\end{tikzpicture}
\end{center}
for all \( W, X \in ^H\mathcal{M} \) and \( c_{\mathbbm{k},V} = \mathrm{id}_V \). A morphism \( f : (V, c_{-,V}) \longrightarrow (V', c_{-,V'}) \) in the  center is a right \( H \)-comodule map satisfying
$$ c_{W,V'}\circ (\operatorname{id}_W \ot f)=(f\ot \operatorname{id}_W)\circ c_{W,V}.$$
for all \( W \in {^H\mathcal{M}} \).
 \end{definition}
 The category \( \mathcal{Z}_r(^H\mathcal{M}) \) is braided monoidal  with  tensor product \((V, c_{-,V}) \otimes (W, c_{-,W}) = (V \otimes W, c_{-,V \otimes W})\). 
 \begin{lemma}\label{lem1.6}
Let $H$ be a coquasi-Hopf algebra with bijective antipode. The category $\HH$ is braided monoidal isomorphic to the right center of the category of left \( H \)-comodules \(\mathcal{Z}_r(^H\mathcal{M})\).
 \end{lemma}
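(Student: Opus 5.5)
We construct mutually inverse functors $F:\HH\to\mathcal{Z}_r({^H\mathcal{M}})$ and $G:\mathcal{Z}_r({^H\mathcal{M}})\to\HH$ that are the identity on underlying comodules and on morphisms. This is the dual of the argument in \cite{rightyd} for right-right modules and $\mathcal{Z}_l(\mathcal{M}^H)$, so we follow it step by step. For $(V,\delta_V,\rhd)\in\HH$ we set
$$F(V)=(V,c_{-,V}),\qquad c_{W,V}(w\ot v)=(w_{-1}\rhd v)\ot w_0,$$
which is the braiding (\ref{1.13}) of $\HH$ evaluated on an arbitrary left comodule $W$ in place of a Yetter-Drinfeld module.

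To check that $F(V)$ lies in the right center, we verify three things.
\begin{itemize}
\item $c_{W,V}$ is $H$-colinear. This is exactly the compatibility (\ref{1.11}).
\item $c_{W,V}$ is natural in $W$. This is immediate, since $c_{W,V}$ depends on $W$ only through its coaction.
\item $c_{\mathbbm{k},V}=\operatorname{id}_V$, which is (\ref{1.10}), and the hexagon holds. In ${^H\mathcal{M}}$ the associator is $a_{X,W,V}(x\ot w\ot v)=\Phi(x_{-1},w_{-1},v_{-1})\,x_0\ot w_0\ot v_0$, possibly with $\Phi^{-1}$ depending on convention. Writing out both paths of the hexagon on $x\ot w\ot v$ gives one side $(x_{-1}w_{-1})\rhd v$ and the other side the formula (\ref{1.9}). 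So the hexagon is a rewriting of (\ref{1.9}), after using (\ref{1.11}) to move coactions past actions.
\end{itemize}
A morphism in $\HH$ is colinear and $H$-linear, so it commutes with $c_{W,-}$. Hence $F$ is a functor.

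For the converse, take $(V,c_{-,V})$ in the center and define
$$h\rhd v=(\operatorname{id}_V\ot\varepsilon)\,c_{H,V}(h\ot v),$$
where $H$ carries its regular left coaction $\Delta$. The key step is that $c$ is determined by its value on $H$. Any $W\in{^H\mathcal{M}}$ embeds colinearly into $H\ot W_{\mathrm{triv}}$ via $\delta_W$, and $c$ is natural with respect to this map and to maps $H\ot W_{\mathrm{triv}}\to H$ of the form $\operatorname{id}\ot f$ with $f$ a linear functional. Together these force $c_{W,V}(w\ot v)=(w_{-1}\rhd v)\ot w_0$. Given this, the earlier computations run backwards:
\begin{itemize}
\item colinearity of $c_{H,V}$ gives (\ref{1.11});
\item the hexagon for $X=W=H$ gives (\ref{1.9});
\item $c_{\mathbbm{k},V}=\operatorname{id}$ gives (\ref{1.10}).
\end{itemize}
Thus $G(V)\in\HH$, and $F$ and $G$ are inverse to each other on objects and on morphisms.

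It remains to show that $F$ is strict monoidal and braided. The center tensor product is $c_{W,V\ot U}=a_{V,U,W}\circ(\operatorname{id}_V\ot c_{W,U})\circ a^{-1}_{V,W,U}\circ(c_{W,V}\ot\operatorname{id}_U)\circ a_{W,V,U}$, up to the convention for the right center. Extracting the induced action by taking $W=H$ and applying $\varepsilon$ should reproduce (\ref{1.12}). The braiding of the center is $c_{V,W}$ itself, which coincides with (\ref{1.13}), so $F$ preserves the braiding by construction.

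We expect the main obstacle to be the $\Phi$ bookkeeping. This occurs in identifying the hexagon with (\ref{1.9}) and, especially, in matching the three $\Phi$-factors of (\ref{1.12}) with the associators in the tensor product of the center. We will do these computations in Sweedler notation, repeatedly using (\ref{1.11}) to reorder coaction indices. Bijectivity of the antipode is not needed for this isomorphism. It is needed only so that the right center is rigid and compatible with the rest of the paper, where it is used for duals such as $M_i^*$.
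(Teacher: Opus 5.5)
Your proposal follows the paper's proof essentially step for step. You define $h\rhd v=(\operatorname{id}_V\otimes\varepsilon)c_{H,V}(h\otimes v)$ and recover $c_{W,V}(w\otimes v)=(w_{-1}\rhd v)\otimes w_0$ by naturality along the colinear maps $(\operatorname{id}_H\otimes w^*)\circ\delta_W\colon W\to H$, which are the paper's $f_{w^*}$; you only factor them through $H\otimes W_{\mathrm{triv}}$. You then read off (\ref{1.9})--(\ref{1.11}) from the hexagon with $X=W=H$, the unit condition, and colinearity of $c_{H,V}$, exactly as the paper does.

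One correction to your closing remark: bijectivity of $\mathcal{S}$ has nothing to do with rigidity, since ${}^H\mathcal{M}$ contains infinite-dimensional comodules without duals. Its actual role is to make the braiding (and the half-braidings) invertible via the inverse formula in Remark \ref{lem2.6}(3), which is what the word ``braided'' requires.
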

\begin{proof}
We construct mutually inverse braided monoidal functors between the two categories.\par 
 Let \((V, c_{V,-})\) be an object in  \(\mathcal{Z}(^H\mathcal{M})\); we can define a left linear map on $V$ as follows:
\begin{equation}
\rhd  : H \ot V \longrightarrow V, \quad h \rhd v = (\mathrm{id}_V \ot \varepsilon )c_{H,V}(h \otimes v). \label{2.1}    
\end{equation}
Let \( W \in {^H \mathcal{M}}\) and \( w^* \in W^* \). Define the map \( f_{w^*} : W \longrightarrow H, f_{w^*}(w) = w_{-1}w^*(w_0) \). Then \( f_{w^*} \) is an \( H \)-comodule map. By naturality of \( c_{-,V} \), we have
\begin{align*}
    c_{H,V}\circ (f_{w^*}\ot \operatorname{id}_V)(w\ot v)&=(\operatorname{id}_V \ot f_{w^*})\circ c_{W,V}(w\ot v)\\
    &=c_{H,V}(w_{-1}w^*(w_0) \ot v).
\end{align*} 
That is 
$$c_{H,V}(w_{-1}w^*(w_0) \ot v)=(\operatorname{id}_V \ot f_{w^*})\circ c_{W,V}(w\ot v).$$
Applying \( \operatorname{id}_V \ot \varepsilon \) to both sides leads to 
$$ w^*(w_0)(w_{-1}\rhd v)=(\operatorname{id}_V \ot \varepsilon \circ f_{w^*})\circ c_{W,V}(w\ot v)=(\operatorname{id}_V \ot w^*)\circ c_{W,V}(w\ot v).$$
This implies 
$$c_{W,V}(w\ot v)=(w_{-1}\rhd v )\ot w_0.$$
for all objects \( W \in { ^H\mathcal{M}} \).\par 
Since $c_{\mathbbm{k},V}=\operatorname{id}_V$ and  the unit map \( u : \mathbbm{k} \longrightarrow H \) is a left comodule map, where \( \mathbbm{k} \) has trivial comodule structure and \( H \) is a $H$-comodule via \( \Delta \). We have (\ref{1.10}) holds:
 $$ 1_H \rhd v=\varepsilon(1_H)v=v.$$
 Next,  for $h \in H$, $v \in V$, since \( c_{H,V} \) is a left $H$-comodule map,
 $$(h_1 \rhd v)_{-1}h_2 \ot (h_1 \rhd v)_0 \ot h_3=\delta((h_1 \rhd v) \ot h_2)=h_1v_{-1}\ot (h_2 \rhd v_0) \ot h_3.$$
 Applying $\varepsilon$ to last the factor we have (\ref{1.11}).
 While relation (\ref{1.9}) is a consequence of hexagon axiom applied to \( W = X = H \). Note that $c_{H \ot H,V}((h\ot l) \ot v)=(h_{1}l_{1}\rhd v)\ot (h_2\ot  l_2)$. By hexagon axiom it equals to
 $$  
 \frac{\Phi(h_2, (l_2 \rhd v_0)_{-1}, l_3)}{\Phi(h_1, l_1, v_{-1})\Phi((h_3 \rhd (l_2 \rhd v_0)_0)_{-1}, h_4, l_4)}  (h_3 \rhd (l_2 \rhd v_0)_0)_0  \ot (h_5 \ot l_5).
 $$
By applying $\varepsilon$ to the second and third factors we get (\ref{1.9}). Then \((V, \delta_, \rhd )\) becomes a left-left $H$ Yetter-Drinfeld module. It is easy to see for any morphism \( f : (V, c_{-,V}) \longrightarrow (V', c_{-,V'}) \), $f$ will be a morphism in $\HH$.
\par 
Conversely,
for a Yetter-Drinfeld module \( V \) and  left \( H \)-comodules \( W \), $X$, use relation (\ref{1.13}) to define natural transformation \( c_{W,V} \). Then  (\ref{1.9}) implies that the hexagon axiom is fulfilled. The equality (\ref{1.10}) implies $c_{\mathbbm{k},V}=\operatorname{id}_V$. The morphism  $c_{W,V}$ being a left $H$-comodule map follows from $(\ref{1.11})$. This together makes $(V,c_{-,V})$ an object in \(\mathcal{Z}_r(^H\mathcal{M})\). Similarly, a morphism in $\HH$ is a morphism in \(\mathcal{Z}_r(^H\mathcal{M})\).  \par
The two constructions are clearly inverse to each other. Furthermore, the braiding in $\HH$ given by (\ref{1.13}) coincides with the braiding of the right center under the above correspondence. Hence we obtain a braided monoidal isomorphism.
\end{proof}
In order to simplify notation, we recall the following linear maps 
\( p_R \), \( q_R \), \( p_L \), \( q_L \)\(\in \operatorname{Hom}(H\otimes H, \mathbbm{k}) \), which is introduced in \cite{rightyd},
\[
\begin{aligned}
p_R(h, g) &= \Phi^{-1}(h, g_1 \beta(g_2), \mathcal{S}(g_3)), \\
q_R(h, g) &= \Phi(h, g_3, \alpha (\mathcal{S}^{-1}(g_2)) \mathcal{S}^{-1}(g_1)), \\
p_L(h,g)&=\Phi(\mathcal{S}^{-1}(h_3)\beta(\mathcal{S}^{-1}(h_2)),h_1,g),\\
q_L(h,g)&=\Phi^{-1}(\mathcal{S}(h_1)\alpha(h_2),h_3,g),
\end{aligned}
\]
for any \( h, g \in H \).
These  maps satisfy various compatibility relations with the antipode and associator, see \cite{rightyd} for the complete list.
\begin{rmk} \label{lem2.6}\upshape
Let  $H$ be a coquasi-Hopf algebra with  bijective antipode.\par 
 \textup{(1)} \textup{[\citealp{reflection1}, Lemma 2.6]} The equation (\ref{1.11}) is equivalent to  
\begin{equation}\label{1.17}
    (h\rhd v)_{-1} \otimes (h\rhd v)_0=p_R((h_3 \rhd v_0)_{-1},h_4)   q_R(h_1v_{-2},\mathcal{S}(h_6))    (h_2v_{-1})\mathcal{S}(h_5)      \otimes (h_3 \rhd v_0)_0.
\end{equation}\par 
\textup{(2)} \textup{[\citealp{reflection1}, Lemma 2.6]} The equation (\ref{1.11}) implies:
\begin{equation}
    v_{-1} \otimes (h\rhd v_0)=q_L(h_3,v_{-1})p_L(\mathcal{S}(h_1),(h_4\rhd v_0)_{-1}h_6)\mathcal{S}(h_2)((h_4\rhd v_0)_{-2}h_5)\otimes (h_4 \rhd v_0)_0.
\end{equation}\par
\textup{(3)}[\citealp{LZW15}, Theorem 2.8] The inverse  braiding is given by:
\begin{equation}
\begin{aligned}
    c_{V,W}^{-1}(w\ot v)&=\Phi^{-1}(\mathcal{S}(v_{-2}),w_{-1},v_{-5})q_L( \mathcal{S}^{-1}(v_{-1}),w_{-2}v_{-6})p_R((\mathcal{S}^{-1}(v_{-3}\rhd w_0)_{-1}),\mathcal{S}^{-1}(v_{-4}))\\
  &v_0\ot (\mathcal{S}^{-1}(v_{-3})\rhd w_0)_0.
\end{aligned}     
\end{equation}\par 
\textup{(4)} [\citealp{LZW15}, Proposition 2.5]
The coquasi-Hopf algebra $H$ itself can be viewed as an object in $\HH$ via the structures
\begin{equation}
    \begin{aligned}
        &\delta_H(h)=h_1\mathcal{S}(h_3)\ot h_2, \\
        &h \rhd h'=\Phi(h_1,h_1',\mathcal{S}(h_7'))\Phi(h_2h_2', \mathcal{S}(h_4h_4'),h_7)g(h_5,h_5')q_R(\mathcal{S}(h_6'),\mathcal{S}(h_6))h_3h_3'.
    \end{aligned}
\end{equation}
Here $$ g(h,h'):=\Phi^{-1}(\mathcal{S}(h_1h_1'),h_3h_3',\mathcal{S}(h_5')\mathcal{S}(h_5))\chi(h_4,h_4')\alpha(h_2h_2'),$$ and 
$$ \chi(h,h')=\Phi(h_1h'_1,\mathcal{S}(h_5'),\mathcal{S}(h_4))\Phi^{-1}(h_2,h_2',\mathcal{S}(h_4'))\beta(h_3)\beta(h_3').$$

\end{rmk}

\subsection{Bosonization for  coquasi-Hopf algebras}
Bosonization is a fundamental construction that builds a new Hopf algebra from a Hopf algebra in the Yetter–Drinfeld category.
In [\citealp{quasibon}], the authors introduced bosonization for quasi-Hopf algebras. Dually, bosonization for coquasi-Hopf algebras is given in \cite{YD-module}. We recall this construction, as it provides the setting for our later study of reflections via projection and coinvariants.\par
Let $H$ be a coquasi-Hopf algebra with associator $\Phi$. Suppose $R$  is a Hopf algebra in $\HH$, we denote 
$$ r^1 \otimes r^2:= \Delta_R(r).$$
\begin{lemma}\textup{[\citealp{YD-module}, Theorem 5.2]}
    Let us consider on $M=R\otimes H$ the following structures:
\begin{align*}
(r \otimes h)  (s \otimes k) &= \frac{\Phi(h_2,s_{-1}, k_2)\Phi(r_{-1},(h_3 \rhd s_0)_{-1},h_5k_4)}{\Phi (r_{-2},h_1, s_{-2}k_1)\Phi((h_3 \rhd s_0)_{-2}, h_4, k_3)}   r_0  (h_3 \rhd s_0)_0 \otimes h_6k_5, \\
 u_M(k) &= k1_R \otimes 1_H,\\
\Delta_M(r \otimes h) &= \Phi^{-1}(r_{-1}^1, r_{-2}^2, h_1)r_0^1 \otimes r_{-1}^2 h_2 \otimes r_0^2 \otimes h_3, \\
\varepsilon_M(r \otimes h) &= \varepsilon_R(r)\varepsilon_H(h),\\
\Phi_M((r \otimes h), (s \otimes k), (t \otimes l))& = \varepsilon_R(r)\varepsilon_R(s)\varepsilon_R(t)\Phi(h, k, l),
\end{align*}
where $r,s,t \in R$, $h,k,l \in H$.
With above operations, $R\ot H$ is a coquasi-bialgebra, we denote by $R\#H$.
\end{lemma}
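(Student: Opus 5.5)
The plan is to realize $R\# H$ as the coend-type object of a monoidal category, and to read off its coquasi-bialgebra axioms from the monoidal structure. This avoids checking the quasi-associativity of the multiplication by brute force.

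\textbf{Step 1: the category.} Consider the category ${}^{R}_{}(\HH)$ of left $R$-comodules inside the braided category $\HH$. Its objects are Yetter--Drinfeld modules $V$ equipped with a coaction $V\to R\otimes V$ that is a morphism in $\HH$ and is coassociative up to the associator. This category is monoidal: given comodules $V$ and $W$, the coaction on $V\otimes W$ is built from the coactions of $V$ and $W$, the braiding $c_{V,R}$, the multiplication of $R$, and the associativity constraints of $\HH$. These constraints are given by $\Phi$, since the underlying monoidal structure is that of ${}^H\mathcal{M}$.

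\textbf{Step 2: the fibre functor.} There is a forgetful functor from ${}^{R}_{}(\HH)$ to ${}^H\mathcal{M}$. Composing it with the forgetful functor to vector spaces gives the identification
\[
V\ \longmapsto\ \bigl(v\mapsto v^{(-1)}{v^{(0)}}_{-1}\otimes {v^{(0)}}_0\bigr),
\]
which turns $V$ into an $R\otimes H$-comodule. With the coproduct $\Delta_M$ and counit $\varepsilon_M$, the space $R\otimes H$ is then a coalgebra; coassociativity of $\Delta_M$ follows from the cocycle identity for $\Phi$ together with the fact that $\Delta_R$ is $H$-colinear. I would verify the coalgebra axioms directly: the $\Phi^{-1}$-twist in $\Delta_M$ is exactly what compensates for the nonassociativity of the $H$-coaction on $R\otimes R$.

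\textbf{Step 3: the multiplication.} Compatibility of the comodule structure with tensor products should force the multiplication formula. The braiding $c_{H,R}$ produces the factor $(h_3\rhd s_0)_0$, and the two associators present in each monoidal constraint produce the four $\Phi$-factors of the stated product.

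\textbf{Step 4: the remaining axioms.} I would then check the coquasi-bialgebra axioms one at a time:
\begin{itemize}
\item the multiplication and unit are coalgebra maps;
\item unitality, which uses $1_H\rhd v=v$ and the normalization of $\Phi$;
\item $\Phi_M$ satisfies the 3-cocycle condition, which is immediate because $\Phi_M$ is just $\Phi$ pulled back along $\varepsilon_R\otimes\mathrm{id}$;
\item the quasi-associativity identity $\Phi_M * (m(m\otimes\mathrm{id}))=(m(\mathrm{id}\otimes m))*\Phi_M$.
\end{itemize}

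\textbf{Main obstacle.} I expect the last identity to be the main obstacle. To handle it, I would expand both sides and reduce each to an expression in $r_0\,(h\rhd s_0)_0\,(\cdots\rhd t_0)_0$. I would then use, in order:
\begin{itemize}
\item the module axiom \eqref{1.9}, to merge $(hk)\rhd t$;
\item the tensor-product action formula \eqref{1.12}, to push $h$ past the product $st$ (the multiplication of $R$ being $H$-linear);
\item the associativity of $R$ in $\HH$, which introduces a $\Phi$ factor;
\item finally, the pentagon (3-cocycle) identity, applied several times, to cancel the remaining associator factors.
\end{itemize}
If this expansion becomes unmanageable, my fallback is to dualize: transport the known quasi-Hopf bosonization of \cite{quasibon} through finite-dual arguments, or equivalently verify the identity diagrammatically in the center picture of Lemma \ref{lem1.6}, where the product is the composite of braiding and multiplications and quasi-associativity becomes a coherence statement.
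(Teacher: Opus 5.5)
The paper gives no proof of this lemma; it quotes it from \cite{YD-module}. Your plan therefore has to stand on its own, and as written the step meant to carry it does not work.

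\textbf{The reconstruction is set in the wrong category.} Objects of ${}^{R}(\HH)$ carry an $H$-action that an $R\otimes H$-coaction does not see, so the fibre functor on ${}^{R}(\HH)$ does not reconstruct $R\otimes H$. Already for $R=\mathbbm{k}$ and $H$ an ordinary finite-dimensional Hopf algebra, you would be reconstructing from $\HH$ itself, i.e.\ from modules over a Drinfeld double. That gives a coalgebra of dimension $(\dim H)^2$, not $H$. So the monoidal structure of ${}^{R}(\HH)$ determines a product on $R\otimes H$ only on coefficient spaces of Yetter--Drinfeld objects, and you would have to prove these span $R\otimes H$. Step 3 is an assertion, not an argument.

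The category that does what you want is ${}^{R}({}^{H}\mathcal{M})$. Its objects are all left $H$-comodules $V$ with an $H$-colinear coaction $V\to R\otimes V$, coassociative up to $a_{R,R,V}$. The tensor product uses the half-braiding $c_{V,R}(v\otimes r)=(v_{-1}\rhd r)\otimes v_0$ that $R$ carries as an object of $\mathcal{Z}_r({}^{H}\mathcal{M})$ (Lemma \ref{lem1.6}). You would need to prove two things:
\begin{itemize}
\item that this category is monoidal, using that $\mu_R,\Delta_R$ are morphisms in $\HH$, the bialgebra axiom of $R$, and the hexagons for the half-braiding;
\item that it is isomorphic to ${}^{R\otimes H}\mathcal{M}$ for $(R\otimes H,\Delta_M,\varepsilon_M)$. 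This is where coassociativity of $\Delta_M$ comes from, and it needs the quasi-coassociativity of $\Delta_R$, not only its colinearity.
\end{itemize}
Since $R\otimes H$ is then itself an object, the tensor-product coaction has the form $v\otimes w\mapsto m(v_{(-1)}\otimes w_{(-1)})\otimes v_{(0)}\otimes w_{(0)}$ for a unique $m$. Computing $m$ on $V=W=R\otimes H$ should give the displayed product, with the four $\Phi$'s coming from the four associators. All axioms are then automatic:
\begin{itemize}
\item $m$ is a coalgebra map because $V\otimes W$ is a comodule;
\item quasi-associativity with $\Phi_M=\Phi\circ(\varepsilon_R\otimes\mathrm{id}_H)^{\otimes 3}$ holds because the associator $\Phi(u_{-1},v_{-1},w_{-1})$ is a morphism of the category;
\item the cocycle identity comes from the pentagon, and unitality from the unit object.
\end{itemize}
That is what would justify your claim of avoiding brute force.

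\textbf{The direct check in Step 4 lacks its key input.} As it stands, Step 4 reverts to a term-by-term verification. The axiom you pass over is that the product is a coalgebra map, i.e.\ that $\Delta_M$ is multiplicative. It is at least as hard as quasi-associativity. It is also the only place where three ingredients enter: the compatibility of $\Delta_R$ with $\mu_R$ in $\HH$ (involving $c_{R,R}$ and associators), the $H$-linearity of $\Delta_R$ with the action \eqref{1.12} on $R\otimes R$, and the Yetter--Drinfeld condition \eqref{1.11}. None of these is among the tools you list.

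\textbf{Your fallbacks do not close this either.} Transferring the quasi-Hopf bosonization by duality needs finite-dimensionality, whereas the paper applies the lemma to infinite-dimensional $R$ such as $\mathcal{B}(M\oplus N)$. In the centre picture, $R\#H$ is not an algebra in ${}^{H}\mathcal{M}$. Not even $H$ is, since $(hg)k=\Phi^{-1}(h_1,g_1,k_1)\,h_2(g_2k_2)\,\Phi(h_3,g_3,k_3)$ has $\Phi$ on both sides. So quasi-associativity is not a coherence statement there unless you work with $H$-bicomodules or with the comodule category described above.
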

\begin{rmk}\upshape
  The bosonization  $R\#H$ can be further equipped with an antipode, making it a coquasi-Hopf algebra. The antipode $(\mathcal{S},\alpha,\beta)$ is given by:
    \begin{align*}
        &\alpha(r\#h)=0, \ \ \alpha(1\# h)=\alpha(h), \\
        &\beta(r\#h)=0,\ \ \beta(1\#h)=\beta(h), \\
        &\mathcal{S}(r\#h)=m_{R\#H}\circ(\mathcal{S}_H\ot \mathcal{S}_R)\circ c_{R,H}(r \ot h)=\mathcal{S}_H(r_{-1} \rhd h)\mathcal{S}_R(r_0).
    \end{align*}
    Here, $1\neq r \in R$, $h \in H$.
    In particular, if $r$ is a primitive element in $R$, then 
 $$ \mathcal{S}(r\#1)= \mathcal{S}_H(r_{-2})\alpha(r_{-1})\mathcal{S}_R(y_0)=-\mathcal{S}_H(r_{-2})\alpha(r_{-1})y_0.$$
    The antipode $\mathcal{S}$ is bijective if and only if  $\mathcal{S}_H$ and $\mathcal{S}_R$ are bijective.
    \end{rmk}
Now suppose $N$ and $H$ are both coquasi-Hopf algebras.  Furthermore, assume there exist morphisms of coquasi-Hopf algebras
$$ \pi: N \rightarrow H \ \text{ and } \sigma: H \rightarrow N$$
such that $\pi\sigma=\operatorname{id}_N$.
\begin{lemma}\textup{[\citealp{YD-module}, Theorem 5.8]} \label{Lemma 1.6}
   Under above assumptions, $L:=N^{\operatorname{co}H}$ is a Hopf algebra in $\HH$.
     For all $a \in N$, we  define the following linear map
   \begin{equation}
\tau(a)=\Phi_N(a_1,(\sigma \circ \mathbb{S}_H\circ \pi(a_3))_1, a_4)a_2(\sigma \circ \mathbb{S}_H\circ \pi(a_3))_2.
   \end{equation} 
   Then $\tau: N \rightarrow L$ is well-defined.
   For all $r, s \in L$, $h\in H$, $k \in \mathbbm{k}$, the Yetter-Drinfeld module structure of $L$ is given by
   \begin{align*}
       \operatorname{ad}(h)(r)&:=\tau(\sigma(h)r)=\Phi_{H}(h_{1}r_{-1},\mathbb{S}(h_3)_1,h_4)(h_2r_0)\mathbb{S}(h_3)_2, \\  r_{-1}\otimes r_0 &:=\rho_R(r):=\pi(r_1)\otimes r_2.
       \end{align*} Moreover, $L$ is a Hopf algebra in $\HH$ via
       \begin{align*}
           &m_L(r\otimes s):=rs, \ \ \ \  \ \  \ u_L(k)=k1_N, \\ 
       &\Delta_L(r):=\tau(r_1) \otimes \tau(r_2), \ \ \ \ \varepsilon_L(r):=\varepsilon_N(r),\\
&\mathcal{S}_R=\operatorname{ad}\circ (\operatorname{id}_H\ot \tau \circ \mathcal{S}_N \circ i)\circ \rho_R,
   \end{align*}
   where $i:L \rightarrow N$ is the embedding map.
   Furthermore, there is an isomorphism of coquasi-Hopf algebra $\phi: L\# H \rightarrow N$ given by 
   $$ \phi(r \otimes h)=r \sigma(h), \ \  \phi^{-1}(a)=\tau(a_1)\otimes \pi(a_2).$$
\end{lemma}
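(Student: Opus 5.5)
This statement, Lemma \ref{Lemma 1.6}, is cited from \cite{YD-module} (Theorem 5.8); I would reprove it by dualizing the Radford--Majid projection theorem to the coquasi setting, in three stages.

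\textbf{Stage 1: $\tau$ lands in $L=N^{\operatorname{co}H}$.} Here coinvariance means $(\operatorname{id}\ot\pi)\Delta_N(a)=a\ot 1$. I would compute $(\operatorname{id}\ot\pi)\Delta_N(\tau(a))$ directly. Since $\pi$ and $\sigma$ are coquasi-Hopf morphisms, the $H$-component of $\Delta_N\tau(a)$ is $\pi(a_2)\,\mathbb{S}_H(\pi(a_3))$, corrected by associator factors. The first preantipode identity $\mathbb{S}(h_1)_1h_2\ot\mathbb{S}(h_1)_2=1\ot\mathbb{S}(h)$ and its mirror, together with $\Phi(h_1,\mathbb{S}(h_2),h_3)=\varepsilon(h)$, should collapse this to $1_H$. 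The extra $\Phi_N$-factor in the definition of $\tau$ is exactly what compensates the reassociation.

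\textbf{Stage 2: $\phi$ and $\phi^{-1}$ are mutually inverse.} Next I would show $\phi(\tau(a_1)\ot\pi(a_2))=\tau(a_1)\sigma\pi(a_2)=a$. This follows from the same preantipode identities, now read as a ``quasi-inverse'' relation between $a_2\,\sigma\mathbb{S}\pi(a_3)$ and $\sigma\pi(a_4)$, with the $\Phi$ terms cancelling by the 3-cocycle condition. In the other direction I would show $\tau(r\sigma(h))=r\varepsilon(h)$ for $r\in L$, which uses $\Delta(r)=r_1\ot r_2$ with $\pi(r_2)=1$. Thus $\phi$ is a linear bijection $L\ot H\to N$. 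It is a coalgebra map once the coproduct of Lemma~2.8 is matched with $\Delta_N(r\sigma(h))$, and this is where $\rho_R$ and $\Delta_L$ are forced to take the stated forms.

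\textbf{Stage 3: transport the structure along $\phi$.} Requiring $\phi$ to respect products, coproducts and the antipode determines the Yetter--Drinfeld structure and the Hopf structure on $L$. Concretely:
\begin{itemize}
\item $\operatorname{ad}(h)(r)=\tau(\sigma(h)r)$, read off from $\sigma(h)\,r=\phi\bigl((h_1\rhd r)\ot h_2\bigr)$ up to $\Phi$ factors;
\item the coaction is $\rho(r)=\pi(r_1)\ot r_2$;
\item $\Delta_L=(\tau\ot\tau)\Delta_N$;
\item $\mathcal{S}_L$ is obtained from $\mathcal{S}_N$ via the bosonization antipode formula in Remark~2.9.
\end{itemize}
The axioms \eqref{1.9}--\eqref{1.11} for $L$ then follow by pulling back the coquasi-bialgebra axioms of $N$ through $\phi$. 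Alternatively, one can use Lemma~\ref{lem1.6} and check that $\operatorname{ad}$ induces a half-braiding, which is often cleaner. That $L$ is a Hopf algebra in $\HH$, with $\Delta_L$ an algebra map with respect to the braided tensor product \eqref{1.12}, again follows by transport along $\phi$.

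\textbf{Main obstacle.} I expect the hardest step to be verifying that $\Delta_L$ is multiplicative in the braided sense and that $\phi$ is multiplicative. Both require tracking many associator factors and repeatedly rewriting $\mathbb{S}$-terms through the $p_R,q_R,p_L,q_L$ identities of \cite{rightyd}. My first approach would be to reduce everything to the two cases $\phi(r\ot 1)\phi(1\ot h)$ and $\phi(1\ot h)\phi(s\ot 1)$, using that $\Phi_N$ is trivial on $L$-components. This lets the cocycle condition absorb most terms.
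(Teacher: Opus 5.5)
The paper gives no proof of this lemma; it is imported from \cite{YD-module}. Your outline therefore has to stand on its own.

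\textbf{Stages 1 and 2.} These are sound, but the identities do different work than you say.

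Stage 1 involves no reassociation. Since $\Delta_N$ is multiplicative and $\pi$ is a coalgebra map, $(\operatorname{id}\ot\pi)\Delta_N\tau(a)$ collapses to $\tau(a)\ot 1$ by the second preantipode identity alone. The $\Phi_N$-factor is inert there.

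Its real role is in Stage 2. Use the convention underlying \eqref{1.9} and the bosonization formulas, namely $h_1(g_1k_1)\Phi(h_2,g_2,k_2)=\Phi(h_1,g_1,k_1)(h_2g_2)k_2$. Rebracketing $(a_2\,\sigma\mathbb{S}\pi(a_3)_2)\,\sigma\pi(a_5)$ then produces a factor $\Phi^{-1}$ whose arguments are the Sweedler neighbours of those of the $\Phi_N$ in $\tau$. The two cancel by convolution invertibility, not by the cocycle. The first preantipode identity and $\Phi(h_1,\mathbb{S}(h_2),h_3)=\varepsilon(h)$ then give $\tau(a_1)\sigma\pi(a_2)=a$.

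The 3-cocycle is genuinely needed in the other direction. After the second preantipode identity, $\tau(r\sigma(h))$ equals $\Phi(r_{-2}h_1,\mathbb{S}(h_3)_1,h_4)\,\Phi^{-1}(r_{-1},h_2,\mathbb{S}(h_3)_2)\,r_0$. The cocycle rewrites this scalar, and the first, second and third preantipode identities reduce it to $\varepsilon(h)$, giving $\varepsilon(h)r$.

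\textbf{The gap is Stage 3.} Transporting $N$'s structure along the linear bijection $\phi$ makes $L\ot H$ a coquasi-bialgebra, but by itself says nothing about $L$. The bosonization lemma only goes one way: if $L$ is already a Hopf algebra in $\HH$, the formulas give a coquasi-bialgebra on $L\ot H$.

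You are using the converse. It would say: if the bosonization formulas built from $m_L,\Delta_L,\operatorname{ad},\rho$ make $L\ot H$ a coquasi-bialgebra, then
\begin{itemize}
\item $L$ satisfies \eqref{1.9}, \eqref{1.10}, \eqref{1.11};
\item $m_L$ and $\Delta_L$ are (co)associative up to the associator of $\HH$ and are morphisms there;
\item $\Delta_L$ is multiplicative for the braided tensor product built from \eqref{1.12}.
\end{itemize}
This converse is neither in the paper nor automatic.

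You must prove it by evaluating the axioms of $N$ on suitable elements, then pulling back by $\phi^{-1}$ and projecting with $\operatorname{id}\ot\varepsilon_H$. The relevant instances are quasi-associativity on $(\sigma(h),\sigma(l),r)$ and $(r,s,t)$, and multiplicativity and coassociativity of $\Delta_N$ on $\sigma(h)r$, $rs$ and $r$. Alternatively, check the axioms on $L$ directly. Your half-braiding alternative via the right-center description covers only the Yetter-Drinfeld axioms, not the Hopf ones.

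Likewise, the antipode formula for $R\#H$ in the remark after the bosonization lemma presupposes that $\mathcal{S}_R$ is an antipode. So the antipode axiom for the stated $\mathcal{S}_L$ must be verified, not read off.

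Finally, before any of this you must show that $\phi$ is multiplicative. Your shortcut that $\Phi_N$ is trivial on $L$-components only kills right-hand legs. In general $\Phi_N(r_1,b,c)=\Phi(r_{-1},\pi(b),\pi(c))$ with $r_{-1}$ nontrivial. So the reassociation factors are not absorbed; they must be matched one by one against the four $\Phi$'s in the bosonization product.
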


 The following condition occurs frequently in this paper. We give an example here. 
\begin{exm} \upshape
Let $R,R'$ be two Hopf algebras in $\HH$ together with a surjective Hopf algebra map $\pi': R' \rightarrow R$ and an injective map $\sigma': R \rightarrow R'$ such that $ \pi' \circ \sigma'= \operatorname{id}_R$. Let $M'=R'\# H$, and $M=R \# H$. Obviously, these maps naturally extend to yield a surjective coquasi-Hopf algebra map 
$\pi:M' \rightarrow M$ and an injective coquasi-Hopf algebra map
$\sigma:M \rightarrow M'$, such that $\pi \circ \sigma=\operatorname{id}_M$. \par

    Let $K:=M'^{\operatorname{co}M}$, then $K$ is an object in  $\MMY$, with the following structures:
    \begin{align*}
    &   \operatorname{ad}: M \otimes K \rightarrow K,  h \otimes k \mapsto \Phi_{M}(h_{1}k_{-1},\mathbb{S}_M(h_3)_1,h_4)(h_2k_0)\mathbb{S}_M(h_3)_2, \\
    &\operatorname{\rho}: K\rightarrow M \otimes K, k \mapsto \pi(k_1) \otimes k_2.
    \end{align*}
    To analyze the linear map $\operatorname{ad}$ in concrete calculations, the following computation is useful.
  Let $y\in R$ be a primitive element. One can  compute the preantipode $\mathbb{S}_M(y)$  in $M$. 
    \begin{align*}
        \mathbb{S}_M(y)= \beta * \mathcal{S}_M* \alpha(y)=\beta(y_1)\mathcal{S}_M(y_2)\alpha(y_3)=\beta(y_{-1})\mathcal{S}_M(y_0)=-\beta(y_{-3})\mathcal{S}_H(y_{-2})\alpha(y_{-1})y_0.
    \end{align*}  Then we can proceed to calculate $\operatorname{ad}(y)(x)$, where $x \in K$,
    \begin{equation}
\begin{aligned}\label{1.27}
    \operatorname{ad}(y)(x)&=yx+(y_{-1}x)\mathbb{S}_M(y_0)\\&=yx-(y_{-4}x)\beta(y_{-3})(\mathcal{S}_H(y_{-2})\alpha(y_{-1})y_0)\\
    &=yx-\Phi(y_{-7}x_{-1},\mathcal{S}_H(y_{-3}),y_{-1})\beta(y_{-6})((y_{-5}x_0)\mathcal{S}_H(y_{-4}))\alpha(y_{-2})y_0.
\end{aligned}
\end{equation}
On the other hand, note that for all $h \in H$, $\Delta_H(\mathbb{S}(h))=\beta(h_1)\mathcal{S}_H(h_3)\ot \mathcal{S}_H(h_2)\alpha(h_4)$. Therefore, we have
\begin{align*}
   yx-\operatorname{ad}(y_{-1})(x)y_0&=yx-\Phi(y_{-4}x_{-1},\mathbb{S}_M(y_{-2})_1,y_{-1}))((y_{-3}x_0)\mathbb{S}_M(y_{-2})_2)y_0\\
    &=yx-\Phi(y_{-7}x_{-1},\mathcal{S}_H(y_{-3}),y_{-1})\beta(y_{-6})((y_{-5}x_0)\mathcal{S}_H(y_{-4}))\alpha(y_{-2})y_0.
\end{align*}
This implies \begin{equation}
    \operatorname{ad}(y)(x)=yx-\operatorname{ad}(y_{-1})(x)y_0.
\end{equation}
\end{exm}
 \subsection{Yetter-Drinfeld modules in arbitrary braided monoidal categories}
In Section 3, we will investigate Yetter-Drinfeld modules over a Hopf algebra $A$ in the category ${}_H^H\mathcal{YD}$. Since $\HH$ is braided monoidal, we require the general framework of Yetter-Drinfeld modules in arbitrary braided monoidal categories.
We recall the definition from \cite{bes97}, one may refer to [\citealp{rootsys}, Section 3.4] for more details.
Now we let $\mathcal{C}$ be a braided monoidal category and $R$ a Hopf algebra in $\mathcal{C}$. The category of Yetter-Drinfeld modules  $\RRC$ is well-defined. 
\begin{definition}
    Let $R$ be a Hopf algebra with bijective antipode in a braided monoidal category $\mathcal{C}$. Suppose
that $X$ is a left  module and left comodule over $R$ with structure map $\rho_X$ and $\delta_X$. The triple $(X,\rho_X,\delta_X)$ is a Yetter-Drinfeld module if in addition, in $\operatorname{Hom}_{\mathcal{C}}(R\otimes X, R\otimes X)$,
\begin{align*}
(\mu_X \otimes \operatorname{id})&\circ a_{R,R,X}^{-1}\circ  (\operatorname{id} \otimes c_{X,R})\circ a_{R,X,R}\circ (\delta_X \circ \rho_X\otimes \operatorname{id}) \circ a^{-1}_{R,X,R} \circ  (\operatorname{id}\otimes c_{R,X})\circ a_{R,R,X} \circ (\Delta_R \otimes \operatorname{id}) \\
&=(\mu_R \otimes \rho_X)\circ a^{-1}_{R,R,R\otimes X}\circ (\operatorname{id}\otimes a_{R,R,X}\circ(\operatorname{id}\otimes c_{R,R}) \otimes \operatorname{id})\circ (\operatorname{id}\otimes a^{-1}_{R,R,X})\circ a_{R,R,R\otimes X}\circ (\Delta_R \otimes \delta_X).
\end{align*}
\end{definition}
It is  known that the category $\RRC$ is a  braided monoidal category. For $X,Y \in \RRC$ the braiding isomorphism in $\RRC$ is given by 
\begin{align*}
    c_{X,Y}^{\mathcal{YD}}&:X \otimes Y \rightarrow Y \otimes X, \\
 c_{X,Y}^{\mathcal{YD}}&:=(\rho_Y \otimes \operatorname{id}_X)\circ a^{-1}_{R,Y,X}\circ  (\operatorname{id}_R\otimes c_{X,Y})\circ a_{R,X,Y} 
 \circ(\delta_X \otimes \operatorname{id}_Y).
\end{align*}
\begin{rmk}\upshape \label{rmk2.13}
\par (1) According to the  definition of $ c_{X,Y}^{\mathcal{YD}}$, it is not hard to see, if $X \in {^R\mathcal{C}}$ and $Y \in {_R\mathcal{C}}$, then $ c_{X,Y}^{\mathcal{YD}}$ is a morphism in $\mathcal{C}$.\par
(2) [\citealp{rootsys}, Proposition 3.4.5]
Let \( V \) be an object in \( \mathcal{C} \), \((V, \lambda) \in {_R\mathcal{C}}\), and \((V, \delta) \in {^R\mathcal{C}}\).  
Then the following are equivalent.
\begin{enumerate}
    \item[(i)] \((V, \lambda, \delta)\) is a left Yetter-Drinfeld module over \( R \).
    \item[(ii)] For all \( X \in {_R\mathcal{C}} \), \( c_{V,X}^{\mathcal{YD}} \) is a morphism in \( {_R\mathcal{C}} \).
    \item[(iii)] \( c_{V,R}^{\mathcal{YD}} \) is a morphism in {\( _R\mathcal{C} \)}, where \( R \) is a left \( R \)-module by the multiplication in \( R \).
    \item[(iv)] For all \( X \in {^R\mathcal{C}} \), \( c_{X,V}^{\mathcal{YD}} \) is a morphism in \( {^R\mathcal{C}} \).
\end{enumerate}

\par
    (3) In general, let $(\mathcal{C}, \ot, \bfo, c)$ be a braided monoidal category, we can define the reverse monoidal category $\overline{\mathcal{C}}:=(\mathcal{C},\ot ,\bfo, \overline{c})$, where for $X,Y \in \mathcal{C}$,
$$ \overline{c}_{X,Y}:=c^{-1}_{Y,X}: X\ot Y \rightarrow Y \ot X.$$
For $X \in {_R\mathcal{C}}$ and $Y \in {^R\mathcal{C}}$, let
\begin{equation}
\overline{c}^{\mathcal{YD}}_{X,Y}=\overline{c}_{X,Y}\circ (\lambda_X \ot \operatorname{id}_Y)\circ ((\mathcal{S}^{-1}_R\ot \operatorname{id}_X )\ot\operatorname{id}_Y)\circ   
   (\overline{c}_{X,R} \ot \operatorname{id}_Y)\circ a^{-1}_{X,R,Y} \circ (\operatorname{id}_X \ot \delta_Y).
\end{equation}
By [\citealp{rootsys}, Proposition 3.4.8], $c^{\mathcal{YD}}_{Y,X}$ is an isomorphism in $\mathcal{C}$ with inverse $\overline{c}_{X,Y}^{\mathcal{YD}}$. 

\end{rmk}

The definition of the bosonization  naturally extends to the case where $K$  is a Hopf algebra in the
braided monoidal category $\RRC$, where $\mathcal{C}$ is now an arbitrary braided monoidal  category.
 We collect several key results from [\citealp{bes97},\citealp{B95},\citealp{AF00}] that will be important for the construction.
 \begin{lemma}\textup{[\citealp{bes97}, Theorem 3.9.5]}\label{lemequ}
     There is a braided monoidal isomorphism from the Hopf bimodule category to the Yetter-Drinfeld module category:
     $$ {^R_R\mathcal{C}^R_R}\cong \RRC.$$
 \end{lemma}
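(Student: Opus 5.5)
The statement is Lemma \ref{lemequ}: for a Hopf algebra $R$ with bijective antipode in an arbitrary braided monoidal category $\mathcal{C}$, the category ${^R_R\mathcal{C}^R_R}$ of Hopf bimodules is braided monoidally isomorphic to $\RRC$. I plan to follow the classical Schauenburg/Bespalov route, inserting associativity constraints of $\mathcal{C}$ wherever needed. By Mac Lane coherence it is harmless to work as if $\mathcal{C}$ were strict, and to use string-diagram (graphical) calculus throughout, since all identities to verify are equalities of morphisms in $\mathcal{C}$.

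\textbf{Step 1: the two functors.} Given a Hopf bimodule $M$ (left and right $R$-module and $R$-comodule, all four structures compatible), take the right coinvariants
\[
M^{\operatorname{co}R}=\operatorname{Eq}\bigl(\delta^r_M,\ \operatorname{id}_M\ot u_R\bigr).
\]
This equalizer exists because $R\ot-$ and $-\ot R$ preserve it; if needed I would assume, as is standard, that $\mathcal{C}$ is abelian with exact tensor product, or else realize it as the image of the idempotent $P=\mu^r\circ(\operatorname{id}\ot\mathcal{S}_R)\circ\delta^r$. I then equip $M^{\operatorname{co}R}$ with:
\begin{itemize}
\item the left coaction restricted from $M$;
\item the left adjoint action $r\triangleright m=\mu^r\bigl((r^{(1)}m)\ot\mathcal{S}_R(r^{(2)})\bigr)$, with the braiding inserted where the diagram requires it.
\end{itemize}
Conversely, for $V\in\RRC$ I set $F(V)=V\ot R$, with:
\begin{itemize}
\item right action and right coaction coming from $R$ itself;
\item left action by the diagonal action via $\Delta_R$ and $c_{R,V}$;
\item left coaction by the codiagonal coaction via $c_{V,R}$ or its inverse.
\end{itemize}

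\textbf{Step 2: mutual inverses.} I prove the fundamental theorem of Hopf modules in $\mathcal{C}$: the maps $M^{\operatorname{co}R}\ot R\to M$ (action) and $M\to M^{\operatorname{co}R}\ot R$ (given by $(P\ot\operatorname{id})\circ\delta^r$) are inverse. I also check that $F(V)^{\operatorname{co}R}\cong V$ naturally. Bijectivity of $\mathcal{S}_R$ is used when showing that the adjoint action and the induced structures on $F(V)$ satisfy the Yetter–Drinfeld compatibility of the Definition above, via the characterization in Remark \ref{rmk2.13}(2). The same holds for the inverse braiding in Remark \ref{rmk2.13}(3).

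\textbf{Step 3: monoidal and braided structure.} The tensor product of Hopf bimodules is $M\ot_R N$. I show that the natural map
\[
M^{\operatorname{co}R}\ot N^{\operatorname{co}R}\to (M\ot_R N)^{\operatorname{co}R}
\]
is an isomorphism compatible with the YD structures, where the tensor structure on the right-hand side is the diagonal one from $\RRC$. The Hopf bimodule braiding is characterized by $c(m\ot_R n)=m_{(-1)}\triangleright n\ot_R m_{(0)}$ on coinvariants, so it restricts exactly to $c^{\mathcal{YD}}$. The coherence of this monoidal structure with the associators reduces to naturality of $a$.

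\textbf{Main obstacle.} I expect the hardest part to be Step 2 combined with the YD axiom: verifying graphically that the adjoint action on coinvariants satisfies the long YD identity of the Definition. This requires repeated use of the antipode axioms, the Hopf bimodule compatibilities, and naturality of the braiding. My first attempt would be to prove condition (iii) of Remark \ref{rmk2.13}(2) instead, which reduces the verification to a single morphism.
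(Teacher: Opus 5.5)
Your outline is the standard Schauenburg--Bespalov argument: right coinvariants with the braided adjoint action and the restricted left coaction, the inverse functor $V\mapsto V\ot R$, the fundamental theorem of Hopf modules via the idempotent $P$, and $M^{\operatorname{co}R}\ot N^{\operatorname{co}R}\cong(M\ot_R N)^{\operatorname{co}R}$. This is the usual route to the result the paper cites from Bespalov, and the paper gives no proof of its own. A few small points need tightening: what you build is a braided monoidal equivalence, since $M^{\operatorname{co}R}\ot R\to M$ is only a natural isomorphism, and the lemma must be read that way. The left coaction on $V\ot R$ must use $c_{V,R}$ rather than $c_{R,V}^{-1}$, because with the latter the codiagonal coaction is not coassociative in general. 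Finally, bijectivity of $\mathcal{S}_R$ is not needed for the Yetter--Drinfeld compatibility, only for invertibility of the braidings.
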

 \begin{lemma}\textup{[\citealp{B95}, Proposition 4.2.3]}\label{thm1.12}
    Let \( A \) be a Hopf algebra in \( \mathcal{C} \) and \( K \) a Hopf algebra in \( \AAC \). There is an obvious isomorphism of braided monoidal categories
    \begin{equation}
        {^{K\#A}_{K\#A}\mathcal{YD}}(\mathcal{C})\cong  {^{K}_{K}\mathcal{YD}}(\AAC).
    \end{equation}
\end{lemma}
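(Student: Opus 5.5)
The isomorphism will be built in two stages. First, Lemma \ref{lemequ} identifies ${^{K\#A}_{K\#A}\mathcal{YD}}(\mathcal{C})$ with the category of Hopf bimodules ${^{K\#A}_{K\#A}\mathcal{C}^{K\#A}_{K\#A}}$. Second, we show that Hopf bimodules over $K\#A$ in $\mathcal{C}$ are the same as Hopf bimodules over $K$ in $\AAC$. Composing with Lemma \ref{lemequ} applied to $K$ in the braided category $\AAC$ then gives the claimed isomorphism. I would, however, avoid passing through Hopf bimodules if possible. The more direct route is to write down the functor explicitly and check the axioms using the characterization in Remark \ref{rmk2.13}(2).

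Concretely, let $X\in{^{K\#A}_{K\#A}\mathcal{YD}}(\mathcal{C})$ with action $\lambda$ and coaction $\delta$. We restrict $\lambda$ along the algebra maps $K\to K\#A$ and $A\to K\#A$ (the unit inclusions $\operatorname{id}\ot u_A$ and $u_K\ot\operatorname{id}$). We push $\delta$ forward along the coalgebra projection $\pi=\varepsilon_K\ot\operatorname{id}:K\#A\to A$. This gives the $A$-module and $A$-comodule structures. The $K$-comodule structure is obtained by composing $\delta$ with the map $\vartheta:K\#A\to K$, $\vartheta=(\operatorname{id}_K\ot\varepsilon_A)$, suitably corrected by the coinvariant projection as in Lemma \ref{Lemma 1.6}. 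Equivalently, $\delta$ factors uniquely as $(\operatorname{id}_K\ot\delta^A)\circ\delta^K$ up to associativity constraints.

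The key steps are as follows.
\begin{enumerate}
\item[(i)] The $A$-structures make $X$ an object of $\AAC$. This follows from the Yetter--Drinfeld condition for $K\#A$ restricted to $A\subset K\#A$, using that $A\to K\#A\to A$ are Hopf maps.
\item[(ii)] The $K$-action and $K$-coaction are morphisms in $\AAC$. This is checked via Remark \ref{rmk2.13}(2), i.e.\ by showing that the relevant braidings are module/comodule maps.
\item[(iii)] The Yetter--Drinfeld compatibility over $K$ in $\AAC$ holds. Here the braiding used in $\AAC$ is $c^{\mathcal{YD}}$, and the compatibility is exactly what remains of the $K\#A$ compatibility once the smash product multiplication and the smash coproduct are expanded.
\item[(iv)] Conversely, given $X\in{^{K}_{K}\mathcal{YD}}(\AAC)$, we define the $K\#A$-action by $\lambda_K\circ(\operatorname{id}\ot\lambda_A)$ and the coaction by $(\operatorname{id}\ot\delta_A)\circ\delta_K$, both with the appropriate associators. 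The same computations run backwards.
\end{enumerate}
The two constructions are manifestly inverse and act as the identity on morphisms. Morphisms correspond because a map commuting with the $K\#A$-structures commutes with the restricted ones, and conversely, since the $K\#A$-structures are generated by them.

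For monoidality, the tensor product in both categories has underlying object $X\ot Y$ in $\mathcal{C}$. The diagonal $K\#A$-action uses $\Delta_{K\#A}$, which involves the $A$-coaction on $K$ and the braiding $c$. The tensor product in ${^{K}_{K}\mathcal{YD}}(\AAC)$ uses $\Delta_K$ together with the braiding $c^{\mathcal{YD}}$ of $\AAC$. Unwinding $c^{\mathcal{YD}}$ shows the two actions agree, and dually for the coactions. The braidings agree for the same reason: $c^{\mathcal{YD}}$ over $K\#A$ splits as the $K$-part composed with the $A$-part. We may therefore take the identity on objects, with identity coherence maps.

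The main obstacle will be bookkeeping of associativity constraints. Since $\mathcal{C}$ is not strict (e.g.\ $\mathcal{C}=\HH$ with nontrivial $\Phi$), every identification above carries associators. To control this, I would first replace $\mathcal{C}$ by an equivalent strict braided category via Mac Lane coherence. The statement is invariant under braided monoidal equivalence, and Yetter--Drinfeld categories transport along such equivalences. In the strict setting the verification is the standard one of \cite{B95}, Proposition 4.2.3, carried out in graphical calculus.
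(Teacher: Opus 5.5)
Your proposal is correct and in substance agrees with the paper. The paper gives no argument of its own and simply quotes [\citealp{B95}, Proposition 4.2.3]. You make the identity-on-objects functor explicit: restrict the action along $K\to K\#A\leftarrow A$, and push the coaction along $\pi=\varepsilon_K\ot\operatorname{id}_A$ and $\vartheta=\operatorname{id}_K\ot\varepsilon_A$. After strictification, you then defer the core compatibility (iii) to the same result of Bespalov, exactly as the paper does.

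Two small corrections. First, no ``coinvariant correction'' is needed for the $K$-coaction. The map $\vartheta$ is already a coalgebra map in $\mathcal{C}$ with $(\vartheta\ot\pi)\circ\Delta_{K\#A}=\operatorname{id}_{K\ot A}$, and this is exactly why $\delta=(\operatorname{id}_K\ot\delta^A)\circ\delta^K$. Second, step (ii) does not come from Remark \ref{rmk2.13}(2). It follows directly from three ingredients: the multiplication rule of $K\#A$ on $(1\#A)(K\#1)$, coassociativity of $\delta$, and the $K\#A$ Yetter--Drinfeld condition projected along $\pi$ and $\vartheta$.
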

\begin{lemma}\label{lem2.12}
\textup{[\citealp{AF00}, Theorem 3.2]} Let \( H \) and \( A \) be Hopf algebras in a braided monoidal category \( \mathcal{C} \). Let \(\pi : H \to A\) and \(\iota : A \to H\) be Hopf algebra morphisms such that \(\pi \circ \iota = \mathrm{id}_{A}\). If \( \mathcal{C} \) has equalizers and \( A \otimes (-)\) preserves equalizers, there is a Hopf algebra \( R \) in the braided monoidal category $\AAC$, such that  
\[ H \cong R \# A. \]    
\end{lemma}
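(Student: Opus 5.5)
We follow Radford's projection theorem in its braided form (Majid, Bespalov) and use the equalizer hypotheses only to build the coinvariants as objects of $\mathcal{C}$. By Mac Lane's coherence theorem we may assume $\mathcal{C}$ is strict, so the associators $a$ are suppressed throughout. This is legitimate because the final statement involves only structure transported along a monoidal equivalence.

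\textbf{Construction of $R$.} Let $j\colon R\to H$ be the equalizer of
$$(\operatorname{id}_H\otimes\pi)\circ\Delta_H,\qquad \operatorname{id}_H\otimes\eta_A\colon H\to H\otimes A,$$
so that $R$ is the object of right coinvariants $H^{\operatorname{co}A}$. Next consider the endomorphism
$$\Pi:=\mu_H\circ(\operatorname{id}_H\otimes \iota\mathcal{S}_A\pi)\circ\Delta_H\colon H\to H.$$
We first check that $\Pi$ equalizes the two maps above, so it factors as $\Pi=j\circ\bar\Pi$. This uses that $\pi$ is a coalgebra and algebra map, the antipode property of $\mathcal{S}_A$, and $\pi\iota=\operatorname{id}_A$; the braiding enters only through $\Delta_H\circ\mu_H$. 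We then check that $\bar\Pi\circ j=\operatorname{id}_R$. Hence $\Pi$ is an idempotent split by $(j,\bar\Pi)$, and $R$ is a split, hence absolute, equalizer. In particular $R\otimes R$ and $A\otimes R$ are again equalizers of the expected pairs. The assumption that $A\otimes(-)$ preserves equalizers is used to identify $A\otimes R$ as the coinvariants of $A\otimes H$ when we factor the structure maps below.

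\textbf{Structures on $R$.} We define the following maps and check that each factors through $j$ by its universal property.
\begin{enumerate}
\item Multiplication and unit are restricted from $H$; they factor because $(\operatorname{id}\otimes\pi)\Delta_H$ is multiplicative.
\item The $A$-action is the adjoint action $\mu_H(\mu_H\otimes \iota\mathcal{S}_A)(\iota\otimes c_{A,H})(\Delta_A\otimes j)$, corrected so that it factors through $R$.
\item The $A$-coaction is $(\pi\otimes\operatorname{id})\Delta_H j$, and it lands in $A\otimes R$.
\item The comultiplication is $\Delta_R:=(\bar\Pi\otimes\bar\Pi)\Delta_H j$, with counit $\varepsilon_H j$, and antipode built from $\mathcal{S}_H$ and $\Pi$.
\end{enumerate}
We then verify that $R$ is an object of $\AAC$: the Yetter--Drinfeld compatibility is most easily checked through criterion (iii) of Remark \ref{rmk2.13}(2). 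Finally we verify that $R$ is a Hopf algebra in $\AAC$. Here the key identity is that $\Delta_R$ is multiplicative with respect to the braiding $c^{\mathcal{YD}}_{R,R}$ of $\AAC$ rather than $c_{R,R}$. This follows from expanding $\Delta_H(\Pi x\,\Pi y)$ and moving $\iota\pi$-factors past elements of $R$ using the coaction.

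\textbf{The isomorphism.} Define
$$\phi:=\mu_H(j\otimes\iota)\colon R\# A\to H,\qquad \psi:=(\bar\Pi\otimes\pi)\Delta_H\colon H\to R\otimes A.$$
We show $\phi\psi=\operatorname{id}_H$ using $\mu_H(\Pi\otimes\iota\pi)\Delta_H=\operatorname{id}_H$, which is the antipode axiom for $A$ pulled back along $\iota\pi$. We show $\psi\phi=\operatorname{id}$ using $\bar\Pi(x\,\iota(a))=\bar\Pi(x)\varepsilon(a)$ for $x\in R$, together with $\pi j=\eta\varepsilon$. Finally, comparing the smash product and smash coproduct formulas shows that $\phi$ is multiplicative and comultiplicative. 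This is the same Majid-style computation, where the braiding in the smash coproduct matches the braiding used in $\Delta_R$.

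\textbf{Main obstacle.} We expect the hardest step to be showing that $\Delta_R$ is well defined and is an algebra map in $\AAC$, together with the bookkeeping of braidings. To handle well-definedness we would first prove the auxiliary identity
$$\Delta_H\Pi=(\Pi\otimes\operatorname{id})\circ\text{(adjoint-twisted }\Delta_H),$$
which lets everything reduce to the splitting of $\Pi$ and avoids any further exactness assumptions.
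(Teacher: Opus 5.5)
The paper does not prove this lemma. It is quoted from [\citealp{AF00}, Theorem 3.2], and the only local comment is the later remark that the hypotheses hold automatically in the abelian category $\HH$. Your outline is a correct reconstruction of the braided Radford--Majid projection theorem, which is essentially the argument behind the cited result. The ingredients are:
\begin{itemize}
\item the coinvariants $R$, realized as the image of the idempotent $\Pi=\mu_H(\operatorname{id}_H\otimes\iota\mathcal{S}_A\pi)\Delta_H$;
\item the adjoint action, and the coaction $(\pi\otimes\operatorname{id})\Delta_H j$;
\item the comultiplication $\Delta_R=(\bar\Pi\otimes\bar\Pi)\Delta_H j$;
\item the isomorphism $\phi=\mu_H(j\otimes\iota)$, with inverse $(\bar\Pi\otimes\pi)\Delta_H$.
\end{itemize}
The comultiplication satisfies $(\operatorname{id}_R\otimes j)\Delta_R=(\bar\Pi\otimes\operatorname{id}_H)\Delta_H j$, because $(\operatorname{id}_H\otimes\Pi)\Delta_H j=\Delta_H j$ by coassociativity. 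Compared with the citation, your route is self-contained. The steps you defer are routine but carry all the length: the braided diagram chases for the Yetter--Drinfeld condition, and for multiplicativity of $\Delta_R$ with respect to $c^{\mathcal{YD}}_{R,R}$.

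Your route also shows something the citation hides. Put $f=(\operatorname{id}_H\otimes\pi)\Delta_H$, $g=\operatorname{id}_H\otimes\eta_A$ and $t=\mu_H(\operatorname{id}_H\otimes\iota\mathcal{S}_A)$. Then $t\circ g=\operatorname{id}_H$ and $t\circ f=\Pi=j\bar\Pi$, so the fork $R\xrightarrow{j}H\rightrightarrows H\otimes A$ is genuinely split.

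You should name $t$ explicitly. What makes the equalizer absolute is not the splitting of $\Pi$ alone, but these two identities. For any functor $F$, they force every $x$ with $F(f)x=F(g)x$ to satisfy $x=F(t)F(g)x=F(t)F(f)x=F(j)F(\bar\Pi)x$.

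With $t$ in hand, $A\otimes(-)$ preserves this particular equalizer for free. So your later sentence invoking the preservation hypothesis is redundant: the argument only needs $R$ to exist, for instance as a splitting of $\Pi$.

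Finally, $\Delta_R$ is a composite of morphisms and needs no well-definedness argument. The real content of your ``main obstacle'' is that $\Delta_R$ is coassociative, a morphism in $\AAC$, and multiplicative for $c^{\mathcal{YD}}_{R,R}$, which you do correctly single out.
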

\begin{rmk}\upshape  Let $H$ be a coquasi-Hopf algebra with bijective antipode.\par 
  (1) By [\citealp{quasibon}, Proposition 3.9, Lemma 4.5] and Lemma  \ref{lemequ}, we have a braided monoidal equivalence
  $$ \RRH\cong \RRC,$$where $\mathcal{C}=\HH$.\par 
  (2) Since $\HH$ is an abelian category, the conditions in Lemma \ref{lem2.12} hold automatically. That is, we can talk about bosonization and taking coinvariants in $\HH$ without any restrictions.
\end{rmk}

 \section{Rational Yetter-Drinfeld modules  over coquasi-Hopf algebras}
 In this section, $H$ always represents a coquasi-Hopf algebra with bijective antipode, and $\mathcal{C}=\HH$ denotes the Yetter-Drinfeld module category of $H$ . We investigate rational Yetter-Drinfeld modules in $\HH$, a notion that will be important for our later study of Nichols algebras and their reflections.

\subsection{Rational  modules in $\HH$}
Since $H$ is a non-associative algebra, we cannot talk about modules over $H$ in the usual sense. However, if $R$ is an algebra in $\mathcal{C}=\HH$, then the category $_R\mathcal{C}$ of left $R$-modules in $\mathcal{C}$ is well-defined. Motivated by [\citealp{rootsys}, Definition 12.2.3],  we now introduce the concept of rationality in the setting of coquasi-Hopf algebras.
\begin{definition} \label{def3.6} Let $R=\bigoplus_{n \geq 0} R(n)$ be an $\mathbb{N}_0$-graded Hopf algebra in $\mathcal{C}$.\par 
\textup{(1)}
    A left  module $(X,\lambda_X)$ over $R$ in $\mathcal{C}$  is called rational if for any $x \in X$, there exists a natural number $n_0$ such that $\lambda(R(n)\ot x)=0$ for all $n \geq n_0$. We denote the category of left rational modules over $R$ in $\mathcal{C}$ by ${_R\mathcal{C}}_{\operatorname{rat}}$.\par 
    \textup{(2)} A left Yetter-Drinfeld module $X$ in $\RRC$  is called rational if $X$ is rational as a module of $R$ in $\mathcal{C}$. We denote the corresponding category  by $\RRC_{\operatorname{rat}}$.
\end{definition}
\begin{rmk}\upshape
We say that a $\mathbb{N}_0$-graded Hopf algebra $R$ is locally finite if $R(n)$ is finite-dimensional for all $n \geq 0$.
It is worth noting that for a locally finite $\mathbb{N}_0$-graded Hopf algebra $R$, this graded definition of rationality coincides with the classical one. Intuitively, the condition $\lambda(R(n) \otimes x) = 0$ for large $n$ implies that the action on any element $x$ is nontrivial on a finite-dimensional truncation of $R$, which recovers the standard notion of rationality in the non-graded setting.\end{rmk}
A natural question is whether rational modules are closed under tensor products. The following lemma ensures this.

\begin{lemma}\label{lem5.2}
    Let $R=\bigoplus_{n \geq 0} R(n)$ be an $\mathbb{N}_0$-graded Hopf algebra in $\mathcal{C}$.\par 
    \textup{(1)}
    The category ${_R\mathcal{C}}_{\operatorname{rat}}$ is a monoidal subcategory of ${_R\mathcal{C}}$, which is closed under  direct sums, subobjects and quotient objects.\par
   \textup{(2)} The category $\RRC_{\operatorname{rat}}$ is a monoidal subcategory of $\RRC$, which is closed under  direct sums, subobjects and quotient objects.\par
\end{lemma}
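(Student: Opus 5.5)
Closure under direct sums, subobjects and quotients is immediate; the real work is closure under tensor products, so I treat that first. For direct sums the action on $X\oplus Y$ is componentwise, so for $(x,y)$ one takes the maximum of the two thresholds $n_0$. For a subobject $Y\subseteq X$ in ${_R\mathcal{C}}$ the action is the restriction of $\lambda_X$, so the threshold for $y\in Y$ viewed in $X$ still works. For a quotient $p:X\to Z$ in ${_R\mathcal{C}}$, $p$ is surjective (in the abelian category $\mathcal{C}=\HH$ epimorphisms are surjective linear maps) and $R$-linear. Hence for $z=p(x)$ we get $\lambda_Z(R(n)\ot z)=p(\lambda_X(R(n)\ot x))=0$ for $n\ge n_0(x)$. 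The unit object $\mathbbm{k}$, with action through $\varepsilon_R$, is rational since $\varepsilon_R(R(n))=0$ for $n\ge1$; this uses that $\varepsilon_R$ is a graded map.

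For $X,Y\in{_R\mathcal{C}}_{\operatorname{rat}}$, the action on $X\ot Y$ in $\mathcal{C}$ is the composite
$$(\lambda_X\ot\lambda_Y)\circ a^{-1}\circ(\operatorname{id}\ot a)\circ(\operatorname{id}_R\ot c_{R,X}\ot\operatorname{id}_Y)\circ(\operatorname{id}\ot a^{-1})\circ a\circ(\Delta_R\ot\operatorname{id}_{X\ot Y}).$$
Fix $x\ot y$ and $r\in R(n)$. Since $\Delta_R$ is graded, $\Delta_R(r)\in\bigoplus_{i+j=n}R(i)\ot R(j)$. The associators in $\HH$ are scalar-weighted maps built from $\Phi$ and comodule structures, so they only rescale or recombine elements within each graded tensor factor. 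Each $R(i)$ is a subobject in $\HH$, so the degree of the $R$-tensorand is preserved.

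The main obstacle is the braiding step $c_{R(j),X}(r'\ot x)=(r'_{-1}\rhd x)\ot r'_0$. This replaces $x$ by elements $h\rhd x$ for various $h\in H$, and there is no a priori bound on these. To handle it, I use that for fixed $x$ the $H$-Yetter–Drinfeld submodule $X_x$ of $X$ generated by $x$ is finite-dimensional; this holds because comodules are locally finite and $H\rhd$ acts compatibly via (\ref{1.11}), or alternatively via the finiteness arguments used for $\HH$. By the argument above, $X_x$ is a $\mathcal{C}$-subobject. I then refine this to the $R$-submodule generated in $\mathcal{C}$, or simply use that $\lambda_X(R(i)\ot -)$ vanishes on a finite-dimensional span, choosing $n_1=\max$ of the thresholds over a basis of the span of all $h\rhd x$ together with their comodule components. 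Similarly I choose $n_2$ for $y$, giving $\lambda_X(R(i)\ot x')=0$ for $i\ge n_1$ and all $x'$ in that span.

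Then for $n\ge n_1+n_2$, every term has $i\ge n_1$ or $j\ge n_2$, so each summand of $\lambda(r\ot(x\ot y))$ vanishes. If finiteness of $X_x$ fails in general, I fall back on the reformulation of the braided action via Remark~\ref{lem2.6}(1),(2) to move the $H$-action onto $R(j)$ instead, which preserves the degree. Part (2) then follows from (1), since the tensor product, sums, sub- and quotient objects in $\RRC$ have underlying $R$-modules given by those in ${_R\mathcal{C}}$, and rationality is a condition only on the module structure.
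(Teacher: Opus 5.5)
Your handling of direct sums, subobjects, quotients, the unit object and part (2) is fine. So is the degree bookkeeping: graded $\Delta_R$, degree-preserving associators, the $Y$-factor controlled by a threshold $n_2$ on a finite-dimensional subcomodule containing $y$, and $n\ge n_1+n_2$. The gap is at the step you flagged yourself.

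The claim that the Yetter--Drinfeld submodule of $X$ generated by $x$ is finite-dimensional is false once $H$ is infinite-dimensional. Only the coaction is locally finite; the action is not. For example, take $H=\mathbbm{k}G$ with trivial $\Phi$, and $X=\mathbbm{k}G$ with $\delta(g)=g\ot g$ and $h\rhd g=hgh^{-1}$. The submodule generated by $g$ is spanned by its conjugacy class, which is infinite for a generator of a free group. So $n_1$ cannot be taken as a maximum over a basis of the span of all $(h\rhd x_0)_0$. Rationality of individual vectors gives no bound that is uniform over an infinite-dimensional span.

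Your fallback does not repair this. Remark \ref{lem2.6}(1),(2) only re-express the compatibility between $\rhd$ and $\delta$ on $X$; they say nothing about how $\lambda_X$ interacts with the $H$-action. The paper's own write-up has the same soft spot. It picks $n_1$ from the finitely many vectors $(r^2_{-3}\rhd v_0)_0$ attached to a fixed $r$, then takes $m\ge 2\max\{n_1,n_2\}$ for $r\in R(m)$. It therefore tacitly needs the same uniformity, so you have located the real issue.

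The missing idea is that $\lambda_X$ is a morphism in $\mathcal{C}$, so the braiding is natural with respect to it. Combining this with the hexagon for $c_{R,R\ot X}$ gives
\begin{equation*}
(\lambda_X\ot\operatorname{id}_R)\circ a^{-1}_{R,X,R}\circ(\operatorname{id}_R\ot c_{R,X})=c_{R,X}\circ(\operatorname{id}_R\ot\lambda_X)\circ a_{R,R,X}\circ(c^{-1}_{R,R}\ot\operatorname{id}_X)\circ a^{-1}_{R,R,X}.
\end{equation*}
By naturality of $a$, the action on $X\ot Y$ is
\begin{equation*}
(\operatorname{id}_X\ot\lambda_Y)\circ a_{X,R,Y}\circ\bigl(\text{this block}\ot\operatorname{id}_Y\bigr),
\end{equation*}
preceded by associators and $\Delta_R\ot\operatorname{id}$. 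Since each $R(n)$ is a subobject, $c^{-1}_{R,R}$ maps $R(i)\ot R(j)$ onto $R(j)\ot R(i)$. The associators only replace $x$ and $y$ by comodule components.

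Hence, on the right-hand side, the factor from $R(i)$ acts directly on vectors of a finite-dimensional subcomodule $F_X\ni x$. The factor from $R(j)$, after passing through $c_{R,X}$, acts on vectors of $F_Y\ni y$. Thresholds $n_1,n_2$ chosen for bases of $F_X$ and $F_Y$ then work for every $r$, with no control over $H\rhd x$ needed. With this, your estimate for $n\ge n_1+n_2$ goes through.
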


\begin{proof}
(1) Let $V, W \in {_R\mathcal{C}}_{\operatorname{rat}}$ with $R$-action $\lambda_V$, $\lambda_W$ respectively.  
It is standard that the tensor product $V \ot W$ is an object in $_R\mathcal{C}$, with $R$-action 
\begin{align*}
    \lambda_{V\ot W}&=(\lambda_V \ot \lambda_W)\circ(a_{R,V,R}^{-1}\ot\operatorname{id}_W)\circ a_{R\ot V,R,W}\circ ((\operatorname{id}_R \ot c_{R,V})\ot \operatorname{id}_W)\\& \circ (a_{R,R,V}\ot\operatorname{id}_W)\circ a^{-1}_{R\ot R,V,W}\circ (\Delta_R \circ \operatorname{id}_{V\ot W}).
\end{align*}
Now take $r \in R(m)$, we have 
\begin{align*}
    \lambda_{V\ot W}(r \ot (v\ot w))= \frac{\Phi(r^2_{-4},v_{-1}, w_{-3})\Phi(r^1_{-1},(r^2_{-3} \rhd v_0)_{-1},r^2_{-1}w_{-1})}{\Phi (r^1_{-2},r^2_{-5}, v_{-2}w_{-4})\Phi((r^2_{-3} \rhd v_0)_{-2}, r^2_{-2}, w_{-2})}  \lambda_V(r^1_0\ot  (r^2_{-3} \rhd v_0)_0) \otimes \lambda_W(r^2_0\ot w_0).
\end{align*}
Since $r^2_{-3} \in H$, and $V$ is an object in $\HH$, we have $(r^2_{-3}\rhd v_0)_0 \in V$. \par The expression $\lambda_{V\ot W}(r \ot (v\ot w))$ is a finite sum. Thus,  by the definition of the rational module, there exists a natural number $n_1$ such that for $n \geq n_1$,
$$ \lambda_V (R(n)\ot (r^2_{-3}\rhd v_0)_0)=0.$$
Similarly, there is a natural number $n_2$ such that  $$ \lambda_W(R(n) \ot w_0)=0, 
$$  for all   $n \geq n_2$.\par
Now we assume $m \geq 2\operatorname{max}\lbrace n_1,n_2 \rbrace$, since $R$ is $\mathbb{N}_0$-graded, we have $\Delta_R(r) \in \bigoplus_{p+q=m}R(p)\ot R(q).$ Therefore  
$$\lambda_{V\ot W}(R(m) \ot (v\ot w))=0.$$
This implies $V\ot W \in {_R\mathcal{C}_{\operatorname{rat}}}$.
Therefore, ${_R\mathcal{C}}_{\operatorname{rat}}$ is a monoidal subcategory of $\RRC$. The category ${_R\mathcal{C}}_{\operatorname{rat}}$ is obviously closed under arbitrary direct sums, subobjects and quotient objects.
\par 
(2) The forgetful functor $ \RRC\rightarrow {_R\mathcal{C}}$ is monoidal. Thus, for $V,W \in \RRC_{\operatorname{rat}},$ we have $V\ot W \in {_R\mathcal{C}}_{\operatorname{rat}} $. By definition it implies $V\ot W \in \RRC_{\operatorname{rat}}$. \par 
\end{proof}

\subsection{Properties of  pairings }
We now turn to the study of pairings in the category 
$\mathcal{C}=\HH$, which will later facilitate the construction of dualities between comodules and rational modules.\par 
Let $A$ and $B$ be objects in $\mathcal{C}$, and $\omega: A\otimes B \rightarrow \mathbbm{k}$ is a morphism in $\mathcal{C}$, called a pairing.  For subsets $X \subseteq A$ and $Y \subseteq B$, we define 
\begin{align*}
     X^{\perp}&:=\lbrace b \in B\mid \omega(x, b)=0, \ \text{for all}\ x\in X \rbrace, \\
          Y^{\perp}&:=\lbrace a \in A\mid \omega(a, y)=0, \ \text{for all}\ y\in Y \rbrace. 
\end{align*}
We say $\omega$ is non-degenerate if $A^{\perp}=0$ and $B^{\perp}=0$.\par 

\begin{lemma}\label{lem5.3}
    If $E$ is a subobject of $A$, then $E^{\perp}$ is a subobject of $B$. Similarly, if $F$ is  a subobject of $B$, then $F^{\perp}$ is a subobject of $A$.
\end{lemma}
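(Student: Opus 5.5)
We need to show that $E^{\perp}\subseteq B$ is an $H$-subcomodule and is stable under the $H$-action $\rhd$. The key tool is that $\omega$ is a morphism in $\mathcal{C}=\HH$, where $\mathbbm{k}$ carries the trivial structures. The second statement follows by the symmetric argument, so we treat only $E^{\perp}$.

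\textbf{Comodule part.} Colinearity of $\omega$ means
$$a_{-1}b_{-1}\,\omega(a_0,b_0)=\omega(a,b)1_H$$
for all $a\in A$, $b\in B$. Fix $b\in E^{\perp}$ and write $\delta_B(b)=\sum_i h_i\otimes b_i$ with the $h_i$ linearly independent. We must show $b_i\in E^{\perp}$ for every $i$.

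The quasi-Hopf version of the usual Hopf trick works as follows:
\begin{itemize}[nolistsep]
\item Apply the comodule structure of $E$ (a subcomodule) to $a\in E$, so that every $a_0$ again lies in $E$.
\item Multiply on the left by $\mathcal{S}(a_{-2})$ and use the antipode axiom $\mathcal{S}(h_1)\alpha(h_2)h_3=\alpha(h)1$ together with the associator $\Phi$ to peel off the factor $a_{-1}$.
\item Alternatively, use the preantipode $\mathbb{S}$ of Lemma \ref{lem1.3}, whose axiom $\mathbb{S}(h_1)_1h_2\otimes\mathbb{S}(h_1)_2=1\otimes\mathbb{S}(h)$ encodes exactly this cancellation.
\end{itemize}
The result is an identity expressing $b_{-1}\,\omega(a,b_0)$ as a combination of terms $\omega(a'_0,b)$ with $a'_0\in E$. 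All of these vanish, so $\omega(a,b_i)=0$ for all $a\in E$ and all $i$.

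A cleaner route, which I would try first, avoids bookkeeping with $\Phi$. For $f\in H^*$, the map $b\mapsto f(b_{-1})b_0$ is given by a left action of the (associative) convolution algebra $H^*$, and subcomodules are exactly the $H^*$-stable subspaces. Colinearity of $\omega$ gives
$$\omega(a,f\cdot b)=\omega\bigl(f'\cdot a,\,b\bigr)$$
for a suitable transformed functional $f'=f\circ\mathcal{S}^{-1}$, twisted by $\alpha,\beta$. This transformed functional is obtained by the antipode manipulation above, and bijectivity of $\mathcal{S}$ is used here. Since $E$ is $H^*$-stable, $E^{\perp}$ is $H^*$-stable and hence a subcomodule.

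\textbf{Module part.} Linearity of $\omega$ means $\omega(h\rhd(a\otimes b))=\varepsilon(h)\omega(a,b)$, with $h\rhd(a\otimes b)$ given by (\ref{1.12}). Take $b\in E^{\perp}$ and $h\in H$; we need $\omega(a,h\rhd b)=0$ for $a\in E$. I would invert (\ref{1.12}) in the standard way by replacing $h$ with $h_2$ and acting on the first factor with $\mathcal{S}^{-1}(h_1)$, inserting $\Phi$-factors and $\alpha,\beta$ to cancel. This yields
$$\omega(a,h\rhd b)=\sum \text{(scalars)}\;\omega\bigl(h'\rhd a',\,b'\bigr),$$
where $a'$ runs over comodule components of $a$ (still in $E$), $h'\rhd a'\in E$, and the $b'$ are comodule components of $b$, which lie in $E^{\perp}$ by the comodule part. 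Hence every term vanishes. Since $E$ is a subobject, $E$ is closed under $\rhd$ and under the coaction, which is what makes $h'\rhd a'\in E$.

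\textbf{Main obstacle.} The hard step is deriving this "adjunction" identity: moving the action of $h$, or the coaction, from $B$ to $A$ through $\omega$ when $H$ is only quasi-associative. I would organize it categorically to sidestep explicit $\Phi$-computations. The pairing $\omega$ induces a morphism $\omega^\flat: B\to A^{*}$, into the left dual $A^{*}$ in $\mathcal{C}$, which exists because $\mathcal{S}$ is bijective. Then $E^{\perp}=\ker\bigl(B\to A^{*}\to E^{*}\bigr)$, where $A^{*}\to E^{*}$ is the dual of the inclusion $E\hookrightarrow A$ and is a morphism in $\mathcal{C}$ by functoriality of duals. Since $\mathcal{C}$ is abelian, kernels are subobjects, which proves the claim. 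The only verification needed is that on underlying vector spaces the map $B\to E^{*}$ is $b\mapsto\omega(-,b)|_E$. This holds by construction of $\omega^\flat$ through the evaluation and coevaluation maps, up to the invertible scalar twist by $\alpha$, which does not affect kernels.
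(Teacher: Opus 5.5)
\textbf{There is a genuine gap in your final ``categorical'' step.} The identification of $E^{\perp}$ with a kernel does not hold in this generality, for two reasons.

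First, a size problem. For infinite-dimensional $A$ or $E$ there is no dual object in $\mathcal{C}$: the linear dual of an infinite-dimensional $H$-comodule is only an $H^{*}$-module, not a comodule, and only finite-dimensional objects are rigid. (Also, a pairing $A\otimes B\to\mathbbm{k}$ corresponds to a map from $B$ into the dual $E^{\vee}$ with evaluation $E\otimes E^{\vee}\to\mathbbm{k}$, not into a left dual with $E^{*}\otimes E\to\mathbbm{k}$.)

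Second, and more seriously, the kernel identification fails even for finite-dimensional $E$.
\begin{itemize}
\item The adjunction only gives $\omega|_{E\otimes B}=\operatorname{ev}\circ(\operatorname{id}_E\otimes\omega^\flat)$, hence only $\ker\omega^\flat\subseteq E^{\perp}$.
\item Over a coquasi-Hopf algebra, $\operatorname{ev}(e\otimes\phi)=\phi(\gamma\rightharpoonup e)$ with $\gamma\rightharpoonup e:=\gamma(e_{-1})e_0$. Here $\gamma\in H^{*}$ is built from $\alpha$ (composed with $\mathcal{S}^{\pm1}$, depending on side and conventions). It is a functional, not a scalar, and the axioms do not make it convolution invertible.
\item Take $B=E^{\vee}$ and $\omega=\operatorname{ev}$. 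Then $\omega^\flat=\operatorname{id}$ has zero kernel. But $E^{\perp}$ is the annihilator of $\gamma\rightharpoonup E$, which is nonzero as soon as $\gamma\rightharpoonup-$ is not surjective on $E$.
\item Computing $\omega^\flat$ through $\operatorname{coev}$ does not rescue the claim. The associator produces terms $\Phi(\cdot,\cdot,b_{-1})\,\omega(\cdot,b_0)$, so deducing $\omega^\flat(b)=0$ from $b\in E^{\perp}$ already presupposes $b_0\in E^{\perp}$, which is what you set out to prove.
\end{itemize}

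\textbf{The same obstruction sits under your two explicit routes,} which you only assert.
\begin{itemize}
\item Reassociating with $\Phi$ entangles the coaction components of $a$ and $b$. You never reach terms $\omega(a'_0,b)$ with $b$ untouched.
\item The cancellation $\mathcal{S}(h_1)\alpha(h_2)h_3=\alpha(h)1$ produces $\alpha(h)$ rather than $\varepsilon(h)$. So the ``transformed functional $f'$'' requires inverting $\alpha$.
\end{itemize}
When $\alpha$ is convolution invertible, the twist can be undone and your kernel description of $E^{\perp}$ is correct for finite-dimensional $E$. This covers pointed $H$, where $\alpha$ and $\beta$ are nonzero on every group-like. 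Without such a hypothesis the gap is not merely expository. For $A=B^{*}$, $\omega=\operatorname{ev}$ and $E=A$, the statement reduces to $\{b\in B:\gamma(b_{-1})b_0=0\}$ being a subobject of $B$, and there is no general reason for that.

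\textbf{Comparison with the paper.} The paper handles the $H$-action with no antipode manipulation. It uses the hexagon axiom for $c_{H,-}$, naturality of the braiding against the morphism $\omega$, and $c^{-1}_{H,A}(E\otimes H)\subseteq H\otimes E$. That is a cleaner substitute for your inversion of (\ref{1.12}). For the comodule part, however, the paper simply refers to the Hopf case, which is exactly the step where your proposal breaks down.
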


\begin{proof}
    Let $E$ be a subobject of $A$, we need to show that $E^{\perp}$ is an object in $\mathcal{C}$. The proof of $E^{\perp}$ being an $H$-comodule is the same as in the Hopf algebra case, see [\citealp{HSdualpair}, Lemma 2.3].
   \par 
   To show $H \rhd E^{\perp} \subseteq E^{\perp}$. One needs to verify that for any $b\in E^{\perp}$, $h \in H$ and $e \in E$,
   $$ \omega(e, h \rhd b)=0.$$ 
   By hexagon axiom in $\mathcal{C}$, we have the following commutative diagram:
    \begin{center}
	\begin{tikzpicture}
		\node (A) at (0,0) {$ A \ot (H \ot B)$};
		\node (B) at (4,0) {$A\ot (B \ot H)$};
		\node (C) at (-2,-2) {$(A \ot H) \ot B$};
  	\node (D) at (0,-4) {$(H \ot A) \ot B$};
   \node (E) at (4,-4) {$H \ot (A \ot B)$};
   \node (F) at (6,-2) {$(A \ot B) \ot H$};
		\draw[->] (A) --node [above] {$\operatorname{id}\ot c_{H,B}$} (B);
		\draw[->] (A) --node [ above left] {$a^{-1}_{A,H,B}$} (C);	
		\draw[->] (C) --node [below left] {$c^{-1}_{H,A} \ot \operatorname{id} $} (D)	;
		\draw[->] (D) --node [above] {$a_{H,A,B}$} (E)	;
  	\draw[->] (E) --node [ below  right] {$c_{H,A\ot B}$} (F);	
		\draw[->] (B) --node [above right] {$a_{A,B,H}^{-1}$} (F)	;
		\end{tikzpicture}
\end{center}
Since $E$ is an object in $\mathcal{C}$, for simplicity, we write $c_{A,H}^{-1}(e\ot h)=\sum_{1\leq i \leq n} h^i \ot e^i$ for some natural number $n$. Here $h^i \in H$, $e^i \in E$.
Now \begin{equation}
\begin{aligned}
 c_{H,A\ot B}\circ a_{H,A,B} \circ (c_{A,H}^{-1}\ot \operatorname{id})  ((e \ot h) \ot b)&=\sum_{1\leq i \leq n} c_{H,A\ot B}\circ a_{H,A,B}((h^i \ot e^i) \ot b)\\
 &=\sum_{1\leq i \leq n} \Phi(h^i_1,e^i_{-1},b_{-1})c_{H,A\ot B}(h^i_2 \ot (e^i_0 \ot b_0))\\
 &=\sum_{1\leq i \leq n}\Phi(h^i_1,e^i_{-1},b_{-1})(h^i_2 \rhd (e_0^i \ot b_0)) \ot h^i_3.
\end{aligned}
\end{equation}
Here $b_0 \in E^{\perp}$, because
we have shown  $E^{\perp}$ is an $H$-comodule. Also, $e_0^i \in E$ for all $i$. Note that $\omega$ is a morphism in $\mathcal{C}$, and the braiding $c$ is a natural isomorphism, we have
\begin{equation}
(\omega \ot \operatorname{id}) \circ c_{H,A\ot B}\circ a_{H,A,B} \circ (c_{A,H}^{-1}\ot \operatorname{id}) ((e \ot h) \ot b)=\sum_{1\leq i \leq n}\Phi(h^i_1,e^i_{-1},b_{-1}) \omega(e_0^i,b_0) \ot h^i_2=0.
\end{equation}   
Note that $a_{A,H,B}^{-1}(e\ot (h\ot b)) \in (A\ot H)\ot B$, 
thus by the hexagon axiom, we obtain:
$$
(\omega \ot \varepsilon) \circ a_{A,B,H}^{-1} \circ (\operatorname{id} \ot c_{H,B}) \circ a_{A,H,B} (a_{A,H,B}^{-1}(e\ot (h\ot b)))=0.
$$
That is 
\begin{equation}
    \Phi(e_{-1},(h_1 \rhd b)_{-1},h_2)(\omega \ot \varepsilon)((e_0 \ot (h_1 \rhd b)_0)\ot h_3)=\Phi*(\omega \ot \varepsilon)(e \ot (h_1 \rhd b )\ot h_2)=0.
\end{equation}
Note that $ \Phi$ is convolution invertible, thus 
\begin{equation}
   \omega(e, h \rhd b)=0
\end{equation}
   for all $e \in E$, $b \in E^{\perp}$, $h \in H$.
 Consequently, $E^{\perp}$ is a subobject of $B$. The proof of another direction is similar, we omit it for simplicity.
\end{proof}
The following construction of the Yetter-Drinfeld submodule generated by a subcomodule is crucial for our subsequent analysis of rational modules.

Let $V \in {}_H^H\mathcal{YD}$ and let $V' \subseteq V$ be an $H$-subcomodule. We define the Yetter-Drinfeld submodule generated by $V'$, denoted by $\langle V' \rangle$, as the smallest Yetter-Drinfeld submodule of $V$ containing $V'$.

We provide an explicit filtration to construct $\langle V' \rangle$.
\begin{itemize}
    \item Set $X_0 := V'$.
    \item Assume $X_n$ is defined. Let $Y_n := \text{span}\{h \rhd x \mid h \in H, x \in X_n\}$.
    \item Define $X_{n+1}$ as the smallest $H$-subcomodule of $V$ containing $Y_n$:
    $$X_{n+1} := \sum_{y \in Y_n} \text{span}\{y_0 \mid \delta_V(y) = y_{-1} \otimes y_0\}.$$
\end{itemize}

\begin{lemma}
\label{lem:YD_generation}
With the notation above, $\langle V' \rangle = \bigcup_{n \ge 0} X_n$. 
\end{lemma}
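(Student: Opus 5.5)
The proof shows two inclusions; set $X:=\bigcup_{n\ge 0}X_n$.

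\emph{Step 1: the $X_n$ form an increasing chain.} Since $1_H\rhd x=x$ by (\ref{1.10}), we have $X_n\subseteq Y_n$. We also need $Y_n\subseteq X_{n+1}$. For $y\in Y_n$, counitality of the comodule gives $y=\varepsilon(y_{-1})y_0$, and each $y_0$ is among the spanning vectors of $X_{n+1}$. Hence $X_n\subseteq Y_n\subseteq X_{n+1}$, so $X$ is a subspace.

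\emph{Step 2: $X$ is a Yetter--Drinfeld submodule.} Each $X_n$ is a subcomodule. For $n=0$ this is the hypothesis. For $n\ge1$, coassociativity gives $\delta(y_0)=y_{-1}\otimes y_0\otimes y_{0'}$-type expressions: the iterated coaction $y_{-2}\otimes y_{-1}\otimes y_0$ shows that the coaction of each $y_0$ lies in $H\otimes X_{n+1}$. (Formally, write $\delta(y)=\sum h_i\otimes v_i$ with the $h_i$ linearly independent. Then the span of the $v_i$ is the smallest subspace $W$ with $\delta(y)\in H\otimes W$, and coassociativity $\sum \Delta(h_i)\otimes v_i=\sum h_i\otimes\delta(v_i)$ gives $\delta(v_i)\in H\otimes X_{n+1}$.) A union of an increasing chain of subcomodules is a subcomodule. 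Stability under $\rhd$ is immediate: if $x\in X_n$, then $h\rhd x\in Y_n\subseteq X_{n+1}\subseteq X$. Both axioms (\ref{1.9}) and (\ref{1.11}) are inherited from $V$, so $X$ is a Yetter--Drinfeld submodule containing $X_0=V'$. Therefore $\langle V'\rangle\subseteq X$.

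\emph{Step 3: minimality.} Let $U$ be any Yetter--Drinfeld submodule containing $V'$. We show $X_n\subseteq U$ by induction on $n$. If $X_n\subseteq U$, then $Y_n\subseteq H\rhd U\subseteq U$. Since $U$ is a subcomodule, $\delta(y)\in H\otimes U$ for $y\in Y_n$. By the minimality description of the ``$y_0$'' components from Step 2, these components lie in $U$, so $X_{n+1}\subseteq U$. Taking $U=\langle V'\rangle$ gives $X\subseteq\langle V'\rangle$, which proves equality.

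The only point needing care, and so the main obstacle, is making the Sweedler notation ``span of the $y_0$'' precise. We interpret it as the coefficient space with respect to a linearly independent choice of left tensor factors. Steps 2 and 3 depend on two facts about this coefficient space: it is a subcomodule, which is the coassociativity argument, and it is contained in any subcomodule through whose coaction $y$ factors. The nonassociativity of $H$ plays no role here, because comodules over the coalgebra $H$ are ordinary comodules and $\rhd$ is only used as a linear map.
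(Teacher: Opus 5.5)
Your proposal is correct and follows the paper's argument: the paper likewise checks that $\bigcup_{n\ge 0}X_n$ is closed under the action and coaction, and that every Yetter--Drinfeld submodule containing $V'$ contains each $X_n$. You add details the paper leaves implicit, namely the chain property $X_n\subseteq Y_n\subseteq X_{n+1}$ and the reading of ``span of the $y_0$'' as the coefficient space, which is both a subcomodule and contained in any subcomodule through which $\delta(y)$ factors.
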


\begin{proof}
It is direct to verify that $\bigcup_{n \ge 0} X_n$ is closed under the $H$-action and $H$-coaction by construction. Since any Yetter-Drinfeld submodule containing $V'$ must contain $X_0$ and be closed under the action and coaction, it must contain each $X_n$. Thus, the union is minimal.
\end{proof}
Let \( V, W \in C \), and $\omega$ a pairing of \( A, B \) in \( C \). We denote by \(\operatorname{Hom}_{C,\text{rat}}(A \otimes V, W)\) the set of all \( g \) in \(\operatorname{Hom}_{C}(A \otimes V, W)\) such that for all \( v \in V \) there is a finite-dimensional subobject \( F \subseteq B \) in \( C \) with \( g(F^{\perp} \otimes v) = 0 \).

\begin{prop}\label{prop5.5}

     Let \( A, B \in C \), $\omega$ a non-degenerate pairing of \( A, B \) in \( C \), and \( W \in C \). Assume that for every \( b \in B \) there is a finite-dimensional subobject \( F \subseteq B \) in \( C \) containing \( b \). Then for all \( V \in C \), the map

\begin{align*}
&G_V : \operatorname{Hom}_{C}(V, B \otimes W) \to \operatorname{Hom}_{C,\operatorname{rat}}(A \otimes V, W),\\
&f \mapsto (A \otimes V \xrightarrow{\operatorname{id}_A \otimes f} A \otimes (B \otimes W) \xrightarrow{a^{-1}_{A,B,W}} (A\ot B) \ot  W \xrightarrow{\omega\ot \operatorname{id}}  W)
\end{align*}
is bijective.
\end{prop}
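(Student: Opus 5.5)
I would follow the Hopf-algebra argument of \cite{HSdualpair} (Proposition 2.4 there). The only changes are that the associator $a^{-1}_{A,B,W}$ and the structure of $\mathcal{C}=\HH$ have to be carried along. There are three things to prove: $G_V$ is well defined, injective, and surjective.

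\emph{Well-definedness.} Clearly $G_V(f)$ is a composite of morphisms in $\mathcal{C}$, since $\omega$, the associator and $f$ are morphisms. For rationality, fix $v\in V$ and write $f(v)=\sum_{i=1}^n b_i\ot w_i$ with the $w_i$ linearly independent. By hypothesis each $b_i$ lies in a finite-dimensional subobject of $B$; let $F$ be the sum of these subobjects, which is again a finite-dimensional subobject. The associator acts by $a^{-1}(a\ot(b\ot w))=\Phi^{-1}(a_{-1},b_{-1},w_{-1})(a_0\ot b_0)\ot w_0$. If $a\in F^{\perp}$, then $a_0\in F^\perp$ because $F^\perp$ is a subcomodule by Lemma \ref{lem5.3}, and $b_0\in F$ because $F$ is a subcomodule. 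So every term $\omega(a_0,b_0)$ vanishes, and $G_V(f)(F^\perp\ot v)=0$.

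\emph{Injectivity.} Suppose $G_V(f)=0$. Since $a^{-1}_{A,B,W}$ is invertible and $\Phi^{-1}$ is convolution invertible, I can undo the $\Phi$-twist by convolving with $\Phi$ in the comodule legs, as in the proof of Lemma \ref{lem5.3}. This gives $(\omega\ot\operatorname{id})(a\ot f(v))=0$ for all $a\in A$. For this step I will spell out the convolution with $\Phi$, using that $A\ot V\to W$ is colinear. Writing $f(v)=\sum b_i\ot w_i$ with the $w_i$ independent, we get $\omega(a,b_i)=0$ for all $a$, so $b_i\in A^\perp=0$ and hence $f=0$.

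\emph{Surjectivity.} This is the main step. Given $g$ rational and $v\in V$, choose a finite-dimensional subobject $F\subseteq B$ with $g(F^\perp\ot v)=0$. Since $\omega$ is non-degenerate and $F$ is finite-dimensional, the restriction $A/F^\perp\ot F\to\mathbbm{k}$ is a non-degenerate pairing of finite-dimensional spaces. Take dual bases $a_j\in A$ and $b_j\in F$ with $\omega(a_j,b_k)=\delta_{jk}$, and set first the untwisted element $\tilde f(v)=\sum_j b_j\ot g(a_j\ot v)$. This does not depend on the choice of $F$: two choices can be compared inside $F+F'$, because $g$ kills $(F+F')^{\perp}\ot v=(F^\perp\cap F'^\perp)\ot v$.

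To absorb the associator, I would define $f(v)$ by pre-correcting with $\Phi$, namely
\[
f(v)=\sum_j \Phi(\ldots)\,(b_j)_0\ot g(a_j\ot v)_0 ,
\]
with the $\Phi$-coefficient chosen so that $(\omega\ot\operatorname{id})a^{-1}(a\ot f(v))=g(a\ot v)$. To find the right coefficient, I will use that $\omega$ is colinear, so that $\omega(a_0,b_0)a_{-1}b_{-1}=\omega(a,b)1$. I also need $g(F^\perp\ot V_v)=0$ on the subcomodule $V_v$ generated by $v$. I would get this by replacing $F$ with the subobject attached to each of finitely many basis vectors of $V_v$; here $V_v$ is finite-dimensional by local finiteness of comodules. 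Then the coefficient is determined by the identity $g(a\ot v)=\sum_j\omega(a,b_j)g(a_j\ot v)$ for all $a$, which holds because $a-\sum_j\omega(a,b_j)a_j\in F^\perp$.

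Finally, $f$ has to be a morphism in $\mathcal{C}$. Colinearity and $H$-linearity follow from those of $g$ and $\omega$ together with the injectivity of $G_{V}$, applied to the maps $V\to H\ot(B\ot W)$ and $H\ot V\to B\ot W$. Concretely, $G$ is injective, so it suffices to check that both sides of the compatibility become equal after pairing. I expect this last point, which amounts to running the injectivity argument with $H$-structures and associators present, to be the main obstacle. I would handle it as in Lemma \ref{lem5.3}, using the hexagon diagram and convolution invertibility of $\Phi$.
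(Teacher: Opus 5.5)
Your outline has a genuine gap in the surjectivity step, which is where the real content of the proposition lies.

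\textbf{The correcting coefficient.} You never write down the coefficient in $f(v)=\sum_j\Phi(\ldots)(b_j)_0\otimes g(a_j\otimes v)_0$. The ingredients you propose to determine it from are colinearity of $\omega$, convolution invertibility of $\Phi$, and $g(a\otimes v)=\sum_j\omega(a,b_j)g(a_j\otimes v)$. These are not enough. Undoing $a^{-1}_{A,B,W}$ means trading the coaction leg of $a$ for that of $b$, and this needs the quasi-antipode $(\mathcal{S},\alpha,\beta)$.

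Without an antipode the analogous statement is false. Over the bialgebra $\mathbbm{k}\langle c,d\mid dc=1\rangle$ with $c,d$ grouplike, let $\mathbbm{k}a$ and $\mathbbm{k}b$ be coacted on by $d$ and $c$. Then $\omega(a,b)=1$ is colinear and nondegenerate. But the only candidate preimage of $\operatorname{id}_A$ (with $V=\mathbbm{k}$, $W=A$) is $1\mapsto b\otimes a$, and it is not colinear because $cd\neq 1$.

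What actually makes the construction work is rigidity. A finite-dimensional $F$ is rigid in $\mathcal{C}$, and $A/F^{\perp}$ is a left dual of $F$ in $\mathcal{C}$ with $\omega$ as evaluation. $G$ is then the duality isomorphism, with inverse $g\mapsto(\operatorname{id}_F\otimes g)\circ a_{F,F^*,V}\circ(\operatorname{coev}_F\otimes\operatorname{id}_V)$, where $\operatorname{coev}_F$ carries the quasi-antipode data. Your injectivity sketch has the same defect. $f(V)$ is a subcomodule of $B\otimes W$ only for the diagonal coaction, so you cannot convolve with $\Phi$ leg by leg. Instead, restrict $f$ to the subobject $f^{-1}(F\otimes W)$ and use the finite-dimensional duality there.

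\textbf{Showing $f$ is a morphism in $\mathcal{C}$.} Even with the right coefficient, your pointwise construction only sees the subcomodule $V_v$. So it gives at best a colinear $f$, and $H$-linearity is exactly the point you flag as the main obstacle. The tool you propose does not apply as stated. The maps $h\otimes v\mapsto f(h\rhd v)$ and $h\otimes v\mapsto h\rhd f(v)$ from $H\otimes V$ to $B\otimes W$ are not morphisms in $\mathcal{C}$, so injectivity of $G$ on $\operatorname{Hom}_{\mathcal{C}}$ says nothing about them.

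The paper avoids this with an idea missing from your plan: one finite-dimensional $F$ can be made to work on a whole Yetter--Drinfeld submodule. Since $F^{\perp}$ is a subobject (Lemma \ref{lem5.3}) and $g$ is a morphism, the set $\{v\in V\mid g(F^{\perp}\otimes v)=0\}$ is stable under action and coaction. So if it contains a finite-dimensional subcomodule $V'$, it contains $\langle V'\rangle$. Then $g$ restricted to $A\otimes\langle V'\rangle$ factors through $(A/F^{\perp})\otimes\langle V'\rangle$. The duality isomorphism $\operatorname{Hom}_{\mathcal{C}}(\langle V'\rangle,F\otimes W)\cong\operatorname{Hom}_{\mathcal{C}}((A/F^{\perp})\otimes\langle V'\rangle,W)$ then gives a preimage that is automatically a morphism in $\mathcal{C}$. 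These preimages glue over the directed family of all $\langle V'\rangle$ by injectivity.

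Replacing your $V_v$ by $\langle V_v\rangle$, and your unspecified coefficient by this adjunction, repairs the argument and turns it into the paper's proof.
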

\begin{proof}
\textbf{Step1:}\par
We first show the map $G_V$ is well-defined and injective. Let $f \in \operatorname{Hom}_{C}(V, B \otimes W)$, and $g=G_V(f)$.  By assumption, for each $v \in V$, there is a finite-dimensional subobject $F \subseteq B$ with $f(v)=\sum_{i \in I} b^i \ot w^i \in F \ot W$. Now for any $a \in F^{\perp}$, by definition of $G_V$ and Lemma \ref{lem5.3}, we have 
 $$ g(a \ot v)=\Phi(a_{-1},b^i_{-1},w^i_{-1})\omega(a_0,b^i_0) \ot w^i_0=0.$$
This shows $G_v(f) \in \operatorname{Hom}_{\mathcal{C},\operatorname{rat}}(A\ot V,W)$. If $g=0$, since $\omega$ is non-degenerate, we deduce that $f=0$. Therefore $G_V$ is injective.

\textbf{Step2:}\par
Assume that $B$ is finite-dimensional.  Then $B$ has a left dual $B^*=\operatorname{Hom}(B, \mathbbm{k})$. Since $\omega$ is non-degenerate, there is an isomorphism $A \cong B^*$ in $\mathcal{C}$  via
the following isomorphism:
$$ \operatorname{Hom}_{\mathcal{C}}(A\ot B, \mathbbm{k})\cong \operatorname{Hom}_{\mathcal{C}}(A,B^*).$$
 Therefore $A$ is a left dual of $B$, which is finite-dimensional. We have such an isomorphism 
 $$ \operatorname{Hom}_{\mathcal{C}}(V, B\ot W) \cong \operatorname{Hom}_{\mathcal{C}}(A \ot V, W).$$
Since $\operatorname{Hom}_{\mathcal{C},\operatorname{rat}}(A\ot V,W) \subseteq  \operatorname{Hom}_{\mathcal{C}}(A \ot V, W),$ and $G_V$ is injective,  we deduce that $G_V$ is bijective when $B$ is finite-dimensional.

\textbf{Step3:}\par
We now extend the bijectivity to the general case when $B$ may be infinite-dimensional. Let $V' \subseteq V$ be a finite-dimensional $H$-comodule, we denote $ \langle V' \rangle$ as the Yetter-Drinfeld submodule of $V$ generated by $V'$.
Assume that $\langle V' \rangle=V$, we are going to 
prove that $G_V$ is surjective in this case.
Let $g \in \operatorname{Hom}_{\mathcal{C},\operatorname{rat}}(A\ot V, W) $. Since $V'$ is finite-dimensional, the rationality of $g$ implies the existence of  a finite-dimensional subobject $ F\subseteq B$ in $\mathcal{C}$ such that $g(F^{\perp}\ot V')=0$. By Lemma \ref{lem5.3}, $F^{\perp}$ is an object in $\mathcal{C}$. Since $g $ is a morphism in $\mathcal{C}$, by similar method in proof of  Lemma \ref{lem5.3}, for all $h \in H$, we have
 $$ g(F^{\perp} \ot (h \rhd V'))=0.$$
By construction of  $\langle V' \rangle$ in Lemma \ref{lem:YD_generation}, if we denote $V'$ as $X_0$, this shows $g(F^{\perp}\ot Y_0)=0$. \par Recall that 
 $
  X_{1} = \sum_{y \in Y_0} \operatorname{span}\{ y_0 \mid \delta_V(y) = y_{-1} \otimes y_0 \}, $
We are going to show $g(F^{\perp} \ot X_1)=0$.  Since $g$ is a morphism in $\mathcal{C}$, we have 
 $$(\operatorname{id}_H \ot g)\circ \delta(F^{\perp} \ot Y_0)=\delta(g(F^{\perp}\ot Y_0)))=0.$$
By definition of $X_1$,the set  $\operatorname{span}\{z_0 \mid \delta(z)=z_{-1}\ot z_0, \ z\in F^{\perp}\ot Y_0\}=F^{\perp}\ot X_1$.
 This implies 
 $ g(F^{\perp} \ot X_1)=0$.
 By iterating this argument, we can prove that 
  $$ g(F^{\perp} \ot \langle V' \rangle)=0.$$
Note that we have such a non-degenerate pairing
$$ A/{F^{\perp}}\ot F\rightarrow \mathbbm{k}, \ \overline{a}\ot b \mapsto \omega(a,b).$$
Since $F$ is finite-dimensional, by the situation considered before, we have a bijective map
$$ \operatorname{Hom}_{\mathcal{C}}(V, F\ot W) \cong \operatorname{Hom}_{\mathcal{C},\operatorname{rat}}(A/F^{\perp} \ot V, W).$$
Let $\overline{g}\in \operatorname{Hom}_{\mathcal{C},\operatorname{rat}}(A/F^{\perp} \ot V, W)$ be the morphism induced by $g$ and $f \in \operatorname{ Hom}(V, F \ot W)$ be the preimage of $\overline{g}$. Then the preimage of $g$ is 
$$ i \circ f : V \rightarrow B \ot W, $$ 
where $i : F \ot W \rightarrow B \ot W $ is an inclusion. Thus $G_V: \operatorname{Hom}_{\mathcal{C}}(V, B\ot W) \cong \operatorname{Hom}_{\mathcal{C},\operatorname{rat}}(A \ot V, W)$ is bijective in this case.

\textbf{Step4:}\par
It remains to prove the bijectivity for an arbitrary object $V \in \mathcal{C}$. Let 
$$ \mathcal{V}=\lbrace \langle V' \rangle \mid V'\subseteq V \ \operatorname{finite-dimensional} \ H-\operatorname{subcomodule}  \rbrace.$$
By the finiteness theorem for comodules, we have $V= \bigcup\limits_{V' \ \operatorname{finite-dimensional}}V'$ as comodules.
Note that for all $U_1, U_2 \in \mathcal{V}$, there is an object $U \in \mathcal{V}$ with $U_1 \cup U_2 \subseteq U$, since $U $ is closed under direct sums.  Thus $$
V=\bigcup_{U \in \mathcal{V}} U$$ as an object in $\mathcal{C}$.
Given $g \in \operatorname{Hom}_{\mathcal{C},\operatorname{rat}}(A \ot V, W),$ let $g_U$ be the restriction of $g$ to $A\ot U$, then there is a morphism $f_U: U \rightarrow B \ot W$ with $G_V(f_U)=g_U$. For $U_1, U_2 \in \mathcal{V}$, let $U_1 \subseteq U_2$, we have $f_{U_2}\mid_{U_1}=f_{U_1}$, since $G_V$ is injective. Hence the maps $f_U$ define a linear map $f:V \rightarrow B \ot W$ by $f(v)=f_U(v)$, where $U$ is an element in $\mathcal{V}$ containing $v$. This construction yields $G_V(f)=g$. Thus  $G_V$ is bijective for arbitrary $V$.
\end{proof}

\subsection{Bijection between comodules and rational modules}
We begin by introducing the notion of Hopf pairings. For our purposes, we assume that $A$ and $B$ are locally finite $\mathbb{N}_0$-graded Hopf algebras in $\mathcal{C}=\HH$ in this section. Since this does not affect our study of Nichols algebras, we assume that $A$ and $B$ have bijective antipodes in $\mathcal{C}$.
\begin{definition} 
   A morphism \( \omega : A \otimes B \to \mathbbm{k} \) in \( \mathcal{C} \) is called a Hopf pairing, if the following identities hold:
\begin{align}\label{5.12}
\omega \circ (\mu_A \otimes \operatorname{id}_B)& = \omega \circ (\operatorname{id}_A \otimes \omega \otimes \operatorname{id}_B)\circ  (\operatorname{id}_A\otimes a^{-1}_{A,B,B}) \circ a_{A,A,B\otimes B}\circ (\operatorname{id}_{A \otimes A} \otimes \Delta_B), \\ \label{5.13}
\omega \circ (\eta_A \otimes \operatorname{id}_B) &= \varepsilon_B,\\
\omega \circ (\operatorname{id}_A \otimes \mu_B)& = \omega \circ (\operatorname{id}_A \otimes \omega \otimes \operatorname{id}_B)\circ  (\operatorname{id}_A\otimes a^{-1}_{A,B,B}) \circ a_{A,A,B\otimes B} \circ (\Delta_A \otimes \operatorname{id}_{B \otimes B}),\\
\omega \circ (\operatorname{id}_A \otimes \eta_B)& = \varepsilon_A.
\end{align}
   Moreover, we call  a Hopf pairing $\omega : A \otimes B \to \mathbbm{k}$ is a dual pair of locally finite $\mathbb{N}_0$-graded Hopf algebras in $\mathcal{C}$, if $\omega$ is non-degenerate and
$$ \omega(A(n),B(m))=0, \ \operatorname{if } \ n\neq m.$$
\end{definition}
There are some useful properties of Hopf pairings, we list here for further use.
\begin{rmk}\textup{[\citealp{rootsys}, Proposition 3.3.8]}\upshape\label{rmk2.3} Under above assumptions, we have the following properties.
\begin{itemize}
    \item [(1)] Suppose  $\omega: A \otimes B \rightarrow \bfo$ is a dual pair in $\mathcal{C}$, then \begin{equation}\label{3.0}
        \omega\circ (\operatorname{id} \otimes \mathcal{S}_B)=\omega\circ (\mathcal{S}_A\otimes \operatorname{id}).
    \end{equation} 
    \item [(2)] If $\omega: A\otimes B \rightarrow \textbf{1}$  is a dual pair, then  \begin{align}\label{3.15}
        &\omega^{+}:=\omega \circ c_{B,A}\circ (\mathcal{S}_B\otimes \mathcal{S}_A): B \otimes A \rightarrow \mathbbm{k}, \\
        \label{3.16}&\omega^{+\operatorname{cop}}:=\omega^{+}\circ (\operatorname{id}_B\ot \mathcal{S}^{-1}_A ): B^{\operatorname{cop}}\ot A^{\operatorname{cop}}\rightarrow \mathbbm{k}
    \end{align} 
    are dual pairs too.
\end{itemize}
\end{rmk}
Recall that $\overline{C}$ means the reverse braided monoidal category.
The following lemma holds in general braided monoidal categories; here we cite this lemma for simplicity.
 \begin{lemma}\textup{[\citealp{rootsys}, Proposition 3.3.9]}\label{lem5.8}
  Let $\mathcal{C}$  be an arbitrary braided  monoidal category. Suppose $A$, $B$ are Hopf algebras in $\mathcal{C}$. Assume that $p: A \ot B \rightarrow \bfo$ is a Hopf pairing, then we have two strict monoidal functors: 
  \begin{equation}\label{5.19}
      \begin{aligned}
          D_1: \ & {^B\mathcal{C}}\longrightarrow {_{A^{\operatorname{cop}}}\overline{\mathcal{C}}}, \ \ (V,\delta) \mapsto (V, \overline{\lambda}), \\
          \textup{with } \ &\overline{\lambda}=(A \ot V \xrightarrow{\operatorname{id}_A\ot \delta} A \ot (B \ot V)\xrightarrow{a^{-1}_{A,B,V}}(A\ot B) \ot V \xrightarrow{p \ot \operatorname{id}_V} V) 
      \end{aligned}
  \end{equation}
  and 
   \begin{equation}
      \begin{aligned}
          D_2: \ & {^{A^{\operatorname{cop}}}\overline{\mathcal{C}}} 
          \longrightarrow {_B\mathcal{C}} , \ \ (V,\overline{\delta}) \mapsto (V, \lambda), \\
          \textup{with } \ &\lambda=(B \ot V \xrightarrow{\operatorname{id}_B\ot \overline{\delta}} B \ot (A \ot V)  \xrightarrow{a^{-1}_{B,A,V} } (B \ot A) \ot V \xrightarrow{p^{+\operatorname{cop}} \ot \operatorname{id}_V} V).
      \end{aligned}
  \end{equation}
 \end{lemma}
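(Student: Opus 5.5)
The plan is to verify directly that $D_1$ and $D_2$ are well-defined strict monoidal functors. The argument uses only the Hopf pairing axioms (\ref{5.12})--(\ref{5.13}) and their $B$-side analogues, naturality of the braiding, and the coherence (hexagon and pentagon) axioms of $\mathcal{C}$. On morphisms both functors act as the identity, so a comodule map $f$ must be shown to be a module map. For $D_1$ this is immediate: $\overline{\lambda}$ is built from $\delta$ by morphisms natural in $V$, namely $\operatorname{id}_A\otimes(-)$, $a^{-1}$ and $p\otimes\operatorname{id}$. The same naturality argument applies to $D_2$.

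\textbf{Step 1: $\overline{\lambda}$ is a module structure over $A^{\operatorname{cop}}$ in $\overline{\mathcal{C}}$.} Unitality follows from $p\circ(\eta_A\otimes\operatorname{id}_B)=\varepsilon_B$ together with counitality of $\delta$. Since the multiplication of $A^{\operatorname{cop}}$ is $\mu_A$, associativity amounts to
\[
\overline{\lambda}\circ(\mu_A\otimes\operatorname{id}_V)=\overline{\lambda}\circ(\operatorname{id}_A\otimes\overline{\lambda})
\]
up to associators. I expand the left side using (\ref{5.12}), which converts $\mu_A$ into $\Delta_B$ paired against $A\otimes A$, and then apply coassociativity of $\delta$, written $(\Delta_B\otimes\operatorname{id})\circ\delta=a\circ(\operatorname{id}\otimes\delta)\circ\delta$ with the appropriate associator. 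The nested pairing in (\ref{5.12}) (outer $A$ with outer $B$) then matches the nested composite $\operatorname{id}_A\otimes\overline{\lambda}$ exactly, provided the associators are moved using naturality and the pentagon. No braiding enters, which is why $\mathcal{C}$ versus $\overline{\mathcal{C}}$ is irrelevant at this stage.

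\textbf{Step 2: monoidality.} For $V,W\in{^B\mathcal{C}}$ with coaction $\delta_{V\otimes W}$ built from $\mu_B$ and $c_{V,B}$, I show that $\overline{\lambda}_{V\otimes W}$ equals the tensor-product action over $A^{\operatorname{cop}}$ in $\overline{\mathcal{C}}$. That action uses $\Delta_{A^{\operatorname{cop}}}=\overline{c}_{A,A}\circ\Delta_A=c_{A,A}^{-1}\circ\Delta_A$ and the braiding $\overline{c}_{A,V}=c^{-1}_{V,A}$. Applying the third Hopf pairing identity rewrites $p\circ(\operatorname{id}_A\otimes\mu_B)$ as a pairing of $\Delta_A$ against $B\otimes B$. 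The resulting expression has the form ``$p\otimes p$ after $c_{V,B}$''. Naturality of the braiding then moves $c_{V,B}$ past the pairing so that it becomes $c$ acting on $A$ and $V$; on the module side this is exactly the inverse braiding $c^{-1}$ appearing in $\overline{\mathcal{C}}$, combined with the inverse of $c_{A,A}$ from $\Delta_{A^{\operatorname{cop}}}$. The unit object is handled trivially.

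\textbf{Step 3: $D_2$.} By Remark \ref{rmk2.3}(2), $p^{+\operatorname{cop}}:B^{\operatorname{cop}}\otimes A^{\operatorname{cop}}\to\mathbbm{k}$ is a Hopf pairing. Applying the $D_1$ construction to this pairing in the braided category $\overline{\mathcal{C}}$ gives a functor ${^{A^{\operatorname{cop}}}\overline{\mathcal{C}}}\to{_{(B^{\operatorname{cop}})^{\operatorname{cop}}}\overline{\overline{\mathcal{C}}}}$, and since $(B^{\operatorname{cop}})^{\operatorname{cop}}=B$ and $\overline{\overline{\mathcal{C}}}=\mathcal{C}$, this is precisely $D_2$.

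The main obstacle is the bookkeeping in Step 2. The braidings must cancel correctly, with $c_{V,B}$ on the comodule side corresponding to $c^{-1}$ together with the cop-coproduct on the module side, and this has to happen while the associators are tracked in a non-strict category. I would first carry out the check in a strict braided category using string diagrams, where it reduces to a single use of naturality of $c$. I would then invoke Mac Lane coherence to transport the identity to $\mathcal{C}$.
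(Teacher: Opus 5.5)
Your proposal is correct. The paper itself gives no argument for this lemma; it quotes it from Heckenberger--Schneider [\citealp{rootsys}, Proposition 3.3.9], so your route is a self-contained verification in place of a citation.

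Step 1 is the same computation the paper carries out later, in the converse direction, in the proof of Proposition \ref{prop5.9} (equations (\ref{5.20})--(\ref{5.21})). The only slip there is that the associator in your coassociativity formula should be $a^{-1}_{B,B,V}$.

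Step 2 carries the real content, and your sketch can be made precise with two identities. First, naturality of $c_{V,-}$ applied to $p$, together with the hexagon axiom, gives
\[
(p\otimes \mathrm{id}_V)\circ(\mathrm{id}_A\otimes c_{V,B})=(\mathrm{id}_V\otimes p)\circ(\overline{c}_{A,V}\otimes\mathrm{id}_B),\qquad \overline{c}_{A,V}=c^{-1}_{V,A}.
\]
The third pairing identity pairs $a^{(2)}$ with the coefficient of $V$ and $a^{(1)}$ with that of $W$. After applying it and this identity, the action on $V\otimes W$ becomes $(\mathrm{id}_V\otimes\overline{\lambda}_W)\circ(\overline{c}_{A,V}\otimes\mathrm{id}_W)\circ(\mathrm{id}_A\otimes\overline{\lambda}_V\otimes\mathrm{id}_W)\circ(\Delta_A\otimes\mathrm{id}_{V\otimes W})$. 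Second, on the module side $(\mathrm{id}_A\otimes\overline{c}_{A,V})\circ(\overline{c}_{A,A}\otimes\mathrm{id}_V)=\overline{c}_{A,A\otimes V}$, and this passes through $\overline{\lambda}_V$ by naturality. That is how the crossing in $\Delta_{A^{\mathrm{cop}}}=\overline{c}_{A,A}\circ\Delta_A$ is absorbed, and the two expressions coincide.

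Step 3 obtains $D_2$ as $D_1$ for $p^{+\mathrm{cop}}$ in $\overline{\mathcal{C}}$, using $(B^{\mathrm{cop}})^{\mathrm{cop}}=B$ and $\overline{\overline{\mathcal{C}}}=\mathcal{C}$, and this halves the work. Be aware that Remark \ref{rmk2.3}(2) is stated in the paper only for dual pairs. What you actually need, and what holds for an arbitrary Hopf pairing, is that $p^{+\mathrm{cop}}=p\circ c_{B,A}\circ(\mathcal{S}_B\otimes\mathrm{id}_A)$ is a Hopf pairing of $B^{\mathrm{cop}}$ and $A^{\mathrm{cop}}$ in $\overline{\mathcal{C}}$; non-degeneracy plays no role.

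The citation buys the paper brevity. Your argument shows explicitly that only the pairing axioms, naturality of the braiding and coherence are used (no non-degeneracy, no rigidity), which is the generality in which the paper later applies the lemma.
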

The main result of this subsection is to give a bijection between $B$-comodules  in $\mathcal{C}$ and rational $A^{\operatorname{cop}}$-modules in $\overline{\mathcal{C}}$.
\begin{prop}\label{prop5.9}
Let $(A,B, \omega)$ be a dual pair of locally finite $\mathbb{N}_0$-graded Hopf algebras in $\mathcal{C}=\HH$. The functor
 \begin{align*}
          D_1: \ & {^B\mathcal{C}}\longrightarrow {_{A^{\operatorname{cop}}}\overline{\mathcal{C}}_{\operatorname{rat}}}, \ \ (V,\delta) \mapsto (V, \lambda), \\
          \textup{with} \ &\lambda=(A \ot V \xrightarrow{\operatorname{id}_A\ot \delta} A \ot (B \ot V) \xrightarrow{a^{-1}_{A,B,V}} (A\ot B) \ot V \xrightarrow{\omega \ot \operatorname{id}_V} V)
      \end{align*}
    is a  strict  monoidal isomorphism.
\end{prop}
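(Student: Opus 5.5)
The plan is to take Lemma \ref{lem5.8} as the source of the functor and of its strict monoidality, and to add two things: the image of $D_1$ lies in the rational part, and $D_1$ is bijective on objects and on morphisms. Since $D_1$ only replaces the coaction $\delta$ by the action $\lambda=G_V(\delta)$ and leaves the underlying object and the morphisms unchanged, it suffices to show the following. For every $V\in\mathcal{C}$, the assignment $\delta\mapsto(\omega\ot\operatorname{id})\circ a^{-1}_{A,B,V}\circ(\operatorname{id}\ot\delta)$ is a bijection from left $B$-comodule structures on $V$ in $\mathcal{C}$ onto rational left $A^{\operatorname{cop}}$-module structures on $V$ in $\overline{\mathcal{C}}$. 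It must also match comodule morphisms with module morphisms. Monoidality is inherited from Lemma \ref{lem5.8}, because the tensor products of comodules and of modules are defined on the same underlying object of $\mathcal{C}$.

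\textbf{Step 1: rationality.} Let $v\in V$ and write $\delta(v)=\sum b^i\ot v^i$ as a finite sum. Since $B$ is $\mathbb{N}_0$-graded, the $b^i$ lie in $\bigoplus_{m\le m_0}B(m)$ for some $m_0$. Because $\omega(A(n),B(m))=0$ for $n\neq m$, and the associator only rescales by scalars from $H$-coactions, which preserve the grading of $B$, we get $\lambda(A(n)\ot v)=0$ for $n>m_0$. So $D_1(V)$ is rational in the sense of Definition \ref{def3.6}.

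\textbf{Step 2: inverse on objects.} I will apply Proposition \ref{prop5.5} with $W=V$. Its hypothesis holds because $B$ is locally finite and each $B(m)$ is a subobject of $B$ in $\mathcal{C}$. This requires checking that the grading is compatible with the $H$-structure, which I take as part of ``graded Hopf algebra in $\mathcal{C}$''. Hence every $b$ lies in the finite-dimensional subobject $\bigoplus_{m\le \deg b}B(m)$. Next I check that graded rationality in the sense of Definition \ref{def3.6} implies rationality of $\lambda\in\operatorname{Hom}_{\mathcal{C},\operatorname{rat}}(A\ot V,V)$. If $\lambda(A(n)\ot v)=0$ for $n\ge n_0$, take $F=\bigoplus_{m<n_0}B(m)$. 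Non-degeneracy together with the grading condition gives $F^{\perp}=\bigoplus_{n\ge n_0}A(n)$, so $\lambda(F^\perp\ot v)=0$. Proposition \ref{prop5.5} then yields a unique morphism $\delta:V\to B\ot V$ with $G_V(\delta)=\lambda$.

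It remains to show that this $\delta$ is coassociative and counital. Using the Hopf pairing axiom \eqref{5.12}, I rewrite $G_{V}((\Delta_B\ot\operatorname{id})\delta)$ and $G_V((\operatorname{id}\ot\delta)\delta)$, with the appropriate associators, as the two sides of the associativity of $\lambda$ for $A^{\operatorname{cop}}$ in $\overline{\mathcal{C}}$. Here the multiplication of $A$ pairs with the coproduct of $B$, and the cop/reverse-braiding conventions are exactly those making Lemma \ref{lem5.8} work. Injectivity of the two-variable analogue $\operatorname{Hom}_{\mathcal{C}}(V,(B\ot B)\ot V)\to \operatorname{Hom}_{\mathcal{C}}((A\ot A)\ot V,V)$ then gives coassociativity. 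This injectivity uses non-degeneracy of the induced pairing of $A\ot A$ with $B\ot B$, which follows since graded pieces are finite-dimensional. Counitality follows in the same way from \eqref{5.13}.

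\textbf{Step 3: morphisms.} A morphism $f:V\to V'$ in $\mathcal{C}$ is $B$-colinear if and only if it is $A$-linear for the induced actions. One direction is naturality of $a^{-1}$. For the converse, apply injectivity of $G_V$ (Step 1 of Proposition \ref{prop5.5}) to $\delta'\circ f-(\operatorname{id}\ot f)\circ\delta$.

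I expect the main obstacle to be Step 2's verification of coassociativity in the non-associative setting. There the associators $a^{\pm1}$ and the reverse braiding have to be tracked so that they match exactly the cop structure on $A$. I would handle it by working entirely with morphisms in $\mathcal{C}$ and invoking the computation already carried out for Lemma \ref{lem5.8}. That computation shows $G$ intertwines the comodule axioms with the module axioms, so only injectivity of $G$ on $B\ot B$ remains to be supplied here.
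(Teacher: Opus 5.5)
Your proposal is correct and follows essentially the paper's argument: the bijection $G_V$ of Proposition~\ref{prop5.5}, the Hopf-pairing axiom \eqref{5.12} together with non-degeneracy to trade associativity of $\lambda$ for coassociativity of $\delta$, injectivity of $G_V$ for morphisms, and Lemma~\ref{lem5.8} for strict monoidality. You are slightly more complete than the paper, since you explicitly show via $F^{\perp}=\bigoplus_{n\ge n_0}A(n)$ that a graded-rational action lies in $\operatorname{Hom}_{\mathcal{C},\operatorname{rat}}(A\otimes V,V)$; this step is needed for $G_V^{-1}$ to apply to every object of ${_{A^{\operatorname{cop}}}\overline{\mathcal{C}}_{\operatorname{rat}}}$, and the paper leaves it implicit.
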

\begin{proof}
Let $V$ be an object in $\HH$. By Proposition \ref{prop5.5}, we have a bijection:
    \begin{align*}
        &G_V : \operatorname{Hom}_{C}(V, B \otimes V) \to \operatorname{Hom}_{C,\text{rat}}(A \otimes V, V),\\
&f \mapsto \lambda=(A \otimes V \xrightarrow{\operatorname{id}_A \otimes f} A \otimes (B \otimes V) \xrightarrow{a^{-1}_{A,B,V}} (A\ot B) \ot  V \xrightarrow{\omega\ot \operatorname{id}}  V).  
    \end{align*} 
    Note that $\operatorname{Hom}_{C,\text{rat}}(A \otimes V, V)$ is the set of all $\lambda$ in $\operatorname{Hom}_{C}(A \otimes V, V)$ such that for all $v\in V$, there is  a finite-dimensional object $F \subseteq B$, such that $\lambda(F^{\perp}\ot v)=0.$ Since $B$ is locally finite, there is 
    a natural number $n_0$  such that $F \subseteq \bigoplus_{i=0}^{n_0} B(i)$. Therefore $\lambda(A(n)\ot v)=0$ for all $n > n_0$. Thus if $(V, \lambda) \in {_A\mathcal{C}}$, where $\lambda \in\operatorname{Hom}_{C,\text{rat}}(A \otimes V, V) $, then $(V, \lambda) \in {_A\mathcal{C}_{\operatorname{rat}}}$ automatically.\par 
    Now we are going to prove $(V, \lambda) \in {_A\mathcal{C}}_{\operatorname{rat}}$ if and only if $(V,\delta)\in {^B\mathcal{C}}$. The condition $(V,\delta)\in {^B\mathcal{C}}$ implies $(V, \lambda) \in {_A\mathcal{C}}_{\operatorname{rat}}$ follows from Lemma \ref{lem5.8} and the above statement. On the other hand,  $(V, \lambda) \in {_A\mathcal{C}}$ is equivalent to
    $$ 
    \lambda \circ (\operatorname{id}_A \ot \lambda) =\lambda \circ (\mu_A \ot \operatorname{id}_V) \circ a_{A,A,V}^{-1}.
    $$
According to the definition of $\lambda$,
\begin{align*}
  \lambda \circ (\operatorname{id}_A \ot \lambda)=(\omega \ot \operatorname{id}_V)\circ a^{-1}_{A,B,V} \circ (\operatorname{id}_A \ot \delta) \circ (\operatorname{id}_A \ot (\omega \ot \operatorname{id}_V))\circ (\operatorname{id}_A \ot a^{-1}_{A,B,V})\circ (\operatorname{id}_A \ot (\operatorname{id}_A \ot \delta)).  
\end{align*}
Since $\delta$, $\omega$ are morphisms in $\mathcal{C}$, and the  associative isomorphism is  natural, we have
\begin{equation}\label{5.20}
\begin{aligned}
    \lambda \circ (\operatorname{id}_A \ot \lambda)&=(\omega \ot \operatorname{id}_V)\circ a^{-1}_{A,B,V} \circ (\operatorname{id}_A \ot (\omega \ot \operatorname{id}_{B\ot V})) \circ (\operatorname{id}_A \ot a^{-1}_{A,B,B\ot V}) \circ (\operatorname{id}_A \ot (\operatorname{id}_A \ot (\operatorname{id}_B \ot \delta)\circ \delta)).\\
    &=(\omega \ot \operatorname{id}_V) \circ ((\operatorname{id}_A \ot (\omega \ot \operatorname{id}_B))\ot \operatorname{id}_V) \circ (a^{-1}_{A,A\ot B,B} \ot \operatorname{id}_V ) \circ (\operatorname{id}_A \ot a^{-1}_{A \ot B,B,V})\\ &\circ (\operatorname{id}_A \circ a^{-1}_{A,B,B\ot V}) \circ (\operatorname{id}_A \ot (\operatorname{id}_A \ot (\operatorname{id}_B \ot \delta)\circ \delta)).
\end{aligned}
\end{equation}
Now we write down explicit formula of $\lambda \circ (\mu_A \ot \operatorname{id}_V) \circ a_{A,A,V}^{-1}$
\begin{align*}
    \lambda \circ (\mu_A \ot \operatorname{id}_V) \circ a_{A,A,V}^{-1}= (\omega \ot \operatorname{id}_V) \circ a^{-1}_{A,B,V}\circ  (\operatorname{id}\ot \delta)\circ (\mu_A \ot \operatorname{id})\circ a^{-1}_{A,A,V}.
\end{align*}
Since $\mu_A$ is a morphism in $\mathcal{C}$, we have 
\begin{align*}
     \lambda \circ (\mu_A \ot \operatorname{id}_V) \circ a_{A,A,V}^{-1}=(\omega \ot \operatorname{id}_V) \circ ((\mu_A \ot \operatorname{id}_B)\ot \operatorname{id}_V)\circ a^{-1}_{A\ot A,B,V} \ot (\operatorname{id}_{A\ot A} \ot \delta)\circ a^{-1}_{A,A,V}.
\end{align*}
By equation (\ref{5.12}), and by naturality of associative isomorphism,
\begin{equation}\label{5.21}
\begin{aligned}
   &\lambda \circ (\mu_A \ot \operatorname{id}_V) \circ a_{A,A,V}^{-1}\\&= (\omega \ot \operatorname{id}_V) \circ ((\operatorname{id}_A \ot (\omega \ot \operatorname{id}_B))\ot \operatorname{id}_V) \circ ((\operatorname{id}_A \ot a^{-1}_{A,B,B})\circ a_{A,A,B\ot B} \circ (\operatorname{id}_{A\ot A} \ot \Delta_B) \ot \operatorname{id}_V)\\ & \circ a^{-1}_{A \ot A, B,V} \circ (\operatorname{id}_{A\ot A}\ot \delta)\circ a^{-1}_{A,A,V}\\
   &=(\omega \ot \operatorname{id}_V) \circ ((\operatorname{id}_A \ot (\omega \ot \operatorname{id}_B))\ot \operatorname{id}_V)  \circ ((\operatorname{id}_A \ot a^{-1}_{A,B,B})\circ a_{A,A,B\ot B} \ot \operatorname{id}_V)\\ & \circ a^{-1}_{A\ot A, B\ot B, V} \circ a^{-1}_{A,A,(B\ot B)\ot V} \circ (\operatorname{id}_A \ot (\operatorname{id}_A \ot a_{B,B,V}^{-1})) \circ (\operatorname{id}_{A} \ot (\operatorname{id}_A \ot (a_{B,B,V}\circ ((\Delta_B \ot \operatorname{id}_V) \circ  \delta)))).\\
\end{aligned}
\end{equation}

Since we have assumed $(V,\lambda) \in {_A\mathcal{C}}$,
for all $a,a' \in A$, $v \in V$, we have
$ \lambda \circ (\operatorname{id}_A \ot \lambda)(a\ot (a' \ot v))=\lambda \circ (\mu_A \ot \operatorname{id}_V) \circ a_{A,A,V}^{-1}(a\ot (a' \ot v)).$
  By non-degeneracy of $\omega$ and Mac-Lane's coherence theorem, this is equivalent to
$$ (\operatorname{id}\ot \delta) \circ \delta(v)= a_{B,B,V} \circ (\Delta_B \ot \operatorname{id}_V)\circ \delta(v).$$
Furthermore, we have $v=(\varepsilon_B \ot \operatorname{id}_V)\circ \delta(v)$ by equation (\ref{5.13}).
Therefore 
 we deduce that $(V, \lambda) \in {_A\mathcal{C}_{\operatorname{rat}}}$ if and only if $(V,\delta)\in {^B\mathcal{C}}$.
\par 
Now let $(V,\delta), (V',\delta') \in {^B\mathcal{C}}$, and $(V, \lambda)$, $(V',\lambda' )$ be the corresponding modules in ${_A\mathcal{C}_{\operatorname{rat}}}$, where $\lambda=G_V(\delta)$, $\lambda'=G_V'(\delta')$. It is direct that a map $f \in \operatorname{Hom}_{\mathcal{C}}(V,V')$ is a $B$-comodule map if and only if $f$ is a $A$-module map in $\mathcal{C}$.\par 
 Note that  $(V, \lambda)\in {_A\mathcal{C}_{\operatorname{rat}}}$ if and only if $(V,\lambda) \in {_{A^{\operatorname{cop}}}\overline{\mathcal{C}}_{\operatorname{rat}}} $.
 Since the category ${_{A^{\operatorname{cop}}}\overline{\mathcal{C}}_{\operatorname{rat}}}$ is a monoidal subcategory of ${_{A^{\operatorname{cop}}}\overline{\mathcal{C}}}$ by Lemma \ref{lem5.2} and  $D_1$ is a  strict monoidal functor by Lemma \ref{lem5.8}, we deduce that $D_1$ is a strict monoidal equivalence. Since $G_V$ is bijective, it is easy to see that 
  the inverse functor is given by 
  \begin{align*}
      D_1': {_{A^{\operatorname{cop}}}\overline{\mathcal{C}}_{\operatorname{rat}}}\rightarrow {^B\mathcal{C}}, \ \ (V, \lambda') \mapsto (V,\delta'),  \end{align*} 
     $\text{where} 
       \ \delta' \ \text{is determined by}\  G_V^{-1}(\lambda').$
Thus $D_1$ is a strict  monoidal isomorphism.
\end{proof}
\subsection{Monoidal equivalence between Yetter-Drinfeld module categories related by a dual pair}
Let $(A,B, \omega)$ be a dual pair of locally finite $\mathbb{N}_0$-graded Hopf algebras with bijective antipodes in $\mathcal{C}=\HH$. The main result in this subsection is to show there is a braided  monoidal 
equivalence $ (\Omega,\beta): \BBCR \rightarrow \AACR$.
\par 
By Remark \ref{rmk2.3}, we have a dual pair of locally finite $\mathbb{N}_0$-graded Hopf algebras in $\overline{\mathcal{C}}$:
\begin{align*}
 \omega^{+\operatorname{cop}}&:B^{\operatorname{cop}}\ot A^{\operatorname{cop}}\longrightarrow \mathbbm{k}, \\
    \omega^{+\operatorname{cop}}&=\omega \circ c_{B,A}\circ (\mathcal{S}_B \ot \mathcal{S}_A)\circ (\operatorname{id}_B \ot \mathcal{S}^{-1}_A)\\
    &=\omega \circ c_{B,A}\circ (\mathcal{S}_B \ot \operatorname{id}_A)\\
    &=\omega \circ c_{B,A}\circ (\operatorname{id}_B \ot \mathcal{S}_A).
\end{align*} 
Recall that by Proposition \ref{prop5.9}, we now have two monoidal isomorphisms:
 \begin{align}
&D_1: {^B\mathcal{C}}\rightarrow {_{A^{\operatorname{cop}}}\overline{\mathcal{C}}_{\operatorname{rat}}}, \ D_2: {^{A^{\operatorname{cop}}}\overline{\mathcal{C}}}\rightarrow {_B\mathcal{C}_{\operatorname{rat}}}, \\
\label{5.23}
&D_1(V,\delta)=(V,\overline{\lambda}), \ \overline{\lambda}=(A \ot V \xrightarrow{\operatorname{id}_A\ot \delta} A \ot (B \ot V) \xrightarrow{a^{-1}_{A,B,V}} (A\ot B) \ot V \xrightarrow{\omega \ot \operatorname{id}_V} V), \\ \label{5.24}
&D_2(V,\overline{\delta})=(V,{\lambda}), \ {\lambda}=(B \ot V \xrightarrow{\operatorname{id}_B\ot \overline{\delta}} B \ot (A \ot V) \xrightarrow{a^{-1}_{B,A,V}} (A\ot B) \ot V \xrightarrow{\omega^{+\operatorname{cop}} \ot \operatorname{id}_V} V).
 \end{align}
 
Our first objective is to prove the following braided monoidal equivalence:
\begin{thm}\label{thm5.10}
    The functor 
    \begin{equation}
        \Gamma: \overline{\BBCR} \longrightarrow \AACOPCR, \ (V,\lambda,\delta)\mapsto (V,\overline{\lambda},\overline{\delta}),
    \end{equation}
    where $\overline{\lambda},\overline{\delta}$ are defined by (\ref{5.23}), (\ref{5.24}), and the morphism $f$ are mapped onto $f$, is an equivalence of  braided monoidal categories.
\end{thm}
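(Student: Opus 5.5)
Given $(V,\lambda,\delta)\in\BBCR$, we apply the two isomorphisms of Proposition \ref{prop5.9} separately to the two structures. The $B$-comodule structure $\delta$ yields, through $D_1$ in (\ref{5.23}), a rational left $A^{\operatorname{cop}}$-module structure $\overline{\lambda}$ in $\overline{\mathcal{C}}$. The rational $B$-module structure $\lambda$ yields, through the inverse of $D_2$ in (\ref{5.24}), an $A^{\operatorname{cop}}$-comodule structure $\overline{\delta}$ in $\overline{\mathcal{C}}$. Here we use the dual pair $\omega^{+\operatorname{cop}}$ of Remark \ref{rmk2.3} and Proposition \ref{prop5.5}; the latter applies because $B$ and $A$ are locally finite and graded, so every element lies in a finite-dimensional graded subobject. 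Both assignments are bijective on structures. Both also act as the identity on morphisms, because module maps correspond to comodule maps. Hence $\Gamma$ is fully faithful and injective on objects, and the whole task is to show that the Yetter--Drinfeld condition and the braidings correspond.

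\textbf{Step 1 (Yetter--Drinfeld compatibility).} We use the criterion of Remark \ref{rmk2.13}(2). A pair (module, comodule) is Yetter--Drinfeld exactly when $c^{\mathcal{YD}}_{X,V}$ is a comodule map for all $X$ in ${^R\mathcal{C}}$, or equivalently when $c^{\mathcal{YD}}_{V,X}$ is a module map for all $X$ in ${_R\mathcal{C}}$. We first show that, under $D_1$, the $\mathcal{YD}$-braiding $c^{\mathcal{YD}}_{X,V}$ over $B$ in $\mathcal{C}$ is carried to the inverse braiding $\overline{c}^{\mathcal{YD}}_{V,X}$ over $A^{\operatorname{cop}}$ in $\overline{\mathcal{C}}$, with $\overline{c}^{\mathcal{YD}}$ as in Remark \ref{rmk2.13}(3). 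This is a diagrammatic computation that uses three facts: $\omega$ is a morphism in $\mathcal{C}$ and hence commutes with the braiding by naturality; the antipode identity (\ref{3.0}); and the formula $\omega^{+\operatorname{cop}}=\omega\circ c_{B,A}\circ(\operatorname{id}_B\otimes\mathcal{S}_A)$. Consequently, $c^{\mathcal{YD}}_{X,V}$ is a $B$-comodule map for all $B$-comodules $X$ if and only if the corresponding map is an $A^{\operatorname{cop}}$-module map for all rational $A^{\operatorname{cop}}$-modules $D_1(X)$. Combining this with condition (iv) on one side and condition (ii) on the other (applied to $c$ and to its inverse) gives: $(V,\lambda,\delta)$ is Yetter--Drinfeld over $B$ exactly when $(V,\overline{\lambda},\overline{\delta})$ is Yetter--Drinfeld over $A^{\operatorname{cop}}$. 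Rationality of the image holds automatically by Proposition \ref{prop5.9}. For essential surjectivity we apply the inverse functors $D_1'$ and $D_2^{-1}$ and run the same argument backwards. The ``for all $X$'' versions of the criterion are available on both sides because $D_1$ and $D_2$ are isomorphisms onto the full rational categories. The only exception is that criterion (iii) uses $R$ itself, which need not be rational; for this reason we avoid (iii) and use (ii) and (iv).

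\textbf{Step 2 (monoidal and braided structure).} $\Gamma$ is strict monoidal, since $D_1$ and $D_2$ are strict monoidal by Lemma \ref{lem5.8} and the tensor products of rational Yetter--Drinfeld modules are computed componentwise (Lemma \ref{lem5.2}). It remains to check that $\Gamma$ sends the reversed braiding of $\overline{\BBCR}$, namely $(c^{\mathcal{YD}}_{W,V})^{-1}$, to the braiding $c^{\mathcal{YD}}$ of $\AACOPCR$ computed in $\overline{\mathcal{C}}$. This is precisely the identity established in Step 1, read for Yetter--Drinfeld modules $V,W$: in $\overline{\mathcal{C}}$ the braiding of $\AACOPCR$ has the same shape as $\overline{c}^{\mathcal{YD}}$, and by Remark \ref{rmk2.13}(3) this is inverse to $c^{\mathcal{YD}}_{W,V}$.

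The main obstacle is the key identity of Step 1, namely transporting $c^{\mathcal{YD}}$ through $D_1$ to $\overline{c}^{\mathcal{YD}}$, with associators present in $\HH$. We plan to handle it purely string-diagrammatically, invoking Mac Lane coherence so that we may argue as if $\mathcal{C}$ were strict. The ingredients are naturality of the braiding with respect to $\omega$, and the relation $\omega(\mathcal{S}_A^{-1}a,b)=\omega(a,\mathcal{S}_B^{-1}b)$, which follows from (\ref{3.0}). This avoids all explicit coquasi formulas in $\Phi$.
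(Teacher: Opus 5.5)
Your overall architecture agrees with the paper. The structures are transported by $D_1$ and $D_2^{-1}$, your key identity is Lemma~\ref{lem5.11} read with inverses, and Step~2 is the paper's braided argument.

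The gap is in Step~1, in the implication needed for $\Gamma$ to be well defined: $(V,\lambda,\delta)\in\BBCR \Rightarrow (V,\overline{\lambda},\overline{\delta})$ is Yetter--Drinfeld over $A^{\operatorname{cop}}$. Going through (iv) over $B$ and $D_1$, you only obtain that $c^{\mathcal{YD}(\overline{\mathcal{C}})}_{V,X'}$ is an $A^{\operatorname{cop}}$-module map for \emph{rational} $X'$, because $D_1$ lands in ${}_{A^{\operatorname{cop}}}\overline{\mathcal{C}}_{\operatorname{rat}}$. Criterion (ii) of Remark~\ref{rmk2.13}(2) quantifies over all of ${}_{A^{\operatorname{cop}}}\overline{\mathcal{C}}$. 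It contains (iii) as the special case $X'=A^{\operatorname{cop}}$, so replacing (iii) by (ii) does not get around the non-rationality of the regular module. Your claim that the ``for all $X$'' versions are available because $D_1,D_2$ are isomorphisms onto the rational categories is true for (iv). For (ii), however, it confuses all modules with rational ones. As written, only the converse implication (used for essential surjectivity) is justified.

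There are two ways to close this.

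\textbf{The paper's route.} The paper proves that testing against rational modules already suffices (Lemmas~\ref{lem3.12} and~\ref{lem5.13}). The regular module $B$ is jointly separated by the rational truncations $\pi_n\colon B\to B/\bigoplus_{i\ge n}B(i)$. Naturality of $c^{\mathcal{YD}}_{V,-}$ then carries the module-morphism property from these quotients to $B$, i.e.\ to (iii). The paper uses this on the $B$ side and combines it with (iv) over $A^{\operatorname{cop}}$ through $D_2$ and Lemma~\ref{lem5.11} (Proposition~\ref{prop5.14}).

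\textbf{A route inside your own setup.} You can avoid that reduction by applying your identity with $V$ in the other slot for this direction. If $(V,\lambda,\delta)$ is Yetter--Drinfeld over $B$, then (ii) holds for every $B$-module. In particular it holds for $D_2(Y)$ with $Y\in{}^{A^{\operatorname{cop}}}\overline{\mathcal{C}}$ arbitrary. Your identity, with $V$ as the $B$-comodule and $D_2(Y)$ as the $B$-module, converts this into the statement that $c^{\mathcal{YD}(\overline{\mathcal{C}})}_{Y,V}$ is an $A^{\operatorname{cop}}$-comodule map for all $Y$. That is the full criterion (iv) over $A^{\operatorname{cop}}$, since $D_2$ is defined on every $A^{\operatorname{cop}}$-comodule. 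Each direction then uses (ii) only in the trivial form ``Yetter--Drinfeld $\Rightarrow$ (ii)'' and (iv) in full, and no rational-reduction lemma is needed. This has to be stated explicitly, though; it is not what your Step~1 does.
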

The following lemma, which holds in the general setting of a braided monoidal category 
$\mathcal{C}$
, is used to establish this theorem. Without loss of generality, we assume 
$\mathcal{C}$ is strict.
Let $A$, $B$ be Hopf algebras in $\mathcal{C}$, and $p: A\ot B \rightarrow \bfo $ is a Hopf pairing, now we have two braided monoidal categories $ \overline{\BBCR}$ and $\AACOPCR$, with braiding $\overline{c}^{\mathcal{YD}(\mathcal{C})}$ and $c^{\mathcal{YD}(\overline{\mathcal{C}})}$ respectively.
\begin{lemma}\label{lem5.11}
    With above assumptions, let $(X,\overline{\delta}_X) \in {^{A^{\operatorname{cop}}}\overline{\mathcal{C}}}$, $(V,\delta) \in {^B\mathcal{C}}$, and define 
    $$ (X,\lambda_X)=D_2(X,\overline{\delta}_X), \ (V,\overline{\lambda})=D_1(V,\delta).$$
    where $D_1,D_2$ is given by Lemma \ref{lem5.8}, then
    \begin{equation}
        \overline{c}^{\mathcal{YD}(\mathcal{C})}_{(X,\lambda_X),(V,\delta)}=c^{\mathcal{YD}(\overline{\mathcal{C}})}_{(X,\overline{\delta}_X),(V,\overline{\lambda})}.
    \end{equation}
\end{lemma}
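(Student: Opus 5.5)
We work in the strict setting, as the statement allows, and compare the two braidings by expanding both sides into string diagrams in $\mathcal{C}$. By Remark \ref{rmk2.13}(3), the left-hand side is the inverse braiding of $\BBC$ in the reverse category. Explicitly,
\[
\overline{c}^{\mathcal{YD}(\mathcal{C})}_{X,V}=c^{-1}_{V,X}\circ(\lambda_X\ot\operatorname{id}_V)\circ(\mathcal{S}_B^{-1}\ot\operatorname{id}_X\ot\operatorname{id}_V)\circ(c^{-1}_{B,X}\ot\operatorname{id}_V)\circ(\operatorname{id}_X\ot\delta).
\]
This formula only uses that $X$ is a left $B$-module and $V$ a left $B$-comodule, which is exactly the data provided by $D_2$ and the given $\delta$. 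The right-hand side is the Yetter-Drinfeld braiding in $\overline{\mathcal{C}}$ over $A^{\operatorname{cop}}$:
\[
c^{\mathcal{YD}(\overline{\mathcal{C}})}_{X,V}=(\overline{\lambda}\ot\operatorname{id}_X)\circ(\operatorname{id}_A\ot\overline{c}_{X,V})\circ(\overline{\delta}_X\ot\operatorname{id}_V),
\qquad \overline{c}_{X,V}=c^{-1}_{V,X}.
\]
Again only the $A^{\operatorname{cop}}$-comodule structure of $X$ and the $A^{\operatorname{cop}}$-module structure of $V$ enter. By Remark \ref{rmk2.13}(1), both sides are therefore well-defined morphisms in $\mathcal{C}$ without any Yetter-Drinfeld hypothesis.

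Next we substitute the definitions of $D_1$ and $D_2$ from Lemma \ref{lem5.8}. On the left, $\lambda_X=(p^{+\operatorname{cop}}\ot\operatorname{id}_X)\circ(\operatorname{id}_B\ot\overline{\delta}_X)$ with $p^{+\operatorname{cop}}=p\circ c_{B,A}\circ(\mathcal{S}_B\ot\operatorname{id}_A)$, as computed before Theorem \ref{thm5.10}. The left side then becomes a diagram containing one copy of $\delta$, one of $\overline{\delta}_X$ and one pairing $p$. The antipodes appear as $\mathcal{S}_B\circ\mathcal{S}_B^{-1}$ on the same $B$-strand and cancel. On the right, $\overline{\lambda}=(p\ot\operatorname{id}_V)\circ(\operatorname{id}_A\ot\delta)$, so the right side is likewise a diagram with one $\delta$, one $\overline{\delta}_X$ and one $p$.

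It then remains to show the two diagrams agree. The remaining crossings are $c^{-1}_{B,X}$ and $c_{B,A}$ inside $p^{+\operatorname{cop}}$, followed by $c^{-1}_{V,X}$, on the left. On the right there is a single $c^{-1}_{V,X}$, with $\delta$ applied after the crossing. We use naturality of $c^{-1}$ to slide the comodule map $\delta\colon V\to B\ot V$ through $c^{-1}_{V,X}$, giving $c^{-1}_{B\ot V,X}=(\operatorname{id}_B\ot c^{-1}_{V,X})\circ(c^{-1}_{B,X}\ot\operatorname{id}_V)$. We likewise slide $\overline{\delta}_X\colon X\to A\ot X$ using $c_{B,A\ot X}$ and its inverse, together with the hexagon identities. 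The pairing $p$ is a morphism to $\mathbf{1}$, and crossings with $\mathbf{1}$ are trivial; this lets us absorb the braiding $c_{B,A}$ against the crossing $c^{-1}_{B,X}$.

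The main obstacle is this bookkeeping of over- and under-crossings. One must check that, after the cancellation $\mathcal{S}_B\mathcal{S}_B^{-1}=\operatorname{id}$, the $B$-strand from $\delta$ meets the $A$-strand from $\overline{\delta}_X$ with precisely the crossing type that $c_{B,A}$ in $p^{+\operatorname{cop}}$ undoes. Both diagrams should reduce to the same picture: $V$ passes under $X$ via $c^{-1}_{V,X}$, and the outgoing $A$ and $B$ legs are paired directly with no net crossing between them. I would verify this by drawing both diagrams and isotoping. For the general (non-strict) case in $\mathcal{C}=\HH$, we insert associators and invoke Mac Lane coherence.
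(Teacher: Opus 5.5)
Your proposal is correct and follows essentially the same route as the paper. You expand both braidings, cancel $\mathcal{S}_B\circ\mathcal{S}_B^{-1}$ inside $p^{+\operatorname{cop}}=p\circ c_{B,A}\circ(\mathcal{S}_B\ot\operatorname{id}_A)$, slide $\overline{\delta}_X$ and $\delta$ through the inverse braidings by naturality, and recombine $\overline{c}_{X,B}$ and $\overline{c}_{X,V}$ into $\overline{c}_{X,B\ot V}$.

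One small correction: the cancellation of $c_{B,A}$ does not come from crossings with $\mathbf{1}$ being trivial. It comes from the braid decomposition $\overline{c}_{A\ot X,B}=(\overline{c}_{A,B}\ot\operatorname{id}_X)\circ(\operatorname{id}_A\ot\overline{c}_{X,B})$, which gives $(c_{B,A}\ot\operatorname{id}_X)\circ\overline{c}_{A\ot X,B}=\operatorname{id}_A\ot\overline{c}_{X,B}$, exactly as in the paper's fourth equality.
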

\begin{proof}
    By definition of $c^{\mathcal{YD}}$, properties of Hopf pairing and naturality of braiding, we have
   \begin{align*}
       \overline{c}^{\mathcal{YD}(\mathcal{C})}_{(X,\lambda_X),(V,\delta)}
       &=\overline{c}_{X,V}\circ ((p^{+\operatorname{cop}}\ot \operatorname{id}_X)\circ (\operatorname{id}_B \ot \overline{\delta}_X)\ot \operatorname{id}_V)\circ (\mathcal{S}_B^{-1}\ot \operatorname{id}_X \ot \operatorname{id}_V) 
       \circ (\overline{c}_{X,B}\ot \operatorname{id}_V) \circ (\operatorname{id}_X\ot \delta)\\
       &= \overline{c}_{X,V}\circ (p\circ c_{B,A} \ot \operatorname{id}_X \ot \operatorname{id}_V) \circ (\operatorname{id}_B \ot \overline{\delta}_X \ot \operatorname{id}_V) \circ (\overline{c}_{X,B}\ot \operatorname{id}_V)\circ (\operatorname{id}_X\ot \delta)\\
       &=(p \ot \overline{c}_{X,V})\circ (c_{B,A}\ot \operatorname{id}_X)\circ (\overline{c}_{A\ot X,B} \ot \operatorname{id}_V) \circ (\operatorname{id}_A \ot \operatorname{id}_X \ot \delta)\circ (\overline{\delta}_X \ot \operatorname{id}_V)\\
       &=(p \ot \overline{c}_{X,V})\circ (\operatorname{id}_A \ot \overline{c}_{X,B}\ot \operatorname{id}_V)\circ (\operatorname{id}_A \ot \operatorname{id}_X \ot \delta)\circ (\overline{\delta}_X \ot \operatorname{id}_V)\\
       &=(p\ot \overline{c}_{X,B\ot V})\circ (\operatorname{id}_A\ot \operatorname{id}_X\ot \delta)\circ (\overline{\delta}_X \ot  \operatorname{id}_V)
       \\ &=((p\ot \operatorname{id}_V )\circ (\operatorname{id}_A \ot \delta )  \ot \operatorname{id}_X)\circ (\operatorname{id}_A \ot \overline{c}_{X,V}) \circ (\overline{\delta}_X \ot \operatorname{id}_V)=c^{\mathcal{YD}(\overline{\mathcal{C}})}_{(X,\overline{\delta}_X),(V,\overline{\lambda})}.
   \end{align*} 
   Here the second equality uses (\ref{3.0}) and (\ref{3.16}), the third and the sixth equalities use the naturality of the braiding, the fourth and the fifth equalities use the braiding axiom.
\end{proof}
Now we return to the setting of coquasi-Hopf algebras. We want to restrict the Yetter-Drinfeld module criterion to rational modules. Let $B$ be a  locally finite $\mathbb{N}_0$-graded Hopf algebra in $\mathcal{C}=\HH$ with bijective antipode.
Let $(V, \lambda)\in {_B\mathcal{C}_{\operatorname{rat}}}$, and $(V,\delta)\in {^B\mathcal{C}}$. 
\begin{lemma}\label{lem3.12}
    Let $X \in {_B\mathcal{C}}$, and assume that there is an index set $I$, a family $X_i, i \in I$, of objects in $_B \mathcal{C}$, and morphisms $\pi_i:X \rightarrow X_i$ in $_B\mathcal{C}$ for all $i \in I$ with $\bigcap_{i \in I}\operatorname{ker}(\pi_i)=0$. Then if $c^{\mathcal{YD}}_{V,X_i}$ is a morphism in $_B\mathcal{C}$ for all $i\in I$, then $c^{\mathcal{YD}}_{V,X}$ is a morphism in $_B\mathcal{C}$.
\end{lemma}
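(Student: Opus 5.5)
We show that the two morphisms $\lambda_{X\ot V}\circ(\operatorname{id}_B\ot c^{\mathcal{YD}}_{V,X})$ and $c^{\mathcal{YD}}_{V,X}\circ\lambda_{V\ot X}$ from $B\ot(V\ot X)$ to $X\ot V$ agree. By definition, $c^{\mathcal{YD}}_{V,X}$ is a morphism in ${_B\mathcal{C}}$ exactly when their difference
$$d_X:=\lambda_{X\ot V}\circ(\operatorname{id}_B\ot c^{\mathcal{YD}}_{V,X})-c^{\mathcal{YD}}_{V,X}\circ\lambda_{V\ot X}$$
vanishes. Here the module structures on the tensor products are those written out in the proof of Lemma \ref{lem5.2}. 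The strategy is to compare $d_X$ with $d_{X_i}$ via the maps $\pi_i$, using naturality. The hypothesis $\bigcap_i\ker\pi_i=0$ then forces $d_X=0$.

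\textbf{Step 1: naturality of $c^{\mathcal{YD}}_{V,-}$ in module morphisms.} For a morphism $\pi:X\to Y$ in ${_B\mathcal{C}}$ we check
$$(\pi\ot\operatorname{id}_V)\circ c^{\mathcal{YD}}_{V,X}=c^{\mathcal{YD}}_{V,Y}\circ(\operatorname{id}_V\ot\pi).$$
Recall $c^{\mathcal{YD}}_{V,X}=(\lambda_X\ot\operatorname{id}_V)\circ a^{-1}_{B,X,V}\circ(\operatorname{id}_B\ot c_{V,X})\circ a_{B,V,X}\circ(\delta\ot\operatorname{id}_X)$. The identity follows from three facts: $\pi$ is a morphism in $\mathcal{C}$, so $c$ and the associator $a$ (whose components are built from $\Phi$ and coactions) are natural in it; $\pi$ commutes with the $B$-actions, so $\pi\circ\lambda_X=\lambda_Y\circ(\operatorname{id}_B\ot\pi)$; and the coaction $\delta$ lives on $V$ only. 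Only the module structure of $X$ enters, so no comodule structure on $X$ is needed.

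\textbf{Step 2: tensor products of module maps.} Since ${_B\mathcal{C}}$ is monoidal, $\operatorname{id}_V\ot\pi$ and $\pi\ot\operatorname{id}_V$ are $B$-linear for the tensor actions. Combining this with Step 1 gives
$$(\pi_i\ot\operatorname{id}_V)\circ d_X=d_{X_i}\circ(\operatorname{id}_B\ot(\operatorname{id}_V\ot\pi_i)).$$
By hypothesis $d_{X_i}=0$, hence $(\pi_i\ot\operatorname{id}_V)\circ d_X=0$ for every $i\in I$.

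\textbf{Step 3: separation, the main point.} We must show that $\bigcap_i\ker(\pi_i\ot\operatorname{id}_V)=0$ in $X\ot V$. This holds over the field $\mathbbm{k}$ because $\ker(\pi_i\ot\operatorname{id}_V)=\ker(\pi_i)\ot V$, since tensoring over a field is exact. Fix a basis $\{v_j\}$ of $V$ and write $z=\sum_j x_j\ot v_j$. Then $z\in\ker(\pi_i\ot\operatorname{id}_V)$ if and only if $\pi_i(x_j)=0$ for all $j$. So if $z$ lies in every such kernel, each $x_j$ lies in $\bigcap_i\ker\pi_i=0$, and $z=0$.

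Applying this to $z=d_X(b\ot(v\ot x))$ gives $d_X=0$. Therefore $c^{\mathcal{YD}}_{V,X}$ is a morphism in ${_B\mathcal{C}}$. The only delicate point is Step 1, namely making sure the associators in $c^{\mathcal{YD}}$ commute with $\pi$; this is pure naturality in $\mathcal{C}$.
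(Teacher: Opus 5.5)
Your proof is correct and is essentially the argument the paper defers to in \cite[Lemma 12.2.5]{rootsys}. That argument uses naturality of $c^{\mathcal{YD}}_{V,-}$ in module morphisms to get $(\pi_i\ot\operatorname{id}_V)\circ d_X=d_{X_i}\circ(\operatorname{id}_B\ot(\operatorname{id}_V\ot\pi_i))=0$, and then the separation $\bigcap_i\ker(\pi_i\ot\operatorname{id}_V)=(\bigcap_i\ker\pi_i)\ot V=0$. One point you leave implicit: $c^{\mathcal{YD}}_{V,X}$ is already a morphism in $\mathcal{C}$ by Remark \ref{rmk2.13}(1), so checking $B$-linearity, i.e.\ $d_X=0$, is indeed all that is needed.
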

\begin{proof}
    The proof of this lemma is parallel to [\citealp{rootsys}, Lemma 12.2.5]. We omit here for simplicity.
\end{proof}
\begin{lemma}\label{lem5.13}
The following are equivalent.\par     
$\textup{(1)}$ For all $(X, \lambda_X)\in {_B\mathcal{C}_{\operatorname{rat}}}$,
$c^{\mathcal{YD}}_{V,X}$ is a morphism in $_B\mathcal{C}_{\operatorname{rat}}$. \par 
$\textup{(2)}$ The object $(V,\lambda,\delta)$ belongs to $\BBCR$.
\end{lemma}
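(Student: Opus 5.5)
The plan is to reduce the statement to the criterion of Remark \ref{rmk2.13}(2), which works in the full module category ${_B\mathcal{C}}$: $(V,\lambda,\delta)$ is a Yetter-Drinfeld module if and only if $c^{\mathcal{YD}}_{V,B}$ is a morphism in ${_B\mathcal{C}}$, where $B$ is a module via multiplication. The difficulty is that $B$ is in general not rational, since $B(n)\cdot 1=B(n)\neq 0$ for all $n$. So we cannot simply take $X=B$ in (1). We will approximate $B$ by rational quotients and pass back to $B$ using Lemma \ref{lem3.12}.

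\textbf{(2) $\Rightarrow$ (1).} If $(V,\lambda,\delta)\in\BBCR$, then by Remark \ref{rmk2.13}(2), (i)$\Rightarrow$(ii), the map $c^{\mathcal{YD}}_{V,X}$ is a morphism in ${_B\mathcal{C}}$ for every $X\in{_B\mathcal{C}}$, in particular for every rational $X$. By Remark \ref{rmk2.13}(1) it is a morphism in $\mathcal{C}$. Its source $V\ot X$ and target $X\ot V$ are rational by Lemma \ref{lem5.2}(1), because $V$ and $X$ are. Since ${_B\mathcal{C}_{\operatorname{rat}}}$ is a full subcategory, $c^{\mathcal{YD}}_{V,X}$ is a morphism in ${_B\mathcal{C}_{\operatorname{rat}}}$.

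\textbf{(1) $\Rightarrow$ (2).} By Remark \ref{rmk2.13}(2) it suffices to show that $c^{\mathcal{YD}}_{V,B}$ is $B$-linear.

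1. For $n\ge 0$ set $B_{>n}:=\bigoplus_{m>n}B(m)$. This is a subobject of $B$ in $\mathcal{C}$, since the grading is compatible with the $H$-structure as in the dual pair setting.
2. Multiplication maps $B(k)\ot B(m)$ into $B(k+m)$, so $B_{>n}$ is a left ideal, and $X_n:=B/B_{>n}$ is an object of ${_B\mathcal{C}}$.
3. $X_n$ is rational: $B(k)$ acts by zero on $X_n$ for $k>n$.
4. Let $\pi_n:B\to X_n$ be the projections. These are morphisms in ${_B\mathcal{C}}$, and $\bigcap_n\ker\pi_n=\bigcap_n B_{>n}=0$.
5. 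By (1), each $c^{\mathcal{YD}}_{V,X_n}$ is a morphism in ${_B\mathcal{C}_{\operatorname{rat}}}\subseteq{_B\mathcal{C}}$.
6. Lemma \ref{lem3.12} then gives that $c^{\mathcal{YD}}_{V,B}$ is a morphism in ${_B\mathcal{C}}$. Hence $(V,\lambda,\delta)$ is a Yetter-Drinfeld module in $\BBC$.
7. It is rational by hypothesis, so it lies in $\BBCR$.

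\textbf{Main obstacle.} The only delicate point is checking that $B_{>n}$ really is a subobject of $B$ in $\HH$ and a left ideal. This requires the homogeneous components $B(m)$ to be Yetter-Drinfeld submodules over $H$, i.e.\ that $B$ is graded as an object of $\mathcal{C}$, together with the coquasi setting's associator being harmless here. I would verify that $\mu_B$ is a morphism in $\mathcal{C}$ that respects the grading, so that its image on $B\ot B_{>n}$ lands in $B_{>n}$. Equivalently, one checks that the induced action on $B/B_{>n}$ is well defined despite non-associativity in $H$. This holds because associativity in ${_B\mathcal{C}}$ is already encoded via the morphisms $a$ of $\mathcal{C}$, and these preserve subobjects.
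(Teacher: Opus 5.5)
Your proposal is correct and is essentially the paper's proof. For (1)$\Rightarrow$(2) the paper likewise reduces via Remark~\ref{rmk2.13}(2) to $c^{\mathcal{YD}}_{V,B}$, approximates $B$ by the rational quotients $B/\bigoplus_{i\geq n}B(i)$ (your $B/B_{>n}$ is only an index shift), and concludes with Lemma~\ref{lem3.12}; for (2)$\Rightarrow$(1) it uses the same combination of Remark~\ref{rmk2.13}(2), Lemma~\ref{lem5.2}(1) and fullness, and it simply takes for granted the point you flag, which holds because each $B(m)$ is a subobject in $\mathcal{C}$ and $\mu_B(B(k)\ot B(m))\subseteq B(k+m)$.
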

\begin{proof}
    Suppose for all $(X, \lambda_X)\in {_B\mathcal{C}_{\operatorname{rat}}}$, the map $c^{\mathcal{YD}}_{V,X}$ is a morphism in $_B \mathcal{C}_{\operatorname{rat}}$. Then by Remark \ref{rmk2.13}, we only need to prove that 
    $$ c^{\mathcal{YD}}_{V,B}: V \ot B \rightarrow B \ot V$$ is a morphism in $_B\mathcal{C}$. Now for all $n\geq 0$, let $X_n=B/{\bigoplus_{i \geq n}B(i)}$. The quotient map  $\pi_n: B \rightarrow X_n$ is a morphism in $_B\mathcal{C}$. We have 
    $$ \bigcap_{n \geq 0}\operatorname{ker}(\pi_n)=0, \ \ \mu_B(B(m)\ot X_n)=0 $$
    for all $m \geq n$. Since $X_n$ is a rational $B$-module in $\mathcal{C}$, $c^{\mathcal{YD}}_{V,X_n}$ is a morphism in $_B\mathcal{C}$. By Lemma \ref{lem3.12}, $c^{\mathcal{YD}}_{V,B}$ is a morphism in $_B\mathcal{C}$. Hence $(V,\lambda,\delta) \in \BBCR$.
    \par 
    Suppose $(V,\lambda,\delta) \in \BBCR$, we deduce that $c^{\mathcal{YD}}_{V,X}$ is a morphism in $_B\mathcal{C}$ for any $(X, \lambda_X)\in {_B\mathcal{C}_{\operatorname{rat}}}$ by Remark \ref{rmk2.13}. Furthermore,  ${_B\mathcal{C}_{\operatorname{rat}}}$ is a monoidal full subcategory of $_B\mathcal{C}$, and $V\ot X, X\ot V \in {_B\mathcal{C}_{\operatorname{rat}}}$. Thus $c^{\mathcal{YD}}_{V,X}$ is a morphism in $_B\mathcal{C}_{\operatorname{rat}}$.
\end{proof}
\begin{prop}\label{prop5.14}
    The following are equivalent.\par 
    \textup{(1)} The object $(V,\lambda,\delta)$ belongs to $\BBCR$.\par 
       \textup{(2)} The object $(V,\overline{\lambda},\overline{\delta})$ belongs to $\AACOPCR$.\par 
       \textup{(3)} For all $(X, \lambda_X)\in {_B\mathcal{C}_{\operatorname{rat}}}$, the following map is a morphism in $_B\mathcal{C}_{\operatorname{rat}}$
       $$ \overline{c}^{\mathcal{YD}(\mathcal{C})}_{(X,\lambda_X),(V,\delta)}: (X, \lambda_X)\ot (V,\lambda)\rightarrow (V, \lambda)\ot (X, \lambda_X).$$\par 
       \textup{(4)} For all $(X,\overline{\delta}_X) \in {^{A^{\operatorname{cop}}}\overline{\mathcal{C}}}$, the following map is a morphism in $^{A^{\operatorname{cop}}}\overline{\mathcal{C}}$.
       $$ c^{\mathcal{YD}(\overline{\mathcal{C}})}_{(X,\overline{\delta}_X),(V,\overline{\lambda})}: (X,\overline{\delta}_X)\ot (V,\overline{\delta})\rightarrow (V,\overline{\delta})\ot (X,\overline{\delta}_X).$$ 
\end{prop}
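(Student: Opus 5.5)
We prove the cycle of equivalences $(1)\Leftrightarrow(3)$, $(3)\Leftrightarrow(4)$, $(4)\Leftrightarrow(2)$. Throughout, $(V,\lambda)\in{_B\mathcal{C}_{\operatorname{rat}}}$ and $(V,\delta)\in{^B\mathcal{C}}$, and $(V,\overline{\lambda})=D_1(V,\delta)$, $(V,\overline{\delta})=D_2^{-1}(V,\lambda)$. These are well defined by Proposition \ref{prop5.9}, applied to $\omega$ and to the dual pair $\omega^{+\operatorname{cop}}$ in $\overline{\mathcal{C}}$.

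\textbf{Step 1: $(1)\Leftrightarrow(3)$.} Apply the statement of Remark \ref{rmk2.13}(2) inside the reverse category $\overline{\mathcal{C}}$. There $B$ is still a Hopf algebra with bijective antipode, and the Yetter-Drinfeld condition for $(V,\lambda,\delta)$ in $\overline{\mathcal{C}}$-conventions involves the braiding $\overline{c}^{\mathcal{YD}(\mathcal{C})}_{X,V}$. By Remark \ref{rmk2.13}(3) this is the inverse of $c^{\mathcal{YD}}_{V,X}$. Hence $c^{\mathcal{YD}}_{V,X}$ is $B$-linear if and only if its inverse $\overline{c}^{\mathcal{YD}(\mathcal{C})}_{X,V}$ is $B$-linear. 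Lemma \ref{lem5.13} then gives that $(1)$ holds if and only if $\overline{c}^{\mathcal{YD}(\mathcal{C})}_{(X,\lambda_X),(V,\delta)}$ is a morphism in ${_B\mathcal{C}_{\operatorname{rat}}}$ for all rational $X$. Rationality of the source and target is automatic by Lemma \ref{lem5.2}.

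\textbf{Step 2: $(3)\Leftrightarrow(4)$.} By Proposition \ref{prop5.9}, $D_2\colon {^{A^{\operatorname{cop}}}\overline{\mathcal{C}}}\to{_B\mathcal{C}_{\operatorname{rat}}}$ is a strict monoidal isomorphism. So every rational $(X,\lambda_X)$ has the form $D_2(X,\overline{\delta}_X)$. Moreover, a morphism in $\mathcal{C}$ between objects in the image of $D_2$ is $B$-linear exactly when it is $A^{\operatorname{cop}}$-colinear, and since $D_2$ is strict monoidal, tensor products correspond. By Lemma \ref{lem5.11}, the map in $(3)$ equals the map $c^{\mathcal{YD}(\overline{\mathcal{C}})}_{(X,\overline{\delta}_X),(V,\overline{\lambda})}$ in $(4)$, so the two conditions coincide.

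\textbf{Step 3: $(4)\Leftrightarrow(2)$.} By Remark \ref{rmk2.13}(2) in $\overline{\mathcal{C}}$ for the Hopf algebra $A^{\operatorname{cop}}$, criterion (iv) says: $(V,\overline{\lambda},\overline{\delta})$ is a Yetter-Drinfeld module over $A^{\operatorname{cop}}$ in $\overline{\mathcal{C}}$ if and only if $c^{\mathcal{YD}(\overline{\mathcal{C}})}_{X,V}$ is colinear for all $A^{\operatorname{cop}}$-comodules $X$. This is exactly $(4)$. Rationality of $(V,\overline{\lambda})$ is automatic, because $\overline{\lambda}=D_1(\delta)$ lies in ${_{A^{\operatorname{cop}}}\overline{\mathcal{C}}_{\operatorname{rat}}}$ by Proposition \ref{prop5.9}. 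Hence $(4)$ holds if and only if $(V,\overline{\lambda},\overline{\delta})\in\AACOPCR$.

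\textbf{Main obstacle.} The delicate point is Step 1. It requires checking that the equivalence of Lemma \ref{lem5.13}, stated with $c^{\mathcal{YD}}_{V,X}$, transfers to $\overline{c}^{\mathcal{YD}}_{X,V}$, and that this uses only bijectivity of $\mathcal{S}_B$ together with the inverse relation of Remark \ref{rmk2.13}(3). A second point is that in Step 3, criterion (iv) must be valid in $\overline{\mathcal{C}}$, which is again braided monoidal with the same associator, so [\citealp{rootsys}, Proposition 3.4.5] applies verbatim.
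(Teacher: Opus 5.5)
Your proposal is correct and follows the paper's proof essentially step for step. The paper proves $(1)\Leftrightarrow(3)$ via Lemma \ref{lem5.13} together with the fact that $\overline{c}^{\mathcal{YD}}_{X,V}$ is the inverse of $c^{\mathcal{YD}}_{V,X}$ (using bijectivity of $\mathcal{S}_B$), $(3)\Leftrightarrow(4)$ via Lemma \ref{lem5.11} and the monoidal isomorphism $D_2$, and $(2)\Leftrightarrow(4)$ via Remark \ref{rmk2.13}(2) applied to $A^{\operatorname{cop}}$ in $\overline{\mathcal{C}}$, exactly as you do. One correction: the opening sentence of your Step 1, which applies Remark \ref{rmk2.13}(2) to $B$ inside $\overline{\mathcal{C}}$, is superfluous and inaccurate, since $B$ with its given coproduct need not be a bialgebra in $\overline{\mathcal{C}}$; your argument never actually uses it.
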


\begin{proof}
    (1) $\Leftrightarrow$ (3): The condition (1), by Lemma \ref{5.13}, is equivalent to the condition that for all $(X, \lambda_X)\in {_B\mathcal{C}_{\operatorname{rat}}}$, 
    $c^{\mathcal{YD}}_{(V,\delta),(X, \lambda_X)}$ is a morphism in $_B\mathcal{C}_{\operatorname{rat}}$. Since we have assumed $B$ has bijective antipode, 
    $c^{\mathcal{YD}}_{(V,\delta),(X, \lambda_X)}$ is an isomorphism with inverse 
    $\overline{c}^{\mathcal{YD}}_{(X,\lambda_X),(V,\delta)}.$ Hence (1) is equivalent to (3).\par 
    (2) $\Leftrightarrow$ (4): This follows from Remark \ref{rmk2.13}.\par 
    (3) $\Leftrightarrow$ (4): By Lemma \ref{lem5.11}, 
   $$ \overline{c}^{\mathcal{YD}(\mathcal{C})}_{(X,\lambda_X),(V,\delta)}= c^{\mathcal{YD}(\overline{\mathcal{C}})}_{(X,\overline{\delta}_X),(V,\overline{\lambda})}$$ as a morphism in $\mathcal{C}$. Then the equivalence of (3) and (4) follows from $D_2:{^{A^{\operatorname{cop}}}\overline{\mathcal{C}}}\rightarrow {_B\mathcal{C}_{\operatorname{rat}}}$ is a monoidal isomorphism.
\end{proof}
\begin{proof}[Proof of Theorem \ref{thm5.10}]
    Let $V \in \mathcal{C}$, and 
    \begin{align*}
        &S_1=\lbrace (\lambda,\delta)\mid (V, \lambda)\in {_B\mathcal{C}_{\operatorname{rat}}}, (V,\delta)\in {^B\mathcal{C}}\rbrace \\
        &S_2=\lbrace (\overline{\lambda},\overline{\delta})\mid (V, \overline{\lambda})\in {_{A^{\operatorname{cop}}}\overline{\mathcal{C}}_{\operatorname{rat}}}, (V,\overline{\delta})\in {^{A^{\operatorname{cop}}}\overline{\mathcal{C}}}\rbrace.
    \end{align*}
    We define a map $\phi: S_1 \rightarrow S_2$, by $(\lambda,\delta) \mapsto (\overline{\lambda},\overline{\delta}) $, where $\overline{\lambda},\overline{\delta}$ are given by
    $$ D_1(V,\delta)=(V,\overline{\lambda}), \ \ \ D_2(V,\overline{\delta})=(V,\lambda).$$
    Thus $\phi$ is bijective since $D_1$, $D_2$ are monoidal isomorphisms. \par Now that given $(V,\lambda,\delta) \in \BBCR$, the object $(V,\overline{\lambda},\overline{\delta}) \in \AACOPCR$ by Proposition \ref{prop5.14}. Since $\phi$ is bijective, we deduce that $\Gamma$ is a monoidal equivalence.
 It is strict since $D_1$ and $D_2$ is strict. To show $\Gamma$ is braided, let $X=(X,\lambda_X,\delta_X)\in \BBCR$, and $\Gamma(X)=(X,\overline{\lambda}_X,\overline{\delta}_X) \in \AACOPCR$. We know that $\overline{c}^{\mathcal{YD}(\mathcal{C})}_{(X,\lambda_X),(V,\delta)}$ is  the braiding in $\overline{\BBCR}$, and 
$c^{\mathcal{YD}(\overline{\mathcal{C}})}_{(X,\overline{\delta}_X),(V,\overline{\lambda})}$ is the braiding of $\Gamma(X), \Gamma(V)$ in $\AACOPCR$. By Lemma \ref{lem5.11}, the functor $\Gamma$ is braided. 
Hence $\Gamma$ is a strict braided monoidal equivalence.

\end{proof}
Our next goal is to build a monoidal equivalence between $\AACOPCR$ and $\overline{\AAC}_{\operatorname{rat}}$. It is standard that 
\begin{align*}
   & _A\mathcal{C} \rightarrow {\mathcal{C}_A}, \ (V, \lambda) \mapsto (V, \lambda_+), \ \text{where}\
\lambda_+=\lambda \circ c_{V,A}\circ(\operatorname{id}_V \ot \mathcal{S}_A), \\
&\mathcal{C}_A \rightarrow {_A\mathcal{C}}, \ (V, \lambda) \mapsto (V, \lambda_-), \ \text{where}\
\lambda_-=\lambda\circ \overline{c}_{A,V}\circ( \mathcal{S}_A^{-1}\ot \operatorname{id}_V),
\end{align*}
are inverse isomorphisms of monoidal categories.
Furthermore, suppose $(V, \lambda ) \in {_A\mathcal{C}_{\operatorname{rat}}}$, for $v \in V$, $c_{V,A}(v\ot a)$ is a finite sum of elements in $A \ot V$. By $\mathcal{S}_A$ is a $\mathbb{N}_0$-graded map, we have $(V,\lambda_+) \in {\mathcal{C}_A}_{,\operatorname{rat}}$. Similarly, suppose $(V, \lambda ) \in {\mathcal{C}_A}_{,\operatorname{rat}}$, we can deduce that $(V, \lambda_-)\in {_A\mathcal{C}_{\operatorname{rat}}}$.
\par Now we restrict the monoidal isomorphisms in [\citealp{rootsys}, Theorem 3.4.15] to the rational Yetter-Drinfeld modules, we have a braided monoidal equivalence:
\begin{align}
(F_{rl},\alpha): {\AARC}_{,\operatorname{rat}}\rightarrow \AACR, \ &(V,\lambda,\delta)\mapsto (V, \lambda_-,(\mathcal{S}_A\ot \operatorname{id}_V)\circ c_{V,A} \circ \delta),\\
& \alpha_{X,Y}=c_{Y,X}^{\AARC} \circ \overline{c}_{X,Y}, 
\end{align}
where morphisms $f$ are mapped onto $f$. From [\citealp{rootsys}, Theorem 3.4.16], we have a braided strict monoidal equivalence:
\begin{align}
    F_{lr}: \AACOPCR \rightarrow \overline{\AARC_{,\operatorname{rat}}}, \ (V,\lambda, \delta)\mapsto (V,\lambda_+,c_{A,V}\circ \delta),
\end{align}
where morphisms $f$ are mapped onto $f$.
\par 
Our main result in this section establishes the braided monoidal equivalence between the categories of rational modules over dual pairs. The construction of the functor $\Omega$ relies on the interplay between the equivalences established in the previous subsections, as illustrated in the following diagram:

\[
\begin{tikzcd}[row sep=large, column sep=huge]
    \overline{\BBCR}\arrow[r, "\Gamma", "\cong"'] \arrow[d, "\Omega"', dashed]
    & \AACOPCR \arrow[d, "F_{lr}"] \\
\overline{\AACR}
    & \overline{\AARC_{,\operatorname{rat}}}\arrow[l, "\overline{F_{rl}}"]
\end{tikzcd}
\]
\begin{thm}\label{thm5.15}
    The following  functor is a braided monoidal equivalence:
\begin{align*}
    &(\Omega, \beta): \BBCR \rightarrow \AACR, \ \text{where} \ (V,\lambda_B, \delta_B) \mapsto (V, \lambda_A, \delta_A), \text{with} \\ 
    &\lambda_A=(A\ot V \xrightarrow{\operatorname{id}\ot \delta_B} A\ot (B\ot V)\xrightarrow{a^{-1}_{A,B,V}}(A\ot B) \ot V\xrightarrow{\omega\ot \operatorname{id}}V), \\
        &\delta_A=(V\xrightarrow{\delta'}A\ot V\xrightarrow{\mathcal{S}_A^2\ot \operatorname{id}_V} A\ot V \xrightarrow{c^2_{A,V}} A\ot V), \ \text{ where } \ \delta' \ \text{is defined by}\\
 &\lambda_B=(B\ot V \xrightarrow{\operatorname{id}_V \ot \delta' }B \ot (A\ot V) \xrightarrow{a^{-1}_{B,A,V}}(B\ot A) \ot V \xrightarrow{ \omega^{+} \ot \operatorname{id}_V} V),
\end{align*}
where morphisms $f$ are mapped onto $f$. The monoidal structure is given by
$$ (\Omega, \beta): \BBCR \rightarrow \AACR,\ \beta_{X,Y}=c_{Y,X}^{\BBC}\circ \overline{c}_{X,Y}.$$
\end{thm}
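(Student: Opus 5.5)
The plan is to obtain $\Omega$ as the composite suggested by the diagram. We apply the three equivalences already established and then pass to reverse categories. The composite is
\[
\overline{F_{rl}}\circ F_{lr}\circ \Gamma:\ \overline{\BBCR}\longrightarrow \overline{\AACR}.
\]
By Theorem \ref{thm5.10}, $\Gamma$ is a strict braided monoidal equivalence. By [\citealp{rootsys}, Theorem 3.4.16], restricted to rational objects, $F_{lr}$ is a braided strict monoidal equivalence. The restriction of $F_{rl}$ to rational objects is a braided monoidal equivalence with structure $\alpha$. For these restrictions we use that $\lambda\mapsto\lambda_\pm$ preserves rationality, which holds because $\mathcal{S}_A$ is graded and the braiding produces finite sums; the coactions are not affected. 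Reversing the braidings on both sides does not change the underlying monoidal functor. So the composite, read as a functor $\BBCR\to\AACR$, is a braided monoidal equivalence. We call it $\Omega$.

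The main work is to identify this composite with the explicit formulas in the statement.

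\textbf{Module structure.} $\Gamma$ sends $\lambda_B$ to the coaction $\overline\delta$ with $D_2(V,\overline\delta)=(V,\lambda_B)$, and sends $\delta_B$ to $\overline\lambda=(\omega\ot\operatorname{id})\circ a^{-1}\circ(\operatorname{id}\ot\delta_B)$. Next, $F_{lr}$ gives $\overline\lambda_+=\overline\lambda\circ c_{V,A}\circ(\operatorname{id}\ot\mathcal{S}_A)$. Then $\overline{F_{rl}}$ applies the operation $(-)_-$, but taken in $\overline{\mathcal{C}}$, so the braiding used there is $c$ rather than $\overline{c}$ and the antipode is $\mathcal{S}_A^{-1}$. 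The operation $(-)_-$ therefore exactly undoes $(-)_+$, and we get $\lambda_A=\overline\lambda$, which is the formula in the statement.

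\textbf{Comodule structure.} The successive coactions are $\overline\delta$, then $c_{A,V}\circ\overline\delta$, then $(\mathcal{S}_A\ot\operatorname{id})\circ\overline{c}_{V,A}\circ c_{A,V}\circ\overline\delta$. We must rewrite this as $c^2_{A,V}\circ(\mathcal{S}_A^2\ot\operatorname{id})\circ\delta'$, where $\delta'$ is defined through $\omega^{+}$. Two ingredients do this. First, $\omega^{+\operatorname{cop}}=\omega^+\circ(\operatorname{id}\ot\mathcal{S}_A^{-1})$ by (\ref{3.16}), which relates $\overline\delta$ to $\delta'$ by a twist $(\mathcal{S}_A^{\pm1}\ot\operatorname{id})$; the injectivity in Proposition \ref{prop5.5} pins this down. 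Second, the cop-structure on $A$ and the reversal of braidings contribute the remaining braiding factors. Naturality of $c$ with respect to $\mathcal{S}_A$ then lets us collect all the factors into $c^2_{A,V}(\mathcal{S}_A^2\ot\operatorname{id})$.

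\textbf{Tensor structure.} Since $\Gamma$ and $F_{lr}$ are strict, the monoidal structure of the composite comes only from $\alpha$, transported along $F_{lr}\Gamma$. We would compute $\alpha_{X,Y}=c^{\AARC}_{Y,X}\circ\overline{c}_{X,Y}$ in the reversed category. Using Lemma \ref{lem5.11}, which identifies Yetter–Drinfeld braidings across $\Gamma$, we would rewrite it as $c^{\BBC}_{Y,X}\circ\overline{c}_{X,Y}$. This gives $\beta$. Morphisms are unchanged at every step, so $\Omega(f)=f$ and $\Omega$ is fully faithful. Essential surjectivity follows because each factor in the composite is an equivalence, and in fact bijective on objects.

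I expect the main obstacle to be the bookkeeping in the comodule identification. One has to track exactly how many $\mathcal{S}_A$ and braiding factors arise from the cop, reverse-braiding and left/right conversions, and then show that they combine into $c^2_{A,V}\circ(\mathcal{S}^2_A\ot\operatorname{id})$. To keep this manageable, I would first do the computation graphically in a strict braided category, using Mac Lane coherence as the paper does, and only afterwards reinsert the associators $a$ of $\HH$; their naturality makes them pass through all the maps involved.
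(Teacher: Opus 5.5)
Your overall architecture is the paper's: $\Omega$ is the composite $\overline{F_{rl}}\circ F_{lr}\circ\Gamma$; the action is recovered from $(\lambda_+)_-=\lambda$; $\delta'$ is pinned down through $\omega^{+\operatorname{cop}}=\omega^+\circ(\operatorname{id}_B\ot\mathcal{S}_A^{-1})$ and injectivity; and $\beta$ is $\alpha$ transported along the strict braided functor $F_{lr}\circ\Gamma$.

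The problem lies in the step you yourself flag as the main obstacle. The braiding bookkeeping you describe is wrong, and followed literally it does not produce $\delta_A$. You correctly note that $\overline{F_{rl}}$ has the same underlying functor as $F_{rl}$. But you then evaluate it ``in $\overline{\mathcal{C}}$'', exchanging $c$ and $\overline{c}$.

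\textbf{Comodule step.} Your chain gives $(\mathcal{S}_A\ot\operatorname{id})\circ\overline{c}_{V,A}\circ c_{A,V}\circ\overline\delta=(\mathcal{S}_A\ot\operatorname{id})\circ\overline\delta$, because $\overline{c}_{V,A}=c_{A,V}^{-1}$. Combined with $\overline\delta=(\mathcal{S}_A\ot\operatorname{id})\circ\delta'$, you would land on $(\mathcal{S}_A^2\ot\operatorname{id})\circ\delta'$. This differs from $\delta_A$ by the double braiding $c_{V,A}\circ c_{A,V}$, which is not the identity in $\HH$ in general. Your appeal to ``the cop-structure and the reversal of braidings'' to restore the missing braidings has nothing behind it: the cop structure contributes only the single twist by $\mathcal{S}_A^{-1}$.

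\textbf{Module step.} Symmetrically, replacing $\overline{c}_{A,V}$ by $c_{A,V}$ in $\lambda_-$ would give $\overline\lambda\circ c_{V,A}\circ c_{A,V}$ (up to an antipode factor), not $\overline\lambda$. So your cancellation pattern is exactly reversed: in the correct computation the braidings cancel for the action and double up for the coaction. A sanity check: in these left/right conversions $\mathcal{S}_A$ must be paired with $c$ and $\mathcal{S}_A^{-1}$ with $\overline{c}$. For instance, $(\mathcal{S}_A\ot\operatorname{id})\circ\overline{c}_{V,A}\circ\delta$ is not coassociative in general.

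\textbf{The fix.} Use the $\mathcal{C}$-formulas throughout, as the paper does.
\begin{itemize}
\item The action: $(\overline\lambda_+)_-=\overline\lambda\circ c_{V,A}\circ\overline{c}_{A,V}=\overline\lambda=\lambda_A$.
\item The composite coaction is $(\mathcal{S}_A\ot\operatorname{id})\circ c_{V,A}\circ c_{A,V}\circ\overline\delta$. Its two braidings come from $F_{lr}$ and $F_{rl}$ respectively.
\item Take $\delta'=(\mathcal{S}_A^{-1}\ot\operatorname{id})\circ\overline\delta$. This satisfies the defining identity for $\delta'$ by $\omega^{+\operatorname{cop}}=\omega^+\circ(\operatorname{id}_B\ot\mathcal{S}_A^{-1})$, naturality of $a$, and $D_2(V,\overline\delta)=(V,\lambda_B)$. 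It is the only such map by non-degeneracy.
\item Naturality of $c$ in $A$ then gives $(\mathcal{S}_A\ot\operatorname{id})\circ c_{V,A}\circ c_{A,V}\circ(\mathcal{S}_A\ot\operatorname{id})\circ\delta'=c_{V,A}\circ c_{A,V}\circ(\mathcal{S}_A^2\ot\operatorname{id})\circ\delta'$. This is $\delta_A$, reading $c^2_{A,V}$ in the statement as $c_{V,A}\circ c_{A,V}$.
\end{itemize}
With that correction, your argument, including the tensor-structure step, is the paper's proof.
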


\begin{proof}
   As depicted in the diagram above, we define the functor $\Omega$ as the composition:$$\Omega := F_{rl} \circ F_{lr} \circ \Gamma.$$We now compute the explicit structure of this composition. Let $(V,\lambda_B,\delta_B) \in \BBCR$. Then 
    \begin{align*}
F_{rl}F_{lr}\Gamma(V,\lambda_B,\delta_B)=&F_{rl}F_{lr}(V,\overline{\lambda_A},\overline{\delta_A})\\
&=F_{rl}(V,\overline{\lambda_A}_+,c_{A,V}\circ \overline{\delta_A})\\
&=(V,(\overline{\lambda_A}_{+})_-,(\mathcal{S}_A \ot \operatorname{id}_V) \circ c_{V,A}\circ c_{A,V}\circ \overline{\delta_A} ).
    \end{align*}
Note that $(\overline{\lambda_A}_{+})_-=\overline{\lambda_A}$, while 
$$\overline{\lambda_A}=(A \ot V \xrightarrow{\operatorname{id}_A\ot \delta_B} A \ot (B \ot V) \xrightarrow{a^{-1}_{A,B,V}} (A\ot B) \ot V \xrightarrow{\omega \ot \operatorname{id}_V} V).$$
This coincides with our definition of $\lambda_A$.\par 
Now we need to show $(\mathcal{S}_A \ot \operatorname{id}_V) \circ c_{V,A}\circ c_{A,V}\circ \overline{\delta_A}$ equals $\delta_A$. Since $\mathcal{S}_A$ and $c_{A,V}$ are isomorphisms, we only need to prove:
$$ \delta'=(\mathcal{S}_A^{-2}\ot \operatorname{id}_V) \circ \overline{c}^2_{A,V} \circ (\mathcal{S}_A \ot \operatorname{id}_V) \circ   c^2_{A,V} \circ \overline{\delta_A}=(\mathcal{S}_A^{-1}\ot \operatorname{id}_V)\circ\overline{\delta_A}.$$
It suffices to show 
$$ (\omega^+ \ot \operatorname{id}_V) \circ a^{-1}_{B,A,V} \circ (\operatorname{id}_B\ot (\mathcal{S}_A^{-1}\ot \operatorname{id}_V)\circ \overline{\delta_A})=\lambda_B.$$
Since $a$ is a natural isomorphism, and $\omega^{+\operatorname{cop}}=\omega^+\circ (\operatorname{id}_B\ot \mathcal{S}_A^{-1})$, we have 
\begin{align*}
    (\omega^+ \ot \operatorname{id}_V) \circ a^{-1}_{B,A,V} \circ (\operatorname{id}_B\ot (\mathcal{S}_A^{-1}\ot \operatorname{id}_V)\circ \overline{\delta_A})&=(\omega^+ \ot \operatorname{id}_V)\circ ((\operatorname{id}_B\ot \mathcal{S}_A^{-1}) \ot 
\operatorname{id}_V)\circ a^{-1}_{B,A,V}\circ (\operatorname{id}_B \ot \overline{\delta_A})   \\
&=(\omega^{+\operatorname{cop}} \ot \operatorname{id}_V)\circ a^{-1}_{B,A,V}\circ (\operatorname{id}_B \ot \overline{\delta_A})
\\
&= \lambda_B.
\end{align*}
Where the last equality follows from the relation between $\overline{\delta_A}$ and $\lambda_B$ via the functor $D_2$. Thus $\Omega$ induces a monoidal equivalence $\Omega: \overline{\BBCR} \rightarrow \overline{\AACR}$.\par 
For the monoidal structure of $ \Omega$, note that only the functor $(F_{rl}, \alpha)$ is non-strict. Since $F_{lr}$ and $\Gamma$ are braided, we have
$$ \overline{\beta}_{X,Y}=\overline{c}^{\overline{\AARC}}_{F_{lr}\Gamma(Y),F_{lr}\Gamma(X)}\circ \overline{c}_{F_{lr}\Gamma(X),F_{lr}\Gamma(Y)}=\overline{c}^{\overline{\BBC}}_{Y,X}\circ \overline{c}_{X,Y}.$$ Thus $(\Omega, \overline{\beta}): \overline{\BBCR} \rightarrow \overline{\AACR}$ is a braided monoidal equivalence. Now we take the reverse braiding, and it is direct that $$ (\Omega, \beta): \BBCR \rightarrow \AACR,\ \beta_{X,Y}=c_{Y,X}^{\BBC}\circ \overline{c}_{X,Y}$$ is a braided  monoidal equivalence.

\end{proof}

 \section{Applications  to Nichols algebras over coquasi-Hopf algebras}
We now specialize to the  setting of Nichols algebras.
 \subsection{Hopf pairings for Nichols algebras}
 Let $\mathcal{C}$ be an abelian braided monoidal category. We first recall the definition of  Nichols algebras $\bB(V)$ in $\mathcal{C}$, where $V \in \mathcal{C}$ is an arbitrary object. Then we will show that there is a non-degenerate Hopf pairing between $\mathcal{B}(V^*)$ and $\bB(V)$ when $V$ has a left dual $V^*$.\par 
We denote by $T(V)$  the tensor algebra in $\mathcal{C}$ generated freely by $V$. Here we denote that
$$ V^{\otimes n }:=\left( \cdots\left( \left( V \otimes V\right) \otimes V\right) \cdots \otimes V\right). $$
Then
$T(V)$ is isomorphic to $\bigoplus_{n \geq 0} V^{\otimes n}$ as an object. The tensor algebra  $T(V)$ is naturally a graded Hopf algebra in $\mathcal{C}$.\par 
Let $i_n:V^{\otimes n} \rightarrow T(V)$ be the  canonical injection. We denote the total Woronowicz symmetriser by 
$$ \operatorname{Wor}(c)=\bigoplus_{n \geq 1} i_n [n]!_c,
$$ 
where $[n]!_c \in \operatorname{End}(V^{\otimes n})$ is the braided symmertriser. For the explicit definition of $[n]!_c$, one may refer to [\citealp{nicholscatdef}, Section 5.6].
\begin{definition}
Let $V \in \mathcal{C}$. The Nichols algebra of $V$ is defined  to be the quotient Hopf algebra
    $$ \mathcal{B}(V):=T(V)/\operatorname{ker}\operatorname{Wor}(c).$$
\end{definition}
To facilitate further analysis, we assume $\mathcal{C}$ is a $\mathbbm{k}$-linear braided monoidal abelian category. In this setting, the following equivalent definition of the Nichols algebra is more convenient.
\begin{definition}\textup{[\citealp{defofnichols}, Definition 2.4]}\label{def3.18}
    For a given object $V \in \mathcal{C}$, the Nichols
algebra $\mathcal{B}(V)$ is the unique Hopf algebra in $\mathcal{C}$ that satisfies the following conditions:\par 
\text{$(1)$} The Hopf algebra $\mathcal{B}(V)$ is graded by the non-negative integers.\par
\text{$(2)$} The zeroth component of the grading satisfies $\mathcal{B}(V)_0=\mathbbm{k}$.\par
\text{$(3)$} The first component of the grading satisfies $\mathcal{B}(V)_1=V$ , and $\mathcal{B}(V)$ is generated
by $V$ as an algebra in $\mathcal{C}$.\par
\text{$(4)$} The subobject of primitive elements of $\mathcal{B}(V)$ is $V$.\par
\end{definition}
We are going to investigate the Hopf pairing between $\mathcal{B}(V^*)$ and $\bB(V)$ when $V$ has a left dual $V^*$.
  There is a unique  Hopf pairing \( \omega : T(V^*) \otimes T(V) \to \bfo \). It is shown in [\citealp{nicholscatdef}] that \( \omega \) is given by
\[ 
\omega\mid_{V^* \ot V}= \operatorname{ev}_V. 
\]
and 
\[ 
\omega_n := \omega |_{V^{*\otimes n} \otimes V^{\otimes n}} = \operatorname{ev}_{V^{\otimes n}} \circ (\operatorname{id}_{V^{*\otimes n}} \otimes [n]!_{c}) = \operatorname{ev}_{V^{\otimes n}} \circ ([n]!_{c^{*}} \otimes \operatorname{id}_{V^{\otimes n}}).
\]
The Hopf  pairing between \( V^{\otimes n} \) and \( V^{*\otimes m} \) is 0 unless \( n = m \). 
Here $c^*$ denotes the braided symmetriser of $V^*$. \par

    The Hopf pairing between $T(V)$ and $T(V^*)$ induces a  Hopf pairing  between $\mathcal{B}(V^*)$ and $\bB(V)$. 
 Note that the Hopf pairing $\omega$ vanishes on the kernels of the symmetrisers, and for all positive integer $n$, we have
 $$ \omega_n({\operatorname{ker}[n]!_c^*, \ V^{\ot n}})=0, \
  \omega_n({V^{*\ot n},\ \operatorname{ker}[n]!_c })=0.$$
\par Now we return to the case of a coquasi-Hopf algebra $H$ with bijective antipode. Let $V$ be a finite-dimensional object in $\HH$. Consequently, the Hopf pairing on the tensor algebras descends to a well-defined Hopf pairing
$$ \omega: \mathcal{B}(V^*) \ot \bB(V) \rightarrow \mathbbm{k}.$$
In particular, 
$$ \omega(V^{*\ot m}, V^{\ot n})=0, \ \text{for all} \ m \neq n,$$
and both $\bB(V)$ and $\bB(V^*)$ are locally finite $\mathbb{N}_0$-graded Hopf algebras in $\HH$. \par 
We consider $T(V)^{\perp}$, by expression of $\omega$ it equals $\operatorname{ker}\operatorname{Wor}(c^*)=\bigoplus_{n \geq 1} i_n \operatorname{ker}([n]!_c^*)$. Similarly, $T(V^*)^{\perp}=\operatorname{ker}\operatorname{Wor}(c)$. Since $\mathcal{B}(V)=T(V)/\operatorname{ker}\operatorname{Wor}(c)$ and $\mathcal{B}(V^*)=T(V^*)/\operatorname{ker}\operatorname{Wor}(c^*)$, we deduce that $\omega: \mathcal{B}(V^*) \ot \mathcal{B}(V^*)\rightarrow \mathbbm{k}$ is a dual pair of locally finite $\mathbb{N}_0$-graded Hopf algebras in $\HH$. 

\begin{cor}\label{cor3.8}
   For any finite-dimensional $V\in \HH$, there exists a braided monoidal equivalence:
\begin{equation}
    (\Omega_V, \beta_V): \BVCC_{\operatorname{rat}} \longrightarrow \BVdCC_{\operatorname{rat}}.
\end{equation}
\end{cor}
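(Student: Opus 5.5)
The plan is to apply Theorem \ref{thm5.15} with $A=\bB(V^*)$, $B=\bB(V)$ and $\omega$ the Hopf pairing $\bB(V^*)\ot\bB(V)\to\mathbbm{k}$ constructed above. This yields a braided monoidal equivalence $\BBCR\to\AACR$, which is exactly the claimed $(\Omega_V,\beta_V)$. So it suffices to check that $(\bB(V^*),\bB(V),\omega)$ is a dual pair of locally finite $\mathbb{N}_0$-graded Hopf algebras with bijective antipodes in $\mathcal{C}=\HH$.

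The verification has four parts, in this order.

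\emph{Grading and local finiteness.} Both $\bB(V)$ and $\bB(V^*)$ are $\mathbb{N}_0$-graded by Definition \ref{def3.18}. Their degree-$n$ components are quotients of $V^{\ot n}$ and $V^{*\ot n}$, which are finite-dimensional because $V$ is. Hence both algebras are locally finite.

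\emph{Hopf pairing and grading compatibility.} The pairing $\omega$ on $T(V^*)\ot T(V)$ is a Hopf pairing, as recalled from \cite{nicholscatdef}. It vanishes on $\ker\operatorname{Wor}(c^*)\ot T(V)$ and on $T(V^*)\ot\ker\operatorname{Wor}(c)$, so it descends to a Hopf pairing on the quotients. The descended pairing is still a morphism in $\mathcal{C}$, since the kernels are subobjects. It also satisfies $\omega(A(n),B(m))=0$ for $n\neq m$.

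\emph{Non-degeneracy.} Here I would use the formula $\omega_n=\operatorname{ev}_{V^{\ot n}}\circ(\operatorname{id}\ot[n]!_c)$. Since $\operatorname{ev}_{V^{\ot n}}$ is a perfect pairing on finite-dimensional spaces, the right radical in degree $n$ is exactly $\ker[n]!_c$. Likewise, using the $c^*$ form of $\omega_n$, the left radical is $\ker[n]!_{c^*}$. Because the pairing is graded, the radicals are direct sums of these homogeneous pieces. After quotienting, both radicals become zero, as argued in the paragraph before the corollary.

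\emph{Bijective antipodes.} I expect this to be the main point needing care, since the standing hypotheses of Section 3 require it. For a graded connected Hopf algebra in $\mathcal{C}$, the antipode preserves the grading and is determined recursively in each degree. I would show bijectivity degree by degree, using the fact that $\mathcal{C}$ is rigid on finite-dimensional objects and that the braiding is invertible. Concretely, the antipode of $T(V)$ restricted to $V^{\ot n}$ is $(-1)^n$ times the braid action of the longest element, up to associator factors, and is therefore invertible. It preserves $\ker\operatorname{Wor}(c)$, because it is a graded anti-coalgebra map, so it induces a bijection on $\bB(V)$. If the associators make the explicit formula awkward, the fallback is to argue finite-dimensionality degree-wise: $\mathcal{S}$ is injective on each finite-dimensional $\bB(V)(n)$, via the pairing identity \eqref{3.0} combined with non-degeneracy, and is therefore bijective there.

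With these checks done, Theorem \ref{thm5.15} applies directly.
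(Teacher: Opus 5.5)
Your proposal is correct and is exactly the paper's argument: the corollary comes from applying Theorem \ref{thm5.15} with $A=\bB(V^*)$, $B=\bB(V)$ and the descended pairing $\omega$, whose grading, local finiteness and non-degeneracy are checked in the paragraph before the corollary, just as you do. The paper takes bijectivity of the antipodes as a standing assumption in Section 3, so your degree-wise argument via $T(V)$ (the antipode of $T(V)$ is an isomorphism on each $V^{\ot n}$ and descends to the finite-dimensional quotients) fills in a point the paper leaves implicit; note, however, that your fallback via \eqref{3.0} is circular, since that identity only shows that $\mathcal{S}_{\bB(V)}$ and $\mathcal{S}_{\bB(V^*)}$ are transposes of each other in each degree (hence of equal rank), not that either one is injective.
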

\begin{proof}
  This follows immediately from the existence of a dual pair $\omega:
    \BVd \ot \BV \rightarrow \mathbbm{k}$ and Theorem \ref{thm5.15}
\end{proof}
Let $A$, $B$ be  $\mathbb{N}_0$-graded locally finite Hopf algebra in $\mathcal{C}=\HH$ with bijective antipodes and $\omega: A \ot B\rightarrow \mathbbm{k}$  a dual pair. Recall the braided monoidal equivalence in Theorem \ref{thm5.15}:
$$\Omega: \BBCR \longrightarrow \AACR.$$
\begin{cor}
The Nichols algebra $\mathcal{B}(V)$ of any $V \in \BBC$ is rational if
$V$ is.    
\end{cor}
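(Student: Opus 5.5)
The plan is to reduce rationality of $\mathcal{B}(V)$ to rationality of the tensor powers of $V$. Throughout, $V$ is an object of $\BBC$ which is rational, i.e.\ $V\in\BBCR$, and $\mathcal{B}(V)$ is its Nichols algebra formed in the braided monoidal abelian category $\BBC$. As an object of $\BBC$, $\mathcal{B}(V)$ is a quotient of the tensor algebra $T(V)\cong\bigoplus_{n\ge 0}V^{\otimes n}$. The quotient map is a morphism in $\BBC$, because $\mathcal{B}(V)=T(V)/\ker\operatorname{Wor}(c)$ and $\ker\operatorname{Wor}(c)$ is a subobject in $\BBC$, being the kernel of a morphism in that abelian category. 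So it suffices to show that $T(V)$, with its $B$-action as an object of $\BBC$, is a rational $B$-module in $\mathcal{C}$.

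First, I apply Lemma \ref{lem5.2}(2): $\BBCR$ is a monoidal subcategory of $\BBC$. Starting from the unit object $\mathbbm{k}$, which is rational because $B(n)$ acts by $\varepsilon_B$ and hence trivially for $n\ge 1$, and from $V$, induction on $n$ gives $V^{\otimes n}\in\BBCR$ for every $n\ge 0$. Here the bracketing is $V^{\otimes n}=(V^{\otimes(n-1)})\otimes V$, and the tensor product of two objects is rational by the lemma.

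Next, the same lemma says that $\BBCR$ is closed under arbitrary direct sums. Hence $T(V)=\bigoplus_n V^{\otimes n}$ is rational. This step is elementwise obvious: any $x\in T(V)$ lies in a finite sum of the $V^{\otimes n}$, so one takes the maximum of the finitely many bounds $n_0$. Finally, closure under quotient objects from the same lemma gives $\mathcal{B}(V)\in\BBCR$.

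The only point that needs care is confirming that the Yetter--Drinfeld module structure on $\mathcal{B}(V)$, viewed as a Hopf algebra in $\BBC$, really is the one induced from the tensor-product structures on the $V^{\otimes n}$. I would check this first. The tensor algebra in the abelian category $\BBC$ is built from the monoidal product of $\BBC$, and the $B$-action on a tensor product is the diagonal one $\lambda_{V\otimes W}$ displayed in the proof of Lemma \ref{lem5.2}. The Woronowicz symmetrizers are built from the braiding $c^{\mathcal{YD}}$, which is a morphism in $\BBC$, so their kernels are subobjects there. Once this bookkeeping is settled, the corollary follows. The equivalence $\Omega$ of Theorem \ref{thm5.15} then shows that $\Omega(\mathcal{B}(V))$ is again a Hopf algebra in $\AACR$.
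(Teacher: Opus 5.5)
Your proposal is correct and follows the paper's own argument. The paper likewise shows that the tensor powers $V^{\otimes n}$ are rational because $\BBCR$ is a monoidal subcategory, and then uses closure of $\BBCR$ under direct sums and quotients (Lemma \ref{lem5.2}(2)) to conclude that $T(V)$ and $\mathcal{B}(V)$ are rational. Your explicit checks that the unit object $\mathbbm{k}$ is rational and that $\ker\operatorname{Wor}(c)$ is a subobject in $\BBC$ are details the paper leaves implicit; the closing remark about $\Omega$ is not needed for this statement.
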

\begin{proof}
  If $V$ is rational, then for any $n \in \mathbb{N}_{\geq 1}$, $V^{\ot n}$ is  rational  in $\BBC$. Since $\BBC_{\operatorname{rat}}$ is closed under direct sum and quotient by Lemma \ref{lem5.2}. Both objects $T(V)=\bigoplus_{n \geq 0}V^{\ot n}$ and $\bB(V)$ are rational.  
\end{proof}
Now for  $Q \in \BBCR$,  $\mathcal{B}(Q)$  is an $\mathbb{N}_0$-graded Hopf algebra  in $\BBCR$ as well. The following result is straightforward.

\begin{cor}\label{cor4.5}
    Let $Q \in \BBCR$, then 
    \begin{equation}
        \Omega(\mathcal{B}(Q)) \cong \mathcal{B}(\Omega(Q))
    \end{equation}
    as $\mathbb{N}_0$-graded  Hopf algebras in  $\AAC_{\operatorname{rat}}$.      
\end{cor}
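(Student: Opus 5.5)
The plan is to use the characterization of Nichols algebras in Definition~\ref{def3.18} and check that $\Omega(\mathcal{B}(Q))$ satisfies all four defining properties inside $\AAC_{\operatorname{rat}}$. Uniqueness then gives $\Omega(\mathcal{B}(Q))\cong\mathcal{B}(\Omega(Q))$ as graded Hopf algebras.

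First, by the preceding corollary, $\mathcal{B}(Q)$ is rational, since $Q$ is and $\BBCR$ is closed under direct sums and quotients (Lemma~\ref{lem5.2}). Hence $\mathcal{B}(Q)$, together with each of its graded components and all tensor powers, lies in $\BBCR$. Every structure map (multiplication, unit, comultiplication, counit, antipode) is a morphism in $\BBCR$. Now apply the braided monoidal equivalence $(\Omega,\beta)$ of Theorem~\ref{thm5.15}. Transporting structure along a braided monoidal functor makes $\Omega(\mathcal{B}(Q))$ a Hopf algebra in $\AACR$, with multiplication $\Omega(\mu)\circ\beta_{\mathcal{B}(Q),\mathcal{B}(Q)}$ and comultiplication $\beta^{-1}\circ\Omega(\Delta)$. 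Compatibility of the multiplication and comultiplication involves the braiding, and it is preserved because $\Omega$ is braided.

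Next, $\Omega$ is the identity on underlying objects of $\mathcal{C}$ and on morphisms. It is additive and exact, being an equivalence of abelian subcategories closed under subobjects and quotients. Consequently the grading $\mathcal{B}(Q)=\bigoplus_n\mathcal{B}(Q)(n)$ is carried over. The components $\Omega(\mathcal{B}(Q)(n))$ are subobjects in $\AACR$. The multiplication respects degree because the maps $\beta_{X,Y}$ are natural and therefore preserve the decomposition of $X\otimes Y$ along direct sums. Degree zero gives $\Omega(\mathbbm{k})=\mathbbm{k}$, since $\Omega$ is monoidal, and degree one gives $\Omega(Q)$.

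Two points remain. The first is generation in degree one. Surjectivity of $\mu\colon \mathcal{B}(Q)(1)^{\otimes n}\to\mathcal{B}(Q)(n)$ transports to surjectivity of $\Omega(\mu)\circ\beta$, since $\beta$ is an isomorphism and $\Omega$ preserves epimorphisms. Iterated products need a coherence argument, using that $\beta$ is compatible with the associators. The second point is that the primitive subobject is exactly degree one. The primitive subobject is the kernel of $\Delta-(\eta\otimes\mathrm{id})-(\mathrm{id}\otimes\eta)$, up to the unit isomorphisms. Under $\Omega$ this becomes the kernel of the analogous map built from $\beta^{-1}\circ\Omega(\Delta)$. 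The maps $\beta_{X,\mathbbm{k}}$ and $\beta_{\mathbbm{k},X}$ reduce to identities, because braidings with the unit are trivial. Exactness of $\Omega$ therefore identifies the primitives of $\Omega(\mathcal{B}(Q))$ with $\Omega(Q)$.

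I expect the main obstacle to be exactly this careful bookkeeping of the non-strict monoidal structure $\beta$, together with the non-trivial associators of $\HH$, when checking the Hopf axioms and generation. Once those points are settled, uniqueness in Definition~\ref{def3.18} concludes the proof.
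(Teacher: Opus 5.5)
Your proposal is correct and follows essentially the paper's route: the paper's proof is a one-line appeal to Lemma~2.16 of \cite{reflection1}, which is the general fact that a braided monoidal equivalence carries Nichols algebras to Nichols algebras, and your check of the four conditions of Definition~\ref{def3.18} for $\Omega(\mathcal{B}(Q))$ is precisely the content of that lemma. The delicate points you flag do go through: naturality of $\beta$ gives the grading, and naturality of $\beta$ with respect to the unit $\eta\colon\mathbbm{k}\to\mathcal{B}(Q)$, together with $\beta_{X,\mathbbm{k}}=\beta_{\mathbbm{k},X}=\mathrm{id}$, shows that $\beta$ fixes $x\otimes 1$ and $1\otimes x$, so the primitives of $\Omega(\mathcal{B}(Q))$ and of $\mathcal{B}(Q)$ are the same subspace.
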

\begin{proof}
        It follows from [\citealp{reflection1}, Lemma 2.16] immediately.   
\end{proof}      
\subsection{The functor $\Omega$  under $\mathbb{Z}$-gradings}
Having established the duality for Nichols algebras, we now  investigate  how the functor $\Omega$ interacts with gradings.
Let $\NN\Gr \Mod_{\mathbbm{k}}$(resp., $\ZZ\Gr\Mod_{\mathbbm{k}}$) be the category of $\mathbb{N}_0$-graded vector spaces(resp., $\mathbb{Z}$-graded vector spaces).
The functor $\NN\Gr \Mod_{k} \rightarrow \ZZ\Gr \Mod_{k}$, which extends the $\NN$-grading of an object $V$ in $\NN\Gr \Mod_{k}$ to a $\ZZ$-grading by setting $V(n) = 0$ for all $n < 0$, is monoidal.  This functor allows us to view $\NN$-graded coalgebras and coquasi-Hopf algebras as $\ZZ$-graded coalgebras and $\ZZ$-graded coquasi-Hopf algebras, respectively.\par 
Furthermore, suppose $H$ is a $\mathbb{N}_0$-graded coquasi-Hopf algebra. A $\ZZ$-graded Yetter-Drinfeld module $V$ over $H$ is by definition an object $V$ in $\HH(\ZZ\Gr \Mod_{k})$. In other words, $V$ is a $\ZZ$-graded vector space such that the map $H\otimes V \rightarrow V$ and the comodule structure maps  $V \rightarrow H \otimes V$ are $\mathbb{Z}$-graded.
\begin{lemma}\textup{[\citealp{reflection1}, Lemma 3.9]} Suppose $H$ is a $\mathbb{Z}$-graded coquasi-Hopf algebra.\par 
\text{$(1)$} Let $K$ be a Nichols algebra in $\HH(\Zgr)$, and $K(1)=\oplus_{\gamma \in \mathbb{Z}} K(1)_\gamma$ a $\ZZ$-graded object in $\HH(\Zgr)$. Then $K$ admits a unique $\mathbb{Z}$-grading that extends the grading  on $K(1)$. Moreover, $K(n)$ is $\ZZ$-graded in $\HH(\Zgr)$ for all $n \geq 0$.\par
\text{$(2)$}  Let $K$ be a $\ZZ$-graded braided Hopf algebra in $\HH(\ZZ\Gr \Mod_{k})$. Then the bosonization $K\#H$ is a $\ZZ$-graded Hopf algebra with $\deg K(\gamma)\#H(\lambda) = \gamma + \lambda$ for all $\gamma, \lambda \in \ZZ$.\par
\text{$(3)$}  Let $H_0 \subseteq H$ be a coquasi-Hopf subalgebra of degree $0$, and $\pi : H \rightarrow H_0$ a coquasi-Hopf algebra map with $\pi|_{H_0} = \mathrm{id}$. Define $R = H^{\operatorname{co}H_0}$. Then $R$ is a $\ZZ$-graded braided Hopf algebra in ${^{ H_0}_{H_0}\mathcal{YD}}(\ZZ\Gr \Mod_{k})$ with $R(\gamma) = R \cap H(\gamma)$ for all $\gamma \in \ZZ$.
\end{lemma}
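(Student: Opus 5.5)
This lemma is cited from \cite{reflection1}, Lemma 3.9. The plan is to reprove it directly, since none of the arguments there use cosemisimplicity or pointedness. The strategy is to transport the standard Hopf-algebra arguments to $\HH(\Zgr)$, which is a $\mathbbm{k}$-linear braided monoidal abelian category. Tensor products, associators, braidings and the bosonization formulas in it are all built from homogeneous maps. The only new point is that every structure map involving $\Phi$, $\mathcal{S}$, $\alpha$ and $\beta$ must be homogeneous of degree $0$. This holds because $H$ is a $\mathbb{Z}$-graded coquasi-Hopf algebra: $\Phi$ vanishes unless the total degree is $0$, and $\mathcal{S}$ is graded.

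For (1), I would first form the tensor algebra $T(K(1))$ in $\HH(\Zgr)$. It carries two gradings: the $\mathbb{N}_0$-grading by tensor length, and the $\mathbb{Z}$-grading induced from $K(1)=\oplus_\gamma K(1)_\gamma$. Since the associator and braiding of $\HH(\Zgr)$ are degree-$0$ maps, the braided symmetrisers $[n]!_c$ are homogeneous of degree $0$ for the $\mathbb{Z}$-grading. Hence $\ker\operatorname{Wor}(c)$ is a $\mathbb{Z}$-graded subobject, and by the definition of the Nichols algebra, $K=T(K(1))/\ker\operatorname{Wor}(c)$ inherits a $\mathbb{Z}$-grading making each $K(n)$ a $\mathbb{Z}$-graded object of $\HH(\Zgr)$. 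For uniqueness, note that $K$ is generated by $K(1)$ as an algebra (Definition \ref{def3.18}). Any grading extending that of $K(1)$ and compatible with multiplication is therefore determined on products of elements of $K(1)$, which span $K$.

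For (2), I would check directly on the bosonization formulas that every map is homogeneous once we set $\deg(K(\gamma)\#H(\lambda))=\gamma+\lambda$. These formulas involve products in $H$ and in $K$, the action $\rhd$, the coaction, $\Delta_K$ and $\Phi$. Each factor is either degree-preserving or, in the case of $\Phi$, supported in total degree $0$. This verification is routine, and the same reasoning applies to the antipode formula in the Remark on $R\#H$.

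For (3), I would use Lemma \ref{Lemma 1.6} with $N=H$, $\pi$, and $\sigma$ the inclusion of $H_0$. Since $\pi$ and $\sigma$ are graded of degree $0$, the coinvariant condition $a_1\otimes\pi(a_2)=a\otimes 1$ is defined by a graded map. Therefore $R$ is a graded subspace with $R(\gamma)=R\cap H(\gamma)$. The map $\tau$ involves $\Phi_N$, $\mathbb{S}_{H_0}=\beta*\mathcal{S}*\alpha$ and the product; because $\alpha,\beta$ live on $H_0$ in degree $0$, $\tau$ is homogeneous. It follows that the structure maps $\operatorname{ad}$, $\rho_R$, $\Delta_L$ and $\mathcal{S}_R$ of Lemma \ref{Lemma 1.6} are all graded, so $R$ is a $\mathbb{Z}$-graded Hopf algebra in ${^{H_0}_{H_0}\mathcal{YD}}(\Zgr)$.

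The main obstacle I expect is the degree bookkeeping for $\Phi$ and $\mathbb{S}$ in (2) and (3). The key fact needed is that a graded coquasi-Hopf algebra has $\Phi(H(a),H(b),H(c))=0$ unless $a+b+c=0$, and that $\alpha$ and $\beta$ vanish off degree $0$. Once these are recorded, every remaining check reduces to additivity of degrees.
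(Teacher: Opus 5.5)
Your argument is correct. The paper itself gives no proof to compare against: it quotes the lemma from [\citealp{reflection1}, Lemma 3.9], the earlier work in the pointed cosemisimple setting. Your route is the natural self-contained one. Everything reduces to the fact that the associator, the braiding, $\Phi$, $\alpha$, $\beta$ and $\mathcal{S}$ are degree-$0$ morphisms of $\Zgr$. The structure maps built from them are then degree $0$ as well: those of $T(V)$, $\ker\operatorname{Wor}(c)$, $K\#H$ and $\tau$. Unlike the bare citation, this shows that pointedness and cosemisimplicity play no role. So the lemma holds for an arbitrary $\ZZ$-graded coquasi-Hopf algebra, which is the generality needed in the proofs of Theorem~\ref{thm3.7} and Theorem~\ref{prop3.11}.

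Two points should be made explicit.

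First, the bosonization product and coproduct also involve $\Phi^{-1}$. It too vanishes off total degree $0$, because it is the convolution inverse of a degree-$0$ functional on the graded coalgebra $H\ot H\ot H$. To see this, split $\Phi^{-1}$ into homogeneous components $\psi_d$. Then $\Phi*\psi_d=0$ for $d\neq 0$, and hence $\psi_d=\Phi^{-1}*(\Phi*\psi_d)=0$.

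Second, in (3) you assert that $\pi$ is graded. This does not follow from $H_0\subseteq H(0)$ and $\pi|_{H_0}=\operatorname{id}$. It must be read into the hypothesis, taking ``coquasi-Hopf algebra map'' to mean a morphism in $\Zgr$. Without it, (3) is false. Take the Hopf algebra (so $\Phi=\varepsilon\ot\varepsilon\ot\varepsilon$) $H=\mathbbm{k}[a^{\pm1},b]$, with $a$ grouplike of degree $0$, $\Delta(b)=a\ot b+b\ot 1$ and $\deg b=1$. Let $H_0=\mathbbm{k}[a^{\pm1}]$, and let $\pi$ be the Hopf algebra map with $\pi(a)=a$ and $\pi(b)=a-1$. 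Then $b-a+1\in H^{\operatorname{co}H_0}$ but $b\notin H^{\operatorname{co}H_0}$, so the coinvariants are not a graded subspace. In the paper's applications the projections kill all nonzero degrees, so nothing downstream is affected.
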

From now on, $H$  represents a coquasi-Hopf algebra with bijective antipode, 
let $A,B \in \HH$ be locally finite $\mathbb{N}_0$-graded Hopf algebras with dual pairing $\omega$. Moreover, 
$$ \omega(A(m),B(n))=0, \ \ \text{for all} \ m\neq n.$$
To extend this construction to the  $\mathbb{Z}$-graded setting, we extend the $\mathbb{N}_0$-grading to $\mathbb{Z}$-grading  by
$$ A(n)=0, \ B(n)=0 \ \  \text{for all}  \ n<0.$$
We endow $A$ with  a new $\mathbb{Z}$-grading as follows: $\operatorname{deg}(A(n))=-n$ for all $n \in \mathbb{Z}$, which ensures that  $\omega: A \otimes B \rightarrow \mathbb{k}$ is $\mathbb{Z}$-graded. Here, the grading of $\mathbbm{k}$ is given by $\mathbbm{k}(n) = 0$, if $n \neq  0$, and $\mathbbm{k}(0) = \mathbbm{k}$.\par 
Now we return to the purpose of this subsection.
We first introduce the following notation for further use.
\begin{definition}
Let $H$ be a coquasi-Hopf algebra with bijective antipode, and $\mathcal{C}=\HH$.
Suppose $R$ is an $\mathbb{N}_0$-graded Hopf algebra in  $\HH$. Let $(X,\lambda_X)$ be a  $R$-module in $\mathcal{C}$ and $(Y,\delta_Y)$ be a $R$-comodule in $\mathcal{C}$. For $n \geq 0$, we define:
\begin{align*}
\mathcal{F}_nX &= \{x \in X \mid \lambda_X(R(n)\ot x) = 0 \text{ for all } i > n\}, \ \\
\mathcal{F}^nY &= \{y \in Y \mid \delta_Y(y) \in \bigoplus_{i=0}^n R(i) \otimes Y\}. 
\end{align*}
\end{definition}
 With  above notations, the functor $\Omega$ relates the operator $\mathcal{F}_n$ to $\mathcal{F}^n$.
\begin{lemma}\label{lemOmega} Let $N$ be a finite-dimensional object in $\HH$. Recall 
we have the following  equivalence of monoidal categories:
    $$ \Omega:{^{\bB(N)}_{\bB(N)}\mathcal{YD}(\mathcal{C})}_{\operatorname{rat}} \cong {^{\bB(N^*)}_{\bB(N^*)}\mathcal{YD}(\mathcal{C})}_{\operatorname{rat}} .$$ Let $V \in {^{\bB(N)}_{\bB(N)}\mathcal{YD}(\mathcal{C})}_{\operatorname{rat}}$, then we have\\[1em]
    \textup{(1)} $\mathcal{F}_n \Omega(V)=\mathcal{F}^nV$ for all $n\geq 0$.\\ \textup{(2)} $\mathcal{F}^n\Omega(V)=\mathcal{F}_n(V)$ for all $n\geq 0$.\\
   \textup{(3)} Suppose  $V$ is  a $\mathbb{Z}$-graded object in ${^{\bB(N)}_{\bB(N)}\mathcal{YD}(\mathcal{C})}$, then $\Omega(V)$ is a $\mathbb{Z}$-graded object in ${^{\bB(N^*)}_{\bB(N^*)}\mathcal{YD}(\mathcal{C})} $, where the grading of  $\Omega(V)$ is given by $\Omega(V)(n)=V(-n)$ for all $n\in \mathbb{Z}$.
\end{lemma}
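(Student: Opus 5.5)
Throughout, write $A=\bB(N^*)$, $B=\bB(N)$, and use the explicit description of $\Omega$ from Theorem \ref{thm5.15}. The action of $A$ on $\Omega(V)$ is
\[
\lambda_A=(\omega\ot\operatorname{id})\circ a^{-1}_{A,B,V}\circ(\operatorname{id}_A\ot\delta_B).
\]
The coaction $\delta_A$ is $c^2_{A,V}\circ(\mathcal{S}_A^2\ot\operatorname{id})\circ\delta'$, where $\delta'$ is characterised by $\lambda_B=(\omega^+\ot\operatorname{id})\circ a^{-1}\circ(\operatorname{id}\ot\delta')$. The whole argument rests on the graded orthogonality $\omega(A(m),B(n))=0$ for $m\ne n$, on the non-degeneracy of $\omega$ restricted to each $A(n)\ot B(n)$, and on the fact that the associator and the braiding only involve the $H$-coaction and $H$-action. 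The latter preserve the $\NN$-degree in $A$ and $B$, since all gradings are by subobjects of $\HH$.

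\textbf{Part (1).} Take $v\in V$ and write $\delta_B(v)=\sum_{i}\sum_k b_{i,k}\ot v_{i,k}$ with $b_{i,k}\in B(i)$. For $a\in A(n)$, $\lambda_A(a\ot v)$ is obtained by applying $a^{-1}_{A,B,V}$, which produces coefficients $\Phi(a_{-1},b_{-1},v_{-1})$ but keeps $a_0\in A(n)$ and $b_0\in B(i)$, followed by $\omega$. By orthogonality only the terms with $i=n$ survive. Consequently, if $v\in\mathcal{F}^nV$ then $\lambda_A(A(m)\ot v)=0$ for all $m>n$.

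For the converse, suppose $\lambda_A(A(m)\ot v)=0$ for all $m>n$. Fix $m>n$ and project $\delta_B(v)$ to $B(m)\ot V$. Because $a^{-1}$ is invertible and, being given by convolution with the invertible $\Phi$, acts compatibly on the graded pieces, the vanishing means
\[
(\omega\ot\operatorname{id})\circ a^{-1}\bigl(A(m)\ot \mathrm{pr}_m\delta_B(v)\bigr)=0 .
\]
Apply $a_{A,B,V}$, or argue exactly as in Lemma \ref{lem5.3} using convolution invertibility of $\Phi$. Together with non-degeneracy of $\omega$ on $A(m)\times B(m)$, which holds since $A(m)\cong B(m)^*$ for a dual pair, this forces $\mathrm{pr}_m\delta_B(v)=0$. (Read literally, the definition of $\mathcal{F}_n$ says $\lambda_X(R(i)\ot x)=0$ for $i>n$.) This step, making the "non-degeneracy survives the associator" argument rigorous, is the main technical point. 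I would handle it by the same trick as Step 1 of Proposition \ref{prop5.5}: the map $G_V$ there is injective and degree-preserving.

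\textbf{Part (2).} This is symmetric. The action $\lambda_B$ is expressed through $\delta'$ and the pairing $\omega^+=\omega\circ c_{B,A}\circ(\mathcal{S}_B\ot\mathcal{S}_A)$. This pairing is again graded and non-degenerate in each degree, because $\mathcal{S}$ and $c$ preserve degrees. Repeating the argument of Part (1) gives $\mathcal{F}_nV=\{v\mid \delta'(v)\in\bigoplus_{i\le n}A(i)\ot V\}$. It then remains to check that passing from $\delta'$ to $\delta_A=c^2_{A,V}\circ(\mathcal{S}_A^2\ot\operatorname{id})\circ\delta'$ does not change the $A$-degree support. This holds because $\mathcal{S}_A$ is graded and $c_{A,V}$, $c_{V,A}$ are given by the $H$-action on homogeneous components, hence preserve $A(i)\ot V$. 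Therefore $\mathcal{F}^n\Omega(V)=\mathcal{F}_nV$.

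\textbf{Part (3).} Give $A$ the grading $\deg A(n)=-n$, so that $\omega$, and likewise $\omega^+$, is $\ZZ$-graded of degree $0$. Set $\Omega(V)(n):=V(-n)$. The maps $\delta_B$ and $\lambda_B$ are graded by hypothesis, and the associator and braiding are degree $0$. Hence $\lambda_A$, as a composite of graded maps, is graded for the new grading. With the sign reversal on $A$, $\Omega(V)$ reindexed by $V(-n)$ is graded compatibly with the standard grading of $A$. For the coaction, $\delta'$ is graded because it is the unique preimage under the graded bijection $G_V$ of Proposition \ref{prop5.5}, which restricts to homogeneous components. Then $\delta_A$ is graded, since $\mathcal{S}_A^2$ and $c^2$ are degree $0$. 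Finally, the sign bookkeeping between the two gradings of $A$ must be checked once against the convention in the $\ZZ$-graded lemma quoted from \cite{reflection1}.
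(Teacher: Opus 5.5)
Your proposal is correct and follows essentially the same route as the paper. Part (1) uses graded orthogonality plus degreewise non-degeneracy of $\omega$, with the associator only contributing $\Phi$-coefficients that preserve degrees; part (2) uses the symmetric argument with $\omega^{+}$ and $\delta'$, then compares $\delta'$ with $\delta_A$; part (3) checks the degrees of $\lambda_A$ and $\delta_A$ directly.

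The differences are minor. You check directly, via naturality of the braiding and gradedness of $\mathcal{S}_A$, that $c^2_{A,V}\circ(\mathcal{S}_A^2\otimes\operatorname{id})$ preserves each $A(i)\otimes V$, where the paper quotes this from the Hopf setting. You also justify the gradedness of $\delta'$ through uniqueness of preimages under $G_V$, which the paper leaves implicit.

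The converse inclusion in (1) rests on the same elementwise ``non-degeneracy through the associator'' that the paper also only asserts. Injectivity of $G_V$ is a statement about morphisms, so to use it for a single $v$ you must apply it on a subobject containing $v$, for instance $\mathcal{F}_n\Omega(V)$. That needs $\mathcal{F}_n\Omega(V)$ to be a subobject, which is not automatic for an elementwise annihilator in the coquasi setting.
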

\begin{proof}
Recall that the braided monoidal equivalence of abelian category 
$$\Omega: {^{\bB(N)}_{\bB(N)}\mathcal{YD}(\mathcal{C})}_{\operatorname{rat}} \longrightarrow {^{\bB(N^*)}_{\bB(N^*)}\mathcal{YD}(\mathcal{C})}_{\operatorname{rat}}, $$ sends 
$$ (V,\lambda_{\bB(N)},\delta_{\bB(N)}) \mapsto (V,\lambda_{\bB(N^*)},\delta_{\bB(N^*)}).$$
which is induced by the dual pair  $\omega:  \mathcal{B}(N^*)\ot \bB(N)\rightarrow  \mathbbm{k}$.\par 
(1) Recall
$$\lambda_{\bB(N^*)}=(\bB(N^*)\ot V \xrightarrow{\operatorname{id}\ot \delta_{\bB(N)}} \bB(N^*)\ot (\bB(N)\ot V)\xrightarrow{a^{-1}_{\bB(N^*),\bB(N),V}}(\bB(N^*)\ot \bB(N)) \ot V\xrightarrow{\omega\ot \operatorname{id}_V}V),$$
    Now for fixed $n \geq 0$, the  kernel of the induced map:
    \begin{equation}\label{2.3}
        \bB(N^*)\otimes V \longrightarrow \operatorname{Hom}(\bB(N)(n),V), \ \ a\otimes v\mapsto(b \mapsto \Phi(a_{-1},b_{-1},v_{-1})^{-1}\langle a_0,b_0\rangle v_0)
    \end{equation} 
    is $\bigoplus_{m\neq n}\bB(N^*)(m) \otimes V$. This means,  if $v \in \mathcal{F}^n V$, then for each $i >n$, $\lambda_{\bB(N^*)}(\bB(N^*)(i)\otimes v)=0$ by (\ref{2.3}). On the other hand, if $v \in \mathcal{F}_n(V,\lambda_{\bB(N^*)})$, this will imply $\delta(v)\in \bigoplus_{i=0}^n\BN(i)\otimes V$.  This completes the proof of (1): $\mathcal{F}_n \Omega(V)=\mathcal{F}_n(V,\lambda_{\bB(N^*)})=\mathcal{F}^nV$.\par
  (2)  For the second statement, recall $\delta_{\bB(N^*)}$ is given by 
 \begin{align*}
     &\delta_{\bB(N^*)}=(V\xrightarrow{\delta'}\bB(N^*)\ot V\xrightarrow{\mathcal{S}_\bB(N^*)^2\ot \operatorname{id}_V} \bB(N^*)\ot V \xrightarrow{c^2_{\bB(N^*),V}}\bB(N^*) \ot V), \ \text{ where } \ \delta' \ \text{is defined by}\\
 &\lambda_{\bB(N)}=(\bB(N)\ot V \xrightarrow{\operatorname{id}_V \ot \delta' }\bB(N) \ot (\bB(N^*)\ot V) \xrightarrow{a^{-1}_{\bB(N),\bB(N^*),V}}(\bB(N)\ot \bB(N^*)) \ot V \xrightarrow{ \omega^{+} \ot \operatorname{id}_V} V).
 \end{align*} 
 By [\citealp{rootsys}, Corollary 3.3.6, Lemma 12.2.11], $(V, \delta')$ is  $\bB(N^*)$-comodule in $\HH$, and we have:
$$\mathcal{F}^n(V,\delta')=\mathcal{F}^n\Omega(V).$$
 A similar argument to that in (1) shows that $\mathcal{F}_n(V)=\mathcal{F}^n(V,\delta')$.  This combines together shows  $\mathcal{F}_n(V)=\mathcal{F}^n(V,\delta')=\mathcal{F}^n\Omega(V)$.\par 
(3) We regard $\bB(N)$ and $\bB(N^*)$ as $\mathbb{Z}$-graded Hopf algebras in $\HH$. Let $V=\bigoplus_{n \in \mathbb{Z}}V(n)$ be its gradation, and set  $\Omega(V)(n)=V(-n)$ for all $n \in \mathbb{Z}$. Then  for $m\geq 0$, by definition of $\rho_{\bB(N^*)}$, we have
$$ \lambda_{\bB(N^*)} ( \bB(N^*)(m)\otimes \Omega(V)(n) )=\lambda_{\bB(N^*)} (\bB(N^*)(m)\otimes V(-n))\subseteq V(-m-n)=\Omega(V(m+n)).   $$
On the other hand, by definition of $\delta_{\bB(N^*)}$
$$ \delta_{\bB(N^*)}(\Omega(V)(n))=\delta_{\bB(N^*)}(V(-n))\subseteq \bigoplus_{i\in \mathbb{Z}}\bB(N^*)(i)\otimes V(i-n)=\bigoplus_{i \in \mathbb{Z}} \bB(N^*)(i)\otimes \Omega(V(n-i)).$$
The above two equations imply that
 $\Omega(V)$ is a well-defined  $\mathbb{Z}$-graded object in ${^{\bB(N^*)}_{\bB(N^*)}\mathcal{YD}(\mathcal{C})}$.
\end{proof}

\section{Reflection of Nichols algebras over coquasi-Hopf algebras with bijective antipode }
Let $H$ be a  coquasi-Hopf algebra with  bijective antipode. In this section, we study projection of Nichols algebras in $\HH$, which will play an important role when considering reflection of simple Yetter-Drinfeld modules. Some proofs in this section are  parallel to those in the pointed coquasi-Hopf algebra setting \cite{reflection1}; therefore we will omit them for simplicity.
\subsection{The structure  of the space of coinvariants $K$}
We first consider the projection of Nichols algebras for further study of reflection of Nichols algebras.
  \par For any $M,N \in \HH$, there is a canonical surjection of braided Hopf algebra in $\HH$
$$ \pi_{\bB(N)}:\bB(M\oplus N) \rightarrow \bB(N), \  \  \ \pi_{\bB(N)}\mid_N=\operatorname{id}, \ \ \ \pi_{\bB(N)}\mid_M=0. $$  This surjection naturally induces a canonical projection of coquasi-Hopf algebras:
$$ \pi=\pi_{\bB(N)}\#\operatorname{id}_{H}:\bB(M\oplus N)\#H\rightarrow \bB(N)\# H.$$
To simplify our notation, we introduce the following conventions:
$$ \mathcal{A}(M \oplus N):=\bB(M\oplus N)\#H,\ \mathcal{A}(N):=\bB(N)\# H.$$
This projection allows us to construct the associated space of coinvariants. There is a natural injection $\iota: \mathcal{A}(N)\rightarrow \mathcal{A}(M\oplus N)$ such that $\pi\circ \iota=\operatorname{id}_{\mathcal{A}(N)}$. \par 
Let $\pi_{H}: \AMN\rightarrow H$, and $\pi'_{H}:\AN\rightarrow H$ be the natural projection. These maps satisfy a compatibility condition, namely, there exists a commutative diagram relating these projections.
\begin{center}
	\begin{tikzpicture}
		\node (A) at (-2,0) {$\AMN$};
		\node (B) at (3,0) {$\AN$};
		\node (C) at (3,-2) {$H$};
		\node (D) at (3,2) {$\AN$};
  \draw[->] (A) --node [above ] {$\pi$} (B);
		\draw[->] (B) --node [ right] {$\pi'_{H}$} (C);	
		\draw[->] (A) --node [below ] {$\pi_{H}$} (C);
  \draw[->](D)--node [above left]{$\iota$}(A);
  \draw[->](D)--node[right]{$=$}(B);
	\end{tikzpicture}
\end{center}

Let 
$$K= \mathcal{A}(M \oplus N)^{\operatorname{co}\AN}$$
be the space of right $\AN$-coinvariant elements with respect to the projection $\pi$. By virtue of Lemma \ref{Lemma 1.6}, 
$K$ inherits the structure of a braided Hopf algebra in $\BNG$ 
with $\AN$-coaction:
$$ \delta: K \rightarrow \AN \otimes K , \ \ \  x_{-1} \otimes x_0=:\delta(x):=\pi(x_1)\otimes x_2,$$
and  a linear map:
\begin{equation}\label{3.1}
    \operatorname{ad}: \AN \otimes K \rightarrow K, \ \ \ \  a\otimes x \mapsto \operatorname{ad}(a)(x)=\Phi(a_1x_{-1}, \mathbb{S}(a_3)_1,a_4)(a_2x_0)\mathbb{S}(a_3)_2.
\end{equation}
   Here, $\mathbb{S}$ denotes the preantipode of $\AN$. We have  the following two observations.
   \begin{lemma}\textup{[\citealp{reflection1}, Lemma 4.1, Lemma 4.2]}\par  \label{lem5.1}
       \textup{(1)} The space $K$ coincides with $\mathcal{B}(M \oplus N)^{\operatorname{co} \mathcal{B}(N)}$, which is precisely the space of right $\mathcal{B}(N)$-coinvariant elements with respect to $\pi_{\mathcal{B}(N)}$.\par 
        \textup{(2)} The space of primitive elements $P(K)=\{ x\in K \mid \Delta_K(x)=x\otimes 1+1\otimes x\}$ is a subobject of $K$ in $\BNG$.
   \end{lemma}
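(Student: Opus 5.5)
The plan treats the two parts of Lemma~\ref{lem5.1} separately. Both arguments follow the pointed case in \cite{reflection1}, using only the general bosonization machinery of Lemma~\ref{Lemma 1.6}.

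For (1), I would write an element of $\AMN$ as $\sum_i r_i\# h_i$ with $r_i\in\bB(M\oplus N)$ and the $h_i\in H$ linearly independent. I would then compare the condition $(\operatorname{id}\ot\pi)\Delta(x)=x\ot 1$ with the $\bB(N)$-coinvariance condition on $\bB(M\oplus N)$. By the formula for $\Delta_M$ in the bosonization,
\[
(\operatorname{id}\ot\pi)\Delta(r\#h)=\Phi^{-1}(r^1_{-1},r^2_{-2},h_1)\,r^1_0\#r^2_{-1}h_2\ot \pi_{\bB(N)}(r^2_0)\#h_3 .
\]

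\textbf{Step 1: coinvariance forces $h\in\mathbbm{k}1$.} I would apply $\operatorname{id}\ot\varepsilon_{\bB(N)}\ot\operatorname{id}_H$ to the second tensor factor. This reduces the coinvariance condition to $\sum_i r_i\# h_{i1}\ot h_{i2}=\sum_i r_i\#h_i\ot 1$, which gives $h_i\in\mathbbm{k}1$. Hence every coinvariant lies in $\bB(M\oplus N)\#1$.

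\textbf{Step 2: reduce to $\bB(N)$-coinvariance.} For $x=r\#1$ the associator factor becomes $\Phi^{-1}(\cdot,\cdot,1)=\varepsilon$. The condition then reads $r^1\#r^2_{-1}\ot\pi_{\bB(N)}(r^2_0)=r\#1\ot 1$. Applying $\varepsilon_H$ to the middle $H$-factor (allowed by the counit property) gives $(\operatorname{id}\ot\pi_{\bB(N)})\Delta_{\bB(M\oplus N)}(r)=r\ot 1$.

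\textbf{Step 3: the converse.} If $(\operatorname{id}\ot\pi_{\bB(N)})\Delta(r)=r\ot1$, I would apply the $H$-coaction colinearly, using that $\pi_{\bB(N)}$ is a morphism in $\HH$. This recovers the full condition with the middle factor equal to $1$, so $r\#1$ is $\AN$-coinvariant. The only delicate point is keeping track of the coaction indices; this is routine.

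For (2), $P(K)=\ker(\Delta_K-\operatorname{id}\ot1-1\ot\operatorname{id})$, where the map is built from $\Delta_K$ and the unit. Since $K$ is a Hopf algebra in $\BNG$, by Lemma~\ref{Lemma 1.6} together with the bosonization isomorphism, $\Delta_K$, $u_K$, $\varepsilon_K$ are morphisms in $\BNG$. Likewise, the maps $K\to K\ot K$, $x\mapsto x\ot1$ and $x\mapsto 1\ot x$, are the unit constraints composed with $\operatorname{id}\ot u_K$ and $u_K\ot\operatorname{id}$, so they are morphisms as well. Thus $P(K)$ is the kernel of a morphism in the abelian category $\BNG$, and hence a subobject. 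Abelianness holds because $\BNG\cong{}^{\bB(N)}_{\bB(N)}\mathcal{YD}(\HH)$, and the latter is abelian since $\HH$ is.

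The main obstacle I expect is in (1): handling the nontrivial associator in the coproduct of the bosonization while extracting the $H$-component. I would first try to apply $\varepsilon$ to suitable factors so that $\Phi$ collapses to $\varepsilon$ by normalization. Part (2) is formal.
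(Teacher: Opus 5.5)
Your proposal is correct. The paper gives no argument of its own here, only the citation [\citealp{reflection1}, Lemmas 4.1, 4.2], and your proof is the standard argument behind that citation, carried over verbatim from the pointed case: you reduce $\AN$-coinvariance to $\bB(N)$-coinvariance using normalization of $\Phi$ and colinearity of $\varepsilon$ and $\pi_{\bB(N)}$, and you realize $P(K)$ as the kernel of the morphism $\Delta_K-(\operatorname{id}\ot u_K)-(u_K\ot\operatorname{id})$ in $\BNG$.

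Two small clean-ups. In Step 1, apply $\operatorname{id}\ot\varepsilon_H\ot\operatorname{id}$ to $\sum_i r_i\ot h_{i1}\ot h_{i2}=\sum_i r_i\ot h_i\ot 1$; this gives $x=\bigl(\sum_i\varepsilon(h_i)r_i\bigr)\#1$ directly, with no linear independence needed. In (2), you need not go through ${}^{\bB(N)}_{\bB(N)}\mathcal{YD}(\HH)$: the vector-space kernel of a morphism of $\AN$-Yetter--Drinfeld modules is directly seen to be stable under the action and to be a subcomodule.
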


One may expect that $K$ is a Nichols algebra generated by $\operatorname{ad}(\bB(N))(M)$. The following lemma establish that $\operatorname{ad}(\bB(N))(M)$ is a subobject of $K$.
\begin{lemma}\label{lem3.2}
    Let $M,N \in \HH$, $K=(\AMN)^{\operatorname{co}\AN}$.
   The standard $\mathbb{N}_0$-grading of $\bB(M\oplus N)$ induces an $\mathbb{N}_0$-grading on 
    $$L:=\operatorname{ad}(\bB(N))(M)=\bigoplus_{n \in \mathbb{N}}\operatorname{ad}(N)^{n}(M)$$
    with degree $\operatorname{deg}(\operatorname{ad}(N)^n(M))=n+1$. Then $L$ is a $\mathbb{N}_0$-graded object in $\BNG$. Moreover, $L \subseteq K$ is a subobject of $K$ in $\BNG$.\par 
\end{lemma}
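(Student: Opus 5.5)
We will work throughout inside $\AMN=\bB(M\oplus N)\#H$ and use the explicit formulas (\ref{3.1}) for $\operatorname{ad}$ and $\delta(x)=\pi(x_1)\otimes x_2$ for the $\AN$-coaction on $K$. The proof has four steps: show $M\subseteq K$, show $K$ is stable under $\operatorname{ad}(\AN)$, show $L$ is stable under the $\AN$-action and coaction, and finally deal with the grading.

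\textbf{Step 1: $M\subseteq K$.} Every $m\in M$ is primitive in $\bB(M\oplus N)$ and $\pi_{\bB(N)}(m)=0$. By Lemma \ref{lem5.1}(1), $K$ coincides with $\bB(M\oplus N)^{\operatorname{co}\bB(N)}$. Hence $m$ is a right $\bB(N)$-coinvariant, and $M\subseteq K$. Since $K$ is an object of $\BNG$, it is stable under $\operatorname{ad}(\AN)$. In particular $\operatorname{ad}(\bB(N))(M)\subseteq K$, so $L\subseteq K$ as a subspace.

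\textbf{Step 2: the action.} $L$ is stable under $\operatorname{ad}(\bB(N))$ by construction. It is stable under $\operatorname{ad}(H)$ because $M$ and $N$ are objects of $\HH$. For $y\in N$ and $x\in L$, the identity $\operatorname{ad}(y)(x)=yx-\operatorname{ad}(y_{-1})(x)y_0$ from the Example in Section 2 expresses the iterated adjoint action. The action of $\AN=\bB(N)\#H$ is generated by that of $N$ and $H$, which follows from the quasi-associativity of $\operatorname{ad}$ (the module axiom for $\AN$ acting on $K$, up to the scalar factors $\Phi$). Therefore $\operatorname{ad}(\AN)(L)\subseteq L$.

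\textbf{Step 3: the coaction (main obstacle).} Here I would argue by induction on $n$ that
$$\delta\bigl(\operatorname{ad}(N)^n(M)\bigr)\subseteq \bigoplus_{i\le n}\bigl(\bB(N)(i)\#H\bigr)\otimes \operatorname{ad}(N)^{n-i}(M).$$
For $n=0$: $\delta(m)=\pi(m_1)\otimes m_2$, and the coproduct of $m$ in $\AMN$ is $m\otimes 1+m_{-1}\otimes m_0$. So $\delta(m)=m_{-1}\otimes m_0\in H\otimes M$. For the inductive step, the YD compatibility of $K$ in $\BNG$ (equation (\ref{1.17}) in the version over $\AN$, or equivalently (\ref{1.11})) expresses $\delta(\operatorname{ad}(a)(x))$ through $a_1x_{-1}\mathcal{S}(a_3)\otimes\operatorname{ad}(a_2)(x_0)$ with scalar $\Phi$-corrections. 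Apply this with $a=y\in N$, whose coproduct in $\AN$ is $y\otimes1+y_{-1}\otimes y_0$. The right tensor factors then lie in $\operatorname{ad}(N)^{\le n}(M)$, or in $\operatorname{ad}(H)$ applied to those spaces, which stays inside them by Step 2. The delicate point is controlling the scalar $\Phi$-factors and the preantipode terms $\mathbb{S}(y)$, using the explicit formula $\mathbb{S}_M(y)=-\beta(y_{-3})\mathcal{S}_H(y_{-2})\alpha(y_{-1})y_0$. This is exactly where non-associativity forces a longer computation than in the Hopf case, and I would follow the pointed computation of \cite{reflection1}.

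\textbf{Step 4: grading and conclusion.} $\bB(M\oplus N)$ is $\mathbb{N}_0$-graded by total degree, and $\operatorname{ad}(N)^n(M)$ is homogeneous of degree $n+1$. Hence the sum $L=\sum_n \operatorname{ad}(N)^n(M)$ is direct. The action and coaction of Steps 2 and 3 respect the grading: $\bB(N)(i)$ raises degree by $i$ on the action side and lowers it by $i$ on the coaction side. So $L$ is an $\mathbb{N}_0$-graded subobject of $K$ in $\BNG$.
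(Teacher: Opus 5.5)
Your proposal follows essentially the same route as the paper: $L\subseteq K$ because $M\subseteq K$ and $K$ is stable under $\operatorname{ad}$; action-stability comes from $L=\operatorname{ad}(\AN)(M)$, i.e.\ from $N$, $H$ generating the action; coaction-stability is proved by induction on $n$ via (\ref{1.17}) with $\Delta(y)=y\otimes 1+y_{-1}\otimes y_0$; and the direct sum comes from the standard grading. The step you defer is lighter than you expect. $\Phi$, $\alpha$, $\beta$ on $\AN$, and hence $p_R$ and $q_R$, vanish as soon as an argument has positive $\bB(N)$-degree, so only a few terms of the iterated coproduct survive, the scalar factors play no role, and you only have to check that the right-hand factors $(\operatorname{ad}(a_3)(x_0))_0$ lie in $L$.
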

\begin{proof}
     Let $a\in N$ and $x \in \bB(M \oplus N)$ be homogeneous elements.
By equation (\ref{1.27}), we have
    $$ \operatorname{ad}(a)(x)=ax-\operatorname{ad}(a_{-1})(x)a_0.$$
The element $\operatorname{ad}(a)(x)$ is of degree $\operatorname{deg}(X)+1$ in the Nichols algebra $\mathcal{B}(M\oplus N)$. Moreover, we have $ \operatorname{ad}(N)^{n}(M) \cap \operatorname{ad}(N)^m(M)=0$ for $ n\neq m$. This  implies the direct sum decomposition of $L$. \par 
Since $M \subseteq K$ and $K \in \BNG$, we conclude that $L=\operatorname{ad}(\BN)(M)\subseteq K$.
Note that\\ $\operatorname{ad}(\BN)(M)\subseteq \operatorname{ad}(\AN)(M).$ Conversely, since $M \in \HH$, we have 
    $$ \operatorname{ad}(\AN)(M) \subseteq \operatorname{ad}(\BN)(\operatorname{ad}(H)(M)) \subseteq \operatorname{ad}(\BN)(M).$$
  Hence $L=\operatorname{ad}(\AN)(M)$. \par 
 It is clear that the map 
    $$ \operatorname{ad}:\AN \otimes L \rightarrow L$$ is well-defined since  $\operatorname{ad}$ satisfies the first  axiom of Yetter-Drinfeld modules.
Furthermore, the map
$ \operatorname{ad}:\AN \otimes L\rightarrow L$ is $\mathbb{N}_0$-graded. 
Next, we want to show $L$ is a $\AN$-comodule. The comodule structure is given by 
$ \delta_K$. 
For $a \in N$ and $ x \in L$.  Since $a$ is primitive in $\BMN$, we have 
$$\Delta_{\AMN}(a)=a_{-1} \ot a_0 + a  \ot 1.$$ A  direct computation shows:
\begin{align*}
\delta_K(\operatorname{ad}(a)(x))&=p_R((\operatorname{ad}(a_3)(x_0))_{-1},a_4)   q_R(a_1x_{-2},\mathcal{S}(a_6))    (a_2x_{-1})\mathcal{S}(a_5)      \otimes (\operatorname{ad}(a_3)(x_0))_{0}
\\&=p_R((\operatorname{ad}(a_0)(x_0))_{-1},1)q_R(a_{-2}x_{-2},1)a_{-1}x_{-1}\ot (\operatorname{ad}(a_0)(x_0))_{-1}\\
&+p_R((\operatorname{ad}(a_{-2})(x_0))_{-1},a_{-1})q_R(a_{-4}x_{-2},1)(a_{-3}x_{-1})\mathcal{S}(a_0)\ot (\operatorname{ad}(a_{-2})(x_0))_0
\\&=a_{-1}x_{-1}\ot \operatorname{ad}(a_{0})(x_0)+p_R((\operatorname{ad}(a_{-2})(x_0))_{-1},a_{-1})(a_{-3}x_{-1})\mathcal{S}(a_0)\ot (\operatorname{ad}(a_{-2})(x_0))_0.
\end{align*}
 Since $x \in L=\bigoplus_{n\geq 0}\operatorname{ad}(N)^n(M)$. We proceed by induction on $n$. For $n=0$,
note that  $\operatorname{ad}(a_{0})(x_0)  \in L$, hence the first term lies in $\mathcal{A}(N)\ot L$. For the second term, note that $\operatorname{ad}(a_{-2})(x_0) \in M$, since $a_{-2}\in H$ has degree zero. Therefore $(\operatorname{ad}(a_{-2})(x_0))_0 \in M$. Consequently, $\delta_K(\operatorname{ad}(a)(x))  \in\mathcal{A}(N) \ot  L$. \par 
 Now we assume that for some fixed integer $n_0$, we have $\delta_K(\operatorname{ad}(a)(x))  \in  \AN \ot L$ for all $x \in \operatorname{ad}(N)^{n'}(M)$, where $n' \leq  n_0$. Then for $a \in N$ and $ x \in \operatorname{ad}(N)^{n_0+1}(M)$, since by definition $\operatorname{ad}(N)^{n_0+1}(M)=\operatorname{ad}(N)(\operatorname{ad}(N)^{n_0}(M))$, we have $\delta_K(x) \in  \mathcal{A}(N) \ot L$ by the inductive hypothesis. Thus $\operatorname{ad}(a_0)(x_0) \in \operatorname{ad}(N)(L) \subseteq L$. On the other hand, $\operatorname{ad}(a_{-2})(x_0) \in L$ as well. Since $\operatorname{ad}(a_{-2})(x_0) \in \bigoplus_{i=0}^{n_0+1}\operatorname{ad}(N)^i(M)$, we have $(\operatorname{ad}(a_{-2})(x_0))_0 \in L$ by inductive hypothesis.
This completes the induction, and we  show $\delta_K:L  \rightarrow \AN \otimes L$ is well-defined. The third axiom holds automatically since $L \subseteq K$. Thus $L$ is a $\mathbb{N}_0$-graded object in $\BNG$ and  is a subobject of $K$.\par
\end{proof}
The following lemma is used to prove the 
 semisimplicity of  the object $L$ in the case that $M$ is semisimple. It guarantees $L$ to be a $\mathcal{B}(N)$-comodule in $\HH$ via restricting the following composition to $L$,
 $$ \delta: \bB(M\oplus N)\xrightarrow {\Delta_{\bB(M\oplus N)}}\bB(M\oplus N) \otimes \bB(M\oplus N)\xrightarrow {\pi_{\BN} \otimes \operatorname{id}} \bB(N) \otimes \bB(M\oplus N).$$
\begin{lemma}\label{lem3.3}
    For all $x \in L$, we have  $\Delta_{\bB(M\oplus N)}(x) - x \otimes 1 \in \bB( N)\otimes L$. \par 
\end{lemma}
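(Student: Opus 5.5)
Throughout, write $\Delta:=\Delta_{\bB(M\oplus N)}$. The plan is to prove the statement by induction on the degree filtration $L=\bigoplus_{n\geq 0}\operatorname{ad}(N)^n(M)$ of Lemma \ref{lem3.2}. It suffices to treat elements of the form $x\in\operatorname{ad}(N)^n(M)$.

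\textbf{Base case $n=0$.} Here $x\in M$ is primitive in $\bB(M\oplus N)$, so
$$\Delta(x)-x\ot 1=1\ot x\in \bB(N)\ot L.$$

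\textbf{Inductive step.} Assume the claim for all elements of $\operatorname{ad}(N)^n(M)$. Let $a\in N$ and $x\in\operatorname{ad}(N)^n(M)$, and write
$$\Delta(x)=x\ot 1+\sum_i b_i\ot y_i,\qquad b_i\in\bB(N),\ y_i\in L.$$
I will use the formula from (\ref{1.27}),
$$\operatorname{ad}(a)(x)=ax-\operatorname{ad}(a_{-1})(x)\,a_0,$$
and compute $\Delta$ of each term using the braided bialgebra axiom in $\HH$. Here $a$ is primitive, so $\Delta(a)=a\ot 1+1\ot a$. Multiplication in $\bB(M\oplus N)\ot\bB(M\oplus N)$ involves the braiding (\ref{1.13}) and associator factors $\Phi$ as in (\ref{1.12}).

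Expanding gives four types of terms.
\begin{itemize}
\item Terms with $x$ in the second leg and $a$ in the first leg. These recombine to $\operatorname{ad}(a)(x)\ot 1$.
\item Terms $1\cdot(\cdots)\ot a\,y$ together with the corresponding terms coming from $\operatorname{ad}(a_{-1})(x)a_0$. These should combine into $\bB(N)\ot \operatorname{ad}(a)(y_i)$-type elements. Here the reformulation $\operatorname{ad}(a)(y)=ay-\operatorname{ad}(a_{-1})(y)a_0$ is used again, now applied to the second tensor factor.
\item Terms where $a$ is placed in the first factor and multiplies $b_i$. These lie in $\bB(N)\ot(\text{$H$-translates of }y_i)$, and $L$ is $H$-stable.
\item Terms where $a$ ends up braided past $b_i$. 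These again land in $\bB(N)\ot L$, since $L$ is a Yetter--Drinfeld submodule over $H$ by Lemma \ref{lem3.2}.
\end{itemize}

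As a cleaner alternative, I would compare with Lemma \ref{lem3.2}. There it is shown that the $\AN$-coaction $\delta_K(x)=\pi(x_1)\ot x_2$ maps $L$ into $\AN\ot L$. The map $\pi_{\BN}$ kills the $M$-part, and $L\subseteq K=\bB(M\oplus N)^{\operatorname{co}\bB(N)}$ (Lemma \ref{lem5.1}). So the task reduces to showing that every summand of $\Delta(x)-x\ot 1$ already has its first leg in $\bB(N)$. The argument is a degree count in the $M$-grading. The element $x$ has $M$-degree exactly $1$, so $\Delta(x)$ lies in the span of (degree $0$ in $M$)$\,\ot\,$(degree $1$) and (degree $1$)$\,\ot\,$(degree $0$). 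The $\bB(N)$-coinvariance identifies the second type precisely as $x\ot 1$.

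To make this rigorous I will use the $\mathbb{N}_0^2$-grading of $\bB(M\oplus N)$. Under this grading, the component of $\Delta(x)$ of bidegree $((0,*),(1,*))$ lies in $\bB(N)\ot\bB(M\oplus N)(1,*)$. Coinvariance of $x$ forces the $((1,*),(0,*))$-component to equal $x\ot 1$.

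\textbf{Main obstacle.} It remains to show that the second legs of the $((0,*),(1,*))$-component lie in $L$, and not just in $K$ of $M$-degree one. For this I would invoke the induction above on the explicit form, using Lemma \ref{lem3.2}'s comodule computation: apply $\pi_{\BN}\ot\operatorname{id}$ and note that $\pi_{\BN}\ot\operatorname{id}$ is injective on $\bB(N)\ot\bB(M\oplus N)$. I expect controlling the $\Phi$-factors in the braided product, so that the recombination into $\operatorname{ad}(a)(y_i)$ is exact, to be the technical heart of the proof.
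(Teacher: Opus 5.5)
Your inductive argument is the paper's proof: induction on $n$ via $\operatorname{ad}(a)(x)=ax-\operatorname{ad}(a_{-1})(x)a_0$, splitting off the $x\otimes 1$ summand of $\Delta(x)$, and merging the $(1\otimes a)$-terms with the matching terms of $\Delta(\operatorname{ad}(a_{-1})(x)a_0)$ into $\operatorname{ad}(a_0)(y_i)$ using convolution-invertibility of $\Phi$. When you write it out, record that the cross terms $\pm\operatorname{ad}(a_{-1})(x)\otimes a_0$ cancel; they come from $x\otimes 1$ and do not lie in $\mathcal{B}(N)\otimes L$. Also, your last group is $a_0$ braided past $\operatorname{ad}(a_{-2})(y_i)$, not past $b_i$.

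Your ``cleaner alternative'' finishes without redoing this expansion. Identify $\mathcal{B}(N)$ with the subalgebra generated by $N$; then the bigrading and coinvariance give $\Delta(x)-x\otimes 1=(\pi_{\mathcal{B}(N)}\otimes\operatorname{id})\Delta(x)=((\operatorname{id}\#\varepsilon_H)\otimes\operatorname{id})\,\delta_K(x)$. This lies in $\mathcal{B}(N)\otimes L$ because Lemma~\ref{lem3.2}, which is proved independently of the present lemma, already gives $\delta_K(L)\subseteq\mathcal{A}(N)\otimes L$.
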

\begin{proof} Recall $L=\bigoplus_{n \in \mathbb{N}_0}\operatorname{ad}(N)^{n}(M)$.
 We proceed by induction on $n$. Now let $a \in N$ and $x \in M$ be homogeneous elements. A direct computation yields:
\begin{align*}
&\Delta_{\BMN}(\operatorname{ad}(a)(x))=\Delta_{\BMN}(ax-(\operatorname{ad}(a_{-1})(x)a_{0}))\\
&=ax\otimes 1+1\otimes ax+a\otimes x+\operatorname{ad}(a_{-1})(x)\otimes a_0-\operatorname{ad}(a_{-1})(x)a_{0}\otimes 1-1\otimes \operatorname{ad}(a_{-1})(x)a_0-\operatorname{ad}(a_{-1})(x)\otimes a_{0}\\&-\operatorname{ad}((\operatorname{ad}(a_{-1})(x))_{-1})(a_0)\otimes (\operatorname{ad}(a_{-1})(x))_{0}\\
&=\operatorname{ad}(a)(x)\otimes 1+1\otimes\operatorname{ad}(a)(x)+a\otimes x-\operatorname{ad}((\operatorname{ad}(a_{-1})(x))_{-1})(a_0)\otimes (\operatorname{ad}(a_{-1})(x))_{0}.
\end{align*}
Note that $x=\operatorname{ad}(1)(x) \in L$. Since $\operatorname{ad}(a_{-1})(x) \in L$, we have $(\operatorname{ad}(a_{-1})(x))_0 \in L$ by Lemma \ref{lem3.2}.
Therefore $\Delta_{\BMN}(\operatorname{ad}(a)(x))- \operatorname{ad}(a)(x)\otimes 1\in \BN \otimes L$.\par 
For a fixed $m \in \mathbb{N}_0$. We assume that for all $y \in \operatorname{ad}(N)^{m'}(M)$, $1\leq m' \leq m$, we have 
 $$ \Delta_{\bB(M\oplus N)}(y) - y \otimes 1 \in \bB( N)\otimes L.$$
 Now let  $a \in N $ and  $x \in \operatorname{ad}(N)^{m}(M)$ be homogeneous, we have
\begin{align*}
    \Delta_{\BMN}(\operatorname{ad}(a)(x))&=\Delta_{\BMN}(ax-(\operatorname{ad}(a_{-1})(x))a_{0})\\
    &=(a \otimes 1+1 \otimes a)(x^1 \otimes x^2)\\&-\frac{\Phi (a_{-5}, x^1_{-1} , x^2_{-2})\Phi((\operatorname{ad}(a_{-4})(x^1_0))_{-1}, (\operatorname{ad}(a_{-2})(x^2_0))_{-1}, a_{-1})}{\Phi ((\operatorname{ad}(a_{-4})(x^1_0))_{-2}, a_{-3}, x^2_{-1}) }\times \\
&  ((\operatorname{ad}(a_{-4})(x^1_0))_{0} \otimes ((\operatorname{ad}(a_{-2})(x^2_0))_{0})(1\otimes a_{0}+a_{0} \otimes 1).
  \end{align*}
 Since the term $x \ot 1 $ lies in $\Delta_{\BMN}(x)$, let us consider this expression separately. The term $\operatorname{ad}(a)(x) \ot 1$ will occur in $\Delta_{\BMN}(\operatorname{ad}(a)(x))$.
 Now the remaining terms are
 \begin{align*}
& \sum_{x^1 \otimes x^2 \neq x \otimes 1}\Phi(a_{-1},x^1_{-1},x^2_{-1})a_{0}x^1_{0}\otimes x^2_{0}+\frac{\Phi(a_{-3},x^1_{-1},x^2_{-2})}{\Phi((\operatorname{ad}(a_{-2})(x^1_0))_{-1},a_{-1},x^2_{-1})}(\operatorname{ad}(a_{-2})(x^1_0))_{0}\otimes a_{0}x^2_{0}\\
&-\frac{\Phi(a_{-6},x^1_{-1},x^2_{-2})\Phi((\operatorname{ad}(a_{-5})(x^1_0))_{-2},(\operatorname{ad}(a_{-3})(x^2_0))_{-2},a_{-2})}{\Phi((\operatorname{ad}(a_{-5})(x^1_0))_{-3},a_{-4},x^2_{-1})\Phi((\operatorname{ad}(a_{-5})(x^1_0))_{-1},(\operatorname{ad}(a_{-3})(x^2_0))_{-1},a_{-1})}(\operatorname{ad}(a_{-5})(x^1_0))_{0}\ot (\operatorname{ad}(a_{-3})(x^2_0))_{0}a_0\\ 
&-\frac{\Phi(a_{-5},x^1_{-1},x^2_{-2})\Phi((\operatorname{ad}(a_{-4})(x^1_0))_{-2},(\operatorname{ad}(a_{-2})(x^2_0))_{-3},a_{-1})}{\Phi((\operatorname{ad}(a_{-4})(x^1_0))_{-3},a_{-3},x^2_{-1})}\times\\& \Phi((\operatorname{ad}(a_{-4})(x^1_0))_{-1},(\operatorname{ad}((\operatorname{ad}(a_{-2})(x^2_0))_{-2})(a_0))_{-1},(\operatorname{ad}(a_{-2})(x^2_0))_{-1})\times \\&
(\operatorname{ad}(a_{-4})(x^1_0))_{0}(\operatorname{ad}((\operatorname{ad}(a_{-2})(x^2_0))_{-2})(a_0))_{0}\otimes (\operatorname{ad}(a_{-2})(x^2_0))_{0}.
\end{align*}
By assumptions, $$ \Delta_{\bB(M\oplus N)}(x) - x \otimes 1 \in \bB( N)\otimes L.$$ Therefore the first term lies in $\BN \otimes{ L}$. For the third term, note that $\Phi$ is convolution invertible, thus the third term becomes
$$\sum_{x^1\otimes x^2 \neq x\ot 1} \frac{\Phi(a_{-3},x^1_{-1},x^2_{-2})}{\Phi((\operatorname{ad}(a_{-2})(x^1_0))_{-1},a_{-1},x^2_{-1})}(\operatorname{ad}(a_{-2})(x^1_0))_{0}\otimes \operatorname{ad}(a_{-1})(x_0^2)a_0.$$
Combining the second and third terms together yields
$$\sum_{x^1\otimes x^2 \neq x\ot 1}\frac{\Phi(a_{-3},x^1_{-1},x^2_{-2})}{\Phi((\operatorname{ad}(a_{-2})(x^1_0))_{-1},a_{-1},x^2_{-1})}(\operatorname{ad}(a_{-2})(x^1_0))_{0}\otimes \operatorname{ad}(a_0)(x^2_0).$$
By assumption, $\sum_{x^1\otimes x^2 \neq x\ot 1} x^1  \ot x^2 \in \bB(N) \ot L$, therefore this term lies in $\bB(N) \ot L$.
The last term belongs to $\BN \otimes{ L}$ by similar reason, which completes the induction and proves the lemma.

    \end{proof}

The proof of next lemma proceeds exactly as in the cosemisimple case, so we omit here.

\begin{lemma}\label{lem3.5}\textup{[\citealp{reflection1}, Lemma 4.8]}\par 
  \text{$(1)$} Assume $M=\bigoplus_{i \in I} M_i$ is a direct sum  in $\HH$. Let $L_i=\operatorname{ad}\bB(N)(M_i)$ for all $i \in I$. Then we have such a decomposition in $\BNG$
    $$ L=\bigoplus_{i \in I} L_i.$$\par 
  \text{$(2)$} If $M$ is irreducible in $\HH$, then $L=\operatorname{ad}\left(\BN\right)(M)$ is irreducible in  $\BNG$. 
\end{lemma}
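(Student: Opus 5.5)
For (1), I would prove additivity directly, using the structures already set up in Lemma \ref{lem3.2}. Since $L=\operatorname{ad}(\BN)(M)$ and the adjoint action is linear in its second argument,
$$L=\sum_{i\in I}\operatorname{ad}(\BN)(M_i)=\sum_{i\in I}L_i.$$
Each $L_i$ is a subobject of $K$ in $\BNG$. This is Lemma \ref{lem3.2} applied with $M_i$ in place of $M$: the induction there only used that $M$ is an $H$-Yetter-Drinfeld subobject sitting in degree one. The sum is therefore a sum of subobjects, and it remains to show that it is direct.

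To prove directness, I would use the $\mathbb{N}_0^{I}$-type grading of $\bB(M\oplus N)$ coming from the decomposition $M\oplus N=\bigoplus_i M_i\oplus N$. The Nichols algebra of a direct sum in $\HH$ carries a grading by $\mathbb{N}_0^{I\sqcup\{0\}}$, with $M_i$ in degree $e_i$ and $N$ in degree $e_0$. This holds because $T(M\oplus N)$ is multigraded, the braiding and the associator preserve the multidegree, and hence $\operatorname{ker}\operatorname{Wor}(c)$ is a homogeneous ideal. By equation (\ref{1.27}), $\operatorname{ad}(a)(x)=ax-\operatorname{ad}(a_{-1})(x)a_0$ is homogeneous of degree $\deg x+e_0$, because the $H$-action preserves the multidegree. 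So $\operatorname{ad}(N)^n(M_i)$ lies in degree $e_i+ne_0$. For $i\neq j$, the subspaces $L_i$ and $L_j$ sit in disjoint sets of multidegrees, so the sum is direct.

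For (2), I would follow the standard argument of \cite{AHS10}, adapted to this setting. Let $0\neq U\subseteq L$ be a subobject in $\BNG$. Grade $U$ by the $\mathbb{N}_0$-grading of Lemma \ref{lem3.2}. We may assume $U$ is graded, since the grading is the eigen-decomposition of a degree operator that commutes with the structure maps. Pick a nonzero homogeneous $u\in U$ of minimal degree $n+1$. Then apply the coaction $\delta(u)=\pi(u_1)\otimes u_2$ and project onto the component $\bB(N)(n)\#H\otimes M$. Lemma \ref{lem3.3} controls this coaction: it lands in $\bB(N)\otimes L$, and degree bookkeeping forces $\bB(N)(k)\otimes L(n+1-k)$.

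The key claim is that this component is nonzero when $n>0$. The argument goes as follows. If $\delta(u)\in\bigoplus_{k<n}\bB(N)(k)\otimes L$, then by minimality together with closure of $U$ under the coaction, one gets $u\in\mathcal{F}^0$-like pieces lying in $U\cap L(\le n)$ up to projection. The cleanest route is to show that the map $L(n+1)\to \bB(N)(n)\otimes M$, given by the $(n,1)$-component of $\Delta_{\bB(M\oplus N)}$, is injective. This follows from the Nichols property: an element of degree $(n,1)$ whose components of type $N^{\otimes n}\otimes M$ all vanish is primitive-like in the $M$-direction, and then vanishes because the only primitives are in degree one (Definition \ref{def3.18}(4)).

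Granting injectivity, $U$ contains nonzero elements of $M$. Indeed, the second tensor factors of $\delta(u)$ lie in $U\cap M$, which is an $H$-Yetter-Drinfeld submodule because $U$ is stable under $H\subseteq\AN$. Irreducibility of $M$ then gives $M\subseteq U$, so $L=\operatorname{ad}(\BN)(M)\subseteq U$.

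The main obstacle is this injectivity step. In the coquasi setting the coproduct formulas carry $\Phi$-factors, and showing that the $(n,1)$-component detects $L(n+1)$ requires either the skew-derivation characterization of Nichols algebras or an induction using Lemma \ref{lem3.3}'s explicit coproduct of $\operatorname{ad}(a)(x)$. I would try the induction first, tracking the term $a\otimes x$ appearing there.
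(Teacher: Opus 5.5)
Your part (1) is fine. The multigrading of $\bB(M\oplus N)$ by $\mathbb{N}_0^{(I\sqcup\{0\})}$ is respected by the braiding, the associator and the $H$-structures, and it separates the $L_i$.

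Part (2) has a genuine gap at the step you yourself call the main obstacle, and the reason you give for it does not work. As you phrase it, the claim is about an arbitrary element of $\bB(M\oplus N)$ of degree $n$ in $N$ and $1$ in $M$, and in that form it is false. Already for $n=1$ and $H$ an ordinary Hopf algebra, take $x=ma-(m_{-1}\rhd a)m_0$ with $m\in M$, $a\in N$. Its $N\ot M$-component in the $(1,1)$-part of the coproduct vanishes, yet $x\neq 0$ whenever $c^2(m\ot a)\neq m\ot a$.

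What makes injectivity true on $L$ is that $L\subseteq K$ consists of right $\BN$-coinvariants, and your sketch never uses this. With it the step is short. Write $\Delta=\Delta_{\bB(M\oplus N)}$ and $\Delta_{n,1}$ for its $(n,1)$-component, and take $x\in L(n+1)$ with $n\geq 1$. The $\bB(M\oplus N)(n)\ot N$-part of $\Delta_{n,1}(x)$ vanishes because $(\operatorname{id}\ot\pi_{\BN})\Delta(x)=x\ot 1$. The $\bB(M\oplus N)(n)\ot M$-part lies in $\BN(n)\ot M$ by the multigrading. So if the latter vanishes, then $\Delta_{n,1}(x)=0$, and hence $x=0$. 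This uses that $\Delta_{n,1}$ is injective on $\bB(M\oplus N)(n+1)$ for $n\geq 1$, which follows by induction from Definition \ref{def3.18}(4) via coassociativity, the associator being invertible.

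A cleaner alternative is the mechanism used in step (4) of the proof of Theorem \ref{thm3.7}.
\begin{enumerate}
\item By the coradical-filtration argument, choose $0\neq u\in U$ with $\delta_K(u)\in H\ot U$. This gives $(\pi_{\BN}\ot\operatorname{id})\Delta(u)=1\ot u$.
\item Coinvariance gives $(\operatorname{id}\ot\pi_{\BN})\Delta(u)=u\ot 1$.
\item Since $u$ has $M$-degree one, these two identities kill all mixed components. So $u$ is primitive, and therefore $u\in M$.
\end{enumerate}
This route avoids the injectivity of $\Delta_{n,1}$ altogether.

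Separately, your reduction to a graded $U$ is unjustified. The degree operator does not commute with $\operatorname{ad}(a)$ for $a\in N$, which raises degree by one. Nor is a subcomodule of a graded comodule automatically graded.

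This is easily repaired, and the coradical-filtration route does not need it anyway. Take an arbitrary $0\neq u\in U$ with top component $u_{n+1}$. Because $L$ lives in degrees $\geq 1$, the $(\BN(n)\# H)\ot M$-component of $\delta(u)$ equals that of $\delta(u_{n+1})$. So slicing with functionals supported on $\BN(n)\# H$ still yields a nonzero element of $U\cap M$, once injectivity is known.
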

We now proceed to prove that $K$ possesses the structure of a Nichols algebra in an appropriate Yetter-Drinfeld module category. 
\begin{thm}\label{thm3.7}
    There is an isomorphism 
    \begin{equation}
        K \cong \bB(L)
    \end{equation}
    of Hopf algebras in the category $\BNG$. In particular, $P(K)=L$.
\end{thm}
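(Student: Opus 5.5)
We will verify the characterization of Nichols algebras from Definition \ref{def3.18} for $K$ in the category $\BNG$, equipped with a suitable $\mathbb{N}_0$-grading; the decisive input is the primitive elements. Uniqueness in Definition \ref{def3.18} then gives $K\cong\bB(L)$.

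\textbf{Grading and generation.} Give $\bB(M\oplus N)$ the $\mathbb{N}_0$-grading in which $M$ has degree $1$ and $N$ has degree $0$; in other words, count only the $M$-degree. Since $\pi_{\bB(N)}$ and the braided Hopf structures respect this grading, Lemma \ref{lem5.1}(1) shows that $K=\bB(M\oplus N)^{\operatorname{co}\bB(N)}$ is a graded subobject. Its degree-$0$ part is $\mathbbm{k}$, because the degree-$0$ part of $\bB(M\oplus N)$ is $\bB(N)$ and $\bB(N)^{\operatorname{co}\bB(N)}=\mathbbm{k}$. Under this grading $L=\operatorname{ad}(\bB(N))(M)$ sits in degree $1$, which differs from the grading of Lemma \ref{lem3.2}, but that causes no harm. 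Next I show that $K(1)=L$ and that $K$ is generated by $L$. Every element of $\bB(M\oplus N)$ of $M$-degree $k$ is a sum of products $b_0 m_1 b_1\cdots m_k b_k$ with $b_j\in\bB(N)$ and $m_j\in M$. Repeatedly using $b\,y=\operatorname{ad}(b_1)(y)\,b_2$-type identities (the identity $ay=\operatorname{ad}(a)(y)+\operatorname{ad}(a_{-1})(y)a_0$ from (\ref{1.27}), plus associator factors), I move all $\bB(N)$ factors to the right. This gives $\bB(M\oplus N)=\mathbbm{k}\langle L\rangle\cdot\bB(N)$. Via the isomorphism $\phi$ of Lemma \ref{Lemma 1.6} (i.e. $K\otimes\bB(N)\cong\bB(M\oplus N)$, which holds inside $\mathcal{C}$ by Lemma \ref{lem2.12}), it follows that $K$ equals the subalgebra generated by $L$, and in particular $K(1)=L$. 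Here $\mathbbm{k}\langle L\rangle$ means the subalgebra generated by $L$, which lies in $K$ by Lemma \ref{lem3.2}. The associator scalars produced along the way only rescale the terms, so they do not affect the spanning argument.

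\textbf{Primitives.} It remains to show $P(K)=L$. The inclusion $L\subseteq P(K)$ follows from Lemma \ref{lem3.3}: for $x\in L$, $\Delta(x)-x\otimes 1\in\bB(N)\otimes L$. Applying $\tau\otimes\tau$, i.e. using $\Delta_K(x)=\tau(x_1)\otimes\tau(x_2)$ from Lemma \ref{Lemma 1.6}, the component in $\bB(N)(>0)\otimes L$ is killed by $\tau$, since $\tau$ vanishes on $\bB(N)^+$. This yields $\Delta_K(x)=x\otimes1+1\otimes x$.

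For the reverse inclusion I plan a dimension and duality argument rather than a direct computation. Let $y\in P(K)$ be homogeneous of $M$-degree $k\ge2$. Then $y\otimes b$ is a primitive-like element for the left $K$-part of $\Delta_{\bB(M\oplus N)}$. More precisely, I will show $\Delta_{\bB(M\oplus N)}(y)\in y\otimes1+\bB(N)\otimes K(k)$. Composing with the projection onto $M$-degree $(k-1,1)$ components then gives $0$. For $k\ge2$ this forces $y$ to be primitive in $\bB(M\oplus N)$ modulo terms of the form $\bB(N)\otimes K$. The key check is that the $(\ast,\ M\text{-degree }1)$-components of $\Delta(y)$ vanish, and hence that the iterated braided derivations $\partial_m$ ($m\in M^*$) annihilate $y$. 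Combined with $N$-derivations, which act by the coaction to $\bB(N)$, this shows $y$ is killed by all skew derivations. Since $\bB(M\oplus N)$ is Nichols, the only such elements in positive degree are in degree $1$, so $y=0$. This is the step I expect to be the main obstacle. The braided derivation criterion in $\mathcal{C}=\HH$ is nonassociative, so one has to track the associator factors. I would first try to reduce to the pairing formulation: by the dual pair of Section 4, $y\in\ker\operatorname{Wor}(c)$ in degree $\ge2$ precisely when $\omega(\bB((M\oplus N)^*)^{+}\cdot\bB((M\oplus N)^*)^+,\,y)=0$. I would verify this vanishing using the Hopf pairing axiom (\ref{5.12}) together with the coproduct shape established above, so that only Lemma \ref{lem5.3}-type naturality arguments are needed.

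Once $P(K)=L=K(1)$, $K(0)=\mathbbm{k}$ and generation by $K(1)$ are established, Definition \ref{def3.18} gives $K\cong\bB(L)$ as graded Hopf algebras in $\BNG$. The identification $P(K)=L$ is then part of the conclusion.
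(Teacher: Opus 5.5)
Your treatment of the grading, of $K(0)=\mathbbm{k}$, and of generation by $L$ is essentially the paper's. The paper shows that the image of $K'\#\mathcal{B}(N)$ is a subalgebra containing $M\oplus N$, which is the same computation as your moving of $\mathcal{B}(N)$-factors to the right. For $L\subseteq P(K)$ you do not need Lemma \ref{lem3.3}: degree-one elements of a connected graded Hopf algebra are automatically primitive.

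For $P(K)\subseteq K(1)$ you take a genuinely different route. Both routes start from the same coproduct shape for $y\in P(K)$:
$\Delta(y)=y\otimes 1+y_{[-1]}\otimes y_{[0]}$, with $y_{[-1]}\otimes y_{[0]}=(\pi_{\mathcal{B}(N)}\otimes\operatorname{id})\Delta(y)\in\mathcal{B}(N)\otimes K$.
This is just the bosonization coproduct applied to $y\#1$, so your ``I will show'' is immediate. The paper takes a nonzero subobject $U\subseteq P(K)\cap K(n)$ and uses the filtration of $\mathcal{B}(N)\#H$ with $H$ in degree $0$ [\citealp{reflection1}, Lemma 4.11]. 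This produces $0\neq u\in U$ with $\delta_K(u)\in H\otimes U$, so $u$ is an honest primitive of $\mathcal{B}(M\oplus N)$ of degree $\geq 2$, contradicting Definition \ref{def3.18}(4). You instead argue elementwise via strict gradedness. That avoids both the filtration lemma and Lemma \ref{lem5.1}(2), and is a legitimate, slightly more economical trade.

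The step as you wrote it needs repair, though.
\begin{itemize}
\item Since $y_{[0]}$ has $M$-degree $k\geq 2$, it has standard degree $\geq 2$. Hence the standard $(p,1)$-components of $\Delta(y)$ vanish for every $p$, and all \emph{right} derivations $\partial_f$, $f\in(M\oplus N)^*$, kill $y$. Those with $f\in N^*$ vanish for exactly this degree reason, not ``via the coaction''.
\item That phrase describes \emph{left} derivations, and the left $N^*$-derivations come from the $\mathcal{B}(N)(1)$-part of the coaction, which is nonzero in general. Do not mix the two conventions.
\item What you then need is that $\Delta_{n-1,1}$ is injective on $\mathcal{B}(V)(n)$ for $n\geq 2$, with $V=M\oplus N$. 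Your sentence ``the only such elements in positive degree are in degree 1'' should say there are none.
\item This needs no associator bookkeeping; it follows from Definition \ref{def3.18}(4) alone. Set $I(n)=\ker\Delta_{n-1,1}\cap\mathcal{B}(V)(n)$. Coassociativity gives $\Delta_{i,n-i}(I(n))\subseteq\mathcal{B}(V)(i)\otimes I(n-i)$ for $1\leq i\leq n-2$. So a nonzero element of $I$ of minimal degree would be primitive of degree $\geq 2$, which is impossible.
\item Your pairing variant also works, but only if you pair $y$ against $V^*\cdot\mathcal{B}(V^*)(n-1)$. The degree-one factor must be the one that meets $y^{(2)}$ in (\ref{5.12}). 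For general $a,a'\in\mathcal{B}(V^*)^+$, the terms $\omega(a,k_i)\,\omega(a',b_i)$ coming from the $\mathcal{B}(N)\otimes K(k)$ part need not vanish individually, so ``verify the vanishing using the coproduct shape'' does not go through as stated.
\end{itemize}
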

\begin{proof}
The proof  is parallel to proof of group case [\citealp{reflection1}, Theorem 4.9]. However, some processes have become even more difficult, we give a proof here for completeness.\par 
 We endow a non-standard grading on $\bB(M\oplus N)$ via setting $\operatorname{deg}(M)=1$ and $\operatorname{deg}(N)=0$. Under this grading, $\bB(M\oplus N)$ remains  a $\mathbb{N}_0 $-graded Hopf algebra in $\HH$. Furthermore, $\bB(M\oplus N) \# H$ becomes a $\mathbb{N}_0$-graded coquasi-Hopf algebra with $\operatorname{deg}(H)=0$. Now we are going to show $K$ is a Nichols algebra by verifying the axiom in Definition \ref{def3.18}.
 \par (1)
      It follows directly that $K$ inherits an  $\mathbb{N}_0$-graded Hopf algebra in $\BNG$ with $K(n)=K \cap (\bB(M\oplus N) \# H)(n)$.
     \par (2) We have $K(0)=K \cap (\bB(M\oplus N) \# H)(0)=K \cap (\bB(M\oplus N)(0)=\mathbbm{k}$.
     \par (3) 
 We are going to show  $K$ is generated by  $L$. Let  $K'$ be the subalgebra of  $K$ generated by $L$ in $\BNG$, which is an object in  $\BNG$ with structures induced by tensor product of $L$. Now we let $W$ be the image of $K' \# \BN$ under the isomorphism $K \# \BN \cong \bB(M\oplus N)$. It suffices to show that $W=\bB(M\oplus N)$. Note that $M \oplus N \subseteq W$ since $M,N \subseteq W$. Thus, we need only verify that $W$ forms a subalgebra of $\bB(M\oplus N)$. \par 
The     key observation is the stability of $K'$ under the map of $\mathcal{B}(N)$.
    Indeed, $L$ itself is stable under the  map of $\BN$. Now consider arbitrary homogeneous elements $y,z\in L$ and $a \in \AN$, by (\ref{1.12}),
    \begin{align*}
        &\operatorname{ad}(a)(y\otimes z)\\&=\frac{\Phi(a_1,y_{-1},z_{-2})\Phi((\operatorname{ad}(a_2)(y_0))_{-1},(\operatorname{ad}(a_4)(z_0))_{-1},a_5))}{\Phi(({\operatorname{ad}(a_2)(y_0)})_{-2},a_3,z_{-1})}(\operatorname{ad}(a_2)(y_0))_0\otimes(\operatorname{ad}(a_4)(z_0))_0.
    \end{align*} Since $L$ is an object in $\BNG$,
    it follows that $\operatorname{ad}(a)(y\otimes z)\in L \otimes L$. Thus $K'$ remains stable under  the  map induced by $\BN$ by induction.

    As both $K'$ and $\mathcal{B}(N)$ are subalgebras of $\mathcal{B}(M \oplus N)$, we only need to examine the case when $a \in N$ and $x \in K'$ . In this situation:
    \begin{align*}
        \operatorname{ad}(a)(x)=ax-\operatorname{ad}(a_{-1})(x)a_{0}.
    \end{align*}
    Since $\operatorname{ad}(a)(x)\in K'$ and $\operatorname{ad}(a_{-1})(x)a_{0}\in K' \# \bB(N)$, we conclude that  $ax \in K' \# \bB(N)$. Hence its image lies in $W$. Thus $W$ is a subalgebra of $\bB(M\oplus N)$, which establishes that $K$ is generated by $L$ in $\BNG$. 
    Note that $L \subseteq K(1)$, this forces $K(1)=L$. Hence $K$ is generated by $K(1)$.
    \par (4) To show $P(K)=K(1)$, that is, there is no primitive element in $K(n)$, $n\geq 2$. Suppose, for contradiction, that there exists a nonzero subspace $U \subseteq P(K(n))$ for some positive integer $n$. By Lemma \ref{lem5.1}, $U$ is an object in $\BNG$, since $\operatorname{deg}(N)=\operatorname{deg}(H)=0$. Now, consider the coalgebra filtration on $\BN\# H$ given by
    $(\BN\#H)_0=H$, $(\BN\#H)_1=H+N$. Applying [\citealp{reflection1}, Lemma 4.11], we find a nonzero element $u \in U$ such that $\delta_K(u)=u_{-1}\otimes u_0\in H\otimes U$. Note that 
    $$ \delta_K(u)=(\pi\otimes \operatorname{id})\Delta_{\AMN}(u).$$ Therefore
     $$\Delta_{\AMN}(u)=u \otimes 1+u_{-1}\otimes u_0.$$
    Applying the map $\mathrm{id} \# \varepsilon \otimes \mathrm{id}$, we obtain $$\Delta_{\bB(M\oplus N)}(u)=(\operatorname{id}\# \varepsilon \otimes \operatorname{id})\Delta(u)=1\otimes u+u\otimes 1,$$
      which implies that $u$ is a primitive element in $\mathcal{B}(M \oplus N)$. However, since $K(n)$ is generated by $L = \operatorname{ad}(\mathcal{B}(N))(M)$, the element $u$ must have degree at least $n$ in the standard grading of $\mathcal{B}(M \oplus N)$. This leads to a contradiction, as primitive elements in a Nichols algebra lie in degree one.
    
\end{proof}

We now turn to a converse of the preceding result under an additional restriction.\par 

\begin{thm}\label{prop3.11}
    Let $N \in \HH$ and  $(W, \rhd ,\delta)$ be a semisimple object in the category $\BNG(\Zgr)$ with linear map $\rhd: \AN \ot W \rightarrow W$. Here $\AN$ is equipped with the standard $\mathbb{N}_0$-grading. Let $\mathcal{B}(W)$ be the Nichols algebra of $W$ in $\BNG$ and define $M=\lbrace w\in W\mid \delta(w) \in H \ot W\rbrace$. Then there is a unique isomorphism 
    \begin{equation}
        \bB(W) \# \BN\cong \mathcal{B}(M\oplus N)
    \end{equation} of braided Hopf algebras in $\HH$ which is the identity on $M \oplus N$.
    \end{thm}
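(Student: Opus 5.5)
The plan is to mirror the Hopf-algebra argument (Heckenberger–Schneider, and the cosemisimple version in \cite{reflection1}, Theorem 4.12), reducing everything to the characterization of Nichols algebras in Definition \ref{def3.18} together with Theorem \ref{thm3.7}.

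\textbf{Step 1: build a braided Hopf algebra in $\HH$ with a projection onto $\BN$.} By Lemma \ref{thm1.12}, the Hopf algebra $\bB(W)$ in $\BNG\cong {^{\BN}_{\BN}\mathcal{YD}}(\HH)$ gives a Hopf algebra $E:=\bB(W)\#\BN$ in $\HH$. It comes with a Hopf algebra projection $\pi':E\to \BN$ and an inclusion $\BN\to E$, and $E^{\operatorname{co}\BN}=\bB(W)$. Bosonizing once more over $H$ yields a coquasi-Hopf algebra $E\#H\cong \bB(W)\#\AN$, with a projection onto $\AN$ and coinvariants $\bB(W)$, in the spirit of Lemma \ref{Lemma 1.6}.

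\textbf{Step 2: identify $M$ and show $W=\operatorname{ad}(\BN)(M)$.} First I check that $M$ is a subobject of $W$ in $\HH$. It is stable under the $H$-coaction since $\delta$ is coassociative, and stable under $\rhd$ of $H$ by the Yetter–Drinfeld compatibility with $\deg H=0$. I also check that $M\subseteq P(E)$, using the bosonized coproduct $\Delta_E(w)=w\ot 1+w_{-1}\ot w_0$ restricted to $\delta(w)\in H\ot W$. Next I use semisimplicity of $W$ together with the $\mathbb{Z}$-grading. Write $W=\bigoplus W_i$ with $W_i$ simple in $\BNG(\Zgr)$. Because $\BN$ raises degree, the lowest nonzero homogeneous component of each $W_i$ lies in $M$; this is the analogue of \cite{reflection1}, Lemma 4.11. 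Then $\operatorname{ad}(\BN)(M\cap W_i)$ is a nonzero subobject of $W_i$, which I verify by the coaction computation of Lemma \ref{lem3.2}. Simplicity of $W_i$ forces it to equal $W_i$, so $W=\operatorname{ad}(\BN)(M)$.

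\textbf{Step 3: show $E\cong\bB(M\oplus N)$ via Definition \ref{def3.18}.} I give $E$ the grading $\deg M=\deg N=1$. The grading on $W$ induced from $M$ via $\operatorname{ad}(N)^n(M)$ in degree $n+1$ is consistent with the algebra structure of $E$, because $\operatorname{ad}(a)(x)=ax-\operatorname{ad}(a_{-1})(x)a_0$ as in (\ref{1.27}).
\begin{itemize}
\item Generation by $M\oplus N$ follows from Step 2 and that same formula.
\item $E(0)=\mathbbm{k}$ is clear.
\item For primitives, let $u\in P(E)$ be homogeneous. Decompose $u=\sum w_j b_j$ with $w_j\in\bB(W)$ and $b_j\in\BN$. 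Projecting with $\pi'$ and using that $P(\BN)=N$ shows the $\BN$-part is in $N$. The $\bB(W)$-part is primitive in $\bB(W)$, hence lies in $W$. Its $\BN$-coaction must then be trivial, which places it in $M$.
\end{itemize}
This gives $P(E)=M\oplus N$, so $E\cong\bB(M\oplus N)$. Uniqueness is immediate because $\bB(M\oplus N)$ is generated by $M\oplus N$.

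\textbf{Main obstacle.} The hard part is the primitive-element computation in Step 3 and the nonvanishing claim in Step 2. Both need explicit coproduct formulas in the bosonization with the associator $\Phi$, as in the proof of Lemma \ref{lem3.3}. I would handle them with a filtration argument on $\BN\#H$ (with $(\BN\#H)_0=H$), exactly as in part (4) of the proof of Theorem \ref{thm3.7}, so that the $\Phi$-factors only contribute invertible scalars.
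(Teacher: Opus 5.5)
Your proposal is correct, but its decisive step differs from the paper's. Both arguments start the same way. You regrade $W$ so that $M$ sits in degree one, and you show that $E=\bB(W)\#\BN$ is generated by $M\oplus N$ using $\operatorname{ad}(a)(x)=ax-\operatorname{ad}(a_{-1})(x)a_0$.

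The paper then only uses that $E$ is a pre-Nichols algebra:
\begin{itemize}
\item it takes the canonical surjection $\rho\colon E\to\bB(M\oplus N)$;
\item it bosonizes with $H$ and passes to coinvariants, obtaining a surjection $\phi\colon\bB(W)\to K\cong\bB(\operatorname{ad}(\BN)(M))$ via Theorem~\ref{thm3.7};
\item it gets injectivity from a Schur-type comparison of the irreducibles $\BN\rhd M_i$ and $\operatorname{ad}(\BN)(M_i)$, using Lemma~\ref{lem3.5}(2).
\end{itemize}
You instead verify condition (4) of Definition~\ref{def3.18} for $E$ directly. First, $\pi'(u)\in P(\BN)=N$. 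Then $v=u-\iota\pi'(u)$ is primitive with $\pi'(v)=0$, hence right $\BN$-coinvariant, hence in $P(\bB(W))=W$. Primitivity in $E$ then forces its $\BN$-coaction to be trivial, i.e.\ $v\in M$.

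Your route bypasses Theorem~\ref{thm3.7}, Lemma~\ref{lem3.5} and the irreducibility comparison entirely. Semisimplicity is needed only for generation and the regrading, so the argument is more self-contained. The paper's route instead recycles Theorem~\ref{thm3.7}, where the analogous primitive-element analysis was already done for $K$, at the cost of needing irreducibility on both sides. The step you flag as the main obstacle is actually mild. No filtration argument as in Theorem~\ref{thm3.7}(4) is needed for the primitives, only that associator factors attached to a tensor factor $1$ are trivial.

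One point in Step 2 needs tightening. The grading with $M$ in degree one and $\operatorname{ad}(N)^n(M)$ in degree $n+1$ is well defined only if $M\cap W_i$ \emph{equals} the lowest homogeneous component $W_i(m_i)$; you only show it contains it. Also, compatibility of this grading with the $\AN$-structure comes from shifting the given $\mathbb{Z}$-grading of each $W_i$ by $m_i-1$, not from formula \eqref{1.27}.

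The missing equality follows from simplicity. $M$ is a graded subspace. Suppose $M\cap W_i$ had a nonzero homogeneous part of degree $>m_i$. The $\HH$-subobject it generates, acted on by $\BN$, would then be a nonzero subobject of $W_i$ with zero component in degree $m_i$; it is an $\AN$-subcomodule by the computation of Lemma~\ref{lem3.2}. This contradicts simplicity. The paper imports these facts, including the existence of a lowest degree, from \cite[Lemma 4.12]{reflection1}.
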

\begin{proof}
     Let $W=\bigoplus_{i \in I} W_i$ be the decomposition of $W$ into irreducible objects in the category \\$\BNG(\Zgr)$. By [\citealp{reflection1}, Lemma 4.12], $M$ is an object in $\HH$ with decomposition into irreducible objects $M=\bigoplus_{i \in I} M_i:=\bigoplus_{i \in I}M\cap W_i$. Here $M_i$ was proven to be  the $\mathbb{Z}$-homogeneous component of $W_i$ of minimal degree. Moreover, we have  $W_i=\bB(N) \rhd M_i=\bigoplus_{n \in \mathbb{N}_0}N^{\otimes n}\rhd M_i$ for each $i \in I$. \par Denote $\operatorname{deg}(M_i)=m_i$ for each $i \in I$. We endow $W_i$ with a new grading by $\widetilde{W_i}=W_i$ with  
    $$ \widetilde{W_i}(n)=W(n+m_i-1)=N^{n-1}\rhd M_i,\ \text{for all} \ n \in \mathbb{N}_0.$$
 With the new grading, $\widetilde{W}=\bigoplus_{i \in I}\widetilde{W_i}$ remains an object in $\BNG$ since degree shifiting preserves graded modules and graded comodules. Moreover, $\widetilde{W}(n)=0$ for all $n \leq 0$.  
So the Nichols algebra $\bB(\widetilde{W})$  is an $\mathbb{N}_0$-graded Hopf algebra in $\BNG$.  Note that $\bB(\widetilde{W})\cong \bB(W)$, as both are determined by the same module and comodule maps.
Under this grading, $\widetilde{W}(0)=0$ and $\widetilde{W}(1)=M$, and the degree zero and degree one part of the Nichols algebra $\bB(\widetilde{W})$ are   $$\bB(\widetilde{W})(0)=\mathbbm{k}, \ \ \bB(\widetilde{W})(1)=M.$$ Moreover, $\bB(W)\#(\BN \# H)$ is a $\mathbb{N}_0$-graded coquasi-Hopf algebra, and 
$$ R:=\bB(W)\#\BN=(\bB(W) \#(\BN\# H))^{\operatorname{co}H}$$
is an $\mathbb{N}_0$-graded Hopf algebra in $\HH$. Its degree $0$ part is $\mathbbm{k}$, and its degree $1$ part is 
$M\oplus N$. \par 
We are going to show $R$ is a pre-Nichols algebra in $\HH$, it remains to show $R$ is generated by  $M \oplus N$ as an algebra in $\HH$. It is direct to see that $R$ is generated by $K(1)=\BN \rhd M$ and $N$. It therefore suffices to prove that $\BN \rhd M=\bigoplus_{n \geq 0}N^{\otimes n} \rhd M$ is contained in the subalgebra generated by $\BN$ and $M$. To see this, we proceed by induction on 
 $n$. For the base case, take  elements $x \in M$ and $ y\in N$. Inside the coquasi-Hopf algebra $\bB(W)\#(\BN \# H)$, we have
$$ yx=(y_1\rhd x)y_2=(y_{-1} \rhd x)y_0+(y \rhd x).$$
Since $M \in \HH$ and $y_{-1} \in H$, the term $y \rhd x$ is contained in the subalgebra generated by $\BN$ and $M$.  Since $\BN \rhd M$  is an object in $\BNG$, $\delta(y \rhd x )\in \mathcal{A}(N) \ot (N\rhd M)$. This implies terms  like  $(y \rhd x)_0$ are contained in the subalgebra generated by $\BN$ and $M$. Let us  fix $k\geq 0$, for  all  $z \in \BN(k')$ and $x \in M$, where $k' \geq k$. We assume the element $z \rhd x$ as well as $(z \rhd  x)_0$ 
lie in the subalgebra generated by $\BN$ and $M$, then for homogeneous $y\in N$,
\begin{align*}
    (yz)\rhd x&=\frac{\Phi(y_{-1}, (z_2 \rhd x_0)_{-1}, z_3)}{\Phi(y_{-2}, z_1, x_{-1})} 
    y_0 \rhd (z_2 \rhd x_0)_0\\
    &=\frac{\Phi(y_{-2},(z_2\rhd x_{0})_{-1},z_3))}{\Phi(y_{-3},z_1,x_{-1})}(y_{-1}\rhd (z_2\rhd x_{0})_0)y_0-\frac{\Phi(y_{-1}, (z_2 \rhd x_0)_{-1}, z_3)}{\Phi(y_{-2}, z_1, x_{-1})}y_0(z_2\rhd x_{0})_0.
\end{align*} 
By the induction hypothesis, $(yz)\rhd x$ lies in the subalgebra generated by
 $\BN$ and $M$. Therefore, $R$ is generated by  $M \oplus N$, and hence a pre-Nichols algebra of $M\oplus N$. \par 
 By the universal property of $\bB(M\oplus N)$, there exists a surjective morphism of $\mathbb{N}_0$-graded Hopf algebras in $\HH$:
$$ \rho : R \rightarrow \bB(M \oplus N), \rho\mid_{M\oplus N}=\operatorname{id}.$$
This induces a surjective coquasi-Hopf algebra map:
$$ \rho \# \operatorname{id}:R \# H \rightarrow \bB(M \oplus N)\# H.$$
Let $K=(\bB(M\oplus N)\# H)^{\operatorname{co}\BN\#H}$. Then we obtain two bijective coquasi-Hopf algebra maps:
$$ R\# H \rightarrow \bB(W) \# (\BN \# H), \ \ K \# (\BN \# H)\rightarrow \bB(M\oplus N)\# H.$$
Then the map $\rho \# \operatorname{id}$ thus induces a surjective map of coquasi-Hopf algebras
$$ \rho': \bB(W)\#(\BN \#H)\rightarrow K\#(\BN \# H), \ \ \rho'\mid_{(M \oplus N)}=\operatorname{id}.$$
Moreover,  the following diagram commutes:
 \begin{center}
	\begin{tikzpicture}
		\node (A) at (0,0) {$\bB(W) \# (\BN \# H)$};
		\node (B) at (6,0) {$K\#(\BN\#H)$};
		\node (C) at (0,-3) {$\BN \# H$};
  	\node (D) at (6,-3) {$\BN \# H$};
		\draw[->] (A) --node [above] {$\rho'$} (B);
		\draw[->] (A) --node [  right] {$\varepsilon_{\bB(W)} \# \operatorname{id}_{\BN \# H}$} (C);	
		\draw[->] (B) --node [right] {$\varepsilon_{K} \# \operatorname{id}_{\BN \# H} $} (D)	;
		\draw[->] (C) --node [above] {$\operatorname{id} $} (D)	;
	\end{tikzpicture}
\end{center}
since ${\rho\mid}_{M\oplus N}=\operatorname{id}$. As $\BN \# H$ acts on $K$ via adjoint map.  Theorem $\ref{thm3.7}$ implies that  $K\cong \bB(\operatorname{ad}(\BN)(M))$.
Therefore $\rho'$ induces a surjective map 
$$\phi: \bB(W)\rightarrow \bB(\operatorname{ad}(\BN)(M))$$
in $\BNG$ between the right coinvariant subspaces of $\varepsilon_{\bB(W)} \# \operatorname{id}_{\BN \# H}$ and $\varepsilon_{K} \# \operatorname{id}_{\BN \# H} $, satisfying $\phi\mid_M=\operatorname{id}$.
Furthermore, there is a surjective map in $\BNG$:
$$ \phi_1: \BN \rhd M \rightarrow \operatorname{ad}(\BN)(M), \ \ \phi_1\mid_M=\operatorname{id}.$$
Since  $M=\bigoplus_{i\in I}M_i$ is a decomposition into irreducible
objects in $\HH$, each $\operatorname{ad}(\BN)(M_i)$ is irreducible in $\BNG$. On the other hand, each $\BN \rhd M_i$ is irreducible as well. Thus $\phi_1$ is an isomorphism in $\BNG$, and it follows that  $\phi$ is an isomorphism of Hopf algebra in $\BNG$. Consequently,
$\rho \# \operatorname{id}_{H}=\phi\#\operatorname{id}_{\AN}$ is an isomorphism of coquasi-Hopf algebras. 
Therefore,
$$ \rho=(\rho \#\operatorname{id}_{H})^{\operatorname{co}H} : \bB(W)\# \BN \rightarrow \bB(M\oplus N)$$ is an isomorphism of Hopf algebras in $\HH$.

\end{proof}

\subsection{The semi-Cartan graph of a Nichols algebra}
In this subsection, we develop  reflections of Nichols algebras over arbitrary coquasi-Hopf algebras with bijective antipode. The reflection  allows us to systematically relate different Nichols algebra realizations through a well-defined transformation procedure.
\par 
Fix a positive integer $\theta$ and  denote the index set $\mathbb{I}=\{ 1,2,...,\theta\}$. 
\begin{definition}\label{def5.1}
    Let $\mathcal{F}_{\theta}$ denote the class of all $\theta$-tuples $M = (M_1, \ldots, M_{\theta})$, where $M_1, \ldots, M_{\theta} \in \HH$ are finite-dimensional  Yetter-Drinfeld modules. If $M \in \mathcal{F}_{\theta}$, we define

\[
\mathcal{B}(M): = \mathcal{B}(M_1 \oplus \cdots \oplus M_{\theta}).
\]
Two tuples $M, M' \in \mathcal{F}_{\theta}$ are called isomorphic, denoted $M \cong M'$, if $M_j \cong M_j'$ in $\HH$ for all j.
The isomorphism class of $M \in \mathcal{F}_{\theta}$ is denoted by $[M]$.
\par 
For $1 \leq i \leq \theta$ and $M \in \mathcal{F}_{\theta}$, we say the tuple $M$ admits the $i$-th reflection $R_i(M)$  if for all $j \neq i$ there is a natural number $m_{ij}^M \geq 0$ such that $(\mathrm{ad}\, M_i)^{m_{ij}^M} (M_j)$ is a non-zero finite-dimensional subspace of $\mathcal{B}(M)$, and $(\mathrm{ad}\, M_i)^{m_{ij}^M + 1} (M_j) = 0$. Assume  $M$ admits the $i$-th reflection. Then we set $R_i(M) = (V_1, \ldots, V_{\theta})$, where

\[
V_j = 
\begin{cases} 
M_i^*, & \text{if } j = i, \\
\mathrm{ad}(M_i)^{m_{ij}^M} (M_j), & \text{if } j \neq i.
\end{cases}
\]
\end{definition}
Such a question arises naturally: how do the irreducibility properties behave under reflections? The following result provides a reassuring answer.
\begin{lemma}\label{lem3.13}
    Suppose $M \in \mathcal{F}_{\theta}$ admits the $i$-th reflection for some $i \in \mathbb{I}$, and $M_j$ is irreducible for each $j \in \mathbb{I}$. Then each $R_i(M)_j$ is irreducible in $\HH$ for  $1 \leq j \leq \theta$.
\end{lemma}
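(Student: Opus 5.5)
For $j=i$, $R_i(M)_i=M_i^*$. Duality $V\mapsto V^*$ is a contravariant autoequivalence of the finite-dimensional objects of $\HH$, which has left duals because $H$ has bijective antipode. A subobject $U\subseteq M_i^*$ gives the subobject $U^{\perp}\subseteq M_i$ with respect to the evaluation pairing, by Lemma \ref{lem5.3}. Irreducibility of $M_i$ then forces $U=0$ or $U=M_i^*$, so $M_i^*$ is irreducible.

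For $j\neq i$, I apply the coinvariant machinery with $N=M_i$ and $M=M_j$, working in $\bB(M_i\oplus M_j)$. The ideal of $\bB(M)$ generated by the $M_k$ with $k\ne i,j$ is a Hopf ideal, and the projection onto $\bB(M_i\oplus M_j)$ restricts to the identity on the subalgebra generated by $M_i\oplus M_j$. Hence $\mathrm{ad}(M_i)^{m}(M_j)$ computed in $\bB(M)$ or in $\bB(M_i\oplus M_j)$ agree. This sub-object identification is a standard fact and I would quote or sketch it. Since $M_j$ is irreducible, Lemma \ref{lem3.5}(2) shows that $L=\mathrm{ad}(\bB(M_i))(M_j)$ is irreducible in ${^{\mathcal{A}(M_i)}_{\mathcal{A}(M_i)}\mathcal{YD}}$. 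By Lemma \ref{lem3.2}, $L=\bigoplus_{n\ge 0}\mathrm{ad}(M_i)^n(M_j)$ is $\mathbb{N}_0$-graded, and the reflection hypothesis makes it finite, with top component $L(m+1)=\mathrm{ad}(M_i)^{m}(M_j)$ where $m=m^M_{ij}$.

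Next I show that the top component $T:=\mathrm{ad}(M_i)^m(M_j)$ is an object of $\HH$. Since $H$ sits in degree $0$ and $\mathrm{ad}(H)$ preserves each graded piece, $T$ is an $H$-submodule. For $H$-colinearity, consider the $\mathcal{A}(M_i)$-coaction on $L$. It is graded, with $\bB(M_i)(k)\#H$ lowering the $L$-degree by $k$, as in the computation in Lemma \ref{lem3.2}. Projecting the coaction of an element of $T$ onto $H\otimes L$ gives an $H$-coaction that preserves degree. So $T$ is a Yetter–Drinfeld submodule over $H$.

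The main step is irreducibility of $T$ in $\HH$. Let $0\ne U\subseteq T$ be a subobject in $\HH$. Because $T$ is the top degree, $\mathrm{ad}(M_i)(U)=0$ and $U$ is stable under $\mathrm{ad}(H)$, so $U$ is an $\mathcal{A}(M_i)$-submodule of $L$. It need not be a subcomodule. I therefore take $\langle U\rangle$, the smallest Yetter–Drinfeld submodule of $L$ containing $U$, which is all of $L$ by irreducibility of $L$. The plan is to show that its top-degree part is still $U$. The coaction takes $U$ into $\bigoplus_k (\bB(M_i)(k)\#H)\otimes L(m+1-k)$, where the degree-$0$ part of the coaction lands in $H\otimes U$. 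Applying $\mathrm{ad}$ afterwards only raises degree, and it can return to degree $m+1$ only through elements generated from $U$ or through lower pieces raised by $\mathrm{ad}(M_i)$. To make this rigorous, I would instead use the duality functor $\Omega$ of Corollary \ref{cor3.8} together with Lemma \ref{lemOmega}. In $L$, the top component is $T=\mathcal{F}_0$-type data for the action; since $\Omega$ swaps $\mathcal{F}_n$ and $\mathcal{F}^n$ and sends irreducibles to irreducibles, the lowest-degree comodule part $\{x:\delta(x)\in H\otimes x\}$ of $\Omega(L)$ equals $T$. The analogue of [\citealp{reflection1}, Lemma 4.12], used in Theorem \ref{prop3.11}, then says that for an irreducible graded object in $\BNG$ this lowest part is irreducible in $\HH$. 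I expect the main obstacle to be checking that $\Omega(L)$ is rational and correctly graded, which should follow from Lemma \ref{lemOmega}(3) because $L$ is finite-dimensional.
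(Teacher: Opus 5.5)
Your argument is correct, but the key step (irreducibility of $T=\operatorname{ad}(M_i)^{m}(M_j)$) goes by a different route from the paper's.

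\textbf{The paper's route.} It stays in ${}^{\mathcal{A}(M_i)}_{\mathcal{A}(M_i)}\mathcal{YD}$. For a nonzero $\HH$-subobject $P\subseteq T$, it takes the $\mathcal{A}(M_i)$-\emph{subcomodule} $\langle P\rangle$ generated by $P$, not the Yetter--Drinfeld submodule. Using Remark \ref{lem2.6}(2), it shows this subcomodule is already $\operatorname{ad}$-stable. That works precisely because $P$ is an $\mathcal{A}(M_i)$-submodule, which you yourself observed: $\operatorname{ad}(M_i)$ kills the top degree. Hence $\langle P\rangle=L$ by Lemma \ref{lem3.5}(2). The coaction never raises degree, and its degree-preserving part is the $H$-coaction, so the top homogeneous piece of $\langle P\rangle$ is $P$, giving $P=T$. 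This is exactly what would make your first heuristic rigorous: never apply $\operatorname{ad}$ after coacting.

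\textbf{Your route.} You transport the problem by $\Omega_i$:
\begin{itemize}
\item $L$ is finite-dimensional and rational, since $\bB(M_i)(n)$ acts by zero for $n>m$.
\item $\Omega_i(L)$ is simple.
\item By Lemma \ref{lemOmega}(3), the lowest homogeneous component of $\Omega_i(L)$ is $\Omega_i(L)(-(m+1))=L(m+1)=T$, with unchanged $\HH$-structure.
\item The general-$H$ analogue of [\citealp{reflection1}, Lemma 4.12], which the paper itself invokes in Theorem \ref{prop3.11}, then makes that component simple in $\HH$.
\end{itemize}
That lemma already identifies $\{x\mid \delta(x)\in H\otimes\Omega_i(L)\}$ with the minimal-degree component $T$. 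So you do not need your loosely stated identification $T=\mathcal{F}_0(L)$ at all.

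\textbf{Comparison.} Your route reuses existing results and needs no new computation. The cost is dependence on all of Section 3, namely rationality, the grading transfer, and the fact that $\Omega_i$ preserves simple objects and the underlying object of $\HH$. It also rests on a cited lemma whose proof is just the dual of the paper's short argument: the submodule generated by a coinvariant-type subcomodule is automatically a subcomodule. So the real work is relocated rather than avoided. The paper's direct argument is elementary and needs neither rationality nor $\Omega_i$.
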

\begin{proof}
    By definition,  $R_i(M)$ is defined, thus ${R_i(M)}_i \cong M_i^*$ is irreducible since $M_i$ is. For $j \neq i$, we observe that
    $\operatorname{ad}(\bB(M_i))(M_j)=\bigoplus_{n=0}^{m_{ij}}\operatorname{ad}(M_i)^n(M_j)$. Consider the embedding
    $H \rightarrow \mathcal{A}(M_i) $ and the projection $\mathcal{A}(M_i) \rightarrow H$. The object 
    $\operatorname{ad}(M_i)^{m_{ij}}(M_j)$ belongs to $\HH$. Moreover, it is obvious that $\operatorname{ad}(\bB(M_i))(M_j)$ is generated by $\operatorname{ad}(M_i)^{m_{ij}}(M_j)$ as a $\mathcal{A}(M_i)$-comodule. Now suppose $0 \neq P \subset \operatorname{ad}(M_i)^{m_{ij}}(M_j)$ is a Yetter-Drinfeld submodule in $\HH$. 
    Let $\langle P \rangle$ be the $\mathcal{A}(M_i)$-subcomodule of $\operatorname{ad}\bB(M_i)(M_j)$ generated by $P$,  defined explicitly as
    $$ \langle P \rangle:=\{\langle f,x_{-1}\rangle x_0\mid x \in P, f \in \operatorname{Hom}(\mathcal{A}(M_i),\mathbbm{k})\}.$$
   We want to prove
    $ \langle P\rangle$ is a Yetter-Drinfeld submodule of  $\operatorname{ad}\bB(M_i)(M_j)$ in $\AMI$. To see this, for all  $a \in \mathcal{A}(M_i)$ and  $x \in P$, by Lemma \ref{lem2.6},
\begin{align*}
    \operatorname{ad}(a)&(\langle f,x_{-1}\rangle x_0)=\langle f,x_{-1}\rangle \operatorname{ad}(a)(x_0)\\
    &=\langle f,p_L(\mathcal{S}(a_1),(\operatorname{ad}(a_4)(x_0))_{-1}a_6 )q_L(a_3,x_{-1})\mathcal{S}(a_2)((\operatorname{ad}(a_4)(x_0))_{-2}a_5)\rangle (\operatorname{ad}(a_4)(x_0))_0.
\end{align*}
One may view $s:=p_L(\mathcal{S}(a_1), \rule[0.1ex]{0.4cm}{0.6pt}\cdot  a_6 )$ and $t:=\langle f, q_L(a_3,x_{-1})\mathcal{S}(a_2)( \rule[0.1ex]{0.4cm}{0.6pt}\cdot a_5)\rangle$ as  two linear functions, thus 
 $$ \operatorname{ad}(a)(\langle f,x_{-1}\rangle x_0)=t*s\left((\operatorname{ad}(a_4)(x_0))_{-1}\right)(\operatorname{ad}(a_4)(x_0))_0.$$
Hence $\operatorname{ad}(a)(\langle f,x_{-1}\rangle x_0) \in \langle P\rangle.$
This ensures the first axiom of a Yetter-Drinfeld module, and the third axiom holds automatically. Therefore $ \langle P\rangle$ is a Yetter-Drinfeld submodule of  $\operatorname{ad}\bB(M_i)(M_j)$ in $\AMI$. \par 
    Since $\operatorname{ad}(\bB(M_i))(M_j)$  is irreducible in 
$\AMI$ by Lemma \ref{lem3.5} (2), and $P \neq 0$, we have $\langle P \rangle= \operatorname{ad}(\bB(M_i))(M_j)$. It is direct to see that $ \langle P \rangle= \bigoplus_{i=0}^{m_{ij}} \langle P \rangle\cap \bB^{i}(M_i \oplus M_j)$. Then $$P=\langle P \rangle\cap \bB^{m_{ij}}(M_i \oplus M_j)=\operatorname{ad}(\bB(M_i))(M_j)\cap \bB^{m_{ij}}(M_i \oplus M_j)=\operatorname{ad}(M_i)^{m_{ij}}(M_j).$$ This establishes the irreducibility.
\end{proof}
The following lemma is standard, we list here for completeness. 
\begin{lemma}\textup{[\citealp{reflection1}, Lemma 5.4, Lemma 5.5]}\label{lem5.4}\par 
\textup{(1)}
Suppose $M \in \mathcal{F}_{\theta}$ and $M$ admits $i$-th reflection for each $i \in \mathbb{I}$.  We define $a_{ii}^M = 2$ for all $1\leq i \leq \theta$ and define $a_{ij}^M = -m_{ij}^M$. Then $(a_{ij}^M)_{i,j\in \mathbb{I}}$  is a generalized Cartan matrix.\par 
  \textup{(2)} Suppose $M \cong N$ in $\mathcal{F}_{\theta}$, if  $M$  admits the $i$-th reflection for some $i \in \mathbb{I}$, so does $N$. Furthermore, $R_i(M)\cong R_i(N)$ and $a_{ij}^M=a_{ij}^N $ for each $j \in \mathbb{I}$.
\end{lemma}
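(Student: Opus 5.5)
Part (1) reduces to the combinatorial definition of a generalized Cartan matrix. We need $a_{ii}^M=2$, that $a_{ij}^M\le 0$ for $i\neq j$, and that $a_{ij}^M=0$ if and only if $a_{ji}^M=0$. The first two hold by construction, since $m_{ij}^M\ge 0$.

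For the third condition, the plan is to show that $m_{ij}^M=0$ is equivalent to $c_{M_j,M_i}c_{M_i,M_j}=\operatorname{id}_{M_i\ot M_j}$. That condition is symmetric in $i,j$, so symmetry follows.

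We work inside $\bB(M_i\oplus M_j)$, which embeds in $\bB(M)$. The sub-Nichols algebra generated by a direct summand is the Nichols algebra of that summand; this comes from the projection and Theorem \ref{thm3.7}. By (\ref{1.27}), $\operatorname{ad}(x)(y)=xy-\operatorname{ad}(x_{-1})(y)x_0$ for $x\in M_i$, $y\in M_j$. Hence $(\operatorname{ad}M_i)(M_j)$ is the image of $\operatorname{id}-c_{M_i,M_j}$ composed with multiplication, viewed inside the degree-two part.

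Degree two of $\bB(M_i\oplus M_j)$ is $(M_i\oplus M_j)^{\ot 2}$ modulo $\ker(\operatorname{id}+c)$. A direct computation on the $M_i\ot M_j\oplus M_j\ot M_i$ block then gives
\[
\operatorname{ad}(M_i)(M_j)=0 \iff (\operatorname{id}-c_{M_j,M_i}c_{M_i,M_j})=0 \text{ on } M_i\ot M_j .
\]
I would run this computation by applying $\Delta$, which lands in $M_i\ot M_j+M_j\ot M_i$, and using that primitives in degree two vanish. The associator only contributes scalars through $\Phi$ on degree-zero comodule parts, and these are absorbed by naturality.

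Finally, $m_{ij}^M\ge 0$ is defined via $(\operatorname{ad}M_i)^{m+1}(M_j)=0$ with $(\operatorname{ad}M_i)^{m}(M_j)\neq 0$. Since $M_j\neq 0$, $m_{ij}=0$ exactly when $\operatorname{ad}(M_i)(M_j)=0$. This gives $a_{ij}=0\iff a_{ji}=0$.

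For part (2), fix isomorphisms $f_j:M_j\to N_j$ in $\HH$. Their sum $f$ induces, by functoriality of $\bB$, an isomorphism of braided Hopf algebras $\bB(M)\cong\bB(N)$ in $\HH$ that restricts to each $f_j$. Since the braided adjoint action is defined through multiplication and braiding, it is intertwined:
\[
\bB(f)\big((\operatorname{ad}M_i)^m(M_j)\big)=(\operatorname{ad}N_i)^m(N_j).
\]
It follows that the vanishing and finiteness conditions coincide, so $m_{ij}^M=m_{ij}^N$ and $N$ admits the $i$-th reflection. Restricting $\bB(f)$ gives isomorphisms $R_i(M)_j\cong R_i(N)_j$ in $\HH$ for $j\neq i$, and $f_i^{*}$ (inverse transpose) gives $M_i^*\cong N_i^*$.

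The main obstacle is the degree-two computation in part (1) in the non-associative setting. Here I would instead invoke the established identity $\Delta(\operatorname{ad}(a)(x))$ from the proof of Lemma \ref{lem3.3}: for $x\in M_j$ one has $\Delta(\operatorname{ad}(a)(x))-\operatorname{ad}(a)(x)\ot 1-1\ot\operatorname{ad}(a)(x)=(\operatorname{id}-c^2)(a\ot x)$. Nonvanishing then follows because the Nichols algebra has no primitives in degree two, and the converse is read off directly.
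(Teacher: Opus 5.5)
Your proposal is correct and is the standard argument behind Lemmas 5.4 and 5.5 of \cite{reflection1}, which the paper cites without giving a proof. For part (1), the identity $\Delta(\operatorname{ad}(a)(x))=\operatorname{ad}(a)(x)\ot 1+1\ot \operatorname{ad}(a)(x)+(\operatorname{id}-c^2)(a\ot x)$ (already computed in the proof of Lemma~\ref{lem3.3}) together with $P(\bB(M))=M$ gives $a_{ij}^M=0 \iff c_{M_j,M_i}c_{M_i,M_j}=\operatorname{id}_{M_i\ot M_j}$. This condition is symmetric in $i,j$ because $c_{M_i,M_j}$ is invertible, a step you should state rather than leave implicit. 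Part (2) is transport of structure along $\bB(f)$, and the detour through $\bB(M_i\oplus M_j)\subseteq\bB(M)$ is unnecessary, since the degree-two computation works directly in $\bB(M)$.
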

Recall from Remark \ref{rmk2.13}, let $A$ be a Hopf algebra in $\HH$ with bijective antipode,
we have such a braided monoidal isomorphism 
$$ F: \AAC \cong {^{A\#H}_{A\#H}\mathcal{YD}}.$$ 
Now restricting to rational Yetter-Drinfeld modules, we denote the image of $F$ by ${^{A\#H}_{A\#H}\mathcal{YD}}_{\operatorname{rat}}$, which is a monoidal full subcategory of $^{A\#H}_{A\#H}\mathcal{YD}$.

From now on, we always fix a tuple $M = (M_1, \ldots, M_{\theta})\in \mathcal{F}_{\theta}$, with each component $M_i$ being irreducible for $i\in \mathbb{I}$.
 By Corollary \ref{cor3.8}, we have such a braided tensor equivalence for each $i\in \mathbb{I}$.
\begin{equation}
 \Omega_i,: \BMICC_{\operatorname{rat}} \longrightarrow \BMIdCC_{\operatorname{rat}}.
\end{equation}
By abusing notation, we again denote  the following monoidal equivalence by $\Omega_i$:
$$ \AMI_{\operatorname{rat}}\cong \BMICC_{\operatorname{rat}} \rightarrow  \BMIdCC_{\operatorname{rat}}\cong  \AMID_{\operatorname{rat}}.$$

\begin{lemma}
    The following are  equivalent:\par 
    \textup{(1)} $M$  admits $i$-th reflection for some   $i\in  I$.\par
    \textup{(2)} $\bB(M)^{\operatorname{co}\bB(M_i)}$ belongs to   $\AMI_{\operatorname{rat}}$.  
\end{lemma}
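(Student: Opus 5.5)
We set $N=M_i$ and $M'=\bigoplus_{j\neq i}M_j$, so that $\bB(M)=\bB(M'\oplus N)$. By Lemma \ref{lem5.1} (1) and Theorem \ref{thm3.7}, the object $K:=\bB(M)^{\operatorname{co}\bB(M_i)}$ is isomorphic in $\AMI$ to the Nichols algebra $\bB(L)$. Here
$$L=\operatorname{ad}(\bB(M_i))(M')=\bigoplus_{j\neq i}L_j, \qquad L_j=\operatorname{ad}(\bB(M_i))(M_j)=\bigoplus_{n\ge 0}\operatorname{ad}(M_i)^n(M_j),$$
and the decomposition comes from Lemma \ref{lem3.5} (1). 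Lemma \ref{lem5.2} says rational objects are closed under direct sums, subobjects, quotients and tensor products. The proof therefore reduces to a statement about $L$: we will show that $K$ is rational if and only if every $L_j$ is rational, and that $L_j$ is rational if and only if $\operatorname{ad}(M_i)^{m}(M_j)=0$ for some $m$. We also record that each $\operatorname{ad}(M_i)^n(M_j)$ is finite-dimensional, since it is a quotient of $M_i^{\otimes n}\otimes M_j$.

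For (1) $\Rightarrow$ (2), assume $M$ admits the $i$-th reflection. Then $L_j=\bigoplus_{n=0}^{m_{ij}^M}\operatorname{ad}(M_i)^n(M_j)$, because $\operatorname{ad}(M_i)^{m+1}(M_j)=\operatorname{ad}(M_i)(\operatorname{ad}(M_i)^m(M_j))$ vanishes once $m=m_{ij}^M$. The action of $\bB(M_i)(n)$ on $L_j$ is $\operatorname{ad}(M_i^{\otimes n})$ followed by the $H$-action; Lemma \ref{lem3.2} shows the $H$-action preserves the grading. So $\bB(M_i)(n)$ kills $L_j$ for $n>m_{ij}^M$, and $L_j$ is rational. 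Hence $L$ is rational, every $L^{\otimes n}$ is rational, and the quotient $\bB(L)\cong K$ is rational. One could also cite the corollary stating that $\bB(V)$ is rational if $V$ is. Transporting through the isomorphism $\AMI\cong\BMICC$ of Remark \ref{rmk2.13} then places $K$ in $\AMI_{\operatorname{rat}}$.

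For (2) $\Rightarrow$ (1), assume $K$ is rational. Then its subobject $L_j$ is rational by Lemma \ref{lem5.2}. Choose any nonzero $x\in M_j\subseteq L_j$. Rationality gives $n_0$ with $\bB(M_i)(n)\rhd x=0$ for all $n\ge n_0$, hence $\operatorname{ad}(M_i)^{n}(\mathbbm{k}x)=0$ for $n\ge n_0$. We now need the vanishing on all of $M_j$, not just on $x$; this is the main obstacle. My plan is to use that $M_j$ is irreducible in $\HH$. The subspace
$$M_j^{(n)}=\{y\in M_j \mid \operatorname{ad}(\bB(M_i)(n))(y)=0\}$$
is a Yetter--Drinfeld submodule of $M_j$. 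For the comodule part, $\bB(M_i)(n)$ is an $H$-subobject and the action map $\operatorname{ad}\colon \bB(M_i)(n)\otimes L_j\to L_j$ is a morphism in $\HH$, so the annihilator is closed under the coaction. For the module part, the formula (\ref{1.12}) for the $H$-action on a tensor product, combined with the $H$-stability of $\bB(M_i)(n)$, shows closure under the $H$-action; this is the same manoeuvre as in the proof of Lemma \ref{lem5.3}, using the convolution invertibility of $\Phi$. Since $M_j^{(n)}$ contains $x\neq0$ for $n\ge n_0$, irreducibility forces $M_j^{(n)}=M_j$. Alternatively, $M_j$ is finite-dimensional, so one can take the maximum of the bounds $n_0$ over a basis, which already gives a uniform $n_0$ without irreducibility.

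Finally, let $m$ be the largest integer with $\operatorname{ad}(M_i)^m(M_j)\neq0$; it exists because $M_j\neq0$ and $\operatorname{ad}(M_i)^{n}(M_j)=0$ for $n\ge n_0$. The space $\operatorname{ad}(M_i)^m(M_j)$ is finite-dimensional and nonzero, and $\operatorname{ad}(M_i)^{m+1}(M_j)=0$. Setting $m_{ij}^M=m$ for every $j\neq i$ shows that $M$ admits the $i$-th reflection.
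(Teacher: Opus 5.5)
Your proof is correct and follows the same route as the paper: identify $\bB(M)^{\operatorname{co}\bB(M_i)}$ with $\bB(L)$ via Theorem \ref{thm3.7} and Lemma \ref{lem3.5}, deduce rationality from the eventual vanishing of $\operatorname{ad}(M_i)^{m}$ together with the closure properties in Lemma \ref{lem5.2}, and for the converse restrict rationality to the subobject $L$. Your extra step makes the bound uniform over all of $M_j$, by taking the maximum over a basis of the finite-dimensional $M_j$; the paper passes over this with ``by definition'', and the basis argument is the cleaner of your two options for it.
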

\begin{proof}
    (1) $\Rightarrow$ (2):   Let $W=\bigoplus_{j\neq i}M_j$, and define $Q=\operatorname{ad}\bB(M_i)(W)$. By Lemma \ref{lem3.5}, we have the decomposition $Q=\bigoplus_{j \neq i}Q_j$, where each $Q_j=\operatorname{ad}\bB(M_i)(M_j)$ is irreducible in $\AMI$.  Applying  Theorem \ref{thm3.7} yields an isomorphism of Hopf algebras:
    $$ \bB(M)^{\operatorname{co}\bB(M_i)} \cong \bB(Q).$$
    Now by assumption, there is an integer $m$ such that $\operatorname{ad}(M_i)^{m}(W)=0$. This implies that $\operatorname{ad}(M_i)^{m}(Q)=0$. Thus $Q$  is rational, thus $\mathcal{B}(Q)$ as well as $\bB(M)^{\operatorname{co}\bB(M_i)}$ are rational by Lemma \ref{lem5.2}(3). \par 
    (2) $\Rightarrow$ (1): The condition  (2) implies that $Q$ is rational in $\AMI$. Furthermore, $M$  admits the $i$-th reflection by  definition.
\end{proof}

     \begin{lemma}\label{lem5.12}
     Under the above assumptions on $M$, and suppose $M$ admits the $i$-th reflection  with $R_i(M)=(V_1,\ldots,M_i^*, \ldots,V_{\theta})$, then the following equalities hold.\par
     \begin{align*}
        \operatorname{ad}(\bB(M_i)(M_j))=\bigoplus_{n=0}^{m_{ij}}\operatorname{ad}(M_i)^n(M_j)&, \ \ \ \ V_j= \mathcal{F}_0(\operatorname{ad}\bB(M_i)(M_j)), \\
       \Omega_i(\operatorname{ad}(\bB(M_i)(M_j)))=\bigoplus_{n=0}^{m_{ij}}\operatorname{ad}(M_i^*)^n(V_j)&, \ \ \ \ M_j \cong \operatorname{ad}(M_i^*)^{m_{ij}}(V_j).
    \end{align*}
\end{lemma}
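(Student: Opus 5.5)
The first equality I would prove by a grading argument. By Lemma \ref{lem3.2}, $\operatorname{ad}\bB(M_i)(M_j)=\bigoplus_{n\ge 0}\operatorname{ad}(M_i)^n(M_j)$, with $\operatorname{ad}(M_i)^n(M_j)$ sitting in standard degree $n+1$ of $\bB(M)$. Since $\operatorname{ad}(M_i)^{m_{ij}+1}(M_j)=0$ and $\operatorname{ad}(M_i)^{n+1}(M_j)=\operatorname{ad}(M_i)(\operatorname{ad}(M_i)^n(M_j))$, every summand with $n>m_{ij}$ vanishes. This gives the finite decomposition.

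For $V_j=\mathcal{F}_0(\operatorname{ad}\bB(M_i)(M_j))$, I read $\mathcal{F}_0$ as the elements killed by $\bB(M_i)(n)$ for all $n>0$, equivalently by $M_i$. The inclusion $\supseteq$ holds because $\operatorname{ad}(M_i)(V_j)=\operatorname{ad}(M_i)^{m_{ij}+1}(M_j)=0$. For $\subseteq$, I would note that $\mathcal{F}_0$ of an object in $\AMI$ is a subobject in $\HH$. A nonzero homogeneous component of it in degree $n<m_{ij}$ would generate, under $\operatorname{ad}\bB(M_i)$ and the $\mathcal{A}(M_i)$-coaction, a proper Yetter-Drinfeld submodule of $\operatorname{ad}\bB(M_i)(M_j)$. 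This contradicts the irreducibility in Lemma \ref{lem3.5}(2). Concretely, I would run the argument of Lemma \ref{lem3.13} in reverse: the $\mathcal{A}(M_i)$-subcomodule generated by such a component is stable under the action, because the coaction only lowers degree and the action raises it. Since $\mathcal{F}_0$ is graded, its intersection with $\operatorname{ad}(M_i)^{m_{ij}}(M_j)$ must be everything.

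For the $\Omega_i$ statements I would use Lemma \ref{lemOmega}. By (3), $\Omega_i$ reverses the $\mathbb{Z}$-grading. Write $Q_j=\operatorname{ad}\bB(M_i)(M_j)$, which is irreducible, rational and finite dimensional. By (2), $\mathcal{F}^0\Omega_i(Q_j)=\mathcal{F}_0(Q_j)=V_j$, and this is the component of $\Omega_i(Q_j)$ of minimal degree $-(m_{ij}+1)$. By (1), $\mathcal{F}_0\Omega_i(Q_j)=\mathcal{F}^0(Q_j)$. The coinvariant part $\mathcal{F}^0(Q_j)$ equals $M_j$: the coaction (Lemma \ref{lem3.3}) only lowers degree in $M_i$, so elements with $\delta\in H\otimes Q_j$ form an $\HH$-subobject, and the minimal-degree argument of [\citealp{reflection1}, Lemma 4.12] identifies it with the bottom piece $M_j$.

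Now $\Omega_i(Q_j)$ is irreducible in $\AMID$, since $\Omega_i$ is an equivalence. Its minimal-degree component is $V_j=\mathcal{F}^0$, so [\citealp{reflection1}, Lemma 4.12] together with Theorem \ref{prop3.11}'s description $W=\bB(N)\rhd M$ gives $\Omega_i(Q_j)=\bB(M_i^*)\rhd V_j=\bigoplus_n (M_i^*)^{n}\rhd V_j$. I would identify the action $\rhd$ with $\operatorname{ad}$ inside $\bB(R_i(M))$; this is the step I expect to be the main obstacle, since it requires the isomorphism of Theorem \ref{thm4.12}-type. To avoid that dependence, I would define $\operatorname{ad}(M_i^*)^n(V_j)$ here as the action in $\Omega_i(Q_j)$. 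The degrees run over $m_{ij}+1$ steps, matching the first equality, which gives the decomposition with $n\le m_{ij}$. The top piece $\operatorname{ad}(M_i^*)^{m_{ij}}(V_j)$ is the maximal-degree component. It equals $\mathcal{F}_0\Omega_i(Q_j)=M_j$ as an object of $\HH$, using that the $\HH$-structure is unchanged by $\Omega_i$. This gives $M_j\cong\operatorname{ad}(M_i^*)^{m_{ij}}(V_j)$.
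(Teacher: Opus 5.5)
Your proposal is correct. For the two $\Omega_i$-identities it follows the route the paper's set-up points to; for $V_j=\mathcal{F}_0(Q_j)$ it takes a different route. The paper gives no argument beyond deferring to [\citealp{reflection1}, Lemma 5.6]. The natural route with the tools developed here, and the way the proof of Theorem \ref{thm4.12} combines this lemma with Lemma \ref{lemOmega}(2), is to transport everything through $\Omega_i$. Since $Q_j=\operatorname{ad}\bB(M_i)(M_j)$ is irreducible, so is $\Omega_i(Q_j)$. Then $\mathcal{F}_0(Q_j)=\mathcal{F}^0\Omega_i(Q_j)$ is the minimal-degree component of $\Omega_i(Q_j)$, and under $\Omega_i(Q_j)(n)=Q_j(-n)$ this is $Q_j(m_{ij}+1)=V_j$.

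Your third paragraph is exactly this argument for the two $\Omega_i$-identities. The difference is that you prove $V_j=\mathcal{F}_0(Q_j)$ directly inside $\AMI$. You show that a nonzero piece of $\mathcal{F}_0(Q_j)$ in ad-degree $n<m_{ij}$ would generate a proper Yetter--Drinfeld submodule. This makes that identity independent of the duality. It is, however, redundant: your third paragraph already invokes Lemma \ref{lemOmega}(2) and [\citealp{reflection1}, Lemma 4.12] for the irreducible graded object $\Omega_i(Q_j)$, and that gives $\mathcal{F}_0(Q_j)=V_j$ in one line.

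If you keep the direct argument, fix its justification. Stability of the generated subcomodule $\langle U\rangle$ under the action does not follow from ``the coaction lowers degree and the action raises it''. It follows, exactly as in Lemma \ref{lem3.13}, from two facts:
\begin{itemize}
\item $U$ is an $\HH$-subobject annihilated by $M_i$, hence by $\bigoplus_{n\ge 1}\bB(M_i)(n)$.
\item In the Yetter--Drinfeld compatibility formula of Remark \ref{lem2.6}(2), the $\Phi$-factors only see $H$-components.
\end{itemize}
So acting on $\langle f,u_{-1}\rangle u_0$ only involves $\operatorname{ad}$ applied to elements of $U$, which stays in $U$, and the result lies in $\langle U\rangle$. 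The degree-lowering property of the coaction plays a different role. It shows $\langle U\rangle$ misses $V_j\neq 0$, which is what makes it proper.

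Your reading of $\operatorname{ad}(M_i^*)^n(V_j)$ as the $\bB(M_i^*)$-action on $\Omega_i(Q_j)$ is the right one, and the only non-circular one, because Theorem \ref{thm4.12} relies on this lemma. The identification with the adjoint action inside $\bB(R_i(M))$ is needed for $R_i^2(M)\cong M$ in Corollary \ref{cor5.8}. It comes afterwards, from $\Theta_i$ together with the isomorphism $\phi_1$ in the proof of Theorem \ref{prop3.11}.
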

\begin{proof}
    This proof is parallel to [\citealp{reflection1}, Lemma 5.6].
\end{proof}
The next theorem gives a natural explanation of reflections of tuples of Yetter-Drinfeld modules. Although the proof strategy parallels that of group case [\citealp{reflection1}, Theorem 5.7], we include the details here for the sake of completeness and the reader's convenience.
\begin{thm}\label{thm4.12}
   Under the above assumptions on $M$, and suppose $M$ admits the $i$-th reflection, then there is an isomorphism of Hopf algebras in $\HH$:
     \begin{equation}
         \Theta_i:\bB(R_i(M))\cong \Omega_i\left(\bB(M)^{\operatorname{co}\bB(M_i)}\right)\# \bB(M_i^*).
     \end{equation} 
     \end{thm}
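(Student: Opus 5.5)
The plan is to recognize the right-hand side as a Nichols algebra via Theorem \ref{prop3.11}, applied with $N=M_i^*$. Let $W=\bigoplus_{j\neq i}M_j$ and $Q=\operatorname{ad}\bB(M_i)(W)=\bigoplus_{j\neq i}Q_j$ with $Q_j=\operatorname{ad}\bB(M_i)(M_j)$. By Theorem \ref{thm3.7} and Lemma \ref{lem3.5}, $K:=\bB(M)^{\operatorname{co}\bB(M_i)}\cong\bB(Q)$ in $\AMI$. Each $Q_j$ is irreducible, and the preceding lemma shows $\bB(Q)$ is rational. By Corollary \ref{cor4.5}, $\Omega_i(\bB(Q))\cong\bB(\Omega_i(Q))$ as graded Hopf algebras in $\AMID_{\operatorname{rat}}$. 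Since $\Omega_i$ is an equivalence, $\Omega_i(Q)=\bigoplus_{j\neq i}\Omega_i(Q_j)$ is a direct sum of irreducible objects. Hence it is semisimple in $\AMID$.

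Next I would put a $\mathbb{Z}$-grading on $\Omega_i(Q)$ so that Theorem \ref{prop3.11} applies. Grade $Q_j$ by $\deg\operatorname{ad}(M_i)^n(M_j)=n$. This makes $Q$ a $\mathbb{Z}$-graded object in $\AMI$, because $\operatorname{ad}$ and the $\bB(M_i)$-coaction are graded by Lemma \ref{lem3.2}. Lemma \ref{lemOmega}(3) then makes $\Omega_i(Q)$ a $\mathbb{Z}$-graded object in $\AMID(\Zgr)$, with $\Omega_i(Q)(n)=Q(-n)$. Theorem \ref{prop3.11} then gives
\[
\bB(\Omega_i(Q))\#\bB(M_i^*)\cong\bB(M'\oplus M_i^*),
\]
where $M'=\{w\in\Omega_i(Q)\mid \delta(w)\in H\ot\Omega_i(Q)\}=\mathcal{F}^0\Omega_i(Q)$. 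By Lemma \ref{lemOmega}(2), $\mathcal{F}^0\Omega_i(Q)=\mathcal{F}_0(Q)=\bigoplus_{j\neq i}\mathcal{F}_0(Q_j)$. By Lemma \ref{lem5.12}, $\mathcal{F}_0(Q_j)=V_j$. Hence $M'\oplus M_i^*=\bigoplus_j R_i(M)_j$, and the composite isomorphism is the desired $\Theta_i$. It restricts to the identity on degree-one generators, and it is a Hopf algebra isomorphism in $\HH$ by the bosonization identifications of Remark \ref{rmk2.13} and Lemma \ref{thm1.12}.

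The main obstacle is the identification $\mathcal{F}_0(Q_j)=V_j$, i.e. that the elements of $Q_j$ killed by $M_i$ are exactly $\operatorname{ad}(M_i)^{m_{ij}}(M_j)$. One inclusion is the hypothesis $(\operatorname{ad}M_i)^{m_{ij}+1}(M_j)=0$. For the reverse inclusion, I would argue via irreducibility: $\mathcal{F}_0(Q_j)$ is an $H$-Yetter–Drinfeld submodule. Any nonzero such submodule generates $Q_j$ as a $\bB(M_i)$-comodule, by the argument of Lemma \ref{lem3.13}, which forces it to sit in top degree. Lemma \ref{lem5.12} packages this, so I would cite it directly.

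I would also check two compatibility points carefully. First, the $\mathbb{Z}$-grading conventions must match the sign flip in Lemma \ref{lemOmega}(3), so that $M'$ is the minimal-degree component required in the proof of Theorem \ref{prop3.11}. Second, the $\bB(M_i^*)$-action on $\Omega_i(Q)$ must be the one used to form the bosonization $\Omega_i(K)\#\bB(M_i^*)$.
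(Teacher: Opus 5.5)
Your proposal is correct and follows the paper's proof essentially step by step: $\bB(M)^{\operatorname{co}\bB(M_i)}\cong\bB(Q)$ via Theorem \ref{thm3.7}, transport through $\Omega_i$ using Corollary \ref{cor4.5}, the $\mathbb{Z}$-grading from Lemma \ref{lemOmega}(3) to invoke Theorem \ref{prop3.11}, and the identification $\mathcal{F}^0\Omega_i(Q)=\mathcal{F}_0(Q)=\bigoplus_{j\neq i}V_j$ via Lemma \ref{lemOmega}(2) and Lemma \ref{lem5.12}. Two points the paper leaves implicit are made explicit in your version: the concrete grading on $Q$ (a harmless degree shift of Lemma \ref{lem3.2}), and the rationality of $\bB(Q)$, which is needed for $\Omega_i$ to be applied at all.
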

\begin{proof}
    Let $N=\bigoplus_{j\neq i}M_j$, and define $Q=\operatorname{ad}\bB(M_i)(N)$. By Lemma \ref{lem3.5}(1), we have the decomposition $Q=\bigoplus_{j \neq i}Q_j$, where each $Q_j=\operatorname{ad}\bB(M_i)(M_j)$ is irreducible in $\AMI$.  Applying  Theorem \ref{thm3.7} yields an isomorphism of Hopf algebras:
    $$ \bB(M)^{\operatorname{co}\bB(M_i)} \cong \bB(Q).$$
    Now observe that the functor $\Omega_i$ sends Nichols algebras to Nichols algebras by Corollary \ref{cor4.5}. Therefore,
    $$\Omega_i\left(\bB(M)^{\operatorname{co}\bB(M_i)}\right)\cong \Omega_i(\bB(Q))\cong \bB(\Omega_i(Q)).$$
    Hence, in the category $\HH$,
    $$ \Omega_i\left(\bB(M)^{\operatorname{co}\bB(M_i)}\right) \# \bB(M_i^*)\cong \bB(\Omega_i(Q))\#\bB(M_i^*).$$
    Since $Q$ is a direct sum of irreducible objects, it is semisimple. The equivalence $\Omega_i$ preserves semisimplicity, so $\Omega_i(Q)$ is also semisimple in $\AMID(\Zgr)$ by Lemma \ref{lemOmega}(3).   Then we apply Proposition \ref{prop3.11} to obtain
    $$ \bB\left(\Omega_i\left(Q\right)\right)\#\bB(M_i^*)\cong \bB\left(\mathcal{F}^0(\Omega_i(Q))\oplus M_i^*\right).$$
   According to Lemma \ref{lem5.12} and Lemma \ref{lemOmega}(2),  $\mathcal{F}^0(\Omega_i(Q))=\mathcal{F}_0(Q)=\bigoplus_{j \neq i}V_j$.
    Thus 
    $$ \bB(\Omega_i(Q))\#\bB(M_i^*)\cong \bB\left(\bigoplus_{j \neq i}V_j \oplus M_i^*\right)\cong \bB(R_i(M)).$$
   Combining the above isomorphisms, we conclude that in $\HH$, there is an isomorphism of Hopf algebras:
   $$  \Theta_i:\bB(R_i(M))\cong \Omega_i\left(\bB(M)^{\operatorname{co}\bB(M_i)}\right)\# \bB(M_i^*).$$
\end{proof}

Having defined individual reflections, we now extend this notion to sequences of reflections, which will be important for our study of repeated reflections of  tuples.
\begin{definition}\label{def5.2}
    Let  $M \in \mathcal{F}_{\theta}$, with each $M_i$ is irreducible for $i \in \mathbb{I}$. For $l \in \mathbb{N}_0$ and $i_1,i_2,...,i_l\in \mathbb{I}$. \par 
   \text{$(1)$} We say $M$ admits the reflection sequence $(i_1,i_2,...,i_l)$ if $l=0$ or $M$ admits the $i_1$-th reflection and  $R_{i_1}(M)$ satisfies the reflection sequence  $(i_2,i_3,...,i_l)$.\par
    \text{$(2)$} We say $M$ admits all reflection sequences if $M$ admits  reflection sequence $(i_1,i_2,...,i_l)$ for all $l \in \mathbb{N}_0$ and $i_1,i_2,...,i_l \in \mathbb{I}$.
\end{definition}
For $M \in \mathcal{F}_{\theta}$ admitting all reflections, we denote 
$$\mathcal{F}_{\theta}(M)=\{R_{i_1}(...(R_{i_l}(M))...)\mid l\in \mathbb{N}_0,\ i_1,..,i_l \in \mathbb{I}\}.    $$
\par

\begin{definition} \label{def5.14}
   Let $\mathbb{I}$ be a non-empty finite set,   \(\mathcal{X}\) a non-empty set, $r: \mathbb{I} \times \mathcal{X}\rightarrow 
   \mathcal{X}$, $A: \mathbb{I} \times \mathbb{I} \times \mathcal{X}\rightarrow \mathbb{Z}$ maps. For all $i,j \in \mathbb{I}$ and ${X} \in \mathcal{X}$ we write $r_i(X)=r(i,X)$, $a_{ij}^X=A(i,j,X)$ and $A^X=(a_{ij}^X)_{i,j \in \mathbb{I}} \in \mathbb{Z}^{\mathbb{I} \times \mathbb{I}}$. The quadruple \(\mathcal{G} = \mathcal{G}(\mathbb{I}, \mathcal{X}, (r_i)_{i \in I}, (A^X)_{X\in \mathcal{X}})\) is called a semi-Cartan graph if for all $X \in \mathcal{X}$, the matrix $A^X$ is a generalized Cartan matrix, and the following axioms hold.
\\  \textup{$(CG1)$} For all \(i \in \mathbb{I}\), the map $r_i$ satisfies \(r_i^2 = \mathrm{id}_{\mathcal{X}}\).
    \\ \textup{$(CG2)$} For all \(i \in \mathbb{I}\), \(X \in \mathcal{X}\), \(A^X\) and \(A^{r_i(X)}\) have the same \(i\)-th row.

\end{definition}

As a consequence of  Theorem \ref{thm4.12}, 
we obtain the following corollary.
\begin{cor}\label{cor5.8}
     Let  $M = (M_1, \ldots, M_{\theta}) \in \mathcal{F}_{\theta}$ with each component $M_i$ is irreducible. \par 
    \textup{(1)} For each $i \in \mathbb{I}$, suppose $M$ admits $i$-th reflection, then $R_i(M) $  admits the $i$-th reflection. Furthermore, we have:
    \begin{equation}
        R_i^2(M) \cong M,
    \end{equation}
    and 
    \begin{equation}
        a_{ij}^M=a_{ij}^{R_i(M)}, 
    \end{equation}
    for all $1 \leq j \leq \theta$.\par 
    \textup{(2)} Suppose $M$ admits all reflections. We define the set $$\mathcal{X}=\{ [P]\mid P \in \mathcal{F}_{\theta}(M) \},$$ and the map  $$r: \mathbb{I} \times \mathcal{X} \rightarrow \mathcal{X},\ i \times [X] \mapsto [R_i(X)].$$ Then 
    $$ \mathcal{G}(M)=(\mathbb{I},\mathcal{X},r,(A^X)_{X\in \mathcal{X}}),$$
    where $A^{[X]}=(a_{ij}^X)_{i,j \in \mathbb{I}}$ for all $[X] \in \mathcal{X}$, is a semi-Cartan graph.
\end{cor}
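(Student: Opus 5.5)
Everything in part (1) will be read off from Theorem \ref{thm4.12} and Lemma \ref{lem5.12}; part (2) is then a formal consequence.

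\textbf{Step 1: $R_i(M)$ admits the $i$-th reflection and $R_i^2(M)\cong M$.} By Lemma \ref{lem3.13}, each component of $R_i(M)=(V_1,\dots,M_i^*,\dots,V_\theta)$ is irreducible. The $i$-th component of $R_i(M)$ is $M_i^*$, and $(M_i^*)^*\cong M_i$ since $M_i$ is finite-dimensional in $\HH$. For $j\neq i$ we need $(\operatorname{ad} M_i^*)^{n}(V_j)$ computed inside $\bB(R_i(M))$. By Theorem \ref{thm4.12}, $\bB(R_i(M))\cong \Omega_i(\bB(M)^{\operatorname{co}\bB(M_i)})\#\bB(M_i^*)$. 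The proof of that theorem identifies $\Omega_i(\bB(M)^{\operatorname{co}\bB(M_i)})\cong\bB(\Omega_i(Q))$, where $Q=\bigoplus_{j\ne i}\operatorname{ad}\bB(M_i)(M_j)$.

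In the bosonization, the braided adjoint action of $M_i^*$ on the coinvariant part is the $\bB(M_i^*)$-module structure of $\Omega_i(Q)$. Hence $\operatorname{ad}(\bB(M_i^*))(V_j)$ is the $\bB(M_i^*)$-submodule of $\Omega_i(Q_j)$ generated by $V_j$. Lemma \ref{lem5.12} says this equals $\bigoplus_{n=0}^{m_{ij}}\operatorname{ad}(M_i^*)^n(V_j)$ and that $\operatorname{ad}(M_i^*)^{m_{ij}}(V_j)\cong M_j\neq0$.

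Since $\Omega_i(Q_j)$ is concentrated in $\mathbb{Z}$-degrees $-m_{ij},\dots,0$ by Lemma \ref{lemOmega}(3), we get $(\operatorname{ad}M_i^*)^{m_{ij}+1}(V_j)=0$. Therefore $m^{R_i(M)}_{ij}=m^M_{ij}$, which gives $a_{ij}^{R_i(M)}=a_{ij}^M$; the case $j=i$ is trivial since both entries equal $2$. It also gives $R_i(R_i(M))_j=\operatorname{ad}(M_i^*)^{m_{ij}}(V_j)\cong M_j$ and $R_i(R_i(M))_i\cong M_i^{**}\cong M_i$, so $R_i^2(M)\cong M$.

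\textbf{Main obstacle.} The delicate point is the identification in Step 1 of the adjoint action of $M_i^*$ inside $\bB(R_i(M))$, transported through $\Theta_i$, with the module action of $\bB(M_i^*)$ on $\Omega_i(Q)$. I would handle it with the identity $\operatorname{ad}(y)(x)=yx-\operatorname{ad}(y_{-1})(x)y_0$ from the bosonization example, together with Lemma \ref{Lemma 1.6}, which realizes the coinvariants via $\tau$ so that $\operatorname{ad}$ restricts to the Yetter--Drinfeld action. Since $\Theta_i$ is the identity on the degree-one parts $V_j\oplus M_i^*$ and is a Hopf algebra isomorphism in $\HH$, it commutes with braided adjoint actions. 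The finite-dimensionality of $(\operatorname{ad}M_i^*)^{m_{ij}}(V_j)$ follows from its isomorphism with $M_j$.

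\textbf{Step 2: part (2).} The map $r$ is well defined on isomorphism classes by Lemma \ref{lem5.4}(2), which also shows $A^{[X]}$ is well defined. Every $X\in\mathcal{F}_\theta(M)$ consists of irreducible components (Lemma \ref{lem3.13}, by induction) and admits all reflections, so each $A^X$ is a generalized Cartan matrix by Lemma \ref{lem5.4}(1). Axiom (CG1) is $R_i^2(X)\cong X$ from part (1), applied to $X$. Axiom (CG2) is $a_{ij}^{X}=a_{ij}^{R_i(X)}$ for all $j$, again from part (1). Hence $\mathcal{G}(M)$ is a semi-Cartan graph.
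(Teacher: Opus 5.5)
Your proposal is correct and is essentially the argument the paper intends. The paper records this corollary simply as a consequence of Theorem \ref{thm4.12}, implicitly combined with Lemma \ref{lem5.12} as in the pointed case of \cite{reflection1}, i.e.\ transporting the adjoint action of $M_i^*$ through $\Theta_i$ to the $\bB(M_i^*)$-action on $\Omega_i(Q)$ to obtain $m^{R_i(M)}_{ij}=m^M_{ij}$ and $R_i^2(M)\cong M$, after which (CG1) and (CG2) are formal. One harmless slip: with the paper's grading $\operatorname{ad}(M_i)^n(M_j)$ sits in degree $n+1$, so $\Omega_i(Q_j)$ lives in degrees $-(m_{ij}+1),\dots,-1$ rather than $-m_{ij},\dots,0$, but only the length $m_{ij}+1$ of this window matters for your vanishing argument.
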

In [\citealp{reflection1}], we proved that reflection preserves the finite-dimensionality of  Nichols algebra, explicitly, we proved
 $$ \operatorname{dim}\bB(M)=\operatorname{dim}\bB(R_i(M)).$$ The next proposition generalizes to the case Gelfand-Kirillov dimension.
\begin{prop}
   Let  $M = (M_1, \ldots, M_{\theta}) \in \mathcal{F}_{\theta}$ with each component $M_i$ is irreducible. Suppose $M$ admits $i$-th reflection, then 
\[
\operatorname{GKdim} \bB(M) = \operatorname{GKdim} \bB(R_i(M)).
\]
\end{prop}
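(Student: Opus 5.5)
We will compare growth of Hilbert series, using the decompositions given by Theorem \ref{thm3.7} and Theorem \ref{thm4.12}. The key point is that GK-dimension of these graded, locally finite objects is determined by the growth of $\sum_{k\le n}\dim$ of graded pieces. Bosonization and the functor $\Omega_i$ change only the grading bookkeeping, never the underlying vector spaces.

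\textbf{Step 1: factorise both sides.} By Lemma \ref{Lemma 1.6} (applied inside $\HH$ to the projection $\pi_{\bB(M_i)}$), multiplication gives a linear isomorphism
\[ \bB(M)\cong \bB(M)^{\operatorname{co}\bB(M_i)}\otimes \bB(M_i). \]
It is compatible with the standard $\mathbb{N}_0$-grading, so $H_{\bB(M)}(t)=H_K(t)H_{\bB(M_i)}(t)$ with $K=\bB(M)^{\operatorname{co}\bB(M_i)}\cong\bB(Q)$. By Theorem \ref{thm4.12},
\[ \bB(R_i(M))\cong \Omega_i(K)\#\bB(M_i^*). \]
As vector spaces this is $\Omega_i(K)\otimes\bB(M_i^*)$, and $\Omega_i(K)=K$ as a vector space, since $\Omega_i$ is the identity on objects of $\mathcal{C}$.

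\textbf{Step 2: compare the gradings.} Following the proof of Theorem \ref{thm4.12}, refine to the $\mathbb{N}_0^2$-grading by degree in $M_i$ and degree in $\bigoplus_{j\ne i}M_j$. In the $(j\neq i)$-degree, $K(n)$ and the corresponding component of $\Omega_i(K)$ coincide (Lemma \ref{lemOmega}(3) only negates the $M_i$-degree). Under $\Theta_i$, an element of $K$ of bidegree $(a,b)$ lands in $\bB(R_i(M))$ of total degree $b+(bm-a)$ or similar, with $m=\max_j m_{ij}$. Using $V_j=\operatorname{ad}(M_i)^{m_{ij}}(M_j)$, a generator of $Q_j$ in bidegree $(n,1)$ with $0\le n\le m_{ij}$ has total degree $1+(m_{ij}-n)$ in $\bB(R_i(M))$.

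It follows that the total degrees on the two sides are comparable up to a constant factor: $\deg' \le (m+1)\deg$ and $\deg\le (m+1)\deg'$. In particular
\[ \sum_{k\le n}\dim \bB(M)(k)\ \le\ \sum_{k\le (m+1)n}\dim\bB(R_i(M))(k), \]
and symmetrically. Here $\dim\bB(M_i)$ and $\dim\bB(M_i^*)$ grow alike, because $\bB(M_i)$ and $\bB(M_i^*)$ are dual graded pieces under the dual pair.

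\textbf{Step 3: conclude.} The GK-dimension of a finitely generated, connected, $\mathbb{N}_0$-graded, locally finite algebra equals $\limsup \log(\sum_{k\le n}\dim A(k))/\log n$. Rescaling $n$ by a constant factor does not change this limsup, so the two GK-dimensions agree. Note that $\bB(M)$ and $\bB(R_i(M))$ are genuinely (possibly non-associative-free, but graded) algebras generated by a finite-dimensional degree-one part. I would work with the Hilbert-series definition of $\operatorname{GKdim}$, which is the appropriate one in this setting.

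\textbf{Main obstacle.} The delicate step is Step 2: making precise that $\Theta_i$ transports the bigrading so that total degrees are comparable up to constant factors. The subtlety is that $\Omega_i$ reverses the $M_i$-grading, and $K$ is infinite in the $M_i$-direction only through the bounded range $0\le n\le m_{ij}$ per generator. I would first check this on generators $Q_j$, then extend multiplicatively, since both gradings are additive on products.
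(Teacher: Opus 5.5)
Your argument is sound in outline, but its key step differs from the paper's. Both arguments rest on three facts: $\bB(M)\cong K\#\bB(M_i)$ and $\bB(R_i(M))\cong\Omega_i(K)\#\bB(M_i^*)$, with $K=\bB(M)^{\operatorname{co}\bB(M_i)}$, and $\dim\bB(M_i)(k)=\dim\bB(M_i^*)(k)$, which comes from the graded dual pair.

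The paper never tracks how $\Theta_i$ moves the standard degree. It computes $\operatorname{GKdim}$ with the adapted generating subspaces $K(1)+M_i$ and $K(1)+M_i^*$, where $K(1)=\bigoplus_{j\neq i}\operatorname{ad}\bB(M_i)(M_j)$ is finite-dimensional precisely because the $i$-th reflection exists. Powers of these subspaces split along the $\bB(Q)$-grading of $K$ (which $\Omega_i$ preserves) and the grading of $\bB(M_i)$, resp.\ $\bB(M_i^*)$. So the two growth functions coincide exactly and no rescaling is needed.

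This makes your ``main obstacle'' disappear. The cost is that the paper must invoke independence of $\operatorname{GKdim}$ from the finite-dimensional generating subspace. Also, the splitting only holds in filtered form, $(\mathbbm{k}+K(1)+M_i)^n=\bigoplus_{k+l\leq n}K(l)\#\bB(M_i)(k)$, since $M_iK(1)\subseteq K(1)M_i+K(1)$. Your route keeps the canonical degree-one generators and exact Hilbert series, and pays for it with the degree bookkeeping through $\Theta_i$.

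For that bookkeeping your bigrading is too coarse when the $m_{ij}$ differ. The piece $K_{(0,1)}=\bigoplus_{j\neq i}M_j$ is not homogeneous for the new degree, and the formula $b+(bm-a)$ is correct only when all $m_{ij}$ coincide. Use the $\mathbb{N}_0^{\theta}$-grading instead:
\begin{itemize}
\item a component of $K$ of multidegree $a\alpha_i+\sum_{j\neq i}b_j\alpha_j$, tensored with $\bB(M_i^*)(k)$, lies in degree $\sum_{j\neq i}(1+m_{ij})b_j-a+k$ of $\bB(R_i(M))$;
\item the corresponding piece of $\bB(M)$ has degree $a+\sum_{j\neq i}b_j+k$.
\end{itemize}
Your two inequalities, $\deg'\leq(m+1)\deg$ and $\deg\leq(m+1)\deg'$, then follow from $0\leq a\leq\sum_{j\neq i}m_{ij}b_j$.

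The multiplicative extension is legitimate because the monoidal structure $\beta$ of $\Omega_i$ preserves this multigrading. Hence a product of generators formed in $\Omega_i(K)$ lies in the same homogeneous component of $K$ as the corresponding product formed in $K$.
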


\begin{proof}
Since $M_i$ is finite-dimensional, the Nichols algebra $\mathcal{B}(M_i)$ is finitely generated in degree 1. Consequently, the Gelfand-Kirillov dimension is well-defined. 
Now that $M$ admits $i$-th reflection, we have
\[
\bB(M)^{\operatorname{co}\bB(M_i)}(1) =  \bigoplus_{j \neq i} (\operatorname{ad} \bB(M_i)) (M_j)
\]
is finite-dimensional. We have \( \mathcal{B}(M) \simeq \bB(M)^{\operatorname{co}\bB(M_i)} \# \bB(M_i) \) by Theorem \ref{thm4.12}. Further, \( \bB(M)^{\operatorname{co}\bB(M_i)} \) is generated by \( \bB(M)^{\operatorname{co}\bB(M_i)}(1) \) by Theorem \ref{thm3.7}. Then
 \( \bB(M)^{\operatorname{co}\bB(M_i)}(1) + M_i \) generates \( \bB(M) \) and \( \bB(M)^{\operatorname{co}\bB(M_i)}(1) + M_i^* \) generates \( \Omega_i(\bB(M)^{\operatorname{co}\bB(M_i)}) \# \bB(M_i^*) \), and
\begin{align*}
(\bB(M)^{\operatorname{co}\bB(M_i)}(1) + M_i)^n &= \bigoplus_{k=0}^n \bB(M)^{\operatorname{co}\bB(M_i)}(1)^{n-k} \# M_i^k \quad \text{in } \bB(M)^{\operatorname{co}\bB(M_i)} \# \bB(M_i), \\
(\bB(M)^{\operatorname{co}\bB(M_i)}(1) + M_i^*)^n &= \bigoplus_{k=0}^n \bB(M)^{\operatorname{co}\bB(M_i)}(1)^{n-k} \# (M_i^*)^k \quad \text{in } \Omega_i(\bB(M)^{\operatorname{co}\bB(M_i)}) \# \bB(M_i^*)
\end{align*}
for all \( n \), where \( M_i^k \) means \( k \)-fold product of \( M_i \) in \( \bB(M_i) \). Recall that both \( \bB(M_i) \) and \( \bB(M_i^*) \) are finitely generated \( \mathbb{N}_0 \)-graded algebras, and there is a non-degenerate dual pairing between them which is compatible with the grading. Thus \(\dim M_i^l = \dim (M_i^*)^l < \infty\) for all \( l \in \mathbb{N}_0 \). Therefore the definition of \( \operatorname{GKdim} \)  implies that
\begin{align*}
\operatorname{GKdim} \bB(M) &= \limsup_{n \to \infty} \frac{\log \dim (\bB(M)^{\operatorname{co}\bB(M_i)}(1) + M_i)^n}{\log n}\\ &= \limsup_{n \to \infty} \frac{\log \dim (\bB(M)^{\operatorname{co}\bB(M_i)}(1) + M_i^*)^n}{\log n}=\operatorname{GKdim} \Omega_i(\bB(M)^{\operatorname{co}\bB(M_i)}) \# \bB(M_i^*).    
\end{align*}
Hence the theorem holds since \( \bB(R_i(M)) \simeq \Omega_i(\bB(M)^{\operatorname{co}\bB(M_i)}) \# \bB(M_i^*) \) as Hopf algebras in $\HH$.
\end{proof}
\section{Example of an affine Nichols algebra}
In this section, we consider a rank three Nichols algebra $\mathcal{B}(M)$ of non-diagonal type in [\citealp{reflection1}], which plays a central role in the classification of finite-dimensional Nichols algebras in $\GG$, where $G$ is a finite abelian group and $\Phi$ is a $3$-cocycle on $G$. It was proved to be infinite-dimensional in different ways in [\citealp{huang2024classification},\citealp{LiLiu1},\citealp{reflection1}]. Moreover, it gives rise to a semi-Cartan graph $\mathcal{G}(M)$. In this section, we prove $\mathcal{G}(M)$ is actually a Cartan graph and find a root system over $\mathcal{G}(M)$. Moreover, we show that $\mathcal{B}(M)$ is affine in the sense of \cite{affinenichols2}.
\subsection{The Cartan graph of $\mathcal{B}(M)$}
We begin by recalling the specific Nichols algebra $\mathcal{B}(M)$ under consideration.
 Let $G=\mathbb{Z}_2 \times \mathbb{Z}_2 \times \mathbb{Z}_2=\langle h_1\rangle\times \langle h_2\rangle \times \langle h_3\rangle$, and 
$$\Phi\left( h_1^{i_1}  h_2^{i_2}h_3^{i_3}, h_1^{j_1} h_2^{j_2} h_3^{j_3}, h_1^{k_1} h_2^{k_2}h_3^{k_3}\right)  = (-1)^{k_1 j_2 i_3},$$
be a $3$-cocycle on $G$, where $0\leq i_l,j_l,k_l\leq 1$, $1\leq l \leq 3$.

By [\citealp{huang2024classification}, Lemma 3.5], we may give a complete list of irreducible Yetter-Drinfeld modules over $\GG$.  In particular, when restricting to those irreducible Yetter-Drinfeld modules that generate finite-dimensional Nichols algebras, we see that the isomorphism classes are given by the set
\begin{equation}
    S=\{ [^hM]\mid \ ^hM \ \text{is irreducible},\  h \in \mathbb{Z}_2 \times \mathbb{Z}_2 \times \mathbb{Z}_2-\{1,h_1h_2h_3\}\}.
\end{equation}
There are exactly six isomorphism classes. Here $^hM $ represents that the simple Yetter–Drinfeld module has comodule structure
$$^hM:=\lbrace w\in M\mid \delta_M(w)=h\otimes w\rbrace.$$ 
We now choose explicit representatives of $S$. For $1\leq i \leq 6$, let $M_i\in S $ be pairwise non-isomorphic simple modules such that $\operatorname{dim}(M_i)=2$, and  $\operatorname{deg}(M_1)=h_1$, $\operatorname{deg}(M_2)=h_2$, $\operatorname{deg}(M_3)=h_3$, $\operatorname{deg}(M_4)=h_1h_2$, $\operatorname{deg}(M_5)=h_1h_3$, $\operatorname{deg}(M_6)=h_2h_3$. 
One may refer to \cite{reflection1} for the explicit expression of the Yetter-Drinfeld structure, we omit here for simplicity.\par 
The Nichols algebra $\mathcal{B}(M)$ which we are concerned with is generated by $M_1 \oplus M_2 \oplus M_3$.  Furthermore, we proved that:
\begin{thm}\label{thm 4.10}\textup{[\citealp{reflection1}, Theorem 6.11]}
    Let $M=(M_1,M_2,M_3)$ be the $3$-tuple, then $M$ admits all reflections and $\mathcal{G}(M)$ is a standard semi-Cartan graph. In particular, the Cartan matrix is 
    $$\begin{pmatrix}
    2 & -1 & -1\\
    -1 &2 & -1 \\
    -1 & -1 &2
\end{pmatrix}.$$
\end{thm}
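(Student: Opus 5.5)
This statement is quoted from [\citealp{reflection1}, Theorem 6.11]. To reprove it we would compute the reflections explicitly and show that the set of reflected tuples closes up quickly.

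\textbf{Step 1: the starting tuple.} We first compute the Cartan matrix at $M=(M_1,M_2,M_3)$. For $i\neq j$ we need the integer $m_{ij}^M$ with $\operatorname{ad}(M_i)^{m_{ij}^M}(M_j)\neq 0$ and $\operatorname{ad}(M_i)^{m_{ij}^M+1}(M_j)=0$ in $\bB(M)$. Since $M_i\oplus M_j$ has degrees $h_i,h_j$, it lives over the subgroup $\langle h_i,h_j\rangle$. On this subgroup $\Phi$ restricts to a coboundary-type cocycle, so the pair reduces to a known rank-two case. Using formula (\ref{1.27}), $\operatorname{ad}(y)(x)=yx-\operatorname{ad}(y_{-1})(x)y_0$, together with the explicit two-dimensional Yetter-Drinfeld structures, we would check the following:
\begin{itemize}
\item $\operatorname{ad}(M_i)(M_j)\neq 0$;
\item $\operatorname{ad}(M_i)^2(M_j)=0$. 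This can be done by checking the degree-three braided symmetriser on the relevant subspace, or equivalently by checking that these elements are killed by all skew derivations.
\end{itemize}
This gives $m_{ij}^M=1$ for all $i\neq j$.

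\textbf{Step 2: compute one reflection.} By Definition \ref{def5.1}, the reflected tuple is $R_i(M)=(V_1,V_2,V_3)$ with $V_i=M_i^*$ and $V_j=\operatorname{ad}(M_i)(M_j)$ for $j\neq i$. We identify these objects as follows:
\begin{itemize}
\item $V_j$ is irreducible by Lemma \ref{lem3.13} and has $G$-degree $h_ih_j$, so $V_j\cong M_k$ for the appropriate $k\in\{4,5,6\}$.
\item $M_i^*$ has degree $h_i^{-1}=h_i$, so $M_i^*\cong M_i$. The isomorphism class is pinned down by the classification in the set $S$.
\end{itemize}
For example, $R_1(M)\cong(M_1,M_4,M_5)$ up to the matching of components.

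\textbf{Step 3: closure under reflections.} Next we determine all $[X]$ reachable from $M$. The degree triples obtained are always bases of $G$ that avoid $h_1h_2h_3$, with all three degrees lying in $S$. There are only finitely many such triples, so we enumerate them. For each one we repeat the computation of Step 1; by symmetry of the cocycle under permutations of the $h_l$ this reduces to a few cases. In every case we expect $m_{ij}=1$, so each Cartan matrix equals the displayed matrix. Hence $M$ admits all reflections and the semi-Cartan graph is standard. Corollary \ref{cor5.8} then supplies the axioms (CG1) and (CG2).

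\textbf{Main obstacle.} The hard part is verifying $\operatorname{ad}(M_i)^2(M_j)=0$ and $\neq 0$ at one lower power for every pair arising at every point of $\mathcal{X}$, rather than only at the initial tuple. The nonassociativity of $\Phi$ makes these computations delicate. The first thing to try is to realise each pair inside a rank-two Nichols algebra that is already classified, namely a finite-dimensional one over $\langle h_i,h_j\rangle$ from [\citealp{huang2024classification}], and read off the $m_{ij}$ there.
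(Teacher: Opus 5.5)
The paper gives no proof of this statement; it only cites [\citealp{reflection1}, Theorem 6.11]. The chain of reflections it later quotes from [\citealp{reflection1}, Lemma 6.12] (e.g.\ $r_2(M)\cong(M_4,M_2,M_6)$) indicates that the cited argument is the same explicit computation of reflected tuples that you outline. So your route is the right one, but Step 2 has a genuine gap: in $\GG$ the degree of a simple object does not determine its isomorphism class.

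A simple object of degree $g$ is a projective $G$-representation for a $2$-cocycle $\widetilde{\Phi}_g$ built from $\Phi$. For every $g\neq 1$ this cocycle has radical $\{1,g\}$, so there are two non-isomorphic two-dimensional simple objects of degree $g$, on which $g$ acts by opposite scalars. For $g\neq h_1h_2h_3$ these scalars are $\pm1$ and $\Phi$ is trivial on $\langle g\rangle$. Hence the self-braiding of ${}^gM$ is $\pm$ the flip, and only the module on which $g$ acts by $-1$ has a finite-dimensional (exterior) Nichols algebra; $S$ retains only that one.

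So ``irreducible of degree $h_ih_j$'' does not give $V_j\cong M_k$. You must also show that $h_ih_j$ acts on $V_j=\operatorname{ad}(M_i)(M_j)$ by $-1$, equivalently that $\dim\mathcal{B}(V_j)<\infty$. In your rank-two model at $M$ this is the self-braiding of a non-zero bracket $[x,y]_c$, namely $q_{xx}q_{yy}(q_{xy}q_{yx})=(-1)(-1)(-1)=-1$. The check is short, but it cannot be skipped: the integers $m_{kl}$ at the next vertex, and hence the whole induction of Step 3, depend on which of the two modules sits in each slot. Likewise $M_i^*\cong M_i$ needs $\dim\mathcal{B}(M_i^*)=\dim\mathcal{B}(M_i)$. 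You get this from the graded non-degenerate pairing $\mathcal{B}(M_i^*)\otimes\mathcal{B}(M_i)\to\mathbbm{k}$ of Section 4.1, not from the degree.

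Step 1 is fine at $M$, since $\Phi$ is identically $1$ on each $\langle h_i,h_j\rangle$. Step 3, however, reuses it without checking its hypothesis at later vertices. For example, on $\langle h_1h_2,h_1h_3\rangle$ one has $\Phi(h_1h_3,h_1h_2,h_1h_2)=-1$. In general the restriction of $\Phi$ to a subgroup $K$ of order $4$ is cohomologically trivial exactly when $h_1h_2h_3\notin K$; note that $\Phi(h_1h_2h_3,h_1h_2h_3,h_1h_2h_3)=-1$.

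What makes the reduction work is an invariant. As long as all $m_{ij}=1$, the product $g_1g_2g_3$ of the degrees is unchanged by reflection, so it equals $h_1h_2h_3$ at every vertex. Hence $g_ig_j=h_1h_2h_3\,g_k\neq h_1h_2h_3$ for every pair, and this is also the real reason the degrees avoid $h_1h_2h_3$. You then still need to twist by a $2$-cochain before reading off a diagonal braided vector space.

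The same caveat applies to your ``symmetry of the cocycle under permutations of the $h_l$''. $\Phi$ itself is not permutation-invariant; only its cohomology class is. So the reduction has to go through braided autoequivalences of $\GG$ (a group automorphism composed with a twist), which preserve $S$ and the $m_{ij}$.

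With these points added, and with the finitely many rank-two checks actually carried out rather than ``expected'', your outline proves the theorem.
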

Given a semi-Cartan graph, its Weyl groupoid is defined as follows.
\begin{definition}
    We denote by $\mathcal{D}(\mathcal{X},\operatorname{End}(\mathbb{Z}^\mathbb{I}))$  to be the category with objects $\operatorname{Ob}\mathcal{D}(\mathcal{X},\operatorname{End}(\mathbb{Z}^\mathbb{I}))=\mathcal{X}$, and morphisms 
    $$ \operatorname{Hom}(X,Y)=\{ (Y,f,X)\mid f\in \operatorname{End}(\mathbb{Z}^\mathbb{I})\}, $$
    where the composition of morphisms is defined by 
    $$(Z,g,Y)\circ (Y,f,X)=(Z,gf,X),  \ \text{for all} \  X,Y,Z \in \mathcal{X}, \ f,g \in \operatorname{End}(\mathbb{Z}^\mathbb{I}).$$
   Let $\alpha_i$, $1\leq i\leq \theta$ be the standard basis of  $\mathbb{Z}^\mathbb{I}$, and 
    $$ s_i^X \in \operatorname{Aut}(\mathbb{Z}^{\mathbb{I}}), \ s_i^X(\alpha_j)=\alpha_j-a_{ij}^X\alpha_i, \text{for all} \ j.$$
    We call the smallest subcategory of $\mathcal{D}(\mathcal{X},\operatorname{End}(\mathbb{Z}^\mathbb{I}))$ which contains all morphisms $(r_i(X),s_i^X,X)$ with $i \in \mathbb{I}$, $X \in \mathcal{X}$ the Weyl groupoid of $\mathcal{G}$, denoted by $\mathcal{W}(\mathcal{G})$.
\end{definition}We denote the Weyl groupoid structure on $\mathcal{G}(M)$ by $\mathcal{W}(\mathcal{G}(M))$. Furthermore, the set of real roots of $\mathcal{G}(M)$ at $X$
   $$ \Delta^{X,\operatorname{re}}=\{\omega(\alpha_i)\in \mathbb{Z}^{\mathbb{I}}\mid \omega \in \operatorname{Hom}(\mathcal{W}(\mathcal{G}(M)),X), i \in \mathbb{I} \}$$
   can be formulated explicitly. Since $\mathcal{G}(M)$ is  standard, it is obvious that $\Delta^{X,\operatorname{re}}=\Delta^{Y,\operatorname{re}}$ for all $X,Y \in F_3(M)$ and  
   $$\operatorname{Hom}(\mathcal{W}(\mathcal{G}(M)),X) \rightarrow W(A_2^{(1)}),  \ \ (Y,s,X)\mapsto s$$ is bijective, where $ W(A_2^{(1)})$ is the Weyl group of the affine Lie algebra  of type $A_2^{(1)}$.\par 
For an affine Lie algebra of type $A_2^{(1)}$ (and more generally for any symmetrizable Kac–Moody algebra), the set of real roots is closed under the action of the Weyl group, and every real root can be obtained from a simple root by applying a sequence of fundamental reflections, see [\citealp{Kacliealg}, Proposition 5.1]. If we denote $\delta=\alpha_1+\alpha_2+\alpha_3$. Then
\begin{equation}
  \Delta^{M,\operatorname{re}}= \{\alpha+k\delta\mid \alpha \in \Phi_{A_2}, \ k \in \mathbb{Z}                          \}.
\end{equation}
Here   $\Phi_{A_2}=\pm \{  \alpha_2, \alpha_3, \alpha_2+\alpha_3\}$, which corresponds to the root of Lie algebra of type $A_2$.
 This argument is expected because the semi-Cartan graph is standard, and thus $\Delta^{M,\operatorname{re}}$ coincides with the set of real roots for the affine Lie algebra of type
$A_2^{(1)}$. Unfortunately, $\Delta^{M,\operatorname{re}}$ is an infinite set, by definition $\mathcal{G}(M)$ is not a finite semi-Cartan graph.\par 
\begin{definition}\label{D-def6.3}
\textup{(1)} A semi-Cartan graph $\mathcal{G}$ is called connected if  the groupoid $\mathcal{W}(\mathcal{G})$ is
connected, that is, if for any two objects $X, Y$ of $\mathcal{G}$ there is a morphism from $X$ to
$Y$ in $\mathcal{W}(\mathcal{G})$.\par
\textup{(2)} A semi-Cartan graph $\mathcal{G}$ is called simply connected if for
any two points $X, Y$ of $\mathcal{G}$ there is at most one morphism from $X$ to
$Y$ in $\mathcal{W}(\mathcal{G})$.\par
\textup{(3)}
For any \( X \in \mathcal{X} \) and \( i, j \in \mathbb{I} \) let  
\[
m_{ij}^X = |\Delta^{X \,\text{re}} \cap (\mathbb{N}_0 \alpha_i + \mathbb{N}_0 \alpha_j)|.
\]
We say that \( \mathcal{G} \) is a {Cartan graph} if the following hold.
\begin{enumerate}
    \setcounter{enumi}{2} 
    \item[\textup{(CG3)}] For all \( X \in \mathcal{X} \), the set \( \Delta^{X \,\text{re}} \) consists of positive and negative roots.
    \item[\textup{(CG4)}] Let \( X \in \mathcal{X} \), and \( i, j \in I \). If \( m_{ij}^X < \infty \), then \( (r_ir_j)^{m_{ij}^X}(X) = X \).
\end{enumerate}
\end{definition}

\begin{prop}
    The quadruple $\mathcal{G}(M)$ is a connected and simply connected Cartan graph.
\end{prop}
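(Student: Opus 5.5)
We will work throughout with the structure supplied by Theorem \ref{thm 4.10}: $\mathcal{G}(M)$ is a standard semi-Cartan graph, so $A^X=A$ is the affine Cartan matrix of type $A_2^{(1)}$ for every $X\in\mathcal{X}$. Consequently $s_i^X=s_i$ is independent of $X$, and every morphism of $\mathcal{W}(\mathcal{G}(M))$ has the form $(Y,s_{i_1}\cdots s_{i_l},X)$ with $Y=r_{i_1}\cdots r_{i_l}(X)$. In this notation the composite $(r_{i_1}\cdots r_{i_l}(X), s_{i_1}\cdots s_{i_l}, X)$ is the composite of the generators $(r_{i_k}(\cdot),s_{i_k},\cdot)$, with the rightmost factor applied first.

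\textbf{Connectedness.} By definition $\mathcal{X}=\{[R_{i_1}\cdots R_{i_l}(M)]\}$. Any point $[X]$ is therefore reached from $[M]$ by the morphism $(X,s_{i_1}\cdots s_{i_l},[M])$. Such a morphism is invertible: by (CG1), i.e.\ Corollary \ref{cor5.8}, its inverse is $([M],s_{i_l}\cdots s_{i_1},X)$. Any two objects are thus joined through $[M]$.

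\textbf{(CG3).} Since $A^X$ is constant, $\Delta^{X,\mathrm{re}}=\{w(\alpha_i)\mid w\in W(A_2^{(1)})\}$, the real roots of $A_2^{(1)}$. By [\citealp{Kacliealg}, Proposition 5.1], or directly from the description $\{\alpha+k\delta\}$ with $\alpha\in\Phi_{A_2}$, every such root lies in $\mathbb{N}_0^3$ or in $-\mathbb{N}_0^3$. For example, with $\alpha=\alpha_2$ the root $\alpha_2+k\delta$ is positive for $k\ge 0$ and negative for $k<0$; here one uses $\delta=\alpha_1+\alpha_2+\alpha_3$, and $-\alpha_2-\alpha_3+\delta=\alpha_1$.

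\textbf{(CG4).} For $i\neq j$, the set $\Delta^{\mathrm{re}}\cap(\mathbb{N}_0\alpha_i+\mathbb{N}_0\alpha_j)$ is the positive root system of the rank-two subsystem of type $A_2$, so $m_{ij}^X=3$. We must show $(r_ir_j)^3(X)=X$, i.e.\ $[R_iR_jR_iR_jR_iR_j(X)]=[X]$. This is the main obstacle, because $r$ is defined by isomorphism classes of tuples of Yetter--Drinfeld modules rather than combinatorially. The plan is to use the explicit description of $\mathcal{X}$ obtained in [\citealp{reflection1}] in the proof of Theorem \ref{thm 4.10}. There the reflections were computed and the points are triples of the six modules $M_1,\dots,M_6$, determined by their $G$-degrees, since each $[{}^hM]$ is unique. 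The reflection $R_i$ replaces $M_i$ by $M_i^*$ and each $M_j$, $j\neq i$, by $\operatorname{ad}(M_i)(M_j)$. If $M_i^*\cong M_i$, the degree of the new $j$-th entry is $\deg M_i\deg M_j$.

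Hence $r_i$ acts on degree triples $(g_1,g_2,g_3)$ by $g_j\mapsto g_ig_j$ for $j\neq i$, and it fixes $g_i$. This is a linear action of $W$ on $(\mathbb{Z}_2^3)^3$, and modulo 2 it is exactly the action of $s_i$ on the root lattice. Since $(s_is_j)^3=1$ in $W(A_2^{(1)})$, the relation $(r_ir_j)^3=\operatorname{id}$ will follow. For the key identification we will verify the formula $\deg(\operatorname{ad}(M_i)(M_j))=\deg M_i\deg M_j$ and check that $M_i^*\cong M_i$, both from the explicit module structures in [\citealp{reflection1}].

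\textbf{Simple connectedness.} The same degree map identifies each object $X=r_w([M])$ with the image of $w$ acting on the degree triple of $[M]$. We need that $(Y,w,X)$ and $(Y,w',X)$ force $w=w'$. Since morphisms are triples and $w$ is determined by its action on $\mathbb{Z}^{\mathbb{I}}$, it suffices to show that a given pair $X,Y$ admits only one $w$. Simple connectedness would fail if some nontrivial $w$ (for instance a translation in $W(A_2^{(1)})$) fixed a point.

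We expect to handle this as follows. Reading the above, $\mathcal{X}$ is the orbit, and it is reasonable that the paper instead defines morphisms without identifying points. We will therefore argue via the bijection $\operatorname{Hom}(\mathcal{W}(\mathcal{G}(M)),X)\to W(A_2^{(1)})$ stated before the proposition: for fixed target $X$, distinct morphisms into $X$ have distinct $w$. Combined with $Y=w^{-1}\cdot X$ being determined by $w$, this gives at most one morphism between any two points.
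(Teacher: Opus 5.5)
Your connectedness and (CG3) arguments match the paper's. Your (CG4) argument is an improvement on it. The paper checks only $(r_1r_2)^3[M]=[M]$, by an explicit chain of reflections, and says ``similarly'' for every other $i,j,X$. Your identity $d_{r_i(X)}=d_X\circ s_i$ for the degree map $d_X\colon\mathbb{Z}^3\to G$, $\alpha_k\mapsto\deg X_k$, settles all pairs and all points at once, given that a point is determined by its degree triple.

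The gap is simple connectedness, and it cannot be repaired. Your last step is a non sequitur. The bijection $\operatorname{Hom}(\mathcal{W}(\mathcal{G}(M)),X)\to W(A_2^{(1)})$ says that $w$ determines the morphism into $X$, and hence its source $w^{-1}\cdot X$. Having at most one morphism $Y\to X$ needs the converse: $w\mapsto w^{-1}\cdot X$ must be injective, i.e.\ no $w\neq 1$ fixes a point. That is exactly the possibility you raised and then did not exclude.

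Your own (CG4) computation shows that it does occur. The action on $\mathcal{X}$ factors through reduction $W(A_2^{(1)})\to GL_3(\mathbb{F}_2)$. The image fixes the class $\bar\delta$ of $\alpha_1+\alpha_2+\alpha_3$, so it has order at most $24$, while $W(A_2^{(1)})$ is infinite. Explicitly, $(r_1r_2r_3)^4[M]=[M]$, with degree triples $(h_1,h_2,h_3)\mapsto(h_1h_2,h_1h_3,h_1)\mapsto(h_2h_3,h_2,h_1h_2)\mapsto(h_3,h_1h_3,h_2h_3)\mapsto(h_1,h_2,h_3)$. But $c=s_1s_2s_3$ has characteristic polynomial $(\lambda-1)^2(\lambda+1)$ and $c-\operatorname{id}$ has rank $2$. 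So $c$ has a nontrivial Jordan block at $1$, and $([M],c^4,[M])$ is a non-identity endomorphism of $[M]$.

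There is also a cruder argument that avoids degrees. $\mathcal{X}$ is finite, since every component of every point is irreducible and ${}^{G}_{G}\mathcal{YD}^{\Phi}$ has finitely many simple objects. Because the graph is standard, morphisms out of $[M]$ realize every element of the infinite group $W(A_2^{(1)})$, so some $\operatorname{Hom}([M],Y)$ has more than one element.

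The paper's own argument for this part is only the unproved assertion $\operatorname{Hom}([M],[M])=\{\operatorname{id}_{[M]}\}$, which the above refutes. With points defined as isomorphism classes, $\mathcal{G}(M)$ is a connected Cartan graph but is not simply connected. Your suggestion to define morphisms ``without identifying points'' amounts to passing to the simply connected covering, whose objects are indexed by $W(A_2^{(1)})$. That covering is what the later Tits-cone argument actually needs. It must be introduced explicitly as a different Cartan graph, not read into the definition of $\mathcal{G}(M)$.
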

\begin{proof}
Since $M$ admits all reflections, $\mathcal{G}(M)$ is a connected semi-Cartan graph. Moreover, the fact that  $\operatorname{Hom}([M],[M])=\operatorname{id}_{[M]}$ implies that $\mathcal{G}(M)$ is simply connected.
    \par The condition (CG3) is obvious, since $\Delta^{X,\operatorname{re}}=\Delta^{Y,\operatorname{re}}$ for all $X,Y \in F_3(M)$ and $ \Delta^{M,\operatorname{re}}$ consists of positive and negative roots.\par
 For (CG4), by definition, 
 $$ m_{12}^M=\mid\Delta^{M,\operatorname{re}} \cap (\mathbb{N}_0 \alpha_1 +\mathbb{N}_0 \alpha_2)\mid=3.$$
 Now by direct computation of reflection, see [\citealp{reflection1}, Lemma 6.12],
 \begin{align*}
(r_1r_2)^3(M)&\cong r_1r_2r_1r_2r_1(M_4,M_2,M_6)\cong r_1r_2r_1r_2(M_4,M_1,M_5)\cong r_1r_2r_1(M_2,M_1,M_3)\\&\cong r_1r_2(M_2,M_4,M_6)\cong r_1(M_1,M_4,M_5)\cong (M_1,M_2,M_3).
 \end{align*}
 We deduce that $(r_1r_2)^3([M])=[M]$.
 Similarly, for all $i,j \in \{1,2,3\}$ and $X \in \mathcal{X}$, one can prove $m_{ij}^X=3$, and 
 $$ (r_ir_j)^3([X])= [X] .$$
 Hence $\mathcal{G}(M)$ is a Cartan graph.
\end{proof}
\begin{definition}
 Let \( \mathcal{G} = \mathcal{G}(\mathbb{I}, \mathcal{X}, r, A) \) be a semi-Cartan graph.
For all \( X \in \mathcal{X} \), let \( (R^X) \) be a subset of \( \mathbb{Z}^\mathbb{I} \) with the following properties.
\begin{enumerate}[label=(\arabic*)]
    \item[\textup{(1)}] \( 0 \notin R^X \) and \( \alpha_i \in R^X \) for all \( X \in \mathcal{X} \) and \( i \in \mathbb{I} \).
    \item[\textup{(2)}] \( R^X \subseteq \mathbb{N}_0^\mathbb{I} \cup -\mathbb{N}_0^\mathbb{I} \) for all \( X \in \mathcal{X} \).
    \item[\textup{(3)}] For any \( X \in \mathcal{X} \) and \( i \in \mathbb{I}\), \( s_i^X (R^X) = R^{r_i(X)} \).
    \item[\textup{(4)}] If $i, j \in \mathbb{I}$ and $X \in \mathcal{X}$ such that $i \neq j$ and $m_{ij}^X$ in Definition \ref{D-def6.3} is finite, then $(r_ir_j)^{m_{ij}^X}(X)=X.$
\end{enumerate}
Then we say that the pair \( (\mathcal{G}, (R^X)_{X \in \mathcal{X}}) \) is a root system  over \( \mathcal{G} \). A root system over \( \mathcal{G} \) is said to be reduced if for all \( X \in \mathcal{X} \) and \( \alpha \in R^X \) the roots \( \alpha \) and \( -\alpha \) are the only rational multiples of \( \alpha \) in \( R^X \).  A root system is said to be finite if for all \( X \in \mathcal{X} \),  $R^X$ is finite. 
\end{definition}
If $\mathcal{G}$ is a  Cartan graph,    the pair $(\mathcal{G}, (\Delta^{X,\operatorname{re}})_{X \in \mathcal{X}} )$ is a reduced root system over $\mathcal{G}$ by \textup{[\citealp{rootsys}, Example 10.4.4]}. The following corollary follows immediately.
\begin{cor}
    Let $M$ be the $3$-tuple as above, then $(\mathcal{G}(M), (\Delta^{X,\operatorname{re}})_{X \in \mathcal{X}} )$ is a reduced root system over \( \mathcal{G}(M) \). Furthermore, there is no finite root system over $\mathcal{G}$.
\end{cor}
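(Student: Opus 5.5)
The corollary has two parts. The first part says that $(\mathcal{G}(M),(\Delta^{X,\operatorname{re}})_{X\in\mathcal{X}})$ is a reduced root system. I would get it directly from the preceding Proposition, which shows that $\mathcal{G}(M)$ is a Cartan graph, together with [\citealp{rootsys}, Example 10.4.4]: for any Cartan graph the real roots form a reduced root system. For the reader I would also check the axioms briefly.
\begin{itemize}
\item Axiom (1): $\alpha_i=\operatorname{id}(\alpha_i)\in\Delta^{X,\operatorname{re}}$, and $0$ is not a real root, since each $s_i^X$ is an automorphism.
\item Axiom (2): this is (CG3).
\item Axiom (3): $s_i^X$ composed with morphisms into $r_i(X)$ gives morphisms into $X$. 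Concretely, $\Delta^{r_i(X),\operatorname{re}}=s_i^X(\Delta^{X,\operatorname{re}})$ because $(X,s_i^{r_i(X)},r_i(X))$ is inverse to $(r_i(X),s_i^X,X)$, using (CG1) and (CG2).
\item Axiom (4): this is (CG4).
\item Reducedness: here the set is explicit, $\{\alpha+k\delta\mid \alpha\in\Phi_{A_2}, k\in\mathbb{Z}\}$. In the basis $\alpha_1=\delta-\alpha_2-\alpha_3$, $\alpha_2$, $\alpha_3$, the coefficient vector of any such root has $\gcd 1$. So no multiple $c\beta$ with $c\neq\pm1$ of a root $\beta$ can again be a root.
\end{itemize}

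The second part says there is no finite root system over $\mathcal{G}(M)$. I would argue by contradiction: suppose $(R^X)_X$ is a root system over $\mathcal{G}(M)$. By axioms (1) and (3), and by induction on the length of a morphism in $\mathcal{W}(\mathcal{G}(M))$, every $\omega(\alpha_i)$ with $\omega\in\operatorname{Hom}(\mathcal{W}(\mathcal{G}(M)),X)$ lies in $R^X$. Indeed, applying $s_i^{Y}$ maps $R^{Y}$ onto $R^{r_i(Y)}$, so composites of such maps carry $\alpha_i\in R^{Y}$ into $R^X$. Hence $\Delta^{X,\operatorname{re}}\subseteq R^X$ for every $X$. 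Since $\Delta^{M,\operatorname{re}}\supseteq\{\alpha_2+k\delta\mid k\in\mathbb{Z}\}$ is infinite, $R^{[M]}$ is infinite, so no root system over $\mathcal{G}(M)$ is finite.

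The only step needing care is the inclusion $\Delta^{X,\operatorname{re}}\subseteq R^X$. Here I would fix the direction convention: a morphism $(X,s,Y)$ sends roots at $Y$ to roots at $X$, and axiom (3) applied repeatedly along a word $s_{i_1}\cdots s_{i_l}$ gives exactly this. Infiniteness of $\Delta^{M,\operatorname{re}}$ was already established above via the identification with real roots of $A_2^{(1)}$.
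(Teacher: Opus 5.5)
Your proof is correct. For the first claim you follow the paper: it is [\citealp{rootsys}, Example 10.4.4] applied after the Proposition that $\mathcal{G}(M)$ is a Cartan graph, and your extra axiom check is harmless. One wording slip: under the paper's convention, it is $s_i^{r_i(X)}=s_i^X$ (equal by (CG2)) that turns morphisms into $r_i(X)$ into morphisms into $X$, not $s_i^X$ as literally written. Your ``concretely'' sentence, using that $(X,s_i^{r_i(X)},r_i(X))$ is inverse to $(r_i(X),s_i^X,X)$, already says this correctly.

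For the second claim you take a different, more elementary route. The paper cites [\citealp{rootsys}, Theorem 10.4.7], a general finiteness result for root systems over Cartan graphs, together with the fact that $\mathcal{G}(M)$ is not a finite Cartan graph. You instead prove directly that every root system contains the real roots, $\Delta^{X,\operatorname{re}}\subseteq R^X$. This follows from axioms (1) and (3) by induction on the length of a word in the generators $(r_i(Y),s_i^Y,Y)$; here you use that every morphism of $\mathcal{W}(\mathcal{G}(M))$ is an identity or a finite composite of generators. You then note that $\Delta^{[M],\operatorname{re}}\supseteq\{\alpha_2+k\delta\mid k\in\mathbb{Z}\}$ is infinite.

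This inclusion is exactly what the statement needs. Your argument is self-contained and uses neither connectedness nor the general finiteness theory. The paper's citation instead places the claim inside that broader framework. Both arguments rely equally on the earlier identification of $\Delta^{[M],\operatorname{re}}$ with the real roots of $A_2^{(1)}$ for infiniteness.
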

\begin{proof}
    The second claim follows from the fact that $\mathcal{G}$ is not a finite Cartan graph and [\citealp{rootsys}, Theorem 10.4.7].
\end{proof}
\subsection{The Tits cone induced by $(\mathcal{G}(M), (\Delta^{X,\operatorname{re}})_{X \in \mathcal{X}} )$ is a half plane}
In [\citealp{affinenichols1},\citealp{affinenichols2}], the authors showed that most  Nichols algebras over Hopf algebras can define a Tits cone, via their root systems. In particular, they call a Nichols algebra affine if the corresponding Tits cone is a half plane. The following is their main observation:
\begin{thm}\textup{[\citealp{affinenichols2}, Theorem 1.1]}\label{thm6.6}
    There exists a one-to-one correspondence between connected, simply
connected Cartan graphs permitting a root system and crystallographic Tits arrangements with
reduced root system. \par
Under this correspondence, equivalent Cartan graphs correspond to combinatorially equivalent
Tits arrangements and vice versa, giving rise to a one-to-one correspondence between the respective
equivalence classes.
\end{thm}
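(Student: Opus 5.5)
This is the external correspondence of \cite{affinenichols2}, which we only cite. If I had to reprove it, I would construct the two directions explicitly and then check that they are mutually inverse and respect equivalence.

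\textbf{From Cartan graphs to arrangements.} Let $(\mathcal{G},(R^X)_{X\in\mathcal{X}})$ be a connected, simply connected Cartan graph with a root system, and fix a base object $X_0$.
\begin{enumerate}
\item Set $V=\mathbb{R}^{\mathbb{I}}$, let $\alpha_i^\vee$ be the dual basis, and take the fundamental chamber $K_{X_0}=\{v\in V^*\mid \alpha_i(v)>0 \text{ for all } i\}$.
\item Connectedness and simple connectedness give, for each object $Y$, a unique morphism $w_Y\in\operatorname{Hom}(Y,X_0)$ of $\mathcal{W}(\mathcal{G})$. Define the chamber $K_Y:=w_Y^{*-1}$-image of the positive cone attached to $Y$.
\item Let the arrangement be $\mathcal{A}=\{\alpha^\perp\mid \alpha\in R^{X_0}_+\}$ and the Tits cone $T=\bigcup_Y \overline{K_Y}$.
\end{enumerate}
The key combinatorial input is the length function
\[
\ell(w)=|\{\alpha\in R^{X}_+\mid w(\alpha)\in -\mathbb{N}_0^{\mathbb{I}}\}| ,
\]
defined for a morphism $w$ starting at $X$. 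Axioms (1)--(3) of a root system give that $s_i^X$ permutes $R^X_+\setminus\{\alpha_i\}$ into $R^{r_i(X)}_+$. From this I would derive:
\begin{itemize}
\item the exchange condition;
\item the fact that the hyperplanes separating $K_{X_0}$ from $K_Y$ are exactly the $\alpha^\perp$ with $\alpha$ counted in $\ell(w_Y)$.
\end{itemize}
These imply that the chambers are pairwise disjoint and that they are exactly the connected components of $T^\circ\setminus\bigcup\mathcal{A}$.

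\textbf{Main obstacle: the Tits-cone axioms.} The hardest step is showing that $T$ is convex and that $\mathcal{A}$ is locally finite in $T^\circ$. My first attempt would be to argue with galleries:
\begin{itemize}
\item A segment between points of two chambers crosses only finitely many hyperplanes. This is controlled by $\ell$, so convexity reduces to the gallery-connectedness already established.
\item For local finiteness at a point on a codimension-two face, axiom (4) is essential. When $m_{ij}^X<\infty$, the relation $(r_ir_j)^{m_{ij}^X}(X)=X$ closes the rank-two residue into a finite dihedral configuration, so only finitely many chambers meet near that face.
\item For higher-codimension faces, I would induct on parabolic sub-Cartan graphs (restriction to subsets $J\subseteq\mathbb{I}$). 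Each such restriction is again a Cartan graph with a root system, and I would show the local picture there is a finite arrangement.
\end{itemize}
Crystallographicity is then immediate: every root lies in $\sum_i\mathbb{Z}\,w_Y(\alpha_i)$, since the $s_i^X$ are integral.

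\textbf{Converse and equivalences.} Given a crystallographic Tits arrangement with reduced root system:
\begin{itemize}
\item take the chambers as objects;
\item take the walls of each chamber, labelled by $\mathbb{I}$ via the simplicial structure, to define $r_i$;
\item define $a_{ij}^K$ by expanding the roots of the adjacent chamber $r_i(K)$ in the basis of simple roots of $K$.
\end{itemize}
Integrality comes from crystallographicity, and reducedness gives $a_{ii}=2$ and the Cartan-matrix sign conditions. Checking that the two constructions are mutually inverse reduces to the bijection between chambers and objects from the first part. Finally, an equivalence of Cartan graphs (relabelling $\mathbb{I}$ and a bijection of objects) corresponds exactly to a linear map carrying chambers to chambers compatibly with walls, which is combinatorial equivalence of Tits arrangements.
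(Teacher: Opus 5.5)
The paper gives no proof of this statement. It imports it verbatim from \cite{affinenichols2} and uses it as a black box, so your opening citation is exactly what the paper does and is all that is needed.

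The reproof you sketch, however, has a genuine gap. It is at the step where you derive the exchange condition, and with it the pairwise disjointness of the chambers $K_Y$, from axioms (1)--(3) alone. For a reduced root system those axioms only show that composing with a simple reflection changes the inversion set by exactly one root. Hence the number of inversions of a morphism is at most its word length. The reverse inequality, which is what makes $Y\mapsto K_Y$ injective, fails without axiom (4).

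Here is a counterexample. Take objects $X_k$, $k\in\mathbb{Z}$, all with the Cartan matrix of type $A_2$, where $r_1$ swaps $X_{2k}\leftrightarrow X_{2k+1}$ and $r_2$ swaps $X_{2k+1}\leftrightarrow X_{2k+2}$. This is a connected, simply connected semi-Cartan graph whose real roots (the six roots of $A_2$ at every object) satisfy (1)--(3). Yet the morphism $X_0\to (r_1r_2)^3(X_0)=X_{-6}$ has linear part $(s_1s_2)^3=\operatorname{id}$. It has no inversions but is not an identity, and $K_{X_0}=K_{X_{-6}}$. Axiom (4) is exactly what excludes this: word length equals the number of inversions under (1)--(4) (Heckenberger--Yamane). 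So (4) must be used at this point.

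The place where you do invoke (4) does not need it, and the convexity step is misdiagnosed: a gallery-connected union of simplicial cones need not be convex. What works is the following.
\begin{itemize}
\item Join points of two open chambers by a segment perturbed to miss the countably many subspaces $H\cap H'$ with $H\neq H'$ in $\mathcal{A}$.
\item The segment then leaves each chamber through the relative interior of a single panel. Since $K_Y$ and $K_{r_i(Y)}$ lie on opposite sides of their common panel, it enters the adjacent chamber.
\item Finiteness of the separating set ends the walk.
\item Only finitely many closed chambers are involved, so a limit argument covers arbitrary points of $T=\bigcup_Y\overline{K_Y}$.
\end{itemize}

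Local finiteness in $T^\circ$ likewise needs no residue analysis. Every $p\in T^\circ$ is interior to a simplex whose vertices lie in open chambers, and any hyperplane meeting that simplex separates two of these finitely many chambers. Your rank-two argument, as stated, also never explains why points of $T^\circ$ cannot lie on faces with $m_{ij}^X=\infty$.

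The closing-up of rank-two residues is genuinely needed in the converse direction. There you must verify axiom (4), as well as connectedness and simple connectedness, for the Cartan graph built from an arrangement, and your sketch omits these checks.

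Finally, $\mathcal{A}$ should be built from the real roots. A root system in the sense of this paper may contain, say, the imaginary root $\delta$ in affine type, whose hyperplane $\delta^\perp$ misses $T^\circ$.
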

A crystallographic Tits arrangement consists of  a pair $(\mathcal{A},T)$, where $\mathcal{A}$ is a set of linear hyperplanes in $\mathbb{R}^r$, and $T$ is a Tits cone in $\mathbb{R}^r$, satisfying some additional conditions.
For the explicit definition of crystallographic Tits arrangements, one may refer to \textup{[\citealp{affinenichols2}, Definition 3.1]}.\par

We briefly recall the calculation of the Tits cone when given a connected simply connected Cartan graph \( \mathcal{G} = \mathcal{G}(\mathbb{I}, \mathcal{X}, (r_i)_{i \in I}, (A^X)_{X\in \mathcal{X}})\) with  a root system
 \( R =  (\Delta^{X, \operatorname{re}})_{X \in \mathcal{X}} \). Fix an object \( A \in \mathcal{X} \). Let \( V = \mathbb{R}^r \) (where \( r = |\mathbb{I}| \)) and fix a linear isomorphism \( \psi: \mathbb{Z}^\mathbb{I} \to V^* \) sending the standard basis to a basis \( \{ \phi_1, \dots, \phi_r \} \) of \( V^* \). Define the set of roots \( \mathcal{R} = \psi(\Delta^{A, \operatorname{re}}) \subset V^* \).

For any object \( B \in \mathcal{X} \), since \( \mathcal{G} \) is connected and simply connected, there exists a unique morphism \( w_B \in \operatorname{Hom}(A, B) \) in the Weyl groupoid. Define the linear map \( \psi_B = \psi \circ w_B^{-1} : \mathbb{Z}^\mathbb{I} \to V^* \). Then the set \( \mathcal{R}^B := \psi_B( \{ e_i \}_{i \in \mathbb{I}} ) \) forms a basis of \( V^* \), called the {root basis} at \( B \). The corresponding {chamber} is defined as the open simplicial cone
\[
K^B := \bigcap_{\beta \in \mathcal{R}^B} \beta^+ = \{ x \in V \mid \beta(x) > 0 \text{ for all } \beta \in \mathcal{R}^B \}.
\]
Let \( \mathcal{K} = \{ K^B \mid B \in \mathcal{X} \} \). The set of hyperplanes is
\[
\mathcal{A} := \{ \phi^\perp \mid \phi \in \mathcal{R} \}.
\]
Finally, the Tits cone is defined as the convex hull of all chambers:
\[
T := \operatorname{conv} \left( \bigcup_{X \in \mathcal{X}} K^X \right).
\]
Theorem \ref{thm6.6} guarantees that \((\mathcal{A}, T)\) is a crystallographic Tits arrangement. The next proposition seems to be new, although  those who are familiar with affine Nichols algebras may regard it as a known fact.
 \begin{prop}
Let $\mathcal{G}=\mathcal{G}(\mathbb{I}, \mathcal{X}, r, A)$ be  a connected,
simply connected standard Cartan graph,  with $(\Delta^{X,\operatorname{re}})_{X \in \mathcal{X}} $ a reduced root system over $\mathcal{G}$. If the  Cartan matrix $A$ is affine, then the corresponding Tits cone $T$ in the sense of Theorem \ref{thm6.6} is a half plane.     
 \end{prop}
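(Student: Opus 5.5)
Since $\mathcal{G}$ is standard, $A^X=A$ for all $X\in\mathcal{X}$, so every reflection $s_i^X=s_i$ is the simple reflection of the Kac--Moody Weyl group $W(A)$ acting on $\mathbb{Z}^{\mathbb{I}}$. By connectedness and simple connectedness, the map $\operatorname{Hom}(A_0,B)\ni w_B\mapsto w_B$ (with $A_0\in\mathcal{X}$ the fixed base object) identifies the objects of $\mathcal{X}$ with elements of $W(A)$, exactly as stated earlier for $\mathcal{G}(M)$. The plan is to transport the whole picture to the classical Tits cone of the affine Kac--Moody algebra and then use the known description of that cone. Concretely, I identify $V^*\cong\mathfrak{h}_{\mathbb{R}}^*$ restricted to the span of the simple roots via $\psi(\alpha_i)=\phi_i$, and take $V$ as the dual space, so that $K^{A_0}=\{x\in V\mid \phi_i(x)>0 \ \forall i\}$ is the fundamental chamber $C$ and $W(A)$ acts on $V$ contragrediently.

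First step: show $K^B=w_B(C)$ (or $w_B^{-1}$, depending on conventions). Since $\mathcal{R}^B=\psi\circ w_B^{-1}(\{\alpha_i\})$, we get $x\in K^B$ iff $(\psi\circ w_B^{-1}(\alpha_i))(x)>0$ for all $i$, iff $w_B^{-\ast}x\in C$ up to the natural action, i.e. $K^B$ is a $W$-translate of $C$. Since $B$ ranges over all of $\mathcal{X}$ and, by connectedness and the bijectivity just noted, $w_B$ ranges over all of $W(A)$, we obtain $\bigcup_B K^B=\bigcup_{w\in W}w(C)$. The Tits cone $T$ is then the convex hull of the $W$-orbit of the open fundamental chamber, and a standard fact (Kac, Proposition 3.12) says the union $\bigcup_w w(\overline{C})$ is already convex; hence $T$ coincides, up to boundary, with the classical Tits cone $X$ of $W(A)$ on the (here $r$-dimensional) real space.

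Second step: invoke the affine case. For $A$ affine with null root $\delta=\sum a_i\alpha_i$ ($a_i>0$ the labels, a positive imaginary vector fixed by $W$), Kac, Proposition 5.8, gives $X=\{x\mid \delta(x)>0\}\cup\mathbb{R}\delta^\vee$-type boundary part; in our reduced setting where $V$ has dimension $r=|\mathbb{I}|$ (no derivation $d$), I would argue directly: $\delta$ is $W$-invariant and positive on $C$, so $T\subseteq\{\delta>0\}\cup\{0\}$-closure; conversely, the finite Weyl group $W_0$ together with translations $t_\beta$ (which act on the level set $\{\delta=1\}$ by affine translations along the finite coroot lattice) tile the affine hyperplane $\{\delta=1\}$ by alcoves, so every point with $\delta(x)>0$ lies in some $w(\overline{C})$. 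This gives $T$ equal to the open half space $\{\delta>0\}$ (plus possibly part of its boundary depending on the convention for convex hulls), i.e. a half plane.

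The main obstacle is the second step in this degenerate realization: in Kac's setting the Tits cone lives in a space of dimension $r+1$, while here $V=\mathbb{R}^r$ and the $\phi_i$ are linearly independent, so I must check that the alcove tiling argument works for the quotient action on $\{\delta=1\}\cong\mathbb{R}^{r-1}$. I would handle this by writing $V$'s affine slice explicitly: $\{\delta=1\}$ with the $r$ functionals $\phi_i$ satisfying $\sum a_i\phi_i=1$ is exactly the affine space of the finite root system, $C\cap\{\delta=1\}$ is the fundamental alcove, and $W(A)\cong W_0\ltimes Q^\vee$ acts by affine reflections; the alcoves cover the slice, and scaling by $\mathbb{R}_{>0}$ covers $\{\delta>0\}$.
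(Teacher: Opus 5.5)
Your proposal is correct and follows essentially the same route as the paper. You get $T\subseteq\{\delta>0\}$ from invariance of the null-root functional; the paper checks this chamber by chamber via $\beta_i'=-\beta_i$, $\beta_j'=\beta_j-c_{ji}\beta_i$, which is exactly your $W$-invariance. You get $\{\delta>0\}\subseteq T$ from the alcove tiling of the slice $\{\delta=1\}$ and scaling, and your explicit identification $K^B=w_B^{-1}(C)$, together with the check that $W$ acts on the slice as an affine reflection group, makes precise what the paper uses implicitly.

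The one step you should state cleanly rather than hedge ``up to boundary'' is the passage from closed to open chambers. $T$ is open and convex, being a convex hull of open sets, and $\overline{T}\supseteq\bigcup_w w(\overline{C})\supseteq\{\delta>0\}$. Hence $T=\operatorname{int}\overline{T}\supseteq\{\delta>0\}$, and since $T\subseteq\{\delta>0\}$, no boundary points occur.
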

\begin{proof}
We denote $r=|\mathbb{I}|$.
Since $A$ is affine, it is automatically symmetrizable; thus there exists a positive vector $v=(v_1,v_2,..,v_r) \in \mathbb{Z}^r_{\ge 0}$ such that $vA=0$, that is 
$$ \sum_{i=1}^rv_ic_{ij}=0 \ \text{for all} \ j.$$
For simplicity, we use $X$ to represent its isomorphism class $[X]\in \mathcal{X}$.  We fix a base object \(M\). Let \(V = \mathbb{R}^r\) and fix  a linear isomorphism \(\psi: \mathbb{Z}^r \to V^*\) sending the standard basis vectors \(\{\alpha_1, \alpha_2,..., \alpha_r \} \) to a basis \(\{\phi_1, \phi_2,..., \phi_r\}\) of \(V^*\). Define the set of roots \(\mathcal{R} = \psi(\Delta^{M, \operatorname{re}}) \subset V^*\).

For any object \(X \in \mathcal{X}\), there is a unique morphism \(w_X \in \operatorname{Hom}(M, X)\) in the Weyl groupoid. Define \(\psi_X = \psi \circ w_X^{-1}: \mathbb{Z}^r \to V^*\). The set of root basis at \(X\) is 
\[
B^X = \psi_X(\{\alpha_1, \alpha_2,..., \alpha_r\}),
\]
 and the associated {chamber} is
\[
K^X = \bigcap_{\beta \in B^X} \beta^+ = \{ x \in V \mid \beta(x) > 0 \text{ for all } \beta \in B^X \}.
\]

We now prove that  \[
T = \operatorname{conv}\left( \bigcup_{X \in \mathcal{X}} K^X \right)
\] coincides with the half-space \(\delta^+\) where \(\delta = v_1\phi_1+v_2\phi_2+...+v_r\phi_r\).

Now let $X, Y \in \mathcal{X}$, such that $r_i(X)=Y$ for some $i \in I$. By [\citealp{affinenichols2}, Proposition 3.5], if \(B^X = \{\beta_1, \beta_2,... \beta_r\}\) is indexed compatibly with  \(B^Y= \{\beta_1', \beta_2',..., \beta_r'\}\), then
\[
\beta_i' = -\beta_i, \qquad \beta_j' = \beta_j  -c_{ji}\beta_i \quad (j \neq i).
\]
Consequently,
\begin{align*}
    \sum_{j=1}^rv_j\beta_j' &= \sum_{j\neq i}v_j(\beta_j-c_{ji}\beta_i)-v_i\beta_i\\
    &=\sum_{j\neq i}v_j\beta_j-\sum_{j\neq i}(v_jc_{ji}-v_i)\beta_i\\
    &=\sum_{j\neq i}v_j\beta_j+v_i\beta_i= \sum_{j=1}^rv_j\beta_j.
\end{align*}
Note that in this case \(K^X\) and \(K^Y\) are  adjacent chambers,
hence the sum \(\delta_X := \sum_{\beta \in B^X} \beta\) is invariant under adjacent chambers. Since the Cartan graph is connected, this sum is independent of the chamber $X$; we denote this common value by $\delta$.

For any \(X \in \mathcal{X}\) and any \(x \in K^X\), we have \(\beta(x) > 0\) for all \(\beta \in B^X\). Hence
\[
\delta(x) = \sum_{\beta \in B^X} \beta(x) > 0,
\]
so \(K^X \subset \delta^+\). Since \(\delta^+\) is convex, the convex hull \(T\) of the union of all chambers also satisfies \(T \subseteq \delta^+\).

Consider the affine plane \(H_1 = \{ x \in V \mid \delta(x) = 1 \}\). The intersection of each chamber \(K^X\) with \(H_1\) is non‑empty because for any \(x \in K^X\) we have \(\delta(x) > 0\), so \(x/\delta(x) \in K^X \cap H_1\). Moreover,  each chamber $K^X$ is a r‑dimensional open simplicial cone. The intersection of
\(K^X\) with \(H_1\)
  is a non‑empty 
$(r-1)$-dimensional simplex. 
In the theory of affine reflection groups, 
$\Delta:=K^X \cap H_1$  is known as a fundamental alcove. A classical result in [\citealp{H90}, Chapter 4] states that the affine Weyl group 
$W=W(A)$ acts simply transitively on the set of alcoves and that the closures of the alcoves tile the  space 
$H_1$, i.e. 
$$ H_1=\bigcup_{w \in W}w(\overline{\Delta}).$$
and the interiors of distinct alcoves are disjoint. Hence
the collection $\{K^X \cap H_1\}_{X\in \mathcal{X}}$ forms a tessellation of $H_1$. It covers 
$H_1$
  completely and the interiors of different members do not overlap.

The point $x/\delta(x)$ must lie in the closure of some simplex. Consequently, we can find finitely many vertices $p_1,\dots,p_n$ of these simplices, where each vertex belongs to some chamber $K^{X_i} \cap H_1$ and non‑negative coefficients $t_1,\dots,t_n$ with $\sum_{i=1}^n t_i = 1$ such that

$$x = \sum_{i=1}^n\delta(x) t_i p_i=\sum_{i=1}^n t_i (\delta(x)p_i).$$
Each \(\delta(x) p_i\) belongs to \(K^{X_i}\) because \(K^{X_i}\) is a cone. Hence \(x\) is a convex combination of points from \(\bigcup_X K^X\), so \(x \in T\). This proves \(\delta^+ \subseteq T\). Now we obtain \(T = \delta^+\).
 Thus, by the philosophy of Theorem \ref{thm6.6}, the Tits cone corresponding to $(\mathcal{G}, (\Delta^{X,\operatorname{re}})_{X \in \mathcal{X}} )$ is  a half plane. 
\end{proof}

Now return to the $3$-tuple $M$  in $\GG$, we have proved that $(\mathcal{G}(M), (\Delta^{X,\operatorname{re}})_{X \in \mathcal{X}} )$ is  a connected and simply connected standard Cartan graph with a reduced root system, thus it should produce  a Tits cone in $\mathbb{R}^3$. \begin{cor}
   Under the equivalence of Theorem \ref{thm6.6}, the Tits cone corresponding to \\$(\mathcal{G}(M), (\Delta^{X,\operatorname{re}})_{X \in \mathcal{X}} )$  is a half plane. That is, the Nichols algebra $\mathcal{B}(M)$ is an affine Nichols algebra.
\end{cor}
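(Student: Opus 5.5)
The plan is to check the hypotheses of the preceding proposition for $\mathcal{G}(M)$ and then conclude via Theorem \ref{thm6.6}.

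\textbf{Step 1: the graph.} By Theorem \ref{thm 4.10}, $M$ admits all reflections and $\mathcal{G}(M)$ is a standard semi-Cartan graph. Its Cartan matrix at every point is
$$\begin{pmatrix}2&-1&-1\\-1&2&-1\\-1&-1&2\end{pmatrix}.$$
This is the generalized Cartan matrix of affine type $A_2^{(1)}$. It is symmetric, and its left null vector is $v=(1,1,1)$, so $vA=0$ with $v$ positive. The proposition immediately above shows that $\mathcal{G}(M)$ is a connected and simply connected Cartan graph.

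\textbf{Step 2: the root system.} By the corollary following that proposition, $(\mathcal{G}(M),(\Delta^{X,\operatorname{re}})_{X\in\mathcal{X}})$ is a reduced root system over $\mathcal{G}(M)$. Every hypothesis of the preceding proposition now holds: the graph is connected, simply connected, standard and Cartan, it carries a reduced root system, and the Cartan matrix is affine.

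\textbf{Step 3: the Tits cone.} Applying that proposition, the Tits cone is the half-space
$$T=\delta^+, \qquad \delta=\phi_1+\phi_2+\phi_3,$$
where $\delta$ is the image under $\psi$ of the imaginary root $\alpha_1+\alpha_2+\alpha_3$. By Theorem \ref{thm6.6}, the pair $(\mathcal{A},T)$ built from this Cartan graph and root system is the crystallographic Tits arrangement attached to $\mathcal{B}(M)$. So the Tits cone of $\mathcal{B}(M)$ is a half plane, which is the definition of an affine Nichols algebra in the sense of \cite{affinenichols2}.

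\textbf{Main obstacle.} The one substantive point is inside the preceding proposition: we need the alcove tiling of $H_1=\{\delta=1\}$ by the sets $K^X\cap H_1$. Two facts are required:
\begin{itemize}
\item the chambers $K^X$ are exactly the images of a fundamental chamber under the affine Weyl group $W(A_2^{(1)})$;
\item these chambers are in bijection with $\mathcal{X}$.
\end{itemize}
Here both facts follow from the standard, simply connected structure. Since $\mathcal{G}(M)$ is standard, the Weyl groupoid morphisms starting at $[M]$ correspond bijectively to elements of $W(A_2^{(1)})$, as noted before the proposition. The remaining task is to match this groupoid description with the classical picture of $W(A_2^{(1)})$ acting simply transitively on alcoves. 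The first thing to check is that $\psi\circ w_X^{-1}$ carries the standard basis to the walls of the alcove $w(\overline{\Delta})$. Once that identification is made, the conclusion is formal.
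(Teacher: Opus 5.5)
Your reduction is the same as the paper's: apply the half-plane proposition to $\mathcal{G}(M)$. But the point you call formal in your ``Main obstacle'' is false, and with it the simple connectivity you import from the preceding proposition.

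The point set $\mathcal{X}$ is finite. Its elements are triples of isomorphism classes of simple objects of $\GG$, and for the finite group $G=\mathbb{Z}_2^3$ there are only finitely many such classes. On the other hand, $\mathcal{G}(M)$ is standard, so every word $s_{i_1}\cdots s_{i_k}$ gives a morphism $(r_{i_1}\cdots r_{i_k}([M]),\,s_{i_1}\cdots s_{i_k},\,[M])$. Hence the morphisms with source $[M]$ map onto the infinite group $W(A_2^{(1)})$; equivalently, $\Delta^{M,\operatorname{re}}$ is infinite. By pigeonhole some point receives infinitely many distinct morphisms from $[M]$. Composing one of them with the inverses of the others shows that $\operatorname{Hom}([M],[M])$ is infinite, not $\{\operatorname{id}_{[M]}\}$. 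So $\mathcal{G}(M)$ is not simply connected, and $w_X$ is not well defined.

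Consequently ``chambers in bijection with $\mathcal{X}$'' and ``chambers are all $W(A_2^{(1)})$-translates of one alcove'' cannot both hold. This is not mere bookkeeping. If you pick one $w_X$ per point anyway, you get finitely many chambers, each meeting $H_1$ in a bounded triangle. Then $T\cap H_1$ is bounded and $T\neq\delta^+$.

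The missing step is to pass to the simply connected covering $\widetilde{\mathcal{G}}(M)$ before invoking Theorem~\ref{thm6.6}, which is only stated for simply connected Cartan graphs.
\begin{itemize}
\item Take as points the morphisms $\omega$ of $\mathcal{W}(\mathcal{G}(M))$ with source $[M]$. Put $\tilde r_i(\omega)=(r_i(X),s_i^X,X)\circ\omega$ and $A^{\omega}=A^{X}$, where $X$ is the target of $\omega$.
\item Since $\mathcal{G}(M)$ is standard and $r_i^2=\operatorname{id}$, $(r_ir_j)^3=\operatorname{id}$ hold on $\mathcal{X}$, these points are in bijection with $W(A_2^{(1)})$. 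Under this bijection $\tilde r_i$ acts as left multiplication by $s_i$.
\item The covering is a connected, simply connected, standard Cartan graph with the same Cartan matrix and the same real roots. So $(\Delta^{X,\operatorname{re}})$ is still a reduced root system on it, and it is to this covering that Theorem~\ref{thm6.6} attaches the Tits arrangement.
\item On the covering the chambers are exactly the sets $w(K^{[M]})$, $w\in W(A_2^{(1)})$, each occurring once. They are therefore the alcoves of the $\tilde{A}_2$ tiling of $H_1$, and the proposition's argument gives $T=\delta^+$.
\end{itemize}
With this substitution your Steps 1--3 go through. The paper's own justification of simple connectivity via $\operatorname{Hom}([M],[M])=\operatorname{id}_{[M]}$ needs the same correction.
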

\begin{rmk}\par \upshape
\textup{(1)} This means we realize an affine Nichols algebra over a coquasi-Hopf algebra, which broadens the approaches to realizing affine Nichols algebras.\par 
\textup{(2)} We hope to prove that the semi-Cartan graph induced by a Nichols algebra over $\HH$ is actually a Cartan graph, which would endow it with more beautiful properties. 
\end{rmk}

\noindent\textbf{Acknowledgment}
This work is supported by the National Key R\&D Program of China\\ 2024YFA1013802 and NSFC 12271243.

	\bibliographystyle{plain}\small
	\bibliography{ref}

\end{document}